\documentclass[12 pt]{article}
\usepackage[pdftex]{graphicx}
\usepackage{amsmath}
\usepackage{mathtools}
\usepackage{amsthm}
\usepackage{amsfonts}
\usepackage{amssymb}
\usepackage[margin=1.0in]{geometry}
\usepackage{tikz}

\newtheorem{theorem}{Theorem}[section]
\newtheorem{cor}[theorem]{Corollary}
\newtheorem{lemma}[theorem]{Lemma}
\newtheorem{prop}[theorem]{Proposition}

\newtheorem{prob}[theorem]{Problem}
\newtheorem{conj}[theorem]{Conjecture}

\theoremstyle{definition}

\theoremstyle{remark}
\newtheorem*{rem}{Remark}
\newtheorem*{claim}{Claim}

\newcommand{\flim}[1]{\mathrm{Flim}(#1)}
\newcommand{\age}[1]{\mathrm{Age}(#1)}
\newcommand{\fin}[1]{\mathrm{Fin}(#1)}
\newcommand{\fr}{Fra\"iss\'e }
\renewcommand{\phi}{\varphi}
\newcommand{\arr}[1]{\mathrm{Arr}(#1)}
\newcommand{\ob}[1]{\mathrm{Ob}(#1)}

\newcommand{\emb}[1]{\mathrm{Emb}(#1)}
\newcommand{\aut}[1]{\mathrm{Aut}(#1)}

\newcommand{\dom}[1]{\mathrm{dom}(#1)}

\newcommand{\im}[1]{\mathrm{Im}(#1)}

\begin{document}
\title{Topological Dynamics of Closed Subgroups of $S_\infty$}
\author{Andy Zucker}
\date{April 2014}
\maketitle

\begin{abstract}
For $G$ a closed subgroup of $S_{\infty}$, we provide an explicit characterization of the greatest $G$-ambit. Using this, we provide a precise characterization of when $G$ has metrizable universal minimal flow. In particular, each such instance fits into the framework of metrizable flows developed in [KPT] and [NVT]; as a consequence, each $G$ with metrizable universal minimal flow has the generic point property.
\let\thefootnote\relax\footnote{2010 Mathematics Subject Classification. Primary: 37B05; Secondary: 03C15, 03E02, 03E15, 05D10, 22F50, 54D35, 54D80, 54H20.}
\let\thefootnote\relax\footnote{Key words and phrases. Fra\"iss\'e theory, Ramsey theory, universal minimal flow, greatest ambit, Generic Point Problem.}
\end{abstract}

\section{Introduction}

In the study of abstract topological dynamics, one is often concerned with the continuous action of a Hausdorff topological group $G$ on a compact Hausdorff space $X$, often called a \emph{$G$-flow}. The flow $X$ is \emph{minimal} if every orbit is dense and \emph{universal} if for every $G$-flow $Y$, there is a $G$-map $f: X\rightarrow Y$, where a \emph{$G$-map} is a continuous map which respects the $G$-action. It is a fact that every topological group $G$ admits a \emph{universal minimal flow} $M(G)$ which is unique up to $G$-flow isomorphism. 

One common tool used to study the universal minimal flow $M(G)$ is the \emph{greatest ambit}. A \emph{$G$-ambit} $(X, x_0)$ is a $G$-flow $X$ with a distinguished point $x_0\in X$ whose orbit is dense in $X$. The greatest ambit is then an ambit which maps onto every other $G$-ambit, where a map of $G$-ambits is a $G$-map which also respects the distinguished point. Since any minimal $G$-flow can be turned into an ambit by distinguishing any point, it follows that every minimal subflow of the greatest ambit is universal, hence isomorphic to $M(G)$.

A major direction of research for the past two decades has been the attempt to classify those Polish groups $G$ for which $M(G)$ is metrizable. The introduction of the seminal paper by Kechris, Pestov and Todor\v{c}evi\'c [KPT] contains an excellent survey of early efforts in this direction. In this paper, the authors provide a general way of constructing $M(G)$ for many closed subgroups of $S_\infty$, the group of permutations of $\mathbb{N}$ endowed with the pointwise convergence topology. Interestingly, the greatest ambit is not the primary tool used to study $M(G)$ in this case.

The closed subgroups of $S_\infty$ are exactly those Polish groups which are \emph{non-Archimedean}, i.e.\ which admit a neighborhood basis at the identity consisting of open subgroups. The characterization we will find most useful is that the closed subgroups of $S_\infty$ are exactly the automorphism groups of countably infinite, model-theoretic structures with universe $\mathbb{N}$. We can in fact narrow our scope to certain countably infinite structures known as \emph{\fr structures}. These are those countably infinite structures $\mathbf{K}$ with universe $\mathbb{N}$ which are:
\begin{itemize}
\item
\emph{locally finite} -- there are finite substructures $\mathbf{A}_n\subseteq \mathbf{K}$ with $\mathbf{K} = \bigcup_n \mathbf{A}_n$, 
\item
\emph{ultrahomogeneous} -- every isomorphism $f: \mathbf{A}\rightarrow \mathbf{B}$ between finite substructures of $\mathbf{K}$ extends to an automorphism of $\mathbf{K}$.
\end{itemize}
Examples of \fr structures include the countably infinite set, the rational linear ordering, the random graph, and the countable atomless boolean algebra. See [KPT] for many more examples.

The most useful aspect of \fr structures is that they are uniquely determined by their \emph{age}, the class of finite structures which embed into $\mathbf{K}$. The major insight of [KPT] is that the dynamical properties of $\aut{\mathbf{K}}$ can be studied using the combinatorial properties of $\age{\mathbf{K}}$. Of particular importance is the notion of (structural) \emph{Ramsey degree}: 
\begin{itemize}
\item
If $\mathbf{A}, \mathbf{B}$ are finite substructures, let $\binom{\mathbf{B}}{\mathbf{A}}$ denote the set of substructures of $\mathbf{B}$ which are isomorphic to $\mathbf{A}$. Let $\mathcal{K}$ be a class of finite structures, and for $n\in \mathbb{N}$, set $[n] = \{1,2,...,n\}$. We say that $\mathbf{A}\in\mathcal{K}$ has (structural) \emph{Ramsey degree} $\leq k$ if for every $\mathbf{B}\in\mathcal{K}$ with $\binom{\mathbf{B}}{\mathbf{A}}$ nonempty and every $r\in\mathbb{N}$, there is $\mathbf{C}$ in $\mathcal{K}$ such that for every coloring $\gamma: \binom{\mathbf{C}}{\mathbf{A}}\rightarrow [r]$, there is $\mathbf{B}_0\in \binom{\mathbf{C}}{\mathbf{B}}$ with $|\gamma(\binom{\mathbf{B_0}}{\mathbf{A}})| \leq k$.
\end{itemize}

If $\mathbf{A}$ has Ramsey degree $1$, we say that $\mathbf{A}$ is a (structural) \emph{Ramsey object}. We say that $\mathcal{K}$ has the (structural) \emph{Ramsey Property} if every $\mathbf{A}\in\mathcal{K}$ is a Ramsey object. In section 4, we will introduce the (embedding) Ramsey Property, and most of this paper will use this rather than the structural version above. For now, we note the following for $\mathcal{K}$ a class of finite structures:

\begin{itemize}
\item
$\mathcal{K}$ has the (embedding) \emph{Ramsey Property} iff $\mathcal{K}$ has the (structural) Ramsey Property and consists of \emph{rigid} structures, i.e.\ structures with no non-trivial automorphisms. 
\item
$\mathbf{A}\in\mathcal{K}$ has finite (structural) Ramsey degree iff $\mathbf{A}$ has finite (embedding) Ramsey degree.
\end{itemize}
We can now state the first major theorem in [KPT].

\begin{theorem}
Let $\mathbf{K}$ be a \fr structure with $\age{\mathbf{K}} = \mathcal{K}$. Set $G = \aut{\mathbf{K}}$. Then the universal minimal flow $M(G)$ is a single point iff $\mathcal{K}$ has the embedding Ramsey Property.
\end{theorem}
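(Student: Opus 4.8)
The plan is to reduce the statement to a purely combinatorial one and then feed in the Ramsey Property. The first observation is that $M(G)$ is a single point if and only if $G$ is \emph{extremely amenable}, i.e.\ every $G$-flow has a $G$-fixed point: if $M(G) = \{*\}$ then $*$ is fixed and, since $M(G)$ maps into every flow, every flow acquires a fixed point; conversely, a fixed point of any flow is a one-point minimal subflow, and as $M(G)$ is minimal this forces $M(G)$ itself to be a point. So it suffices to characterize extreme amenability of $G = \aut{\mathbf{K}}$. I would attack this through the greatest ambit: $G$ is extremely amenable exactly when the greatest ambit has a fixed point, and such fixed points can be detected by testing against finite colorings. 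Concretely, I would use the finite-oscillation-stability criterion: $G$ is extremely amenable iff for every right-uniformly continuous $\gamma \colon G \to [r]$ and every finite $F \subseteq G$ there is $g \in G$ with $\gamma$ constant on $Fg$.

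The second step is to translate this analytic condition into a statement about $\age{\mathbf{K}} = \mathcal{K}$. Since $\mathbf{K}$ is \fr, a neighborhood basis of open subgroups of $G$ is given by the pointwise stabilizers $V_{\mathbf{A}}$ of finite substructures $\mathbf{A} \subseteq \mathbf{K}$, and ultrahomogeneity identifies the coset space $G / V_{\mathbf{A}}$ with $\emb{\mathbf{A}, \mathbf{K}}$, the set of embeddings of $\mathbf{A}$ into $\mathbf{K}$, on which $G$ acts by postcomposition. A finitely-valued uniformly continuous function on $G$ is then exactly a finite coloring of some $\emb{\mathbf{A}, \mathbf{K}}$, while a finite set $F$ together with the basepoint amounts to a finite substructure $\mathbf{B} \supseteq \mathbf{A}$ of $\mathbf{K}$. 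Unwinding the oscillation condition, extreme amenability becomes: for all finite $\mathbf{A} \subseteq \mathbf{B}$ in $\mathcal{K}$, all $r$, and every coloring of $\emb{\mathbf{A}, \mathbf{K}}$, some copy $\mathbf{B}'$ of $\mathbf{B}$ inside $\mathbf{K}$ has all of $\emb{\mathbf{A}, \mathbf{B}'}$ monochromatic, which is precisely the Ramsey Property of the infinite structure $\mathbf{K}$. The fact that embeddings and not unordered copies appear is forced by the coset description, which explains why the \emph{embedding} Ramsey Property, and not merely the structural one, is the correct counterpart.

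Third, I would show the Ramsey Property for $\mathbf{K}$ is equivalent to the embedding Ramsey Property for $\mathcal{K}$. One direction is immediate: given finite $\mathbf{A} \subseteq \mathbf{B}$ in $\mathcal{K}$ and $r$, take $\mathbf{C} \in \mathcal{K}$ witnessing $\mathbf{C} \to (\mathbf{B})^{\mathbf{A}}_r$, embed $\mathbf{C}$ into $\mathbf{K}$, and restrict the given coloring of $\emb{\mathbf{A},\mathbf{K}}$ to this copy to extract a monochromatic $\mathbf{B}$. The converse is a standard compactness argument: if no finite $\mathbf{C}$ witnessed the partition relation for some fixed $\mathbf{A}, \mathbf{B}, r$, then a K\"onig's-lemma (or ultrafilter) argument over the finite substructures exhausting $\mathbf{K}$ assembles a single coloring of $\emb{\mathbf{A}, \mathbf{K}}$ admitting no monochromatic copy of $\mathbf{B}$, contradicting the Ramsey Property for $\mathbf{K}$.

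I expect the main obstacle to be the middle step: faithfully passing between the topological-dynamical and the combinatorial sides. The delicate points are verifying the finite-oscillation-stability characterization of extreme amenability while keeping the handedness of the uniformity consistent throughout; checking that a genuine $G$-fixed point in an arbitrary flow can be extracted from the approximate finite-coloring data by a compactness argument in the flow, pulling back continuous functions $f$ on the flow to the functions $g \mapsto f(gx)$ on $G$ and taking limits along a suitable net or ultrafilter; and confirming that the identification $G/V_{\mathbf{A}} \cong \emb{\mathbf{A}, \mathbf{K}}$ is $G$-equivariant, so that monochromatic copies of $\mathbf{B}$ really correspond to the near-fixed-points the oscillation criterion produces. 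Once this bridge is secured, the Ramsey equivalence of step three closes both implications simultaneously.
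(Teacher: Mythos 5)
Your outline is correct, but it retraces the original Kechris--Pestov--Todor\v{c}evi\'c/Pestov route rather than the one this paper takes. You reduce to extreme amenability, invoke the finite oscillation stability criterion, translate it through the identification of $\emb{\mathbf{A},\mathbf{K}}$ with a coset space of $G$, and close the loop with a K\"onig's lemma argument relating the Ramsey property of the infinite structure $\mathbf{K}$ to that of $\age{\mathbf{K}}$. The paper instead builds the greatest ambit concretely as $\varprojlim \beta H_n$ with $H_n = \emb{\mathbf{A}_n,\mathbf{K}}$ (Theorem 6.3) and characterizes its fixed points directly: Theorem 7.3 shows $\alpha$ is fixed iff each $\alpha(n)$ is a \emph{thick} ultrafilter, Proposition 7.1 shows thick ultrafilters on $H_n$ exist iff the non-thick sets form an ideal iff $\mathbf{A}_n$ is a Ramsey object, and Proposition 7.2 shows thick ultrafilters cohere along the inverse system. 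The combinatorial compactness step you describe is shared between the two arguments --- it is the equivalence of (1) and (3) in Proposition 4.1 --- but the dynamical bridge is genuinely different. Your route is shorter if one quotes Pestov's criterion as a black box (and the delicate points you flag, the handedness of the uniformity and the equivariance of $G/V_{\mathbf{A}}\cong\emb{\mathbf{A},\mathbf{K}}$, are real but purely conventional and do work out). The paper's route buys an explicit description of \emph{all} fixed points rather than a mere existence criterion, and this is what gets reused in Section 8, where the same thick-ultrafilter machinery, upgraded to finite thick \emph{sets} of ultrafilters, handles finite Ramsey degrees and metrizability of $M(G)$ --- a setting where the oscillation stability criterion has no obvious analogue.
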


Topological groups $G$ with $M(G)$ a single point are called \emph{extremely amenable}. Another major theme of [KPT] is that if $\mathbf{K}$ is a \fr structure with universe $\mathbb{N}$ and $G = \aut{\mathbf{K}}$ is not extremely amenable, we can often express $M(G)$ as a \emph{logic action}. Let $X_{LO}^\mathbf{K}$ be the space of structures of the form $\langle \mathbf{K}, <\rangle$, where $<$ is a linear ordering of $\mathbb{N}$. We endow $X_{LO}^\mathbf{K}$ with the topology whose basic open neighborhoods are of the form $N(L) = \{\langle \mathbf{K},<\rangle\in X_{LO}^\mathbf{K}:\: <\!\!|_{[k]} = L\}$, where $L$ is some linear ordering of $[k] = \{1,2,...,k\}$. With this topology, $X_{LO}^\mathbf{K}$ is compact and metrizable. We let $G$ act on $X_{LO}^\mathbf{K}$ via $\langle \mathbf{K}, <\rangle\cdot g = \langle \mathbf{K}, <_g\rangle$ where $(m <_g n)$ iff $(g(m) < g(n))$. This turns $X_{LO}^\mathbf{K}$ into a $G$-flow. It should be noted that while [KPT] uses left logic actions, this paper will almost always use right logic actions; this choice will be justified in section 6. When $\mathbf{K}$ is a \fr structure, $\langle \mathbf{K}, <_0\rangle$ is also a \fr structure, and $G = \aut{\mathbf{K}}$, [KPT] provides a complete characterization of when $\overline{\langle \mathbf{K}, <_0\rangle \cdot G} \cong M(G)$. 

Nguyen Van Th\'e in [NVT] made the observation that by allowing more general logic actions, one can describe $M(G)$ for more groups. Let $\mathcal{S} = \{S_i\}_{i\in I}$ be a set with countably many new relation symbols of arity $n(i)$. Let $X_{\mathcal{S}}^\mathbf{K}$ be the space of structures of the form $\langle \mathbf{K}, \{S_i^\mathbf{K}: i\in I\}\rangle$. We endow $X_\mathcal{S}^\mathbf{K}$ with the topology whose basic open neighborhoods are the sets $N(\{T_i: i\in I\}) := \{\langle \mathbf{K},\{S_i^\mathbf{K}: i\in I\}\rangle \in X_\mathcal{S}^\mathbf{K}: S_i^\mathbf{K}|_{[k]} = T_i\}$, where $T_i$ is some interpretation of the relation symbol $S_i$ on $[k]$. This topology is metrizable using a similar metric to the linear order case. The right logic action is also defined similarly; if $\mathbf{K}$ is a \fr structure, $\langle \mathbf{K}, \vec{S}^\mathbf{K}\rangle \in X_\mathcal{S}^\mathbf{K}$ is also a \fr structure, and $G = \aut{\mathbf{K}}$, [NVT] provides a complete characterization of when $\overline{\langle \mathbf{K}, \vec{S}^\mathbf{K}\rangle \cdot G}$ is compact and isomorphic to $M(G)$.

One interesting consequence of these characterizations is that if $M(G) = \overline{\langle \mathbf{K}, \vec{S}^\mathbf{K}\rangle\cdot G}$ as above, then $M(G)$ has a dense $G_\delta$ orbit, namely $\langle \mathbf{K}, \vec{S}^\mathbf{K}\rangle\cdot G$. If $G$ is a topological group and $M(G)$ has a generic orbit, then we say $G$ has the \emph{generic point property}. It can be shown (see Prop.\ 5.10) that if $M(G)$ has a generic orbit, then every minimal $G$-flow has a generic orbit. Angel, Kechris, and Lyons in [AKL] conjecture that each Polish group $G$ with $M(G)$ metrizable has the generic point property; this conjecture is known as the \emph{Generic Point Problem}. For Polish groups $G$ with the generic point property and $M(G)$ metrizable, Melleray, Nguyen Van Th\'e, and Tsankov in [MNT] have shown that $M(G)$ must have a very particular structure; see section 9 for a very brief discussion.

In this paper, we first provide an explicit characterization of the greatest $G$-ambit for $G$ a closed subgroup of $S_\infty$. Using this, we provide new proofs of the main theorems of [KPT] and [NVT]. Furthermore, our new proof will show that all closed subgroups of $S_\infty$ with $M(G)$ metrizable fit into the framework of logic actions developed in [NVT]. As a consequence, we settle in the affirmative the Generic Point Problem for closed subgroups of $S_\infty$. Our main theorem is the following:

\begin{theorem}
Let $\mathcal{K}$ be a \fr class, $\mathbf{K} = \flim{\mathcal{K}}$, and $G = \aut{\mathbf{K}}$. Then the following are equivalent:
\begin{enumerate}
\item
$G$ has metrizable universal minimal flow,
\item 
Each $\mathbf{A}\in \mathcal{K}$ has finite Ramsey degree,
\item
There is a countable set $\mathcal{S}$ of new relation symbols and $\langle \mathbf{K}, \vec{S}^\mathbf{K}\rangle\in X_{\mathcal{S}}^\mathbf{K}$ so that $\langle \mathbf{K}, \vec{S}^\mathbf{K}\rangle$ is also a \fr structure and $M(G)\cong \overline{\langle \mathbf{K},\vec{S}^\mathbf{K}\rangle\cdot {G}}$.
\end{enumerate}
\end{theorem}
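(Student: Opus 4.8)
The plan is to prove the cycle $(3)\Rightarrow(1)\Rightarrow(2)\Rightarrow(3)$, leaning throughout on the explicit description of the greatest ambit obtained above. Write $\mathbf{K}=\bigcup_n\mathbf{A}_n$ with each $\mathbf{A}_n$ a finite substructure, and let $V_n\leq G$ be the pointwise stabilizer of $\mathbf{A}_n$, so that $\emb{\mathbf{A}_n,\mathbf{K}}\cong V_n\backslash G$ is a discrete transitive $G$-set and the greatest ambit is $S(G)=\varprojlim_n\beta(\emb{\mathbf{A}_n,\mathbf{K}})$, the bonding maps being induced by restriction of embeddings. Fix a minimal subflow $M\subseteq S(G)$; as noted in the introduction, $M\cong M(G)$. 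For each $\mathbf{A}\in\mathcal{K}$ let $\pi_{\mathbf{A}}\colon S(G)\to\beta(\emb{\mathbf{A},\mathbf{K}})$ be the projection, so $\pi_{\mathbf{A}}(p)$ records the ``$\mathbf{A}$-type'' of $p\in M$. The implication $(3)\Rightarrow(1)$ is then immediate: $\overline{\langle\mathbf{K},\vec S^{\mathbf{K}}\rangle\cdot G}$ is a closed subspace of the metrizable compact space $X_{\mathcal S}^{\mathbf{K}}$, hence compact metrizable, and it is isomorphic to $M(G)$ by assumption.

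For $(1)\Rightarrow(2)$ I would combine a soft topological fact with one combinatorial claim. The topological input is that a compact metrizable subspace of $\beta D$, for $D$ discrete, must be finite, since $\beta D$ has no nontrivial convergent sequences whereas every infinite compact metric space does. The combinatorial claim, writing $t(\mathbf{A})$ for the (possibly infinite) Ramsey degree of $\mathbf{A}$, is that $\pi_{\mathbf{A}}(M)$ is finite if and only if $t(\mathbf{A})<\infty$, in which case $|\pi_{\mathbf{A}}(M)|=t(\mathbf{A})$. Granting the claim, metrizability of $M(G)$ makes each $\pi_{\mathbf{A}}(M)$ a compact metrizable subspace of $\beta(\emb{\mathbf{A},\mathbf{K}})$, hence finite, and so every $\mathbf{A}$ has finite Ramsey degree. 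To prove the claim I would pass from the coloring formulation of the degree to the ambit: a finite coloring $\gamma\colon\emb{\mathbf{A},\mathbf{K}}\to[r]$ is a bounded right-uniformly continuous function on $G$, so it extends to a continuous $\widehat\gamma\colon S(G)\to[r]$; minimality of $M$ together with the pigeonhole content of the degree-$t(\mathbf{A})$ property forces $\widehat\gamma$ to realize at most $t(\mathbf{A})$ values on $M$, while conversely any failure of the degree-$k$ property yields a coloring separating more than $k$ points of $\pi_{\mathbf{A}}(M)$. Since two points of $M$ are separated by some $\widehat\gamma$ exactly when their $\mathbf{A}$-types differ, this identifies $|\pi_{\mathbf{A}}(M)|$ with $t(\mathbf{A})$.

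For $(2)\Rightarrow(3)$, assume every $\mathbf{A}$ has finite Ramsey degree. By the claim each $\pi_{\mathbf{A}_n}(M)$ is finite, so the canonical map $M\hookrightarrow\prod_n\pi_{\mathbf{A}_n}(M)$ embeds $M$ into a countable product of finite discrete spaces; in particular $M$ is metrizable and zero-dimensional. The finitely many $\mathbf{A}$-types at each level assign to a point $p\in M$ a coherent labelling of each embedding of each $\mathbf{A}$ by one of $t(\mathbf{A})$ values, and reading these labels as interpretations of a countable relational language $\mathcal{S}$ produces a point $\langle\mathbf{K},\vec S^{\mathbf{K}}\rangle\in X_{\mathcal S}^{\mathbf{K}}$. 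I would then check that the associated expansion class is a \fr class with the embedding Ramsey Property and the expansion property --- this is where the optimality $|\pi_{\mathbf{A}}(M)|=t(\mathbf{A})$ is essential --- and conclude, either by invoking the criterion of [NVT] or by re-deriving it from the greatest ambit, that $\overline{\langle\mathbf{K},\vec S^{\mathbf{K}}\rangle\cdot G}\cong M(G)$. As $\langle\mathbf{K},\vec S^{\mathbf{K}}\rangle$ has dense $G_\delta$ orbit in this flow, the generic point property falls out at the same time.

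The main obstacle is the combinatorial identity $|\pi_{\mathbf{A}}(M)|=t(\mathbf{A})$ driving $(1)\Rightarrow(2)$: it is the precise bridge between the topology of the greatest ambit and the structural Ramsey theory of $\mathcal{K}$, and once it is in hand the remaining steps are either soft topology (the $\beta D$ fact and the embedding into a countable product) or an adaptation of the expansion machinery of [KPT] and [NVT]. A secondary difficulty is verifying in $(2)\Rightarrow(3)$ that the extracted expansion genuinely has the expansion property, since this is exactly what forces its orbit closure to be all of $M(G)$ rather than a proper subflow.
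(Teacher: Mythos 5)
Your proposal follows essentially the same route as the paper: the explicit greatest ambit $\varprojlim \beta H_n$, the fact that $\beta D$ admits no infinite compact metrizable subspaces (Theorem 2.9 and Proposition 2.10, yielding finite level sets $Y_n$), a combinatorial bridge identifying the size of a thick finite set of ultrafilters on $H_n$ with the Ramsey degree of $\mathbf{A}_n$ (your claim $|\pi_{\mathbf{A}}(M)|=t(\mathbf{A})$ is the content of Propositions 8.2 and 8.5 and Corollary 8.8), and the extraction of an excellent expansion class $\mathcal{K}(Y)$ from the finite fibers together with verification of ExpP and RP (Propositions 8.10--8.13). The only caveat is that the paper establishes the exact count $|Y_n|=t(\mathbf{A}_n)$ by \emph{constructing} a suitable minimal subflow level by level (lifting thick sets of ultrafilters along $\tilde{i}^n_m$) rather than proving your identity for an arbitrary minimal $M$; but since only the lower bound $t(\mathbf{A}_n)\leq |Y_n|$ is needed for $(1)\Rightarrow(2)$ and only the existence of one optimal $M$ is needed for $(2)\Rightarrow(3)$, this is a difference of presentation, not of substance.
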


The paper is organized as follows. Sections 2 through 5 provide a review of topology, \fr structures, structural Ramsey theory, and KPT correspondence, respectively. Section 6 provides an explicit construction of the greatest ambit for closed subgroups of $S_\infty$, and sections 7 and 8 give new proofs of the main theorems in [KPT]. As a warning, sections 3,4, and 5, while mostly review, do contain some new notions. Section 3 introduces the notion of a \emph{Fra\"iss\'e--HP} class (read ``\fr minus HP''), and section 5 discusses precompact expansions on Fra\"iss\'e--HP classes. Section 4 introduces the notions of (embedding) Ramsey Property/degree/object and contains some other new ideas and nonstandard vocabulary.

\subsection*{Acknowledgements}

I would like to thank James Cummings, Alekos Kechris, Julien Melleray, Miodrag Soki\'c, Lionel Nguyen Van Th\'e, and Todor Tsankov for their many useful comments and suggestions. Their help has been invaluable.

\section{Topological Preliminaries}

In this section, we will discuss the topological tools needed going forward. We should note now that \emph{all} topological spaces  and groups are assumed to be Hausdorff unless explicitly stated otherwise; in particular, any results stated for a class of topological spaces should only be presumed to hold for those members of the class which are Hausdorff.

\subsection{Topological Dynamics and Topological Semigroups}

Let $G$ be a topological group. A (right) \emph{$G$-flow} is a pair $(X, \tau)$, where $X$ is a compact space and $\tau: X\times G\rightarrow G$ is a continuous action, i.e.\ for every $x\in X$ and $g,h\in G$, we have $\tau(\tau(x,g),h) = \tau(x, gh)$. Typically the action $\tau$ is understood and suppressed, so we write $x\cdot g$ for $\tau(x,g)$, or simply $xg$ when there is no confusion. Then we have the identity $x\cdot (gh) = (x\cdot g)\cdot h$. A \emph{subflow} of $X$ is a non-empty closed subspace $Y\subseteq X$ for which $y\cdot g\in Y$ for all $y\in Y$ and $g\in G$. As $X$ is compact, we see that the intersection of a decreasing chain of subflows of $X$ is itself a subflow. Applying Zorn's lemma, we see that $X$ contains a \emph{minimal} subflow $Y$, a flow containing no proper subflows. Notice that if $Y$ is minimal and $y\in Y$, then the \emph{orbit closure} $\overline{y\cdot G}$ is a subflow of $Y$, so we must have $\overline{y\cdot G} = Y$. More generally, a flow $Y$ is minimal iff every orbit is dense.

If $X$ and $Y$ are $G$-flows, a $G$-map $f: X\rightarrow Y$ is a continuous map which respects the $G$-action, i.e.\ $f(x\cdot g) = f(x)\cdot g$ for each $x\in X$ and $g\in G$. Notice that the dots on the left and the right express different $G$-actions. An \emph{isomorphism} of $G$-flows is a bijective $G$-map (by compactness, the inverse is continuous, hence also a $G$-map). A flow $X$ is \emph{universal} iff for each minimal flow $Y$, there is a $G$-map $f: X\rightarrow Y$. It is a fact that every  topological group $G$ admits a unique \emph{universal minimal flow} $M(G)$ up to $G$-flow isomorphism. The rest of this section will be spend proving this fact. The proof we will use is to first prove the existence and uniqueness of the greatest $G$-ambit $S(G)$. Then any minimal subflow of $S(G)$ is universal, and we will show that any universal minimal flow is isomorphic to any minimal flow of $S(G)$.  

A \emph{$G$-ambit} $(X,x_0)$ consists of a $G$-flow $X$ and a distinguished point $x_0\in X$ with dense orbit. A typical example of a $G$-ambit is the \emph{orbit closure}: start with any $G$-flow $X$ and any $x_0\in X$, then $(\overline{x_0\cdot G}, x_0)$ is a $G$-ambit. For ambits $(X,x_0)$ and $(Y, y_0)$, a map of $G$-ambits is a $G$-map $f: X\rightarrow Y$ with $f(x_0) = y_0$. Notice that if there is a map of $G$-ambits $f: (X,x_0)\rightarrow (Y, y_0)$, it must be unique since $f$ is determined on the dense set $x_0\cdot G$. The greatest ambit $(S(G), 1)$ is characterized by being universal for the class of $G$-ambits, i.e.\ for any $G$-ambit $(X,x_0)$, there is a map of $G$-ambits $f: (S(G), 1)\rightarrow (X,x_0)$. Since maps between ambits are unique, the greatest ambit, should it exist, is unique up to a unique isomorphism of $G$-ambits.  

\begin{theorem}
For any topological group $G$, there exists a greatest $G$-ambit $(S(G), 1)$.
\end{theorem}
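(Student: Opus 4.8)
The plan is to realize the greatest ambit as a compactification of $G$ itself, on which the right-translation action of $G$ on $G$ extends continuously, taking $1$ to be the image of the identity $e\in G$; since $(S(G),1)$ must map onto every ambit and its base point must have dense orbit, the construction is essentially forced. (An alternative is to take the orbit-closure of the tuple of base points in a product of representatives of all cyclic ambits, but this forces one to deal with a set-theoretic bounding issue, so I prefer the functional construction.) Writing $R_gf$ for the right translate $(R_gf)(x)=f(xg)$, I would set
\[
\mathcal{A}=\{f\in C_b(G): g\mapsto R_gf \text{ is norm-continuous from } G \text{ into } C_b(G)\}.
\]
A short verification shows $\mathcal{A}$ is a closed, unital subalgebra of $C_b(G)$, stable under the lattice operations, and invariant under each $R_g$ (using $R_gR_h=R_{gh}$). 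The structural facts I need are that $\mathcal{A}$ separates points of $G$ and separates points from closed sets, which hold because a Hausdorff topological group is uniformizable, making such uniformly continuous bounded functions abundant.

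Next I would embed $G$ via $\iota(g)=(f(g))_{f\in\mathcal{A}}$ into the cube $[0,1]^{\mathcal{A}}$ (rescaling each $f$) and set $S(G)=\overline{\iota(G)}$ and $1:=\iota(e)$. As $\mathcal{A}$ separates points and closed sets, $\iota$ is an embedding and $S(G)$ is a compact Hausdorff space in which $\iota(G)$ is dense. To define the action, note that $\iota(xg)$ has $f$-coordinate $f(xg)=(R_gf)(x)$, so right multiplication by $g$ is implemented on the cube by the coordinate permutation $\sigma_g$ sending a point $p$ to the point whose $f$-coordinate is $p_{R_gf}$. Each $\sigma_g$ is a self-homeomorphism of $[0,1]^{\mathcal{A}}$ preserving $\iota(G)$, hence preserving $S(G)$; since $\sigma_h\circ\sigma_g=\sigma_{gh}$, this is a genuine right action of $G$ on $S(G)$ extending right multiplication, and it is continuous in the $S(G)$-variable for free. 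Granting joint continuity, $(S(G),1)$ is then an ambit because $1\cdot G=\iota(G)$ is dense.

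Finally I would verify universality. Given an ambit $(X,x_0)$, each $\phi\in C(X)$ pulls back along the orbit map $g\mapsto x_0\cdot g$ to the function $g\mapsto\phi(x_0\cdot g)$ on $G$, which lies in $\mathcal{A}$ precisely because the action on the compact space $X$ is jointly continuous. Dualizing the resulting inclusion of algebras produces a continuous $G$-map $S(G)\to X$ with $1\mapsto x_0$; it is the unique such map because $1$ has dense orbit, and uniqueness of $(S(G),1)$ up to a unique isomorphism of ambits follows formally from this universal property.

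I expect the main obstacle to be the joint continuity of the extended action $\tau\colon S(G)\times G\to S(G)$. Continuity in each variable separately is routine, but the combined statement requires a modulus of continuity that is uniform over the (generally non-metrizable) compactum $S(G)$. The mechanism is to establish the estimate first on the dense set $\iota(G)$, where for each coordinate $f$ and points $g,g'\in G$ one has $|f(xg)-f(xg')|\le\|R_gf-R_{g'}f\|_\infty$ \emph{uniformly in} $x$; this is exactly the defining property of $\mathcal{A}$, and its uniformity in $x$ is what allows the estimate to pass to the closure $S(G)$. The same point underlies checking that the pullbacks $g\mapsto\phi(x_0\cdot g)$ genuinely belong to $\mathcal{A}$.
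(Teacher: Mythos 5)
Your proof is correct, but it takes a genuinely different route from the paper's. The paper proves existence by the quickest abstract argument: after observing that a space with a dense subset of size $\kappa$ has cardinality at most $2^{2^{\kappa}}$ (so that the isomorphism types of $G$-ambits form a set), it lists representatives $(X_i, x_0^i)$ of all of them, takes the orbit closure of the diagonal point $(x_0^i)_{i}$ in the product $\prod_i X_i$, and projects; universality is immediate. You instead build $S(G)$ as the Gelfand/Samuel-type compactification attached to the algebra $\mathcal{A}$ of bounded functions whose right translates vary norm-continuously --- this is essentially the functional-analytic construction the paper explicitly declines to carry out (``A more constructive proof using functional analysis can be found in [KPT]''), and it is closely related to the Samuel compactification the author mentions in the remark after Corollary 6.4. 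The trade-off: the paper's argument is shorter and needs only the cardinality bound, but gives no concrete description of $S(G)$; yours identifies $S(G)$ as the spectrum of $\mathcal{A}$, which is more informative (and is the natural starting point for the semigroup structure on $S(G)$), at the cost of the uniform-structure facts, Gelfand/Stone--Weierstrass duality, and the joint-continuity verification you correctly flag as the delicate step. Your mechanism for that step --- proving the estimate $|q_{R_g f}-q_{R_{g_0}f}|\le \|R_g f - R_{g_0}f\|_\infty$ on the dense set $\iota(G)$ and passing to the closure because it is a closed condition --- is sound, as is your observation that the pullbacks $g\mapsto\phi(x_0\cdot g)$ land in $\mathcal{A}$ precisely because the action on the compact space $X$ is jointly continuous. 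One small caution: make sure the uniformity you invoke for ``separates points from closed sets'' is the one actually encoded by your definition of $\mathcal{A}$ (translation on the side matching $(R_gf)(x)=f(xg)$); both group uniformities induce the topology, so the conclusion holds, but the two are not interchangeable in the verification.
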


\begin{proof}
Notice first that any topological space with a dense subset of cardinality $\kappa$ has size at most $2^{2^\kappa}$, so we may assume that there is a set (as opposed to a proper class) containing every isomorphism type of $G$-ambit.

Let $\{(X_i, x_0^i): i\in I\}$ list every isomorphism type of $G$-ambit, and form $Z = \prod_i X_i$. Let $G$ act on $Z$ coordinatewise; this action is continuous. Set $1 = (x_0^i)_{i\in I}$, and let $S(G) = \overline{1\cdot G}$. Then $(S(G), 1)$ is a $G$-ambit. Let $\pi_i: Z\rightarrow X_i$ be projection onto the $i^{th}$ coordinate. Then $\pi_i|_{S(G)}$ is a map of $G$-ambits onto $(X_i, x_0^i)$. It follows that $(S(G), 1)$ is the greatest $G$-ambit.
\end{proof}  

A more constructive proof using functional analysis can be found in [KPT]. As we will give a detailed construction for closed subgroups of $S_\infty$ in section 6, the abstract approach is sufficient here. Our construction is also the reason why we are opting to use right actions as opposed to left actions, which are more commonly found in the literature. In order to study the properties of the greatest ambit, we need to develop some of the theory of topological semigroups.

We say that a semigroup $S$ is a \emph{left-topological} semigroup if $S$ is a compact topological space in which left multiplication is continuous, i.e.\ for each $s\in S$, the map $\lambda_s: S\rightarrow S$ with $\lambda_s(t) = st$ is continuous. A \emph{right ideal} of $S$ is a non-empty subset $I\subseteq S$ with $IS\subseteq I$. A right ideal is \emph{minimal} if it does not properly contain any right ideals. Equivalently, $I$ is a minimal right ideal iff $xS = I$ for every $x\in I$; in particular, since $xS = \lambda_x(S)$ and $S$ is compact, minimal right ideals are always closed (closed always refers to the topology; we will write $I^2\subseteq I$ when we mean closed with respect to the operation).  

\begin{prop}
If $S$ is a left-topological semigroup, then there is a minimal right ideal.
\end{prop}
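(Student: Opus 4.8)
The plan is to apply Zorn's lemma to the collection of \emph{closed} right ideals of $S$, obtain a minimal closed right ideal, and then argue that it is in fact minimal among all right ideals. First I would observe that this collection is non-empty: since $S$ itself satisfies $SS \subseteq S$, it is a closed right ideal, so the poset is inhabited. (Here I am implicitly using $S \neq \emptyset$, which holds in all cases of interest, e.g.\ when $S = S(G)$.)

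Next I would verify the hypothesis of Zorn's lemma for the collection of closed right ideals ordered by reverse inclusion. Given a chain $\{I_\alpha\}$ of closed right ideals, its intersection $I = \bigcap_\alpha I_\alpha$ is again closed, and it is a right ideal provided it is non-empty: for each $\alpha$ we have $IS \subseteq I_\alpha S \subseteq I_\alpha$, so $IS \subseteq \bigcap_\alpha I_\alpha = I$. Non-emptiness is exactly where compactness of $S$ enters: the $I_\alpha$ are closed and, forming a chain of non-empty sets, enjoy the finite intersection property, so by compactness $\bigcap_\alpha I_\alpha \neq \emptyset$. Thus every chain has a lower bound, and Zorn's lemma yields a minimal closed right ideal $I$.

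The main point — and the step I expect to require the most care — is upgrading $I$ from minimal-among-closed to minimal-among-all right ideals. The key observation is that for any $x \in S$, the set $xS = \lambda_x(S)$ is the continuous image of the compact space $S$, hence compact and (as $S$ is Hausdorff) closed; moreover it is a right ideal, since $(xS)S = x(SS) \subseteq xS$. Thus every right ideal automatically contains a closed one. Concretely, suppose $J \subseteq I$ is any right ideal and fix $x \in J$. Then $xS \subseteq JS \subseteq J \subseteq I$ is a closed right ideal contained in $I$, so by minimality of $I$ we get $xS = I$; hence $I = xS \subseteq J \subseteq I$, forcing $J = I$. Therefore $I$ contains no proper right ideal, i.e.\ it is a minimal right ideal, as desired. (This also recovers the characterization noted above, since the argument shows $xS = I$ for every $x \in I$.)
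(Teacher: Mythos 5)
Your proof is correct and follows essentially the same route as the paper: apply Zorn's lemma to the closed right ideals (using compactness for the chain condition) and then use the fact that $xS = \lambda_x(S)$ is a closed right ideal contained in the Zorn-minimal $I$ to conclude $xS = I$ and hence minimality among all right ideals. You simply spell out the details the paper leaves implicit.
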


\begin{proof}
Apply Zorn's lemma to the collection of closed right ideals (which is non-empty as $S$ is a member). Let $I$ be minimal. Then for every $x\in I$, $xS$ is a closed right ideal, and $xS\subseteq I$. Hence $xS = I$.
\end{proof}

If $S$ is a semigroup, $u\in S$ is an $\emph{idempotent}$ if $u^2 = u$.

\begin{prop}
If $I$ is any minimal right ideal of a left-topological semigroup $S$, then $I$ contains an idempotent.
\end{prop}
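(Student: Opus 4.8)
The plan is to prove the more general Ellis--Namakura idempotent lemma, namely that \emph{any nonempty compact left-topological semigroup contains an idempotent}, and then apply it to $I$. First I would observe that $I$ is itself such a semigroup: by the remark preceding the statement, minimal right ideals are closed, hence compact as closed subsets of the compact space $S$; and $I^2 \subseteq IS \subseteq I$, so $I$ is closed under the operation. Thus it suffices to produce an idempotent in an arbitrary nonempty compact subsemigroup $T_0$ (we will take $T_0 = I$). To this end, consider the collection of all nonempty closed subsemigroups of $T_0$, partially ordered by reverse inclusion. Given a chain in this collection, its intersection is again closed and a subsemigroup, and is nonempty by the finite intersection property (a finite intersection of members of the chain equals the smallest such member, which is nonempty); so Zorn's lemma yields a minimal nonempty closed subsemigroup $T \subseteq T_0$.

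Next, fix any $u \in T$ and consider $uT = \lambda_u(T)$. Since $\lambda_u$ is continuous and $T$ is compact, $uT$ is compact, hence closed. It is contained in $T$ because $uT \subseteq T^2 \subseteq T$, and it is a subsemigroup since $(uT)(uT) = u(TuT) \subseteq uT$, using $TuT \subseteq T^3 \subseteq T$ together with associativity. Thus $uT$ is a nonempty closed subsemigroup of $T$, so minimality of $T$ forces $uT = T$. In particular $u \in T = uT$, so there is some $v \in T$ with $uv = u$.

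Finally I would localize this using continuity of left multiplication. Let $V = \{t \in T : ut = u\} = T \cap \lambda_u^{-1}(\{u\})$. Because $\lambda_u$ is continuous and singletons are closed in the Hausdorff space $S$, the set $V$ is closed; it is nonempty as $v \in V$; and it is a subsemigroup, since $ut_1 = ut_2 = u$ gives $u(t_1 t_2) = (ut_1)t_2 = ut_2 = u$. By minimality of $T$ we conclude $V = T$, and since $u \in T = V$ this says precisely $uu = u$. Hence $u$ is an idempotent lying in $T \subseteq I$, as desired.

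The only genuinely delicate point, and the place where I would expect care to be required, is that the entire argument must go through with \emph{only} left multiplication assumed continuous. That constraint is exactly why I take images $\lambda_u(T)$ (so that closedness comes from compactness of the domain rather than from continuity of right multiplication) and why $V$ is cut out as a preimage under $\lambda_u$; the three subsemigroup closure checks, $T^2\subseteq T$ for the Zorn step, $(uT)^2\subseteq uT$, and $V^2\subseteq V$, are then purely algebraic consequences of associativity and require no topology.
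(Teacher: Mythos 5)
Your proof is correct and follows essentially the same route as the paper: Zorn's lemma applied to nonempty closed subsemigroups of $I$ to get a minimal one $T$, then $uT = T$ by minimality, then $T \cap \lambda_u^{-1}(\{u\}) = T$ by minimality again, yielding $u^2 = u$. The only difference is that you spell out the compactness and subsemigroup verifications that the paper leaves implicit.
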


\begin{proof}
Consider the collection $\mathcal{F} = \{J\subseteq I: J\neq \emptyset\text{ closed and } J^2\subseteq J\}$. This collection is nonempty as $I\in \mathcal{F}$. Applying Zorn, let $M$ be minimal. Pick $w\in M$; then $wM\in \mathcal{F}$ and $wM\subseteq M$, so $wM = M$. Let $X := M\cap \lambda_w^{-1}(w)$. We see that $\emptyset \neq X\in \mathcal{F}$ and $X\subseteq M$, so $X = M$, $w\in X$, and $w^2 = w$.
\end{proof}

Now if $I$ is a minimal right ideal and $u\in I$ is an idempotent, then $u$ is a left identity for $I$. To see this, notice that $I = uI$, so if $x\in I$, we may write $x = uy$ for some $y\in I$. Then $ux = u^2y = uy = x$. 

Now let us consider the greatest ambit $S(G)$; we can give $S(G)$ a left-topological semigroup structure as follows. If $x, y\in S(G)$ and we want to define $xy$, consider the orbit closure $X := \overline{x\cdot G}$. Then $(X, x)$ is an ambit, and there is a unique map of $G$-ambits $f_x: (S(G), 1)\rightarrow (X, x)$. Then we can define $xy = f_x(y)$. Associativity follows once we note that $f_x\circ f_y = f_{xy}$. Notice that $f_x$ is continuous and for $g\in G$, we have $x\cdot g = f_x(1\cdot g)$. It is a fact that the map $g\rightarrow 1\cdot g$ is a homeomorphism of $G$ onto its image, and many authors identify $G$ as a subspace of $S(G)$, as we will do in section 6.

Suppose $M\subseteq S(G)$ is a minimal $G$-flow and pick any $x\in M$. Then we have $xy = f_x(y)\in M$ for any $y\in S(G)$, so $M$ is a minimal right ideal. Conversely, if $M$ is a minimal right ideal, pick any $x\in M$. Then $M = xS(G)$, and as $1\cdot G$ is dense in $S(G)$, $x\cdot G$ is dense in $M$, so $M$ is a minimal $G$-flow. 

\begin{prop}
Let $(S(G), 1)$ be the greatest ambit, and suppose $M\subseteq S(G)$ is a minimal right ideal (i.e.\ minimal $G$-flow). Suppose $\mu: M\rightarrow M$ is a $G$-map, and fix $x,y\in M$. Then:
\begin{enumerate}
\item
$\mu(xy) = \mu(x)y$,
\item
There is $w\in M$ such that for all $x\in M$, $\mu(x) = wx$,
\item
$\mu$ is an isomorphism.
\end{enumerate}
\end{prop}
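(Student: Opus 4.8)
The plan is to establish the three parts in order, since part (1) is the technical engine that powers the other two. The main obstacle throughout is that, although $S(G)$ is only \emph{left}-topological, so right multiplication need not be continuous, and I therefore cannot simply invoke continuity in the second variable. For part (1), I would fix $x\in M$ and compare the two maps $P,Q\colon S(G)\to M$ defined by $P(z)=\mu(xz)$ and $Q(z)=\mu(x)z$. Both are well defined, since $M$ is a right ideal forces $xz\in M$ so that $\mu(xz)$ makes sense, and both are continuous, being composites of the continuous left-multiplications $\lambda_x,\lambda_{\mu(x)}$ with the continuous map $\mu$. The key computation is that they agree on the dense set $1\cdot G\subseteq S(G)$: using $x(1\cdot g)=f_x(1\cdot g)=x\cdot g$ and that $\mu$ is a $G$-map, one gets $P(1\cdot g)=\mu(x\cdot g)=\mu(x)\cdot g=Q(1\cdot g)$. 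Since $M$ is Hausdorff, two continuous maps agreeing on a dense set coincide, so $P=Q$; restricting to $z=y\in M$ yields $\mu(xy)=\mu(x)y$.

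For part (2), I would pick an idempotent $u\in M$, which exists by the earlier proposition and is a left identity for $M$, and set $w=\mu(u)\in M$. Applying part (1) with first coordinate $u$ gives, for every $x\in M$, the identity $\mu(x)=\mu(ux)=\mu(u)x=wx$, as required.

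For part (3), I would first observe that $\mu$ is surjective by the standard minimality argument: $\mu(M)$ is a nonempty compact, hence closed, $G$-invariant subset of $M$ (it is $G$-invariant because $\mu(M)g=\mu(Mg)=\mu(M)$), so it is a subflow, and minimality forces $\mu(M)=M$. The real crux is injectivity, since surjective $G$-maps between minimal flows are in general not injective; here I would exploit the semigroup structure. By surjectivity choose $w'\in M$ with $ww'=\mu(w')=u$, and define $\nu(x)=w'x$, a continuous $G$-map $M\to M$. Then, using associativity and that $u$ is a left identity, $\mu(\nu(x))=w(w'x)=(ww')x=ux=x$, so $\mu\circ\nu=\mathrm{id}_M$; in particular $\nu$ is injective. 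Applying the same minimality argument to $\nu$ shows $\nu$ is surjective as well, hence bijective, whence $\mu=\nu^{-1}$ is a bijection. Finally, a continuous bijective $G$-map between compact Hausdorff flows is an isomorphism, its inverse being automatically continuous and $G$-equivariant, which completes the argument.
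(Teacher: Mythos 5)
Your proposal is correct and follows essentially the same route as the paper: part (1) by comparing the two continuous maps $z\mapsto\mu(xz)$ and $z\mapsto\mu(x)z$ on the dense set $1\cdot G$, part (2) via an idempotent left identity, and part (3) by producing a left-inverse element ($w'$, the paper's $v$) and using minimality to see that left multiplication by it is surjective. The paper's write-up is terser but the substance is identical.
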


\begin{proof}
For the first claim, use the fact that left multiplication is continuous and that $1\cdot G$ is dense in $S(G)$. For the second claim, let $u\in M$ be an idempotent, and set $w = \mu(u)$. Pick $x\in M$; then $\mu(x) = \mu(ux) = \mu(u)x = wx$. 

Since $M$ is minimal, $\mu$ is surjective. To see that $\mu$ is an isomorphism, it is enough to show that $\mu$ is injective (since $M$ is compact). Notice that $wM = M$, so pick $v\in M$ with $wv = u$. But $vM = M$ as well, and $wvx = ux = x$. As left multiplication by $wv$ is injective, so is left multiplication by $w$, i.e.\ $\mu$ is injective.
\end{proof}

\begin{theorem}
Let $M$ be a minimal subflow of the greatest ambit $(S(G), 1)$, and let $N$ be any universal minimal flow for $G$. Then $M\cong N$. Hence the universal minimal flow $M(G)$ is unique up to isomorphism.
\end{theorem}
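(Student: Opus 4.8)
The goal is to show that any minimal subflow $M$ of the greatest ambit $S(G)$ is isomorphic to any universal minimal flow $N$. Since $N$ is universal, there is a $G$-map $f: N \to M$ onto some subflow of $M$; but $M$ is minimal, so $f$ is onto $M$. The real content is to produce a $G$-map going back the other way and to show the composite is an isomorphism, and here the semigroup structure on $S(G)$ developed above does the work.

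Let me plan this out.

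The plan is as follows. First I would use universality of $N$ to obtain a $G$-map $f: N \to M$. Since $M$ is a minimal flow, the image $f(N)$ is a subflow of $M$, hence equals $M$, so $f$ is surjective. Next I need a $G$-map in the reverse direction $h: M \to N$. To get this, I would pick a point $x_0 \in N$ and consider the orbit-closure ambit $(\overline{x_0 \cdot G}, x_0) = (N, x_0)$ (the orbit closure is all of $N$ since $N$ is minimal). By the universal property of the greatest ambit, there is a map of $G$-ambits $g: (S(G), 1) \to (N, x_0)$, and restricting $g$ to $M \subseteq S(G)$ gives a $G$-map $h := g|_M : M \to N$; its image is a subflow of $N$, hence all of $N$ by minimality, so $h$ is surjective.

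Now the key step: I would form the composite $\mu := h \circ f : N \to N$, wait — better to form the composite on $M$. Consider $f \circ h : M \to M$. This is a $G$-map from the minimal subflow $M \subseteq S(G)$ to itself, so Proposition 2.7 (the preceding proposition) applies: any $G$-map $M \to M$ is an isomorphism. Hence $f \circ h$ is an isomorphism of $M$. In particular $f$ is surjective (already known) and, since $f \circ h$ is injective, $h$ is injective; combined with $h$ surjective, $h: M \to N$ is a bijective $G$-map, hence (by compactness, as the inverse of a continuous bijection between compact Hausdorff spaces is continuous) an isomorphism of $G$-flows. This gives $M \cong N$.

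The main obstacle — and the reason the semigroup machinery was set up — is proving that the self-map of $M$ is an isomorphism, but this is exactly Proposition 2.7, which I may invoke: every $G$-map from a minimal subflow of $S(G)$ to itself is of the form $x \mapsto wx$ and is an isomorphism. The only care needed is the bookkeeping that ensures the relevant composite really is a $G$-map $M \to M$ (so that the proposition applies to the correct flow sitting inside $S(G)$), rather than merely a $G$-map into an abstract copy of $N$. Once $M \cong N$ is established, uniqueness of $M(G)$ is immediate: any two universal minimal flows $N, N'$ are each isomorphic to a fixed minimal subflow $M \subseteq S(G)$, hence isomorphic to one another, so $M(G)$ is well-defined up to $G$-flow isomorphism.
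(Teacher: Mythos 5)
Your proposal is correct and follows essentially the same route as the paper: obtain $G$-maps in both directions between $M$ and $N$ (using universality of $N$ and the fact that $M$, as a minimal subflow of $S(G)$, is universal), compose them to get a $G$-self-map of $M$, invoke the preceding proposition that every $G$-map from a minimal subflow of $S(G)$ to itself is an isomorphism, and deduce that one of the two maps is a bijective $G$-map, hence an isomorphism by compactness. The paper's version is just a terser rendering of the same argument, so there is nothing further to add.
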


\begin{proof}
Since $M$ and $N$ are universal, let $f: M\rightarrow N$ and $g: N\rightarrow M$ be $G$-maps. By the above proposition, we have that $f\circ g$ is an isomorphism. Hence $f$ is injective; as $N$ is minimal, $f$ is surjective. As $M$ is compact, $f$ is an isomorphism.
\end{proof}

See Auslander [A] for a more detailed exposition of topological dynamics and the universal minimal flow. See [HS] or [EEM] for more on topological semigroups.

\subsection{Filters, Ultrafilters, and the $\beta$-compactification}

Let $X$ be a set. A \emph{filter} on $X$ is a collection $\mathcal{F}\subseteq \mathcal{P}(X)$ satisfying the following:

\begin{itemize}
\item
$\mathcal{F}$ is nontrivial: $X\in \mathcal{F}$ and $\emptyset \not\in \mathcal{F}$,
\item
$\mathcal{F}$ is upwards closed: if $A\in \mathcal{F}$ and $A\subseteq B$, then $B\in \mathcal{F}$,
\item
$\mathcal{F}$ is closed under finite intersections: if $A,B\in \mathcal{F}$, then $A\cap B\in \mathcal{F}$.
\end{itemize}

Notice that the union of a chain of filters is also a filter, so by Zorn's Lemma, every filter is contained in a maximal filter. These are called \emph{ultrafilters}. Equivalently, ultrafilters are those filters which contain $A$ or $X\setminus A$ for every $A\subseteq X$. The prototypical example of an ultrafilter is a \emph{principal} ultrafilter, one of the form $p_x := \{A\subseteq X: x\in A\}$ for some fixed $x\in X$. 

Let $X$ and $Y$ be sets, $f: X\rightarrow Y$ any function, $g: X\rightarrow Y$ a surjective function, $\mathcal{F}$ a filter on $X$, and $\mathcal{G}$ a filter on $Y$. Then $f(\mathcal{F})$, the \emph{push forward} of $\mathcal{F}$, is the filter on $Y$ with $A\in f(\mathcal{F})$ iff $f^{-1}(A)\in \mathcal{F}$. The \emph{pre-image filter} $g^{-1}(\mathcal{G})$ is the filter on $X$ generated by the sets $g^{-1}(B)$ for $B\in \mathcal{G}$. The push forwards of ultrafilters are ultrafilters, but pre-images of ultrafilters are typically just filters.

The dual notion to a filter is an \emph{ideal}, a collection $\mathcal{I}\subseteq \mathcal{P}(X)$ which is nontrivial ($\emptyset\in\mathcal{I}$ and $X\not\in \mathcal{I}$), downwards closed, and closed under finite unions. $\mathcal{I}$ is an ideal iff $\{A\subseteq X: X\setminus A\in \mathcal{I}\}$ is a filter; we call this the \emph{dual filter} of $\mathcal{I}$. Every ideal is contained in a maximal ideal, and $\mathcal{I}$ is a maximal ideal iff $\{A\subset X: X\setminus A\in \mathcal{I}\}$ is an ultrafilter. 

Denote the space of ultrafilters on $X$ by $\beta X$; we endow $\beta X$ with the topology whose basic open sets are of the form $\overline{A} := \{p\in \beta{X}: A\in p\}$ for $A\subseteq X$. Notice that each of these basic open sets is closed, $\overline{A} = cl(A)$, and $\overline{A} \sqcup \overline{X\setminus A} = \beta X$. We can identify $X$ as a subspace of $\beta X$ by identifying each $x\in X$ with the principal ultrafilter $p_x$. Notice that $\{p_x\} = \overline{\{x\}}$, so under the identification, $X$ is an open, discrete subspace of $\beta X$.  

Let us show that $\beta X$ is the \emph{Stone-\v{C}ech compactification} of $X$, that is to say $\beta X$ has the properties guaranteed by Proposition 2.6 and Theorem 2.7.
\begin{prop}
$\beta X$ is compact Hausdorff.
\end{prop}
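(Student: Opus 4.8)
The plan is to verify the basis axioms first, then treat the Hausdorff property and compactness separately, the latter being the substantive part.

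First I would record that the sets $\overline{A}$ genuinely form a basis. Since any filter contains $A\cap B$ precisely when it contains both $A$ and $B$, one gets $\overline{A}\cap \overline{B} = \overline{A\cap B}$, and $\overline{X} = \beta X$; hence the collection $\{\overline{A}: A\subseteq X\}$ is closed under finite intersection and covers $\beta X$, so it is a basis. I would also note here that each $\overline{A}$ is clopen, because $\overline{A}$ and $\overline{X\setminus A}$ are disjoint (no ultrafilter contains both a set and its complement) and their union is all of $\beta X$ (every ultrafilter contains one of the two). This clopen structure is what makes the later compactness argument clean.

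For the Hausdorff property, given distinct ultrafilters $p\neq q$, there is some $A\subseteq X$ with $A\in p$ and $A\notin q$; since $q$ is an ultrafilter, $X\setminus A\in q$. Then $p\in \overline{A}$ and $q\in \overline{X\setminus A}$, and these two basic open sets are disjoint, giving the required separation.

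The main work is compactness, and here the key step is the interplay between finite subcovers and the finite intersection property. It suffices to show that every cover of $\beta X$ by basic open sets admits a finite subcover. So suppose $\{\overline{A_i}: i\in I\}$ covers $\beta X$ but no finite subfamily does. Failure of a finite subfamily indexed by $F\subseteq I$ to cover means there is an ultrafilter missing every $\overline{A_i}$ with $i\in F$, i.e.\ containing each $X\setminus A_i$; hence $\bigcap_{i\in F}(X\setminus A_i)\neq\emptyset$. Thus the family $\{X\setminus A_i: i\in I\}$ has the finite intersection property, so it generates a filter, which by the earlier application of Zorn's Lemma extends to an ultrafilter $q$ with $X\setminus A_i\in q$ for all $i$. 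But then $A_i\notin q$, so $q\notin \overline{A_i}$ for every $i$, contradicting that the $\overline{A_i}$ cover $\beta X$. The step I expect to require the most care is this translation of ``no finite subcover'' into the finite intersection property of the complements, since it is exactly the point where the existence of ultrafilters extending arbitrary filters is invoked; everything else reduces to the Boolean identities for the $\overline{A}$ established in the first paragraph.
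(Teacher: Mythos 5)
Your proof is correct, and it takes a cleaner route than the paper's at the one substantive step. Both arguments ultimately produce an ultrafilter that escapes a purported cover with no finite subcover, but they get there differently. The paper applies Zorn's Lemma to the collection of basic open covers with no finite subcover, extracts a \emph{maximal} such cover $\mathcal{C}$, and uses that maximality to show $\{A : \overline{A}\in\mathcal{C}\}$ is literally an ideal (maximality is needed for downward closure and for $\emptyset$ to belong); it then extends the dual filter to an ultrafilter. You skip the maximality step entirely: from the failure of every finite subcover you read off directly that the complements $\{X\setminus A_i\}$ have the finite intersection property, and the finite intersection property alone is enough to generate a proper filter, which extends to the desired ultrafilter. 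In effect, passing to the generated filter does automatically the closure work that the paper's maximal-cover argument does by hand, so your version needs one application of Zorn's Lemma (ultrafilter extension) where the paper needs two. Your preliminary observations --- that $\overline{A}\cap\overline{B}=\overline{A\cap B}$, that each $\overline{A}$ is clopen with complement $\overline{X\setminus A}$, and the reduction of compactness to basic open covers --- are all correct and are exactly what licenses the reduction. The Hausdorff argument matches what the paper leaves as ``easily seen.''
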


\begin{proof}
$\beta X$ is easily seen to be Hausdorff. As for compactness, suppose not. Use Zorn's Lemma to obtain a maximal open cover $\mathcal{C}$ with respect to the properties that each $U\in \mathcal{C}$ is of the form $\overline{A}$ for some $A\subseteq X$ and such that $\mathcal{C}$ contains no finite subcover. Let $\mathcal{I} = \{A\subseteq X: \overline{A}\in \mathcal{C}\}$. I claim $\mathcal{I}$ is an ideal. Certainly $X\not\in \mathcal{I}$, and $\emptyset\in \mathcal{I}$ by maximality of $\mathcal{C}$; maximality also gives downward closure. If $A, B\in \mathcal{I}$, then $A\cup B\in \mathcal{I}$, for if there were a finite cover from $\mathcal{C}\cup \{\overline{A\cup B}\}$, then a finite cover could be found using $\overline{A}$ and $\overline{B}$.

Let $p\in \beta X$ be any ultrafilter extending the dual filter of $\mathcal{I}$. Then every member of $p$ is not in $\mathcal{I}$. It follows that $p$ is not covered by $\mathcal{C}$, a contradiction.
\end{proof}

$\beta X$ also satisfies the following fundamental universal property:

\begin{theorem}
For any map $f: X\rightarrow Y$ with $Y$ compact Hausdorff, there is a unique continuous extension $\tilde{f}: \beta X\rightarrow Y$, i.e.\ the following diagram commutes:
\begin{center}
\begin{tikzpicture}[node distance=3cm, auto]
\node (A) {$X$};
\node (B) [right of=A] {$Y$};
\node (C) [node distance=1.8 cm, above of=A] {$\beta X$};
\draw[->] (A) to node [swap] {$f$} (B);
\draw[->] (A) to node {$i$} (C);
\draw[->] (C) to node {$\tilde{f}$} (B);
\end{tikzpicture}
\end{center}
\end{theorem}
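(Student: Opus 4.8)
The plan is to define the extension $\tilde f$ by means of ultrafilter limits. First I would record the standard fact — which I would state and prove as a short lemma, since it is not established earlier in the excerpt — that in a compact Hausdorff space every ultrafilter converges to exactly one point. Here an ultrafilter $q$ on $Y$ \emph{converges} to $y$ if every open neighborhood of $y$ lies in $q$. Existence of a limit comes from compactness applied to the family $\{\mathrm{cl}(B): B\in q\}$, which has the finite intersection property because $q$ is a filter with $\emptyset\notin q$; any point $y$ in the intersection is a cluster point, and maximality of $q$ promotes it to a limit, since any neighborhood $V$ of $y$ meets every member of $q$ and hence must itself belong to $q$ (otherwise $Y\setminus V\in q$ would be disjoint from $V$). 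Uniqueness is immediate from the Hausdorff property.

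Given this lemma, for $p\in\beta X$ I would set $\tilde f(p)$ to be the unique limit in $Y$ of the pushforward $f(p)$, which is an ultrafilter on $Y$ since pushforwards of ultrafilters are ultrafilters (recall $B\in f(p)$ iff $f^{-1}(B)\in p$). To check that $\tilde f$ extends $f$, I would compute the pushforward of a principal ultrafilter: one finds $f(p_x)=p_{f(x)}$, and $p_{f(x)}$ converges to $f(x)$, so $\tilde f(p_x)=f(x)$, making the diagram commute under the identification of $x$ with $p_x$.

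The heart of the argument is continuity, and this is the step I expect to be the main obstacle, since it is the only place where the topology of $Y$ genuinely enters. Fix $p\in\beta X$, write $y=\tilde f(p)$, and let $V$ be an open neighborhood of $y$. Because a compact Hausdorff space is regular, I can choose an open $V'$ with $y\in V'\subseteq \mathrm{cl}(V')\subseteq V$. Since $f(p)$ converges to $y$ we have $V'\in f(p)$, that is $A:=f^{-1}(V')\in p$, so $p\in\overline A$. I would then show $\tilde f(\overline A)\subseteq V$: for any $q\in\overline A$ we have $A\in q$, hence $V'\in f(q)$; but an ultrafilter containing $V'$ cannot converge to a point outside $\mathrm{cl}(V')$, for otherwise the open complement of $\mathrm{cl}(V')$ would be a neighborhood of the limit lying in $f(q)$ yet disjoint from $V'$, contradicting that a filter has no two disjoint members. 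Thus $\tilde f(q)\in\mathrm{cl}(V')\subseteq V$, so $\overline A$ is a neighborhood of $p$ mapped into $V$, establishing continuity.

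Finally, uniqueness follows from density together with the Hausdorff property of $Y$. Every nonempty basic clopen set $\overline A$ contains the principal ultrafilter $p_x$ for any $x\in A$, so the image of $X$ is dense in $\beta X$. Any two continuous maps into the Hausdorff space $Y$ that agree on the dense set $X$ must coincide, so $\tilde f$ is the unique continuous extension of $f$.
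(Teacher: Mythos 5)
Your proposal is correct and rests on the same core idea as the paper's proof: you define $\tilde f(p)$ as the unique point all of whose neighborhoods have $f$-preimage in $p$ (equivalently, the limit of the pushforward ultrafilter $f(p)$), and your existence argument via the finite intersection property of $\{\mathrm{cl}(B):B\in f(p)\}$ is just the complement-dual of the paper's open-cover argument. The only difference is one of completeness: the paper stops after showing $\tilde f$ is well-defined, whereas you also verify commutativity of the diagram, continuity (via regularity of $Y$), and uniqueness of the extension (via density of $X$ in $\beta X$) --- all of which are correct and are steps the paper leaves to the reader.
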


\begin{proof}
For $p\in \beta X$, we define $\tilde{f}(p)$ to be the unique point in $Y$ such that for every neighborhood $U\ni \tilde{f}(p)$ in $Y$, then $f^{-1}(U)$ is in $p$. We need to show that such $\tilde{f}(p)$ actually does exist and is unique. Uniqueness follows as $Y$ is Hausdorff. For existence, suppose each $y\in Y$ had a neighborhood $U_y\ni y$ with $f^{-1}(U_y)$ not in $p$. Then $\{U_y: y\in Y\}$ is an open cover of $Y$, so let $\{U_{y_1},...,U_{y_k}\}$ be a finite subcover. Then $(X\setminus f^{-1}(U_{y_1}))\cap\cdots \cap (X\setminus f^{-1}(U_{y_k})) = \emptyset\in p$, a contradiction.
\end{proof}

Let $G$ be an infinite discrete group, and form $\beta G$. We can give $\beta G$ a left-topological semigroup structure as follows:

\begin{itemize}
\item
For each fixed $g\in G$, the map $h\rightarrow hg$ has a unique continuous extension to $\beta G$,
\item
For each fixed $p\in \beta G$, the map $h\rightarrow ph$ has a unique continuous extension to $\beta G$.
\end{itemize}

Associativity must be verified, but is straightforward by repeated application of Theorem 2.7. Note that it was arbitrary whether we started with left or right multiplication; however, you can only choose one of left or right multiplication to be continuous. Here, we have chosen a semigroup structure where right multiplication by elements of $G$ is continuous and left multiplication by any $p\in \beta G$ is continuous. We can also identify elements of $\beta G$ with ultrafilters on $G$. For $A\subseteq G$ and $p,q\in \beta G$, we have $A\in pq$ iff $\{h\in G: Ah^{-1}\in p\}\in q$. In particular, if $p\in \beta G$ and $g\in G$, we have $A\in pg$ iff $Ag^{-1}\in p$. For more on semigroup compactifications, see [HS] or [EEN].

We conclude this subsection by discussing some of the topological properties of the space $\beta X$ for $X$ a discrete space.

\begin{prop}
If $U\subseteq \beta X$ is open, then $\overline{U}$ is open. 
\end{prop}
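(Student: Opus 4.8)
The plan is to exhibit $\overline{U}$ explicitly as one of the basic clopen sets, which immediately yields that it is open. First I would use that the sets $\overline{A}$ for $A\subseteq X$ form a basis consisting of \emph{clopen} sets, so I may write $U = \bigcup_{A\in\mathcal{A}} \overline{A}$ for some family $\mathcal{A}\subseteq \mathcal{P}(X)$. The natural guess is that the closure of this union is governed by the union $B := \bigcup_{A\in\mathcal{A}} A \subseteq X$; concretely, I claim $\overline{U} = \overline{B}$. Since $\overline{B}$ is itself clopen, establishing this claim finishes the proof.

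The inclusion $\overline{U}\subseteq \overline{B}$ is the routine half. If $p\in U$, then $A\in p$ for some $A\in\mathcal{A}$; as $A\subseteq B$ and ultrafilters are upward closed, $B\in p$, so $p\in\overline{B}$. Thus $U\subseteq \overline{B}$, and since $\overline{B}$ is closed we get $\overline{U}\subseteq \overline{B}$.

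The reverse inclusion $\overline{B}\subseteq \overline{U}$ is where the real work lies, and it is the step I expect to be the main obstacle. Fix $p\in\overline{B}$, i.e.\ $B\in p$; I must show every basic neighborhood of $p$ meets $U$. Such a neighborhood has the form $\overline{C}$ with $C\in p$, so $B\cap C\in p$ and in particular $B\cap C\neq\emptyset$. Choosing any $x\in B\cap C$, the definition of $B$ gives some $A\in\mathcal{A}$ with $x\in A$. Now I would pass to the principal ultrafilter $p_x$: since $x\in A$ we have $p_x\in\overline{A}\subseteq U$, and since $x\in C$ we have $p_x\in\overline{C}$. Hence $p_x\in\overline{C}\cap U$, so the neighborhood $\overline{C}$ meets $U$. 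As $\overline{C}$ was an arbitrary basic neighborhood of $p$, we conclude $p\in\overline{U}$.

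The crucial idea, and the only nontrivial point, is the use of principal ultrafilters (equivalently, the density of $X$ in $\beta X$ together with $\overline{A} = cl(A)$) to produce an actual point of $\overline{C}\cap U$ from the mere nonemptiness of $B\cap C$; everything else is bookkeeping with the filter axioms. Combining the two inclusions yields $\overline{U} = \overline{B}$, which is open, completing the argument.
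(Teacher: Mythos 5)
Your proof is correct and takes essentially the same route as the paper: the set $B = \bigcup_{A\in\mathcal{A}} A$ you construct is exactly $U\cap X$, and both arguments identify $\overline{U}$ with the basic clopen set $\overline{U\cap X}$, using the density of the principal ultrafilters (i.e.\ $\overline{A}=cl(A)$) for the nontrivial inclusion.
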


Spaces with this property are called \emph{extremely disconnected}.

\begin{proof}
Set $A = U\cap X$. If $p\in U$, then as $U$ is open, $p\in \overline{B}$ for some $B\subseteq A$. So $\overline{A}$ is a closed subspace containing $U$, and any such subspace must contain $\overline{A}$. Hence $\overline{A} = \overline{U}$, and $\overline{A}$ is open. 
\end{proof}

\begin{theorem}
Let $Y$ be a regular extremely disconnected topological space. Then $Y$ embeds no infinite compact metric spaces. In particular, $\beta X$ embeds no infinite compact metric spaces.
\end{theorem}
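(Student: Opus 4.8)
The plan is to prove the contrapositive-flavored statement that a regular extremally disconnected space $Y$ admits no nontrivial convergent sequence, and then observe that every infinite compact metric space contains one. Recall that a compact metric space is sequentially compact, so if $Z\subseteq Y$ is infinite, compact, and metrizable, I can choose a sequence of distinct points of $Z$, extract a convergent subsequence, and (discarding at most one term) obtain a sequence $(y_n)$ of distinct points with $y_n\to y$ and $y_n\neq y$ for all $n$. Since $Z$ carries the subspace topology, this convergence takes place in $Y$ as well. Thus it suffices to rule out such sequences in $Y$.

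First I would isolate the single place where extremal disconnectedness is used, as a lemma: in an extremally disconnected space, two disjoint open sets $U,V$ have disjoint closures. Indeed, $\overline{U}$ is clopen (closed always, open by hypothesis); since $V$ is open and disjoint from $U$, no point of $V$ lies in $\overline{U}$, so $V\subseteq Y\setminus\overline{U}$, and the latter set is closed, whence $\overline{V}\cap\overline{U}=\emptyset$. Note that only extremal disconnectedness, not regularity, is needed here.

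Next, assuming a sequence $y_n\to y$ as above, I would use regularity (which includes Hausdorffness) to build, after passing to a subsequence, pairwise disjoint open sets $O_n\ni y_n$ with $y\notin\overline{O_n}$. The construction is recursive: given a shrinking open neighborhood $W$ of $y$ disjoint from the closures chosen so far, I pick the next term $y_n$ inside $W$ (possible since cofinitely many terms of the sequence lie in any neighborhood of $y$), and then use regularity to separate $y_n$ from $y$ by open sets with disjoint closures, taking the new $O_n$ inside $W$ so that it is automatically disjoint from the earlier $O_i$. This is the main technical obstacle: one must interleave the choice of the $O_n$ with a shrinking sequence of neighborhoods of $y$ so that pairwise disjointness is preserved all the way along the sequence, and passing to a subsequence is exactly what makes this bookkeeping go through.

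Finally, I split the resulting (sub)sequence into its even- and odd-indexed terms and set $U=\bigcup_k O_{2k}$ and $V=\bigcup_k O_{2k+1}$. These are disjoint open sets, and since both $(y_{2k})$ and $(y_{2k+1})$ still converge to $y$, every neighborhood of $y$ meets both $U$ and $V$, so $y\in\overline{U}\cap\overline{V}$. This contradicts the lemma, completing the proof that $Y$ has no nontrivial convergent sequence and hence embeds no infinite compact metric space. For the final assertion, I would simply note that $\beta X$ is compact Hausdorff by Proposition 2.6 (hence regular) and extremally disconnected by Proposition 2.8, so it falls under the general statement.
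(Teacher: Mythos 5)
Your proof is correct and follows essentially the same route as the paper's: reduce to showing there is no nontrivial convergent sequence, use regularity to surround the sequence points by open sets splitting into an even part $U$ and an odd part $V$ whose closures must both contain the limit, and use extremal disconnectedness to force $\overline{U}\cap\overline{V}=\emptyset$. The only (harmless) cosmetic differences are that you pass to a subsequence with shrinking neighborhoods of $y$ where the paper protects future points via the sets $A$ and $B$, and that you isolate the extremal-disconnectedness step as the clean lemma that disjoint open sets have disjoint closures, which the paper proves inline.
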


\begin{proof}
As any infinite compact metric space contains an infinite convergent sequence, we will show that $Y$ has no infinite convergent sequences. Suppose to the contrary that we had $(x_i)_{i\in \mathbb{N}}\subseteq Y$ with $i\neq j \Rightarrow x_i\neq x_j$ and $\lim_{n\to\infty} x_n = x$. We may assume $x_n\neq x$ for all $n$. Set $A = \{x_i: i \text{ even}\}$, $B = \{x_i: i \text{ odd}\}$. Notice that $\overline{A} = A\cup \{x\}$ and $\overline{B} = B\cup \{x\}$.

Using regularity, we will inductively define open sets $\{U_n: n\in \mathbb{N}\}$ as follows. Let $U_1$ contain $x_1$ with $\overline{U_1}\cap \overline{A} = \emptyset$. Now define $U_2$ containing $x_2$ with $\overline{U_2}\cap (\overline{B}\cup\overline{U_1}) = \emptyset$. If $n$ is odd and $U_1,...,U_{n-1}$ are defined, pick $U_n$ containing $x_n$ with $\overline{U_n}\cap (\overline{A}\cup\overline{U_2}\cup\overline{U_4}\cup\cdots\cup\overline{U_{n-1}}) = \emptyset$. If $n$ is even, pick $U_n$ containing $x_n$ with $\overline{U_n}\cap (\overline{B}\cup\overline{U_1}\cup\overline{U_3}\cup\cdots\cup \overline{U_{n-1}}) = \emptyset$.

Let $U = \bigcup_{n} U_{2n}$, $V = \bigcup_{n} U_{2n-1}$. Notice that $U\cap \overline{V} = \overline{U}\cap V = \emptyset$. As $Y$ is extremely disconnected, $\overline{U}$ and $\overline{V}$ are open. If $\overline{U}\cap\overline{V}$ were nonempty, then $\overline{U}\cap\overline{V}$ is nonempty open and must meet $U$, a contradiction. So $\overline{U}\cap\overline{V} = \emptyset$, but $x\in \overline{U}\cap\overline{V}$. Hence the sequence $(x_i)_{i\in\mathbb{N}}$ cannot converge to $x$.
\end{proof}

As we will often study spaces by mapping them into $\beta X$ for some discrete space $X$, we also need the following folklore fact (see [W], p.\ 166):

\begin{prop}
The continuous image of a compact metrizable space in a Hausdorff space is metrizable.
\end{prop}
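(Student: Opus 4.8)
The plan is to reduce to the case of a continuous surjection $f : X \to Y$ with $X$ compact metrizable and $Y$ compact Hausdorff, and then to show that $Y$ is second countable, whence metrizable by the Urysohn metrization theorem. For the reduction, observe that the image $f(X)$ is compact, being the continuous image of a compact space, and is Hausdorff as a subspace of a Hausdorff space. Replacing $Y$ by $f(X)$, we may assume $f$ is surjective and $Y$ is compact Hausdorff, hence normal and in particular regular. Since a regular second countable space is metrizable, it suffices to exhibit a countable basis for $Y$.

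The key tool is that $f$ is a \emph{closed} map: if $C \subseteq X$ is closed, then $C$ is compact, so $f(C)$ is compact, hence closed in the Hausdorff space $Y$. Fix a countable basis $\{U_n\}$ for $X$ and let $\mathcal{B}$ be the (still countable) collection of finite unions of the $U_n$. For each $S \in \mathcal{B}$ put $V_S = Y \setminus f(X \setminus S)$; since $X \setminus S$ is closed, $V_S$ is open. I claim $\{V_S : S \in \mathcal{B}\}$ is a basis for $Y$. Given $y \in Y$ and an open set $W \ni y$, the set $f^{-1}(W)$ is open and contains the fiber $f^{-1}(y)$, which is compact; so one can choose $S \in \mathcal{B}$ with $f^{-1}(y) \subseteq S \subseteq f^{-1}(W)$.

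It then remains to check that $y \in V_S \subseteq W$. For the membership, $y \in V_S$ means precisely that $f^{-1}(y) \cap (X \setminus S) = \emptyset$, i.e.\ $f^{-1}(y) \subseteq S$, which is how $S$ was chosen. For the containment, if $y' \in V_S$ then the fiber $f^{-1}(y')$, which is nonempty as $f$ is surjective, satisfies $f^{-1}(y') \subseteq S \subseteq f^{-1}(W)$, and picking any point of this fiber shows $y' \in W$. Thus $\{V_S\}$ is a countable basis and $Y$ is metrizable.

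The main obstacle is the verification that the $V_S$ form a basis, and more pointedly the idea behind their definition: direct images $f(S)$ of open sets need not be open, so one instead manufactures open subsets of $Y$ as complements of images of closed sets, exploiting that $f$ is closed. Once this countable basis is produced, metrizability is immediate. (Alternatively, one could bypass the basis construction using function algebras: the map $f^{*} : C(Y) \to C(X)$, $g \mapsto g \circ f$, is an isometric embedding because $f$ is surjective, so $C(Y)$ inherits separability from the separable algebra $C(X)$, and a compact Hausdorff space whose algebra of continuous functions is separable is metrizable.)
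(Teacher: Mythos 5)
Your proof is correct, but it takes a genuinely different route from the paper's. The paper simply writes down an explicit compatible metric on $Y$, namely
$d_Y(a,b) = \max_{x\in f^{-1}(a)} d_X(x,f^{-1}(b)) + \max_{y\in f^{-1}(b)} d_X(f^{-1}(a),y)$,
which is (a variant of) the Hausdorff metric on the space of fibers $f^{-1}(y)\in K(X)$; the verification that this metric induces the topology of $Y$ is left to the reader, and in fact hides essentially the same ingredient you make explicit, namely that $f$ is a closed map. Your argument instead establishes second countability of $Y$ by pushing a countable basis of $X$ through the closed map via the sets $V_S = Y\setminus f(X\setminus S)$, and then invokes Urysohn metrization; every step is fully checked (the closedness of $f$, the choice of $S$ covering the compact fiber, and the verification that $y\in V_S\subseteq W$ are all correct). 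What the paper's approach buys is an explicit formula for the metric --- which the subsequent Remark exploits to identify $Y$ with a subspace of the Vietoris hyperspace $K(X)$ --- at the cost of an unverified compatibility claim; what yours buys is a complete, self-contained proof from standard general-topology theorems, at the cost of producing no concrete metric. Your parenthetical alternative via separability of $C(Y)\hookrightarrow C(X)$ is also valid and is a third standard route.
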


\begin{proof}

Let $X$ be compact metrizable with compatible metric $d_X$, $Y$ Hausdorff, and $f: X\rightarrow Y$ continuous. We may assume $f$ is surjective. On $Y$, define $d_Y: Y\times Y\rightarrow \mathbb{R}$ by

$$d_Y(a,b) = \max_{x\in f^{-1}(a)}\left(d_X(x,f^{-1}(b))\right) + \max_{y\in f^{-1}(b)}\left(d_X(f^{-1}(a),y)\right)$$

\vspace{3 mm}
This is a compatible metric for $Y$.
\end{proof}
\begin{rem}
We can view the above proof as identifying each $y\in Y$ with $f^{-1}(y)\in K(X)$, where $K(X)$ is the Vietoris space of compact subsets of $X$. With this identification, a compatible metric on $Y$ is just the Hausdorff metric on $K(X)$.
\end{rem}

\section{\fr structures}

We now move towards the case we will consider, where $G$ is a closed subgroup of $S_\infty$. In this section, we describe a canonical way of viewing any such group. Recall that $S_\infty$ is the group of permutations of $\mathbb{N}$ endowed with the pointwise convergence topology; a basis of open sets at the identity is given by $G_n$, the pointwise stabilizer of $\{1, 2,...,n\}$. A compatible left-invariant metric is given by $d(g,h) = 1/n$ iff $n$ is least with $g^{-1}h\not\in G_n$.

A \emph{language} $L = \{R_i: i\in I\}\cup \{f_j: j\in J\}\cup \{c_k: k\in K\}$ is a collection of relation, function and constant symbols. Each relation symbol $R_i$ has an \emph{arity} $n_i\in \mathbb{N}$, as does each function symbol $f_j$. An \emph{$L$-structure} $\mathbf{A} = \langle A, R_i^\mathbf{A}, f_j^\mathbf{A}, c_k^\mathbf{A}\rangle$ consists of a set $A$, relations $R_i^\mathbf{A}\subseteq A^{n_i}$, functions $f_j^{\mathbf{A}}: A^{n_j}\rightarrow A$, and constants $c_k^\mathbf{A}\in A$; we say that $\mathbf{A}$ is an $L$-structure on $A$. If $\mathbf{A},\mathbf{B}$ are $L$-structures, then $g: \mathbf{A}\rightarrow \mathbf{B}$ is an \emph{embedding} if $g$ is a map from $A$ to $B$ such that $R_i^\mathbf{A}(x_1,...,x_{n_i})\Leftrightarrow R_i^\mathbf{B}(g(x_1),...,g(x_{n_i}))$, $g(f_j^\mathbf{A}(x_1,...,x_{n_j})) = f_j^\mathbf{B}(g(x_1),...,g(x_{n_j}))$, and $g(c_k^\mathbf{A}) = c_k^\mathbf{B}$ for all relations, functions, and constants, respectively. If there is an embedding $g: \mathbf{A} \rightarrow \mathbf{B}$, we say that $\mathbf{B}$ \emph{embeds} $\mathbf{A}$. An \emph{isomorphism} is a bijective embedding, and an \emph{automorphism} is an isomorphism between a structure and itself. If $A\subseteq B$, then we say that $\mathbf{A}$ is a \emph{substructure} of $\mathbf{B}$, written $\mathbf{A}\subseteq \mathbf{B}$, if the inclusion map is an embedding. $\mathbf{A}$ is finite, countable, etc.\ if $A$ is.

Let $\mathbf{K}$ be a countably infinite $L$-structure. We say that $\mathbf{K}$ is \emph{locally finite} if there are finite substructures $\mathbf{A}_n\subseteq \mathbf{K}$ with $\mathbf{A}_n\subseteq \mathbf{A}_{n+1}$ and $\mathbf{K} = \bigcup_n \mathbf{A}_n$. Then $\bigcup_n \mathbf{A}_n$ is said to be an \emph{exhaustion} of $\mathbf{K}$. We set $\fin{\mathbf{K}}$ to be the set of finite substructures of $\mathbf{K}$, and we set $\mathcal{K} = \age{\mathbf{K}}$, the \emph{age} of $\mathbf{K}$, to be the class of finite $L$-structures which embed into $\mathbf{K}$, i.e.\ those structures isomorphic to some structure in $\fin{\mathbf{K}}$. It is natural to ask which classes of finite structures are the age of a countably infinite locally finite structure. If $\mathcal{K}$ is a class of finite structures, we call $\mathcal{K}$ an \emph{age class} if $\mathcal{K}$ satisfies the following:

\begin{itemize}
\item
$\mathcal{K}$ is closed under isomorphism, contains countably many isomorphism types, and contains structures of arbitrarily large finite cardinality
\item
$\mathcal{K}$ has the \emph{Hereditary Property} (HP): if $\mathbf{B}\in\mathcal{K}$ and $\mathbf{A}\subseteq \mathbf{B}$, then $\mathbf{A}\in\mathcal{K}$,
\item
$\mathcal{K}$ satisfies the \emph{Joint Embedding Property} (JEP): if $\mathbf{A},\mathbf{B}\in \mathcal{K}$, then there is $\mathbf{C}\in\mathcal{K}$ which embeds both $\mathbf{A}$ and $\mathbf{B}$.
\end{itemize}

\begin{prop}
$\mathcal{K}$ is an age class iff $\mathcal{K} = \age{\mathbf{K}}$ for $\mathbf{K}$ some countably infinite locally finite structure. 
\end{prop}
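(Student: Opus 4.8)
The plan is to prove both implications separately, with the ``only if'' direction being a direct verification and the ``if'' direction requiring a construction. Throughout I will use the definition of the age as the class of finite structures embedding into $\mathbf{K}$.

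First I would dispatch the easy direction: suppose $\mathbf{K}$ is a countably infinite locally finite structure with exhaustion $\mathbf{K} = \bigcup_n \mathbf{A}_n$, and set $\mathcal{K} = \age{\mathbf{K}}$. I would check the three defining bullets of an age class in turn. Closure under isomorphism is immediate from the definition of the age, and there are only countably many isomorphism types because the countable $\mathbf{K}$ has only countably many finite subsets, hence only countably many finite substructures. Arbitrarily large finite cardinalities occur because $\mathbf{K}$ is infinite, forcing $|\mathbf{A}_n|$ to be unbounded. For HP, if $\mathbf{B} \in \mathcal{K}$ and $\mathbf{A} \subseteq \mathbf{B}$, then composing $\mathbf{A} \hookrightarrow \mathbf{B}$ with an embedding $\mathbf{B} \hookrightarrow \mathbf{K}$ witnesses $\mathbf{A} \in \mathcal{K}$. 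For JEP, given $\mathbf{A}, \mathbf{B} \in \mathcal{K}$, their images in $\mathbf{K}$ are finite, hence both contained in a single $\mathbf{A}_n$ (take the larger of the two exhaustion indices), and this $\mathbf{A}_n \in \mathcal{K}$ embeds both.

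For the converse, suppose $\mathcal{K}$ is an age class. The plan is to build $\mathbf{K}$ as the union of an increasing chain $\mathbf{C}_0 \subseteq \mathbf{C}_1 \subseteq \cdots$ of members of $\mathcal{K}$, in the spirit of a \fr limit but using only JEP rather than amalgamation. I would enumerate representatives $\{\mathbf{B}_n : n \in \mathbb{N}\}$ of the countably many isomorphism types of $\mathcal{K}$, set $\mathbf{C}_0 = \mathbf{B}_0$, and given $\mathbf{C}_n$, apply JEP to obtain $\mathbf{D} \in \mathcal{K}$ embedding both $\mathbf{C}_n$ and $\mathbf{B}_{n+1}$; then I would take $\mathbf{C}_{n+1}$ to be an isomorphic copy of $\mathbf{D}$ chosen so that $\mathbf{C}_n$ is literally a substructure. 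Finally I set $\mathbf{K} = \bigcup_n \mathbf{C}_n$. This $\mathbf{K}$ is an honest $L$-structure as the union of a chain of $L$-structures, it is locally finite with the $\mathbf{C}_n$ as exhaustion, and it is infinite because $\mathcal{K}$ contains arbitrarily large structures, each embedding into some $\mathbf{C}_m$, forcing $|\mathbf{C}_m| \to \infty$. To conclude $\age{\mathbf{K}} = \mathcal{K}$: any finite substructure of $\mathbf{K}$ is contained in some $\mathbf{C}_n \in \mathcal{K}$ and so lies in $\mathcal{K}$ by HP, giving $\age{\mathbf{K}} \subseteq \mathcal{K}$; conversely each $\mathbf{B}_n$ embeds into $\mathbf{C}_n \subseteq \mathbf{K}$, giving $\mathcal{K} \subseteq \age{\mathbf{K}}$.

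I expect the main obstacle to be the bookkeeping in the construction rather than any conceptual difficulty: JEP supplies only an abstract embedding $\mathbf{C}_n \hookrightarrow \mathbf{D}$, so at each stage I must carefully replace the image of $\mathbf{C}_n$ inside $\mathbf{D}$ by $\mathbf{C}_n$ itself, renaming the remaining elements of $\mathbf{D}$ to be fresh, so that the $\mathbf{C}_n$ genuinely form a $\subseteq$-increasing chain whose union $\mathbf{K}$ is well defined. Once this identification is handled cleanly, every remaining step is routine.
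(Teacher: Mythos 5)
Your proposal is correct and follows essentially the same route as the paper: the forward direction is a direct verification, and the converse builds $\mathbf{K}$ as the union of an increasing chain obtained by repeatedly applying JEP to the previous stage and the next enumerated isomorphism type. The extra care you take in realizing the JEP witnesses as literal superstructures is exactly the bookkeeping the paper's phrase ``pick $\mathbf{A}_{n+1}\supseteq \mathbf{A}_n$'' quietly assumes.
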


\begin{proof}
Certainly if $\mathbf{K}$ is a countably infinite locally finite structure, then $\age{\mathbf{K}}$ is an age class. Conversely, suppose $\mathcal{K}$ is an age class. Let $(\mathbf{B}_n: n\in \mathbf{N})$ be an enumeration of the isomorphism types in $\mathcal{K}$. We will define for each $n\in \mathbb{N}$ a structure $\mathbf{A}_n\in\mathcal{K}$ as follows. Set $\mathbf{A}_1 = \mathbf{B}_1$. If $\mathbf{A}_n$ has been defined, pick $\mathbf{A}_{n+1}\supseteq \mathbf{A}_n$ which witnesses JEP for $\mathbf{A}_n$ and $\mathbf{B}_{n+1}$. Now set $\mathbf{K} = \bigcup_n \mathbf{A}_n$; we see that $\age{\mathbf{K}} = \mathcal{K}$.
\end{proof} 

\begin{rem}
It should be noted that the $\mathbf{K}$ contructed in the proof of Proposition 3.1 is not unique.
\end{rem}

A countably infinite locally finite structure $\mathbf{K}$ is a \emph{\fr}structure if $\mathbf{K}$ is \emph{ultrahomogeneous}:

\begin{itemize}
\item
For any $\mathbf{A}\in \fin{\mathbf{K}}$ and any embedding $g: \mathbf{A}\rightarrow \mathbf{K}$, there is an automorphism of $\mathbf{K}$ extending $g$. 
\end{itemize}

Another useful, equivalent definition (Proposition 3.2) is that $\mathbf{K}$ is a \fr structure iff it is locally finite and for every $\mathbf{A}\subseteq \mathbf{B}\in \age{\mathbf{K}}$, every embedding $g: \mathbf{A}\rightarrow \mathbf{K}$ can be extended to an embedding $h: \mathbf{B}\rightarrow \mathbf{K}$. This is often called the \emph{extension property} for $\mathbf{K}$. Often embeddings $g: \mathbf{A}\rightarrow \mathbf{K}$ with $\mathbf{A}\in \fin{\mathbf{K}}$ are called \emph{partial isomorphisms}. In this case, we let $g^{-1}: g(\mathbf{A})\rightarrow \mathbf{K}$ be defined via $g^{-1}\circ g(a) = a$ for each $a\in A$.

\begin{prop} 
For $\mathbf{K}$ a countably infinite locally finite $L$-structure, $\mathbf{K}$ is a \fr structure iff $\mathbf{K}$ has the extension property.
\end{prop}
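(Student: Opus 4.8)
The plan is to prove the two implications separately. The forward direction (ultrahomogeneity $\Rightarrow$ extension property) is short and uses a single application of ultrahomogeneity, while the reverse direction (extension property $\Rightarrow$ ultrahomogeneity) requires a back-and-forth construction. Throughout I would exploit the fact, already built into the definitions, that ``ultrahomogeneous'' means precisely that every embedding $g:\mathbf{A}\to\mathbf{K}$ with $\mathbf{A}\in\fin{\mathbf{K}}$ extends to an automorphism of $\mathbf{K}$.

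For ultrahomogeneity $\Rightarrow$ extension property, suppose $\mathbf{A}\subseteq\mathbf{B}\in\age{\mathbf{K}}$ and let $g:\mathbf{A}\to\mathbf{K}$ be an embedding. Since $\mathbf{B}\in\age{\mathbf{K}}$, I would fix some embedding $j:\mathbf{B}\to\mathbf{K}$. Then $g\circ(j|_{\mathbf{A}})^{-1}$ is an embedding of the finite substructure $j(\mathbf{A})\in\fin{\mathbf{K}}$ into $\mathbf{K}$, so by ultrahomogeneity it extends to an automorphism $\alpha$ of $\mathbf{K}$. The claim is then that $h:=\alpha\circ j:\mathbf{B}\to\mathbf{K}$ is the desired extension: it is an embedding as a composite of embeddings, and for $a\in A$ one computes $h(a)=\alpha(j(a))=g((j|_{\mathbf{A}})^{-1}(j(a)))=g(a)$, so $h$ extends $g$.

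For extension property $\Rightarrow$ ultrahomogeneity, let $g:\mathbf{A}\to\mathbf{K}$ be an embedding with $\mathbf{A}\in\fin{\mathbf{K}}$; I would build an automorphism extending $g$ by back-and-forth. Enumerate the universe of $\mathbf{K}$ as $\{k_1,k_2,\dots\}$ and construct an increasing chain of partial isomorphisms $g=g_0\subseteq g_1\subseteq\cdots$, each with finite domain and range, alternating forth and back steps. At a forth step I force the next enumerated point into the domain: using local finiteness, choose a finite substructure $\mathbf{B}$ of $\mathbf{K}$ containing $\dom{g_n}$ together with that point, and apply the extension property to $g_n$ (with $\dom{g_n}\subseteq\mathbf{B}$) to obtain an extension whose domain includes the point. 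At a back step I apply the identical procedure to $g_n^{-1}$, itself a partial isomorphism, to force the next point into $\ran{g_{n+1}}$. The union $\hat{g}=\bigcup_n g_n$ is then a bijection of the universe of $\mathbf{K}$, and since every relation, function, and constant symbol of $L$ has finite arity, each such instance is already witnessed at some finite stage; hence $\hat{g}$ preserves all structure and is an automorphism of $\mathbf{K}$ extending $g$.

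The main obstacle is the reverse direction, and within it the need to guarantee at each step both that a suitable finite substructure exists and that $g_n$ and $g_n^{-1}$ genuinely fall under the hypothesis of the extension property. Local finiteness is exactly what supplies finite substructures of $\mathbf{K}$ containing any prescribed finite set together with one extra point, and the symmetric roles of domain and range make the back steps formally identical to forth steps applied to inverses. The only genuine verification beyond bookkeeping is that the union of the $g_n$ is an embedding, which rests entirely on the finite arity of the symbols in $L$.
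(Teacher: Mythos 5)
Your proof is correct and follows essentially the same route as the paper's: the forward direction via a single application of ultrahomogeneity to (a copy of) $\mathbf{A}$ inside an embedded copy of $\mathbf{B}$, and the reverse direction via the standard back-and-forth construction, alternately applying the extension property to $g_n$ and $g_n^{-1}$. The only cosmetic difference is that the paper threads the back-and-forth through a fixed exhaustion $\bigcup_n \mathbf{A}_n$ rather than through an enumeration of points, but the argument is the same.
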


\begin{proof}
$(\Rightarrow)$ Fix $\mathbf{A}\subseteq \mathbf{B}\in \mathcal{K}$ and an embedding $g: \mathbf{A}\rightarrow \mathbf{K}$, we may assume that $\mathbf{A}\subseteq \mathbf{K}$ and that $g$ is the inclusion map. Let $f: \mathbf{B}\rightarrow \mathbf{K}$ be any embedding. Then $f|_\mathbf{A}$  can be extended to an isomorphism $h$ of $\mathbf{K}$. Then $h^{-1}\circ f$ extends $g$.

$(\Leftarrow)$ We use a common method known as the back-and-forth method. Suppose $\mathbf{A}\in \fin{\mathbf{K}}$ and $g: \mathbf{A}\rightarrow \mathbf{K}$ is a partial isomorphism. Fix an exhaustion $\bigcup_n \mathbf{A}_n = \mathbf{K}$. Set $g = g_1$. Find $k_2$ large enough so that $g_1(\mathbf{A}_1)\subseteq \mathbf{A}_{k_2}$. Use the extension property to extend $g_1^{-1}$ to an embedding $g_2: \mathbf{A}_{k_2}\rightarrow \mathbf{K}$. Find $k_3\in\mathbb{N}$ large enough with $g_2(\mathbf{A}_{k_2})\subseteq \mathbf{A}_{k_3}$. Use the extension property once again to extend $g_2^{-1}$ to $g_3: \mathbf{A}_{k_3}\rightarrow \mathbf{K}$. We continue going back and forth in this fashion. Set $h = \bigcup_n g_{2n-1}$. Then $h:\mathbf{K}\rightarrow \mathbf{K}$ is an isomorphism with $h^{-1} = \bigcup_n g_{2n}$. 
\end{proof}

The back and forth method is also used to show that if two \fr structures have the same age, then they are isomorphic. However, not all age classes are the ages of \fr structures. A class of finite structures $\mathcal{K}$ is a $\emph{\fr class}$ if $\mathcal{K}$ is an age class which additionally satisfies the \emph{Amalgamation Property} (AP):

\begin{itemize}
\item
If $\mathbf{A}, \mathbf{B}, \mathbf{C}\in \mathcal{K}$ and $f: \mathbf{A}\rightarrow \mathbf{B}$ and $g: \mathbf{A}\rightarrow \mathbf{C}$ are embeddings, there is $\mathbf{D}\in \mathcal{K}$ and embeddings $r: \mathbf{B}\rightarrow \mathbf{D}$ and $s:\mathbf{C}\rightarrow\mathbf{D}$ with $r\circ f = s\circ g$.
\end{itemize}

\begin{rem}
It is enough in the definition of AP to take $f$, $g$, and $r$ to be inclusion maps.
\end{rem}

We will need the notion of isomorphic inclusion pairs for the next proof. If $\mathbf{A},\mathbf{B},\mathbf{C},\mathbf{D}$ are $L$-structures with $\mathbf{A}\subseteq \mathbf{B}$ and $\mathbf{C}\subseteq \mathbf{D}$, we say that $(\mathbf{A}\subseteq \mathbf{B})\cong (\mathbf{C}\subseteq \mathbf{D})$ if there is an isomorphism $f: \mathbf{B}\rightarrow \mathbf{D}$ with $f|_{\mathbf{A}}$ an isomorphism of $\mathbf{A}$ and $\mathbf{C}$.

\begin{prop}
$\mathcal{K}$ is a \fr class iff $\mathcal{K} = \age{\mathbf{K}}$ for some \fr structure $\mathbf{K}$.
\end{prop}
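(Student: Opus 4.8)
The plan is to prove both directions, leaning on the two preceding propositions: Proposition 3.1 (which identifies age classes with ages of countably infinite locally finite structures) and Proposition 3.2 (which reduces ultrahomogeneity to the extension property). Uniqueness of $\mathbf{K}$ is not part of the statement, so the entire argument reduces to trading amalgamation against the extension property.

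For the easy direction $(\Leftarrow)$, suppose $\mathbf{K}$ is a \fr structure with $\age{\mathbf{K}} = \mathcal{K}$. By Proposition 3.1, $\mathcal{K}$ is already an age class, so only AP remains. Using the Remark, it suffices to amalgamate an inclusion diagram $\mathbf{A} \subseteq \mathbf{B}$, $\mathbf{A} \subseteq \mathbf{C}$ with all three in $\mathcal{K}$. First I would fix an embedding $r : \mathbf{B} \to \mathbf{K}$ (possible since $\mathbf{B} \in \age{\mathbf{K}}$); its restriction $r|_{\mathbf{A}}$ is a partial isomorphism of $\mathbf{K}$. Since $\mathbf{A} \subseteq \mathbf{C} \in \age{\mathbf{K}}$, the extension property from Proposition 3.2 lets me extend $r|_{\mathbf{A}}$ to an embedding $s : \mathbf{C} \to \mathbf{K}$ with $s|_{\mathbf{A}} = r|_{\mathbf{A}}$. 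Because $r(\mathbf{B}) \cup s(\mathbf{C})$ is a finite subset of $\mathbf{K}$, it lies in some finite substructure $\mathbf{D} \in \fin{\mathbf{K}} \subseteq \mathcal{K}$, and then $r : \mathbf{B} \to \mathbf{D}$ and $s : \mathbf{C} \to \mathbf{D}$ agree on $\mathbf{A}$, witnessing AP.

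For $(\Rightarrow)$, suppose $\mathcal{K}$ is a \fr class. I would construct $\mathbf{K}$ as an increasing union $\bigcup_n \mathbf{A}_n$ of members of $\mathcal{K}$ built in stages, maintaining two families of requirements handled by dovetailing. JEP-steps, exactly as in the proof of Proposition 3.1, guarantee that every isomorphism type of $\mathcal{K}$ eventually embeds, so that $\age{\mathbf{K}} = \mathcal{K}$. AP-steps secure the extension property: given a pair $\mathbf{A} \subseteq \mathbf{B}$ in $\mathcal{K}$ and an embedding $e : \mathbf{A} \to \mathbf{A}_n$, I would amalgamate $\mathbf{A}_n$ and $\mathbf{B}$ over $\mathbf{A}$ along $e$ and the inclusion $\mathbf{A} \subseteq \mathbf{B}$, obtaining (after identifying $\mathbf{A}_n$ with its image) a structure $\mathbf{D} \in \mathcal{K}$ with $\mathbf{A}_n \subseteq \mathbf{D}$ into which $e$ extends to an embedding of $\mathbf{B}$; set $\mathbf{A}_{n+1} = \mathbf{D}$. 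Since the chain only grows, a requirement once satisfied stays satisfied. At the end, any embedding $g : \mathbf{A} \to \mathbf{K}$ with $\mathbf{A}$ finite has finite image, hence lands in some $\mathbf{A}_n$, so it is handled at some later stage and extends to $\mathbf{B}$; thus $\mathbf{K}$ has the extension property and is \fr by Proposition 3.2.

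The main obstacle is the bookkeeping in the $(\Rightarrow)$ direction. The requirements I must meet are indexed not by abstract data fixed in advance, but by embeddings $e : \mathbf{A} \to \mathbf{A}_n$ into the partially built structure, which is only revealed as the construction proceeds. The standard fix is to enumerate the countably many isomorphism types of pairs $\mathbf{A} \subseteq \mathbf{B}$ from $\mathcal{K}$ with each type repeated infinitely often, and at stage $n$ to service one not-yet-handled embedding of the current pair into the current finite $\mathbf{A}_n$; since each $\mathbf{A}_n$ is finite there are only finitely many such embeddings, and the infinite repetition guarantees that every embedding which ever appears is eventually caught. Verifying that this dovetailing genuinely meets every instance of the extension property — in particular that no embedding into the final $\mathbf{K}$ is missed — is the one place where care is needed; everything else is a routine application of AP, JEP, HP, and Proposition 3.2.
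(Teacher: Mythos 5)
Your proposal is correct and follows essentially the same route as the paper: for $(\Leftarrow)$ both arguments use the extension property plus local finiteness to produce a finite substructure of $\mathbf{K}$ containing compatible images of $\mathbf{B}$ and $\mathbf{C}$, and for $(\Rightarrow)$ both build $\mathbf{K}=\bigcup_n \mathbf{A}_n$ by enumerating isomorphism types of inclusion pairs with infinite repetition and servicing embeddings of the small structure into the current finite stage via AP (the paper clears all finitely many such embeddings at each stage, which slightly simplifies the dovetailing you describe, but this is only a bookkeeping difference).
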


\begin{proof}
If $\mathbf{K}$ is a \fr structure with $\age{\mathbf{K}} = \mathcal{K}$, let $\mathbf{A}\in\fin{\mathbf{K}}$. Suppose $\mathbf{B},\mathbf{C}\in \mathcal{K}$ with $\mathbf{A}\subseteq \mathbf{B}$ and $\mathbf{A}\subseteq \mathbf{C}$. Use the extension property to find embeddings $f: \mathbf{B}\rightarrow \mathbf{K}$ and $g: \mathbf{C}\rightarrow \mathbf{K}$ extending the inclusion map of $\mathbf{A}$. Now as $\mathbf{K}$ is locally finite, find $\mathbf{D}\in \fin{\mathbf{K}}$ with $f(\mathbf{B})\cup g(\mathbf{C})\subseteq \mathbf{D}$. This is enough to verify that $\mathcal{K}$ has the AP.

Conversely, suppose $\mathcal{K}$ is a \fr class. Let $\langle (\mathbf{B}_n\subseteq \mathbf{C}_n): n\in \mathbb{N}\rangle$ list all isomorphism types of inclusion pairs such that each isomorphism type appears infinitely often. For $n\in \mathbb{N}$, we will choose $\mathbf{A}_n\in\mathcal{K}$ as follows. Pick $\mathbf{A}_1\in \mathcal{K}$ arbitrarily. If $\mathbf{A}_n$ has been defined, let $f_1,...,f_k$ list the embeddings of $\mathbf{B}_n$ into $\mathbf{A}_n$. We will build a sequence of structures $\mathbf{A}_n^i\in \mathcal{K}$ with $\mathbf{A}_n:= \mathbf{A}_n^0\subseteq \mathbf{A}_n^1\subseteq \mathbf{A}_n^2\subseteq\cdots\subseteq \mathbf{A}_n^k := \mathbf{A}_{n+1}$. If $\mathbf{A}_n^i$ has been defined, then pick $\mathbf{A}_n^{i+1}\supseteq \mathbf{A}_n^i$ which witnesses the AP for $\mathbf{B}_n\subseteq \mathbf{C}_n$ and $f_{i+1}: \mathbf{B}_n\rightarrow \mathbf{A}_n^i$. If no embeddings $f_i$ exist, then pick $\mathbf{A}_{n+1}\supseteq \mathbf{A}_n$ witnessing JEP for $\mathbf{A}_n$ and $\mathbf{B}_n$. Set $\mathbf{K} = \bigcup_{n} \mathbf{A}_n$. Then $\age{\mathbf{K}} = \mathcal{K}$, and $\mathbf{K}$ satisfies the extension property and is $\sigma$-finite. Hence $\mathbf{K}$ is a \fr structure.
\end{proof}

The $\mathbf{K}$ constructed in Proposition 3.3 is unique up to isomorphism as can be shown using the back and forth method; we write $\mathbf{K} = \flim{\mathcal{K}}$, the \emph{\fr limit} of $\mathcal{K}$. 

We can also define \fr limits of more general classes. If $\mathbf{K}$ is a countable structure and $\mathcal{K}\subseteq \age{\mathbf{K}}$ is closed under isomorphism, we say that $\mathbf{K}$ is $\mathcal{K}$-homogeneous if any partial isomorphism of structures in $\mathcal{K}$ can be extended to an automorphism of $\mathbf{K}$. Most often, we will use this added generality when $\mathcal{K}$ is a \emph{Fra\"iss\'e--HP class}; i.e.\ a class of finite structures which satisfies every condition of being a \fr class except possibly the Hereditary Property. If $\mathcal{K}$ is a class of structures which is not necessarily hereditary, let $\mathcal{K}\!\!\downarrow := \{\mathbf{A}: \exists \mathbf{B}\in \mathcal{K} (\mathbf{A}\subseteq \mathbf{B})\}$. Now if $\mathcal{K}$ is a Fra\"iss\'e--HP class, we can use the exact same proof as in Proposition 3.3 to show that up to isomorphism, there is a unique countably infinite locally finite structure with age $\mathcal{K}\!\!\downarrow$ which is $\mathcal{K}$-homogeneous; we will also call this the \fr limit. The proof shows that if $\mathbf{K} = \flim{\mathcal{K}}$, then $\mathbf{K}$ has an exhaustion $\mathbf{K} = \bigcup_n \mathbf{A}_n$ with $\mathbf{A}_n\in \mathcal{K}$. We will call this a $\mathcal{K}$-exhaustion.

Our interest in \fr structures stems from the following:

\begin{theorem}
$G$ is a closed subgroup of $S_\infty$ iff $G$ is the automorphism group of a relational \fr structure on $\mathbb{N}$.
\end{theorem}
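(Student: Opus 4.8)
The plan is to treat the two implications separately, beginning with the easier direction. Suppose $\mathbf{K}$ is a relational \fr structure on $\mathbb{N}$ and put $G = \aut{\mathbf{K}}$. That $G$ is a subgroup of $S_\infty$ is immediate, since every automorphism is a permutation of $\mathbb{N}$ and automorphisms are closed under composition and inverse. To see that $G$ is closed, I would use that $S_\infty$ is metrizable and argue with sequences: given $g_n \to g$ in $S_\infty$ with each $g_n \in G$, fix a relation symbol $R$ of arity $k$ and a tuple $\bar{a} \in \mathbb{N}^k$. Pointwise convergence forces $g_n$ to agree with $g$ on the finitely many entries of $\bar{a}$ for all large $n$, so $R^{\mathbf{K}}(\bar{a}) \Leftrightarrow R^{\mathbf{K}}(g_n(\bar{a})) = R^{\mathbf{K}}(g(\bar{a}))$. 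Hence $g$ preserves every relation and, being a permutation, is an automorphism. This is exactly where relationality is essential: for a function or constant symbol the defining condition cannot be verified on a fixed finite set, and the argument would break down.

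For the converse, given a closed subgroup $G \leq S_\infty$, the plan is to build a canonical relational structure $\mathbf{K}$ on $\mathbb{N}$ with $\aut{\mathbf{K}} = G$. Let $G$ act diagonally on $\mathbb{N}^k$ for each $k \geq 1$; since $\mathbb{N}^k$ is countable, this action has at most countably many orbits, and for each orbit $O$ I would introduce a $k$-ary relation symbol $R_O$ interpreted by $R_O^{\mathbf{K}} = O$. The resulting language is countable, and by construction every $g \in G$ preserves each orbit, giving $G \subseteq \aut{\mathbf{K}}$ at once.

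The heart of the matter is the reverse inclusion $\aut{\mathbf{K}} \subseteq G$, which is the one step that uses closedness and which I expect to be the main obstacle. Given $h \in \aut{\mathbf{K}}$ and a finite $F \subseteq \mathbb{N}$, I would enumerate $F$ as a tuple $\bar{a}$ and let $O$ be its $G$-orbit; since $h$ preserves $R_O$, the tuple $h(\bar{a})$ again lies in $O$, so some $g \in G$ agrees with $h$ on $F$. Taking $F = [n]$ and letting $n$ grow yields elements of $G$ converging to $h$, so $h \in G$ because $G$ is closed. The same orbit computation delivers ultrahomogeneity: an isomorphism $\phi$ between finite substructures preserves $R_O$ for $O$ the orbit of an enumeration of its domain, hence sends that enumeration to a tuple in the same orbit, and the witnessing element of $G = \aut{\mathbf{K}}$ extends $\phi$. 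Local finiteness follows by exhausting $\mathbf{K}$ with the substructures induced on the sets $[n]$, and the universe is $\mathbb{N}$; thus $\mathbf{K}$ is a \fr structure and the construction is complete. The delicate point throughout is that the orbit relations and the topological hypothesis must be used in tandem: the orbits encode exactly enough of $G$ to be recoverable on every finite set, and closedness is what upgrades this finite agreement to genuine membership in $G$.
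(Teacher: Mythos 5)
Your proof is correct and follows essentially the same route as the paper: the paper interprets, for each tuple $\bar{a}$, a relation symbol $R_{\bar{a}}$ as the $G$-orbit of $\bar{a}$, which is just your one-relation-per-orbit construction with repetitions, and your verifications of closedness, of $\aut{\mathbf{K}}\subseteq G$, and of ultrahomogeneity fill in exactly the details the paper leaves implicit. One small inaccuracy in a side remark: automorphism groups of structures with function or constant symbols are also closed in $S_\infty$, since the condition $g(f^{\mathbf{K}}(\bar{x})) = f^{\mathbf{K}}(g(\bar{x}))$ is still verified on the finite set $\{x_1,\dots,x_k, f^{\mathbf{K}}(\bar{x})\}$; relationality is really used so that every finite subset of $\mathbb{N}$ carries an induced substructure (giving local finiteness for free), not to make the closedness argument work.
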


\begin{proof}
If $\mathbf{K}$ is a relational \fr structure and $G = \aut{\mathbf{K}}$, then if $g_n\in G$ and $g_n\rightarrow g$ with $g\in S_\infty$, then $g$ must also be an automorphism of $\mathbf{K}$ and hence in $G$. Conversely, suppose $G$ is a closed subgroup of $S_\infty$. For every $\overline{a}\in {}^{<\omega}\mathbb{N}$, introduce a relational symbol $R_{\overline{a}}$ of arity $\mathrm{len}(\overline{a})$, and let $L = \{R_{\overline{a}}: \overline{a}\in {}^{<\omega}\mathbb{N}\}$. Give $\mathbb{N}$ an $L$-structure by declaring that $R_{\overline{a}}(b_1,...,b_n)$ iff there is $g\in G$ with $g(a_i) = b_i$ for each $i\leq n$. Then $\mathbf{K} = \langle \mathbb{N}, \{R_{\overline{a}}^\mathbf{K}: \overline{a}\in {}^{<\omega}\mathbb{N}\}\rangle$ is a \fr structure with $\aut{\mathbf{K}} = G$.
\end{proof}

For a more detailed exposition of \fr theory, see [Ho].

\section{Structural Ramsey Theory}
In this section, we introduce some of the ideas underlying structural Ramsey theory. However, we begin with a discussion of Ramsey theory for embeddings, as this is what we will use in the rest of the paper. Proposition 4.4 makes the connection between the structural and embedding versions explicit.

A \emph{partial $k$-coloring} $\gamma$ of a set $X$ is a function $\gamma: Y\rightarrow [k]$, where $Y\subseteq X$ and $[k] = \{1,2,...,k\}$. A coloring $\gamma$ of $X$ is \emph{full} if $\mathrm{dom}(\gamma) = X$. We will often write $\gamma_i$ for $\gamma^{-1}(i)$. If $\dom{\gamma}$ is unspecified, then $\gamma$ is presumed to be a full coloring. If $\gamma$ is a coloring of $X$ and $Y\subseteq \mathrm{dom}(\gamma)$, we say that $Y$ is \emph{monochromatic} if $Y\subseteq \gamma_i$ for some $i$.

If $\mathbf{A},\mathbf{B}$ are $L$-structures, write $\emb{\mathbf{A},\mathbf{B}}$ for the set of embeddings from $\mathbf{A}$ to $\mathbf{B}$, and write $\mathbf{A}\leq \mathbf{B}$ if $\emb{\mathbf{A},\mathbf{B}}\neq\emptyset$. If $\mathcal{C}$ is a class of finite $L$-structures, we say that $\mathbf{A}\in \mathcal{C}$ is a \emph{Ramsey object} if for any $\mathbf{B}\in \mathcal{C}$ with $\mathbf{A}\leq \mathbf{B}$, there is $\mathbf{C}\in \mathcal{C}$, $\mathbf{A}\leq \mathbf{C}$, such that for any full 2-coloring of $\emb{\mathbf{A},\mathbf{C}}$, there is $f\in \emb{\mathbf{B},\mathbf{C}}$ with $f\circ \emb{\mathbf{A},\mathbf{B}}$ monochromatic. We say that $\mathcal{C}$ has the \emph{Ramsey Property} (RP) if each $\mathbf{A}\in \mathcal{C}$ is a Ramsey object. The choice of $2$ colors is arbitrary; a straightforward induction on the number of colors shows that if $\mathbf{A}\in \mathcal{C}$ is a Ramsey object, then for any $k\geq 2$ and any $\mathbf{B}\in\mathcal{C}$ with $\mathbf{A}\leq \mathbf{B}$, there is a $\mathbf{C}\in \mathcal{C}$ such that for any $k$-coloring of $\emb{\mathbf{A},\mathbf{C}}$, there is $f\in \emb{\mathbf{B},\mathbf{C}}$ with $f\circ \emb{\mathbf{A},\mathbf{B}}$ monochromatic.

Once again, we are using an embedding version of Ramsey object/Ramsey property, as opposed to the structural version defined in the introduction. A useful translation between the two versions is as follows:  suppose $\gamma: \emb{\mathbf{A},\mathbf{C}}\rightarrow [r]$ is a coloring which additionally has $\gamma(f) = \gamma(g)$ whenever $f = g\circ h$ for some $h\in \aut{\mathbf{A}}$. Let us call such a $\gamma$ a \emph{structural coloring}. Then we may define $\gamma': \binom{\mathbf{C}}{\mathbf{A}}\rightarrow [r]$ via $\gamma'(\mathbf{A}_0) = \gamma(f)$ for any $f\in \emb{\mathbf{A},\mathbf{C}}$ with $\im{f} = \mathbf{A}_0$. Conversely, suppose $\gamma: \binom{\mathbf{C}}{\mathbf{A}}\rightarrow [r]$ is a coloring. Then we can define $\gamma': \emb{\mathbf{A},\mathbf{C}}\rightarrow [r]$ via $\gamma'(f) = \gamma(\im{f})$. Notice that this $\gamma'$ is a structural coloring. In what follows, should ``embedding'' or ``structural'' not be specified, ``Ramsey'' will always refer to embedding Ramsey. We will borrow the hook-arrow notation used in [MP],
$$\mathbf{C}\hookrightarrow (\mathbf{B})^\mathbf{A}_k$$
to mean that for any full $k$-coloring of $\emb{\mathbf{A},\mathbf{C}}$, there is $f\in \emb{\mathbf{B},\mathbf{C}}$ with $f\circ \emb{\mathbf{A},\mathbf{B}}$ monochromatic. We use the standard arrow notation,
$$\mathbf{C}\rightarrow (\mathbf{B})^\mathbf{A}_k$$
to mean that for any full $k$-coloring of $\binom{\mathbf{C}}{\mathbf{A}}$, there is $\mathbf{B}_0\in \binom{\mathbf{C}}{\mathbf{B}}$ with $\binom{\mathbf{B}_0}{\mathbf{A}}$ monochromatic.

 If $\mathcal{C}$ and $\mathcal{D}$ are classes of structures, we say that $\mathcal{C}$ is \emph{cofinal} in $\mathcal{D}$ if for any $\mathbf{A}\in \mathcal{D}$, there is $\mathbf{B}\in \mathcal{C}$ with $\mathbf{A}\leq \mathbf{B}$. Suppose that $\mathcal{D}$ is the age of a countably infinite locally finite structure $\mathbf{D}$ and that $\mathcal{C}$ is cofinal in $\mathcal{D}$. For $\mathbf{A}\in \mathcal{C}$, we say that $S\subseteq \emb{\mathbf{A}, \mathbf{D}}$ is \emph{thick} if for any $\mathbf{B}\in \mathcal{C}$ with $\mathbf{A}\leq \mathbf{B}$, there is an $f\in \emb{\mathbf{B},\mathbf{D}}$ with $f\circ \emb{\mathbf{A},\mathbf{B}}\subseteq S$. We say a partial coloring $\gamma$ of $\emb{\mathbf{A},\mathbf{D}}$ is \emph{large} if $\mathrm{dom}(\gamma)$ is thick.

\begin{prop}
Suppose $\mathbf{D}$ is a countably infinite locally finite structure, $\mathcal{D} = \age{\mathbf{D}}$, and $\mathcal{C}$ is cofinal in $\mathbf{D}$. Let $\mathbf{A}\in \mathcal{C}$ and fix any $k\geq 2$. Then the following are equivalent:
\begin{enumerate}
\item
$\mathbf{A}$ is a Ramsey object in $\mathcal{C}$,
\item
$\mathbf{A}$ is a Ramsey object in $\mathcal{D}$,
\item
For any full $k$-coloring $\gamma$ of $\emb{\mathbf{A},\mathbf{D}}$, there is some $\gamma_i$ which is thick,
\item
For any large $k$-coloring $\gamma$ of $\emb{\mathbf{A},\mathbf{D}}$, there is some $\gamma_i$ which is thick.
\end{enumerate}
\end{prop}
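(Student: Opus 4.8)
The plan is to prove the cycle $(1)\Leftrightarrow(2)$, $(2)\Rightarrow(4)$, $(4)\Rightarrow(3)$, $(3)\Rightarrow(1)$, freely using that being a Ramsey object is independent of the number of colors $k\geq 2$ (as noted after the definition), so that for the fixed $k$ I may work with $k$-colorings throughout. Two of these links are routine. For $(1)\Leftrightarrow(2)$ I would use the standard ``pullback along a fixed embedding'' trick: given the witnessing $\mathbf{C}$ in one class, cofinality of $\mathcal{C}$ in $\mathcal{D}$ together with $\mathcal{C}\subseteq\mathcal{D}$ lets me transport it to the other class, colorings being pulled back through a fixed $e\in\emb{\mathbf{C},\mathbf{C}'}$ and the witnessing copy of $\mathbf{B}$ pushed forward through $e$ (for $(1)\Rightarrow(2)$ one first enlarges an arbitrary $\mathbf{B}\in\mathcal{D}$ to some $\mathbf{B}'\in\mathcal{C}$ by cofinality and restricts the monochromatic family along a fixed embedding $\mathbf{B}\hookrightarrow\mathbf{B}'$). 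The implication $(4)\Rightarrow(3)$ is immediate, since $\emb{\mathbf{A},\mathbf{D}}$ is itself thick, whence every full coloring is large.

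The first substantial step is $(2)\Rightarrow(4)$. Let $\gamma$ be a large $k$-coloring with thick domain $S$, and suppose for contradiction that no $\gamma_i$ is thick. Then for each $i\in[k]$ I fix a witness $\mathbf{B}_i\in\mathcal{C}$ with $\mathbf{A}\leq\mathbf{B}_i$ such that no $f\in\emb{\mathbf{B}_i,\mathbf{D}}$ has $f\circ\emb{\mathbf{A},\mathbf{B}_i}\subseteq\gamma_i$. Using JEP in $\mathcal{D}$ followed by cofinality, I amalgamate the $\mathbf{B}_i$ into a single $\mathbf{B}\in\mathcal{C}$ with each $\mathbf{B}_i\leq\mathbf{B}$; composing a hypothetical monochromatic family through a fixed embedding $\mathbf{B}_i\hookrightarrow\mathbf{B}$ then shows that for every $f\in\emb{\mathbf{B},\mathbf{D}}$ and every $i$ one has $f\circ\emb{\mathbf{A},\mathbf{B}}\not\subseteq\gamma_i$. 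Now I invoke $(2)$ (equivalently $(1)$): choose $\mathbf{C}\in\mathcal{C}$ with $\mathbf{C}\hookrightarrow(\mathbf{B})^{\mathbf{A}}_k$. Since $\gamma$ is only partial, here is where thickness of $S$ enters: applying thickness of $S$ to $\mathbf{C}$ yields $e\in\emb{\mathbf{C},\mathbf{D}}$ with $e\circ\emb{\mathbf{A},\mathbf{C}}\subseteq S$, so the pullback $\gamma^{*}(h):=\gamma(e\circ h)$ is a genuine full $k$-coloring of $\emb{\mathbf{A},\mathbf{C}}$. The Ramsey property produces $f_{0}\in\emb{\mathbf{B},\mathbf{C}}$ with $f_{0}\circ\emb{\mathbf{A},\mathbf{B}}$ monochromatic for $\gamma^{*}$, and then $e\circ f_{0}\in\emb{\mathbf{B},\mathbf{D}}$ gives a monochromatic family $f\circ\emb{\mathbf{A},\mathbf{B}}\subseteq\gamma_i$, contradicting the previous sentence.

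The second substantial step is $(3)\Rightarrow(1)$, which I would prove by contraposition together with a compactness argument. Assume $\mathbf{A}$ is not a Ramsey object in $\mathcal{C}$, and fix a witness $\mathbf{B}\in\mathcal{C}$ with $\mathbf{A}\leq\mathbf{B}$ such that $\mathbf{C}\not\hookrightarrow(\mathbf{B})^{\mathbf{A}}_k$ for every $\mathbf{C}\in\mathcal{C}$. The goal is a single full $k$-coloring of $\emb{\mathbf{A},\mathbf{D}}$ admitting no $f\in\emb{\mathbf{B},\mathbf{D}}$ with $f\circ\emb{\mathbf{A},\mathbf{B}}$ monochromatic; such a coloring has every color class non-thick (with $\mathbf{B}$ a uniform witness), contradicting $(3)$. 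To build it, I work in the compact space $[k]^{\emb{\mathbf{A},\mathbf{D}}}$: for each $f\in\emb{\mathbf{B},\mathbf{D}}$ the set of colorings for which $f\circ\emb{\mathbf{A},\mathbf{B}}$ is \emph{not} monochromatic is clopen, as it depends only on the finitely many coordinates in $f\circ\emb{\mathbf{A},\mathbf{B}}$, and I claim these clopen sets have the finite intersection property. Given finitely many such $f$, their images lie in some finite $\mathbf{D}_n$ from an exhaustion $\mathbf{D}=\bigcup_n\mathbf{D}_n$; choosing $\mathbf{C}_n\in\mathcal{C}$ with $\mathbf{D}_n\leq\mathbf{C}_n$ by cofinality and pulling a ``bad'' coloring witnessing $\mathbf{C}_n\not\hookrightarrow(\mathbf{B})^{\mathbf{A}}_k$ back to $\emb{\mathbf{A},\mathbf{D}_n}$ along a fixed embedding produces a coloring defeating all of them at once. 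Compactness then yields the desired global coloring.

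The main obstacle I anticipate is keeping the interaction between the Ramsey property and thickness honest in $(2)\Rightarrow(4)$: one cannot directly color $\emb{\mathbf{A},\mathbf{C}}$ from the partial $\gamma$, and must first spend the thickness of $S$ to secure an embedding $e$ of $\mathbf{C}$ into $\mathbf{D}$ with $e\circ\emb{\mathbf{A},\mathbf{C}}\subseteq S$. The parallel difficulty in $(3)\Rightarrow(1)$ is arranging the finite intersection property so that the local bad colorings glue to one global coloring, rather than merely giving a separate non-thickness witness for each finite fragment. Throughout I will also need to be careful that every auxiliary structure produced by JEP or amalgamation is pulled back into $\mathcal{C}$ via cofinality, and that the ``number of colors is irrelevant'' remark is applied only to being a Ramsey object, where it is justified.
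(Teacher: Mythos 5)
Your proposal is correct and follows essentially the same route as the paper: largeness is spent to obtain an embedding of the Ramsey witness $\mathbf{C}$ landing inside $\dom{\gamma}$ so the pulled-back coloring is full, JEP merges the finitely many non-thickness witnesses into one structure, and a compactness argument glues local bad colorings into a global one (the paper runs this as $(3\Rightarrow 2)$ via K\"onig's lemma over an exhaustion, where you use Tychonoff compactness of $[k]^{\emb{\mathbf{A},\mathbf{D}}}$ and the finite intersection property). The remaining differences --- arguing by direct contradiction rather than through the paper's cofinal subclasses $\mathcal{D}_i$ --- are cosmetic rearrangements of the same ingredients.
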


\begin{proof}
$(1\Leftrightarrow 2)$ and $(4\Rightarrow 3)$ are straightforward. 

For $(2\Rightarrow 4)$, fix $\gamma$ a large $k$-coloring of $\emb{\mathbf{A},\mathbf{D}}$. Say $\mathbf{A}\leq \mathbf{B}\in\mathcal{D}$, and fix $\mathbf{C}\in \mathcal{D}$ for which $\mathbf{C}\hookrightarrow (\mathbf{B})^\mathbf{A}_k$ holds. Since $\gamma$ is large, find $f\in \emb{\mathbf{C},\mathbf{D}}$ with $f\circ \emb{\mathbf{A},\mathbf{C}}\subseteq \mathrm{dom}(\gamma)$. Then find $x\in\emb{\mathbf{B},\mathbf{C}}$ with $f\circ x\circ \emb{\mathbf{A},\mathbf{B}}$ monochromatic. For each $i\leq k$, let $\mathcal{D}_i\subseteq \mathcal{D}$, where $\mathbf{B}\in \mathcal{D}_i$ iff there is $f\in \emb{\mathbf{B},\mathbf{D}}$ with $f\circ \emb{\mathbf{A},\mathbf{B}}\subseteq \gamma_i$. We have just shown that each $\mathbf{B}\geq \mathbf{A}$ is in some $\mathcal{D}_i$. 

Suppose for sake of contradiction that no $\mathcal{D}_i$ was cofinal in $\mathcal{D}$. For each $i\leq k$, pick $\mathbf{A}_i\in\mathbf{D}$, $\mathbf{A}\leq \mathbf{A}_i$, so that any $\mathbf{B}'\in\mathcal{D}$ which embeds $\mathbf{A}_i$ is not in $\mathcal{D}_i$. Now use JEP for $\mathcal{D}$ to find $\mathbf{A}'$ embedding each $\mathbf{A}_i$. But $\mathbf{A}'\in \mathcal{D}_i$ for some $i\leq k$, so this is a contradiction. Now observe that each $\mathcal{D}_i$ is hereditarily closed, so if $\mathcal{D}_i$ is cofinal, then $\mathcal{D}_i = \mathcal{D}$. This means that $\gamma_i$ must be thick, so we are done.

For $(3\Rightarrow 2)$, let $\mathbf{D} = \bigcup_n \mathbf{B}_n$ be an exhaustion with $\mathbf{A}\leq \mathbf{B}_1$. Suppose $\mathbf{B}\in \mathcal{D}$ witnesses the fact that $\mathbf{A}$ is not a Ramsey object. Call a coloring $\gamma$ of $\emb{\mathbf{A},\mathbf{B}_n}$ \emph{bad} if there is no $f\in \emb{\mathbf{B},\mathbf{D}}$ with $f\circ\emb{\mathbf{A},\mathbf{B}}$ monochromatic. So for each $n$, there is a bad $k$-coloring of $\emb{\mathbf{A},\mathbf{B}_n}$. In particular, if $\gamma$ is a bad $k$-coloring of $\emb{\mathbf{A},\mathbf{B}_n}$ and $m\leq n$, the restriction of $\gamma$ to $\emb{\mathbf{A},\mathbf{B}_m}$ is also bad. We can now use K\"onig's lemma to find a bad full $k$-coloring of $\emb{\mathbf{A},\mathbf{D}}$.
\end{proof}

Often, we will use Proposition 4.1 with a \fr structure $\mathbf{K}$, where we can say more.

\begin{lemma}
Let $\mathbf{K}$ be a \fr structure with $\mathcal{K} = \age{\mathbf{K}}$. Suppose $\mathbf{A}, \mathbf{B}\in\mathcal{K}$, and let $f:\mathbf{A}\rightarrow \mathbf{B}$ be an embedding. If $S\subseteq \emb{\mathbf{B}, \mathbf{K}}$ is thick, then $T := \{x\circ f: x\in S\}\subseteq \emb{\mathbf{A},\mathbf{K}}$ is also thick.
\end{lemma}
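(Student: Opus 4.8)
The plan is to unwind the definition of thickness for $T$ directly, reducing it to the thickness of $S$ by means of a finite amalgamation argument. So I would fix an arbitrary $\mathbf{C}\in\mathcal{K}$ with $\mathbf{A}\leq \mathbf{C}$ and aim to produce $h\in\emb{\mathbf{C},\mathbf{K}}$ with $h\circ\emb{\mathbf{A},\mathbf{C}}\subseteq T$. Since $\mathbf{A}$ and $\mathbf{C}$ are finite, $\emb{\mathbf{A},\mathbf{C}}$ is a finite set, say $\{e_1,\dots,e_m\}$, and the whole argument will be a finite induction over this list.

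The key step is to build an auxiliary structure $\mathbf{C}^*\in\mathcal{K}$ containing $\mathbf{C}$ in which every $e_i$ extends, through $f$, to a copy of $\mathbf{B}$. Concretely, I would set $\mathbf{C}_0=\mathbf{C}$ and, having built $\mathbf{C}_{i-1}\supseteq\mathbf{C}$, amalgamate $\mathbf{C}_{i-1}$ and $\mathbf{B}$ over $\mathbf{A}$: the left leg is $e_i$ (viewed into $\mathbf{C}_{i-1}$ via the inclusion $\mathbf{C}\subseteq\mathbf{C}_{i-1}$) and the right leg is $f\colon\mathbf{A}\to\mathbf{B}$. By AP this yields $\mathbf{C}_i\in\mathcal{K}$ with an inclusion $\mathbf{C}_{i-1}\subseteq\mathbf{C}_i$ and an embedding $\mathbf{B}\to\mathbf{C}_i$ whose composite with $f$ agrees with $e_i$. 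After $m$ steps I set $\mathbf{C}^*=\mathbf{C}_m$; composing each of these embeddings with the inclusions $\mathbf{C}_i\subseteq\mathbf{C}^*$, I obtain for each $i$ an embedding $b_i\in\emb{\mathbf{B},\mathbf{C}^*}$ with $b_i\circ f = e_i$, where $e_i$ is now regarded as an embedding $\mathbf{A}\to\mathbf{C}^*$ via $\mathbf{C}\subseteq\mathbf{C}^*$. In particular $\mathbf{B}\leq\mathbf{C}^*$.

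Now I apply the thickness of $S$ to $\mathbf{C}^*$: since $\mathbf{B}\leq\mathbf{C}^*\in\mathcal{K}$, there is $g\in\emb{\mathbf{C}^*,\mathbf{K}}$ with $g\circ\emb{\mathbf{B},\mathbf{C}^*}\subseteq S$. Let $h = g|_{\mathbf{C}}\in\emb{\mathbf{C},\mathbf{K}}$ be the restriction of $g$ to the copy of $\mathbf{C}$ inside $\mathbf{C}^*$. For each $i$ I then compute $h\circ e_i = g\circ e_i = g\circ(b_i\circ f) = (g\circ b_i)\circ f$, and since $g\circ b_i\in g\circ\emb{\mathbf{B},\mathbf{C}^*}\subseteq S$, this exhibits $h\circ e_i$ as an element of $T$. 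As the $e_i$ exhaust $\emb{\mathbf{A},\mathbf{C}}$, this gives $h\circ\emb{\mathbf{A},\mathbf{C}}\subseteq T$, which is exactly what thickness of $T$ requires.

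The only real content is the construction of $\mathbf{C}^*$, and the main obstacle there is the bookkeeping of inclusions: one must check that the $i$-th amalgamation genuinely produces an embedding of $\mathbf{B}$ restricting via $f$ to $e_i$, and that later amalgamations do not disturb the earlier $b_j$ — which they do not, since each step only enlarges $\mathbf{C}_{i-1}$ while keeping it as a substructure. The finiteness of $\emb{\mathbf{A},\mathbf{C}}$ is what lets the iterated amalgamation terminate, and the Amalgamation Property of the \fr class $\mathcal{K}$ is precisely what is invoked at each step.
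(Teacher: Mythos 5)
Your proof is correct and follows essentially the same route as the paper's: both enlarge $\mathbf{C}$ to a finite structure in which every element of $\emb{\mathbf{A},\mathbf{C}}$ factors through $f$ via an embedding of $\mathbf{B}$, apply the thickness of $S$ to that larger structure, and restrict the resulting embedding back to $\mathbf{C}$. The only cosmetic difference is that you build the auxiliary structure by iterated amalgamation in $\mathcal{K}$, whereas the paper builds it directly inside $\mathbf{K}$ by repeated use of the extension property; these are interchangeable for a \fr class.
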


\begin{proof}
Fix $\mathbf{C}\in\fin{\mathbf{K}}$. By repeated use of the extension property, find $\mathbf{D}\in\fin{\mathbf{K}}$, $\mathbf{C}\subseteq \mathbf{D}$, such that for each $g\in\emb{\mathbf{A},\mathbf{C}}$, there is $h\in \emb{\mathbf{B},\mathbf{D}}$ with $g = h\circ f$ (here we view $\emb{\mathbf{A},\mathbf{C}}\subseteq \emb{\mathbf{A},\mathbf{D}}$ in the natural way). Now as $S$ is thick, find $x\in\emb{\mathbf{D},\mathbf{K}}$ with $x\circ\emb{\mathbf{B},\mathbf{D}}\subseteq S$. Then $x|_{\mathbf{C}}\in \emb{\mathbf{C},\mathbf{K}}$, and $x|_{\mathbf{C}}\circ \emb{\mathbf{A},\mathbf{C}}\subseteq T$.
\end{proof}

\begin{prop}
Let $\mathbf{K}$ be a \fr structure with $\mathcal{K} = \age{\mathbf{K}}$, and suppose $\mathbf{B}\in\mathcal{K}$ is a Ramsey object. Then if $\mathbf{A}\leq \mathbf{B}$, then $\mathbf{A}$ is a Ramsey object.
\end{prop}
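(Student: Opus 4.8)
The plan is to recast ``Ramsey object'' in terms of the thickness condition of Proposition 4.1 and then to transport a thick monochromatic set of copies of $\mathbf{B}$ down to copies of $\mathbf{A}$ via Lemma 4.2. Since $\mathbf{K}$ is a \fr structure, $\mathcal{K} = \age{\mathbf{K}}$ is (trivially) cofinal in itself, so I may invoke Proposition 4.1 with $\mathbf{D} = \mathbf{K}$ and $\mathcal{C} = \mathcal{D} = \mathcal{K}$. Under this identification, the equivalence $(1\Leftrightarrow 3)$ says that $\mathbf{A}$ (resp.\ $\mathbf{B}$) is a Ramsey object iff every full $k$-coloring of $\emb{\mathbf{A},\mathbf{K}}$ (resp.\ $\emb{\mathbf{B},\mathbf{K}}$) has a thick color class. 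This reduces the problem entirely to producing a thick color class downstairs from one upstairs.

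Next I fix the given embedding $f\colon \mathbf{A}\rightarrow \mathbf{B}$ together with an arbitrary full $k$-coloring $\gamma$ of $\emb{\mathbf{A},\mathbf{K}}$, and I define the pullback coloring $\delta$ of $\emb{\mathbf{B},\mathbf{K}}$ by $\delta(x) = \gamma(x\circ f)$. This is a full $k$-coloring, since every $x\in\emb{\mathbf{B},\mathbf{K}}$ yields $x\circ f\in\emb{\mathbf{A},\mathbf{K}}$. Because $\mathbf{B}$ is a Ramsey object, Proposition 4.1 applied to $\delta$ furnishes a color $i$ for which $\delta_i$ is thick.

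Finally I set $T = \{x\circ f : x\in \delta_i\}\subseteq \emb{\mathbf{A},\mathbf{K}}$. By the definition of $\delta$, every element of $T$ has $\gamma$-color $i$, so $T\subseteq \gamma_i$; and by Lemma 4.2, thickness of $\delta_i$ forces $T$ to be thick. I then note that thickness is monotone under inclusion: if $T\subseteq \gamma_i$ and $T$ is thick, then for any $\mathbf{B}'\in\mathcal{K}$ with $\mathbf{A}\leq \mathbf{B}'$ the witness $g\in\emb{\mathbf{B}',\mathbf{K}}$ with $g\circ\emb{\mathbf{A},\mathbf{B}'}\subseteq T$ also satisfies $g\circ\emb{\mathbf{A},\mathbf{B}'}\subseteq \gamma_i$, so $\gamma_i$ is thick. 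Applying the reverse implication $(3\Rightarrow 1)$ of Proposition 4.1 then shows that $\mathbf{A}$ is a Ramsey object.

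I do not expect a genuine obstacle here; the substance lives in the previously established Proposition 4.1 and Lemma 4.2. The only points requiring care are that the pullback coloring is full, that $T\subseteq \gamma_i$, and that thickness passes to supersets, each of which is immediate from the definitions.
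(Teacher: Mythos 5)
Your proposal is correct and is essentially the paper's own argument: pull the coloring back along $f$, use thickness of a color class of the pullback, and push it forward via Lemma 4.2. The only cosmetic difference is that you argue $T\subseteq\gamma_i$ and invoke monotonicity of thickness, whereas the paper notes the equality $\gamma_i=\{x\circ f: x\in\delta_i\}$ (which holds by the extension property); both routes are fine.
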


\begin{proof}
Let $f: \mathbf{A}\rightarrow \mathbf{B}$ be an embedding, and fix $\gamma$ a full $2$-coloring of $\emb{\mathbf{A}, \mathbf{K}}$. Let $\delta$ be the full $2$-coloring of $\emb{\mathbf{B},\mathbf{K}}$ defined by $\delta(x) = \gamma(x\circ f)$. For some $i$, $\delta_i$ is thick. Then by Lemma 4.2, $\gamma_i = \{x\circ f: x\in \delta_i\}$ is thick.
\end{proof}
\vspace{2 mm}

We say that $\mathbf{A}\in \mathcal{C}$ has \emph{Ramsey degree} $k$ if $k$ is least such that for any $\mathbf{B}$ in $\mathcal{C}$ with $\mathbf{A}\leq \mathbf{B}$ and any $r > k$, there is $\mathbf{C}\in \mathcal{C}$ such that for any $r$-coloring $\gamma$ of $\emb{\mathbf{A},\mathbf{C}}$, there is $f\in \emb{\mathbf{B}, \mathbf{C}}$ such that $|\gamma(f\circ \emb{\mathbf{A},\mathbf{B}})|\leq k$. One could define the notion of an $(r,k)$-Ramsey object, which would be defined just as above for some particular $r>k$. However, this is unnecessary; an induction on $r$ shows that $\mathbf{A}$ is an $(r,k)$-Ramsey object iff $\mathbf{A}$ is a $(k+1,k)$-Ramsey object. Therefore the notion of Ramsey degree is sufficient. We use a similar hook-arrow notation,
$$\mathbf{C}\hookrightarrow (\mathbf{B})^\mathbf{A}_{r,k}$$
to mean that for every $r$-coloring $\gamma$ of $\emb{\mathbf{A},\mathbf{C}}$, there is $f\in \emb{\mathbf{B},\mathbf{C}}$ with $|\gamma(f\circ\emb{\mathbf{A},\mathbf{B}})|\leq k$. We use the standard arrow notation,
$$\mathbf{C}\rightarrow (\mathbf{B})^\mathbf{A}_{r,k}$$
to mean that for every $r$-coloring $\gamma$ of $\binom{\mathbf{C}}{\mathbf{A}}$, there is $\mathbf{B}_0\in\binom{\mathbf{C}}{\mathbf{B}}$ with $|\gamma(\binom{\mathbf{B}_0}{\mathbf{A}})|\leq k$.

\begin{prop}
$\mathbf{A}\in\mathcal{C}$ has structural Ramsey degree $k$ iff $\mathbf{A}$ has embedding Ramsey degree $k\cdot |\aut{\mathbf{A}}|$. 
\end{prop}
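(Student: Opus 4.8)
The plan is to exploit the natural surjection $\pi \colon \emb{\mathbf{A},\mathbf{C}} \to \binom{\mathbf{C}}{\mathbf{A}}$ given by $\pi(f) = \im{f}$. Setting $a = |\aut{\mathbf{A}}|$, each fiber $\pi^{-1}(\mathbf{A}_0)$ has exactly $a$ elements: if $f, f'$ both have image $\mathbf{A}_0$ then $f' = f \circ \sigma$ for the unique $\sigma \in \aut{\mathbf{A}}$, so the fiber is a free transitive right $\aut{\mathbf{A}}$-set. The one structural observation I would isolate first is this: if $g \in \emb{\mathbf{B},\mathbf{C}}$ and $\mathbf{B}_0 = \im{g}$, then for every $\mathbf{A}_0 \in \binom{\mathbf{B}_0}{\mathbf{A}}$ the set $g \circ \emb{\mathbf{A},\mathbf{B}}$ meets the fiber $\pi^{-1}(\mathbf{A}_0)$ in all of it, because the $a$ embeddings $\mathbf{A} \to \mathbf{B}$ with image $g^{-1}(\mathbf{A}_0)$ map bijectively under $g \circ (-)$ onto the $a$ embeddings $\mathbf{A} \to \mathbf{C}$ with image $\mathbf{A}_0$. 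Consequently $\gamma(g \circ \emb{\mathbf{A},\mathbf{B}}) = \bigcup \{\, \gamma(\pi^{-1}(\mathbf{A}_0)) : \mathbf{A}_0 \in \binom{\mathbf{B}_0}{\mathbf{A}} \,\}$ for any coloring $\gamma$ of $\emb{\mathbf{A},\mathbf{C}}$.

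Having isolated that, I would prove the equality of degrees by two inequalities, writing $s$ for the structural degree and $e$ for the embedding degree. For $e \le s \cdot a$: given an $R$-coloring $\gamma$ of $\emb{\mathbf{A},\mathbf{C}}$, define the structural coloring $\gamma^\ast \colon \binom{\mathbf{C}}{\mathbf{A}} \to \mathcal{P}([R])$ by $\gamma^\ast(\mathbf{A}_0) = \gamma(\pi^{-1}(\mathbf{A}_0))$; this has at most $2^R$ colors. Choosing $\mathbf{C}$ with $\mathbf{C} \to (\mathbf{B})^{\mathbf{A}}_{2^R, s}$, I obtain $\mathbf{B}_0 = \im{g}$ with $|\gamma^\ast(\binom{\mathbf{B}_0}{\mathbf{A}})| \le s$; by the displayed identity, $\gamma(g \circ \emb{\mathbf{A},\mathbf{B}})$ is a union of at most $s$ sets of the form $\gamma(\pi^{-1}(\mathbf{A}_0))$, each of size at most $a$, hence has at most $s a$ elements.

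For $e \ge s \cdot a$ I would use that $s$ is the \emph{least} structural degree: this yields some $\mathbf{B}^\ast \ge \mathbf{A}$ such that for every $\mathbf{C}$ there is an $s$-coloring $\delta$ of $\binom{\mathbf{C}}{\mathbf{A}}$ for which every copy $\mathbf{B}_0$ of $\mathbf{B}^\ast$ has $\delta(\binom{\mathbf{B}_0}{\mathbf{A}}) = [s]$ (the reduction to exactly $s$ colors comes from the induction in the remark following the definition of Ramsey degree). Picking a base embedding $f_{\mathbf{A}_0}$ in each fiber and writing a general $f \in \pi^{-1}(\mathbf{A}_0)$ uniquely as $f = f_{\mathbf{A}_0} \circ \sigma$, I define $\gamma(f) = (\delta(\mathbf{A}_0), \sigma) \in [s] \times \aut{\mathbf{A}}$, an $(sa)$-coloring. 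By the displayed identity, a copy $\mathbf{B}_0$ of $\mathbf{B}^\ast$ sees, over each $\mathbf{A}_0 \in \binom{\mathbf{B}_0}{\mathbf{A}}$, the full set $\{\delta(\mathbf{A}_0)\} \times \aut{\mathbf{A}}$; since the first coordinates exhaust $[s]$, the copy sees all of $[s] \times \aut{\mathbf{A}}$. Thus no $\mathbf{C}$ witnesses $\mathbf{C} \hookrightarrow (\mathbf{B}^\ast)^{\mathbf{A}}_{sa,\, sa - 1}$, so $e \ge s a$.

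Combining gives $e = s a$ whenever $s$ is finite; the plain pullback $\delta \mapsto (f \mapsto \delta(\im{f}))$ shows $s \le e$ unconditionally, so finiteness of $e$ forces finiteness of $s$, and then the biconditional in both directions (including the converse) follows from $e = s a$ together with $a \ge 1$. I expect the genuine obstacle to be the lower bound: one must extract $\mathbf{B}^\ast$ from the \emph{minimality} of the structural degree rather than a mere upper bound, keep careful track of the ``exactly $s$ colors'' reduction, and cope with the fact that the fibers of $\pi$ are only torsors, so the refinement to $s a$ colors requires a non-canonical choice of base points, which is harmless for building a bad coloring but must be handled explicitly.
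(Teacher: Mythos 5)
Your proof is correct and follows essentially the same route as the paper's: the upper bound via coloring each copy of $\mathbf{A}$ by the \emph{set} of colors appearing on its fiber (the paper's $\delta(f)=\phi(\{i\in[r]:\exists h\in\aut{\mathbf{A}}\,(f\circ h\in\gamma_i)\})$ is exactly your $\gamma^\ast\circ\pi$ after a bijection with $[2^r]$), and the lower bound by refining a bad structural coloring using a chosen transversal of the $\aut{\mathbf{A}}$-orbits, just as the paper does with its choice function $\psi$ and the formula $\delta_{\mathbf{E}}(f)=(\gamma_{\mathbf{E}}(f)-1)t+\rho(h)$. Your explicit isolation of the fiber-decomposition identity and of the reduction to exactly $s$ colors is a slightly more careful write-up of the same argument, not a different proof.
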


\begin{proof}
Set $t = |\aut{\mathbf{A}}|$, and fix $r>kt$. Assume $\mathbf{A}$ has structural-Ramsey degree $k$, and let $\mathbf{B}\in \mathcal{C}$ with $\mathbf{A}\leq \mathbf{B}$. Find $\mathbf{C}\in \mathcal{C}$ with $\mathbf{C}\rightarrow (\mathbf{B})^\mathbf{A}_{2^r, k}$. Fix any $r$-coloring $\gamma$ of $\emb{\mathbf{A},\mathbf{C}}$. Fix a bijection $\phi: \mathcal{P}([r])\rightarrow [2^r]$, and define a $(2^r)$-coloring $\delta$ with $\delta(f) =\phi(\{i\in [r]: \exists h\in \aut{\mathbf{A}}(f\circ h\in \gamma_i)\})$. Then $\delta$ is a structural coloring, so find $f\in \emb{\mathbf{B},\mathbf{C}}$ with $f\circ\emb{\mathbf{A},\mathbf{B}}$ at most $k$-colored for $\delta$. Then $f\circ\emb{\mathbf{A}, \mathbf{B}}$ is at most $(kt)$-colored for $\gamma$.
\vspace{2 mm}

Now since $\mathbf{A}$ has structural-Ramsey degree $k$, find $\mathbf{D}\in\mathcal{C}$ such that for every $\mathbf{E}\in\mathcal{C}$, there is a structural $r$-coloring $\gamma_\mathbf{E}$ of $\emb{\mathbf{A},\mathbf{E}}$ where for every $f\in\emb{\mathbf{D},\mathbf{E}}$, the set $f\circ \emb{\mathbf{A},\mathbf{D}}$ is at least $k$-colored. Then one can use $\gamma_\mathbf{E}$ to find an $(rt)$-coloring $\delta_\mathbf{E}$ of $\emb{\mathbf{A},\mathbf{E}}$ where each $f\circ \emb{\mathbf{A},\mathbf{D}}$ is at least $(kt)$-colored. One way to do this is as follows: first fix a bijection $\rho: \aut{\mathbf{A}}\rightarrow [t]$. Form the equivalence relation $\sim$ on $\emb{\mathbf{A},\mathbf{E}}$ where $f\sim g$ iff $f = g\circ h$ for some $h\in \aut{\mathbf{A}}$. Let $\langle f\rangle$ denote the equivalence class of $f$. Let $\psi: (\emb{\mathbf{A},\mathbf{E}}/\sim) \rightarrow \emb{\mathbf{A},\mathbf{E}}$ choose a member of each equivalence class. Now if $f\in \emb{\mathbf{A},\mathbf{E}}$ and $h\in \aut{\mathbf{A}}$ is the unique automorphism with $\psi(\langle f\rangle)\circ h = f$, then set $\delta_\mathbf{E}(f) = (\gamma_\mathbf{E}(f) -1)t+\rho(h)$.

Conversely, if $\mathbf{A}$ has finite embedding-Ramsey degree, then $\mathbf{A}$ also has finite structural-Ramsey degree, completing the proof.      
\end{proof}

\begin{cor}
$\mathbf{A}\in\mathcal{C}$ is an embedding Ramsey object iff $\mathbf{A}$ is a structural Ramsey object and is rigid.
\end{cor}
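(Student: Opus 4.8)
The plan is to derive this immediately from Proposition 4.4, since the corollary is essentially just the specialization of that proposition to degree $1$. First I would unwind the definitions involved: $\mathbf{A}$ is an embedding Ramsey object precisely when its embedding Ramsey degree equals $1$, it is a structural Ramsey object precisely when its structural Ramsey degree equals $1$, and it is rigid precisely when $|\aut{\mathbf{A}}| = 1$. With these three reformulations in hand, the statement reduces to a numerical observation about the equation supplied by Proposition 4.4.

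Concretely, let $k$ denote the structural Ramsey degree of $\mathbf{A}$ and set $t = |\aut{\mathbf{A}}|$. Proposition 4.4 tells us that $\mathbf{A}$ has embedding Ramsey degree equal to $k\cdot t$. For the forward direction, I would suppose $\mathbf{A}$ is an embedding Ramsey object, so $k\cdot t = 1$. Since $k$ and $t$ are both positive integers, the only way their product can equal $1$ is $k = 1$ and $t = 1$; hence $\mathbf{A}$ is a structural Ramsey object and is rigid. For the converse, if $\mathbf{A}$ is a structural Ramsey object and rigid, then $k = 1$ and $t = 1$, so its embedding Ramsey degree is $k\cdot t = 1$, i.e.\ $\mathbf{A}$ is an embedding Ramsey object.

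I do not expect any genuine obstacle here: all of the real content is carried by Proposition 4.4, and the corollary only exploits the trivial fact that a product of two positive integers is $1$ iff each factor is $1$. The one point worth checking is that Proposition 4.4 is legitimately applicable at degree $1$ — that is, that we are not implicitly assuming the degrees are finite before concluding they equal $1$. This is automatic, since being a Ramsey object (in either sense) already asserts that the relevant degree is the finite value $1$, so the hypotheses of Proposition 4.4 are met on both sides of the biconditional without any additional argument.
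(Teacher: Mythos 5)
Your proof is correct and matches the paper's intent exactly: Corollary 4.5 is stated without proof precisely because it is the degree-$1$ specialization of Proposition 4.4, and your reduction (a product of positive integers equals $1$ iff both factors do) together with the observation that ``Ramsey object'' means ``Ramsey degree $1$'' in both the structural and embedding senses is the whole argument. Nothing further is needed.
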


The proof of the following proposition is nearly identical to the proof of Proposition 4.1 and is therefore omitted:

\begin{prop}
Suppose $\mathbf{D}$ is a countably infinite locally finite structure, $\mathcal{D} = \mathrm{Age}(\mathbf{D})$, and $\mathcal{C}$ is cofinal in $\mathcal{D}$. Let $\mathbf{A}\in\mathcal{C}$, and fix $r > k$. Then the following are equivalent:
\begin{enumerate}
\item
$\mathbf{A}$ has Ramsey degree $t\leq k$ in $\mathcal{C}$,
\item
$\mathbf{A}$ has Ramsey degree $t\leq k$ in $\mathcal{D}$, 
\item
Any full $r$-coloring of $\emb{\mathbf{A},\mathbf{D}}$ has some subset of $k$ or fewer colors which form a thick subset,
\item
Any large $r$-coloring of $\emb{\mathbf{A},\mathbf{D}}$ has some subset of $k$ or fewer colors which form a thick subset.
\end{enumerate}
\end{prop}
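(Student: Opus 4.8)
The plan is to follow the proof of Proposition 4.1 essentially verbatim, making two systematic substitutions: replace every reference to a single thick color $\gamma_i$ by a reference to a family of at most $k$ colors whose union $\bigcup_{i\in S}\gamma_i$ is thick, and replace the partition relation $\mathbf{C}\hookrightarrow(\mathbf{B})^\mathbf{A}_k$ by $\mathbf{C}\hookrightarrow(\mathbf{B})^\mathbf{A}_{r,k}$. As in Proposition 4.1, the equivalence $(1\Leftrightarrow 2)$ is the cofinality argument: since $\mathcal{C}$ is cofinal in $\mathcal{D}$, every $\mathbf{B}\in\mathcal{D}$ embeds into some member of $\mathcal{C}$, so the structures $\mathbf{C}$ witnessing the degree bound may be taken inside $\mathcal{C}$, and the least $t$ that works is the same whether computed in $\mathcal{C}$ or in $\mathcal{D}$. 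The implication $(4\Rightarrow 3)$ is immediate, since every full coloring is large: its domain $\emb{\mathbf{A},\mathbf{D}}$ is visibly thick.

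For $(2\Rightarrow 4)$, I would fix a large $r$-coloring $\gamma$ of $\emb{\mathbf{A},\mathbf{D}}$ and any $\mathbf{A}\leq\mathbf{B}\in\mathcal{D}$. Using (2) together with the remark that an $(r,k)$-Ramsey object is the same as a $(k+1,k)$-Ramsey object, I pick $\mathbf{C}\in\mathcal{D}$ with $\mathbf{C}\hookrightarrow(\mathbf{B})^\mathbf{A}_{r,k}$; largeness then produces $f\in\emb{\mathbf{C},\mathbf{D}}$ with $f\circ\emb{\mathbf{A},\mathbf{C}}\subseteq\dom{\gamma}$, and the partition relation yields $x\in\emb{\mathbf{B},\mathbf{C}}$ with $|\gamma(f\circ x\circ\emb{\mathbf{A},\mathbf{B}})|\leq k$. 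For each $S\subseteq[r]$ with $|S|\leq k$, let $\mathcal{D}_S$ be the class of those $\mathbf{B}\in\mathcal{D}$ admitting $h\in\emb{\mathbf{B},\mathbf{D}}$ with $h\circ\emb{\mathbf{A},\mathbf{B}}\subseteq\bigcup_{i\in S}\gamma_i$. The displayed computation shows every $\mathbf{B}\geq\mathbf{A}$ lies in some $\mathcal{D}_S$, and crucially there are only finitely many such $S$. If no $\mathcal{D}_S$ were cofinal, then for each $S$ I could choose $\mathbf{A}_S\geq\mathbf{A}$ no extension of which lies in $\mathcal{D}_S$, amalgamate the finitely many $\mathbf{A}_S$ into a single $\mathbf{A}'$ by JEP, and derive a contradiction, since $\mathbf{A}'$ must lie in some $\mathcal{D}_S$ while embedding the corresponding $\mathbf{A}_S$. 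Hence some $\mathcal{D}_S$ is cofinal; as each $\mathcal{D}_S$ is closed under substructures (restrict $h$), cofinality forces $\mathcal{D}_S=\mathcal{D}$, which is precisely the statement that $\bigcup_{i\in S}\gamma_i$ is thick.

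For $(3\Rightarrow 2)$, I would argue the contrapositive via K\"onig's lemma. If $\mathbf{A}$ fails to have Ramsey degree $\leq k$ in $\mathcal{D}$, then it fails to be an $(r,k)$-Ramsey object, so I fix $\mathbf{B}\geq\mathbf{A}$ witnessing this and an exhaustion $\mathbf{D}=\bigcup_n\mathbf{B}_n$ with $\mathbf{A}\leq\mathbf{B}_1$. Call an $r$-coloring $\gamma$ of $\emb{\mathbf{A},\mathbf{B}_n}$ \emph{bad} if there is no $f\in\emb{\mathbf{B},\mathbf{B}_n}$ with $|\gamma(f\circ\emb{\mathbf{A},\mathbf{B}})|\leq k$; by choice of $\mathbf{B}$ a bad $r$-coloring exists at each level, and the restriction of a bad coloring to a lower level is again bad. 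Organizing the bad colorings into a finitely branching tree and applying K\"onig's lemma yields a bad full $r$-coloring of $\emb{\mathbf{A},\mathbf{D}}$, which contradicts (3): a thick set $\bigcup_{i\in S}\gamma_i$ with $|S|\leq k$ would, applied to $\mathbf{B}$, supply some $f$ landing in a single $\mathbf{B}_n$ with $\gamma(f\circ\emb{\mathbf{A},\mathbf{B}})\subseteq S$, violating badness.

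I expect the only genuine departure from Proposition 4.1, and hence the step needing the most care, to be the bookkeeping in $(2\Rightarrow 4)$: one no longer tracks a single distinguished color but an a priori unknown set of at most $k$ colors. The argument survives precisely because the collection of such sets is finite, which is what keeps the JEP amalgamation over all $\mathcal{D}_S$ and the finite branching in the K\"onig tree valid. Everything else is a mechanical transcription of the earlier proof.
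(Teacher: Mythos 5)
Your proof is correct and is precisely the adaptation the paper intends: the paper omits the argument entirely, noting only that it is ``nearly identical to the proof of Proposition 4.1.'' Your systematic replacement of a single thick color by a set of at most $k$ colors with thick union, together with the observation that there are only finitely many such sets (so the JEP pigeonhole over the classes $\mathcal{D}_S$ and the finite branching in the K\"onig tree both survive), is exactly the right bookkeeping.
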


There is a similar analogue to Proposition 4.3, which we also state without proof.

\begin{prop}
Suppose $\mathbf{K}$ is a \fr structure, $\mathcal{K} = \age{\mathbf{K}}$, and suppose $\mathbf{B}\in \mathcal{K}$ has Ramsey degree $k$. Then if $\mathbf{A}\leq \mathbf{B}$, then $\mathbf{A}$ has Ramsey degree $t\leq k$.
\end{prop}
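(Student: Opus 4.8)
The plan is to mimic the proof of Proposition 4.3, replacing the characterization of ``Ramsey object'' with the characterization of ``Ramsey degree $t\leq k$'' supplied by Proposition 4.6. Throughout I would take $\mathbf{D} = \mathbf{K}$ and $\mathcal{C} = \mathcal{D} = \mathcal{K}$, so that Proposition 4.6 applies with these choices. In particular, by the equivalence $(1)\Leftrightarrow(3)$ there, it suffices to fix a single $r > k$ and verify that every full $r$-coloring of $\emb{\mathbf{A},\mathbf{K}}$ admits a set of at most $k$ colors whose union is thick.

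So fix an embedding $f:\mathbf{A}\rightarrow\mathbf{B}$, fix $r > k$, and let $\gamma$ be an arbitrary full $r$-coloring of $\emb{\mathbf{A},\mathbf{K}}$. First I would transport $\gamma$ to a coloring of $\emb{\mathbf{B},\mathbf{K}}$ by precomposition, setting $\delta(x) = \gamma(x\circ f)$ for $x\in\emb{\mathbf{B},\mathbf{K}}$; this is again a full $r$-coloring. Since $\mathbf{B}$ has Ramsey degree $k$, applying Proposition 4.6 $(1)\Rightarrow(3)$ to $\mathbf{B}$ yields a set $I\subseteq[r]$ with $|I|\leq k$ such that $S := \delta^{-1}(I) = \bigcup_{i\in I}\delta_i$ is thick in $\emb{\mathbf{B},\mathbf{K}}$.

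The second step is to push $S$ forward along $f$. By Lemma 4.2, the set $T := \{x\circ f: x\in S\}\subseteq\emb{\mathbf{A},\mathbf{K}}$ is thick. By the definition of $\delta$, every $x\in S$ satisfies $\gamma(x\circ f) = \delta(x)\in I$, so $T\subseteq \gamma^{-1}(I) = \bigcup_{i\in I}\gamma_i$. Since thickness is plainly preserved under passing to supersets, $\bigcup_{i\in I}\gamma_i$ is a thick union of at most $k$ color classes of $\gamma$. As $\gamma$ was arbitrary, this verifies condition $(3)$ of Proposition 4.6 for $\mathbf{A}$, and hence $\mathbf{A}$ has Ramsey degree $t\leq k$.

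I expect no genuine obstacle here: the argument is a routine strengthening of Proposition 4.3, with ``a single thick color class'' replaced by ``a thick union of at most $k$ color classes.'' The only points needing a line of justification are that precomposition by $f$ carries an $r$-coloring to an $r$-coloring, that Lemma 4.2 applies verbatim to push the thick set $S$ forward, and the trivial remark that any superset of a thick set is thick. The one mild subtlety worth flagging is that Proposition 4.6 fixes $r > k$ in advance, so one should note that establishing condition $(3)$ for a single such $r$ already gives condition $(1)$, since $(1)$ is a statement independent of $r$.
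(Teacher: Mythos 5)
Your proof is correct and is exactly the argument the paper intends: the paper omits the proof, noting only that it is the analogue of Proposition 4.3, and your argument is precisely that analogue (pull the coloring back along $f$, apply the Ramsey-degree hypothesis on $\mathbf{B}$ via Proposition 4.6, push the resulting thick union of at most $k$ color classes forward with Lemma 4.2, and pass to the superset $\bigcup_{i\in I}\gamma_i$). Your flagged subtleties, in particular that a superset of a thick set is thick and that condition (3) for a single $r>k$ suffices, are handled correctly.
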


Using Proposition 4.6, we can provide another definition of Ramsey degree which will be extremely useful going forward. Let $\mathbf{D}$ be a countably infinite locally finite structure with $\mathcal{D} = \age{\mathbf{D}}$, and let $\mathbf{A}\in\mathcal{D}$. Call a subset $S\subseteq \emb{\mathbf{A},\mathbf{D}}$ \emph{syndetic} if $S\cap X\neq \emptyset$ for every thick $X\subseteq \emb{\mathbf{A},\mathbf{D}}$. Call a coloring $\gamma$ of $\emb{\mathbf{A},\mathbf{D}}$ a \emph{syndetic coloring} if each $\gamma_i$ is syndetic. Now we have:

\begin{prop}
$\mathbf{A}$ has Ramsey degree $t\geq k$ \rm(\emph{$t$ possibly infinite}\rm) \emph{iff there is a syndetic $k$-coloring of $\emb{\mathbf{A},\mathbf{D}}$}.
\end{prop}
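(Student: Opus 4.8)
The plan is to read the equivalence straight off Proposition 4.6 by specializing its parameters, rather than trying to manufacture the coloring by hand. First I would record two elementary closure facts. Since $\emb{\mathbf{A},\mathbf{D}}$ is itself thick (any $f\in\emb{\mathbf{B},\mathbf{D}}$ witnesses $f\circ\emb{\mathbf{A},\mathbf{B}}\subseteq\emb{\mathbf{A},\mathbf{D}}$), and since any witness that a set $X$ is thick also witnesses thickness of every $Y\supseteq X$, the thick sets are upward closed and the non-thick sets downward closed. From upward closure I would then extract the duality I expect to be the workhorse: a set $S$ is syndetic iff $\emb{\mathbf{A},\mathbf{D}}\setminus S$ is not thick. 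Indeed, $S$ meets every thick set iff no thick set is contained in $\emb{\mathbf{A},\mathbf{D}}\setminus S$, and by upward closure that is exactly the failure of $\emb{\mathbf{A},\mathbf{D}}\setminus S$ to be thick.

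Next I would translate ``there is a syndetic $k$-coloring'' into the exact form appearing in Proposition 4.6 at color count $r=k$. Given a full $k$-coloring $\gamma=(\gamma_1,\dots,\gamma_k)$, the complement of a single class $\gamma_j$ is the union $\bigcup_{i\ne j}\gamma_i$ of the other $k-1$ classes, so by the duality $\gamma_j$ is syndetic iff that $(k-1)$-fold union is not thick. Using downward closure, every union of at most $k-1$ classes lies inside some such complement; hence $\gamma$ is a syndetic coloring (all classes syndetic) if and only if no union of $\le k-1$ of its classes is thick. In other words, the existence of a syndetic $k$-coloring is literally the existence of a full $k$-coloring of $\emb{\mathbf{A},\mathbf{D}}$ no $\le k-1$ of whose classes union to a thick set.

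Finally I would invoke Proposition 4.6 with its threshold taken to be $k-1$ and its color count taken to be $r=k$ (legitimate since $k>k-1$): condition (3) then states that $\mathbf{A}$ has Ramsey degree $\le k-1$ iff every full $k$-coloring has some $\le k-1$ of its classes forming a thick set. Negating both sides, $\mathbf{A}$ has Ramsey degree $\ge k$ iff some full $k$-coloring has no $\le k-1$ classes forming a thick set, which by the previous paragraph is precisely the existence of a syndetic $k$-coloring. This closes the loop, and it automatically accommodates the ``$t$ possibly infinite'' case, since infinite degree just means Ramsey degree $\ge k$ for all $k$.

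The main thing to get right is that one must \emph{not} try to coarsen an arbitrary coloring witnessing Ramsey degree $\ge k$ into $k$ syndetic groups: this can fail outright (e.g.\ a three-class coloring with no single class thick but every pair thick admits no merge into two non-thick groups). The resolution, and the real content, is to apply Proposition 4.6 at the exact color count $r=k$, where ``no proper sub-union is thick'' coincides on the nose with ``every class is syndetic.'' The only other point requiring care is the off-by-one between Ramsey degree $\ge k$ and Ramsey degree $\le k-1$; no compactness or amalgamation beyond what is already packaged into Proposition 4.6 is needed.
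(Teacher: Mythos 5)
Your proposal is correct and follows exactly the route the paper intends: Proposition 4.8 is stated as an immediate consequence of Proposition 4.6 (with threshold $k-1$ and $r=k$), mediated by the duality that $S\subseteq\emb{\mathbf{A},\mathbf{D}}$ is syndetic iff its complement is not thick, which is precisely the translation you carry out. The paper omits these details, but your filling-in of the upward-closure of thick sets, the identification of ``all classes syndetic'' with ``no union of at most $k-1$ classes is thick,'' and the handling of the infinite-degree case are all accurate.
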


The words \emph{thick} and \emph{syndetic} are borrowed from topological dynamics. We will justify these vocabulary choices later (see the discussion after the proof of Proposition 8.1).

\section{KPT Correspondence}

We have now developed enough background to state the results in [KPT]. Our discussion will have two notable differences however. First we will be using embedding Ramsey throughout. Second, we will develop the theory using Fra\"iss\'e--HP classes, as this will allow us more flexibility in section 8. Later, we will provide new proofs of Theorems 5.1 and 5.7 (see Theorem 7.3 and the discussion after Corollary 8.15). 

\begin{theorem}
If $\mathcal{K}$ is a Fra\"iss\'e--HP class, $\mathbf{K} = \flim{\mathcal{K}}$, and $G = \aut{\mathbf{K}}$, then $M(G)$ is a singleton iff $\mathcal{K}$ has the Ramsey Property. 
\end{theorem}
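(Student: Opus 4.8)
The plan is to prove Theorem 5.1 by connecting the Ramsey Property of $\mathcal{K}$ directly to extreme amenability of $G = \aut{\mathbf{K}}$, using the syndetic-coloring reformulation of Ramsey degree (Proposition 4.8) together with the structure of the greatest ambit developed in Section 2. The key conceptual bridge is that $M(G)$ is a singleton iff the greatest ambit $S(G)$ is itself minimal, which happens iff $S(G)$ admits no nontrivial two-coloring into clopen pieces that are each ``syndetic'' in the dynamical sense. Since $G$ is a closed subgroup of $S_\infty$ acting on a \fr structure $\mathbf{K} = \flim{\mathcal{K}}$, I would fix an exhaustion $\mathbf{K} = \bigcup_n \mathbf{A}_n$ and identify finite pieces of the group action with the embedding sets $\emb{\mathbf{A}_n, \mathbf{K}}$, so that colorings of the ambit correspond to colorings of embeddings.

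First I would establish the reverse direction ($\mathcal{K}$ has RP $\Rightarrow$ $G$ extremely amenable), which I expect to be the cleaner half. Assuming every $\mathbf{A} \in \mathcal{K}$ is a Ramsey object, I want to show every $G$-flow has a fixed point, equivalently that $S(G)$ is minimal. By Proposition 4.8, the Ramsey Property says that for each $\mathbf{A}$, no finite coloring of $\emb{\mathbf{A},\mathbf{K}}$ into two or more syndetic pieces exists—equivalently, every full coloring has a \emph{thick} color class. I would translate a continuous map $S(G) \to \{0,1\}$ (or more generally a $G$-invariant partition witnessing non-minimality) into, for large enough $n$, a full coloring of $\emb{\mathbf{A}_n, \mathbf{K}}$; the thickness of one color class then lets me find, inside any target neighborhood, an element of the orbit, forcing density of every orbit and hence minimality of $S(G)$. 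The vocabulary hint after Proposition 4.8 (that \emph{thick} and \emph{syndetic} come from topological dynamics) strongly suggests that thick sets correspond to sets whose orbit closure is everything, which is exactly the engine here.

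For the forward direction ($G$ extremely amenable $\Rightarrow$ $\mathcal{K}$ has RP), I would argue contrapositively: if some $\mathbf{A} \in \mathcal{K}$ fails to be a Ramsey object, then by Proposition 4.8 there is a syndetic $2$-coloring $\gamma$ of $\emb{\mathbf{A},\mathbf{K}}$. I would use this coloring to build a nontrivial $G$-flow with no fixed point. The natural construction is to push $\gamma$ forward: consider the space of ``colorings consistent with the $G$-action'' sitting inside a product $[2]^{\emb{\mathbf{A},\mathbf{K}}}$ with the right $G$-action, take the orbit closure of $\gamma$, and observe that because both color classes are syndetic, the flow cannot collapse to a point (a fixed point would force one color class to be co-thick, contradicting syndeticity of the other). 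This realizes a genuine two-point-separated minimal flow, witnessing that $M(G)$ is not a singleton.

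The main obstacle I anticipate is the careful bookkeeping in translating between the combinatorial objects ($k$-colorings of $\emb{\mathbf{A},\mathbf{K}}$, thickness, syndeticity) and the topological-dynamical objects (continuous functions on $S(G)$, orbit closures, minimal subflows), and in particular making precise the correspondence between a point of the greatest ambit and a coherent system of colorings of all the $\emb{\mathbf{A}_n, \mathbf{K}}$. The fact that the paper defers its explicit construction of $S(G)$ to Section 6 suggests the author will route the proof through that construction, so I would expect the honest version of this argument to rely on identifying $S(G)$ concretely as a space of ultrafilters or consistent colorings (via the $\beta$-compactification machinery of Section 2.2) rather than the purely abstract product construction of Theorem 2.1. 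Getting the direction of the action right—justifying the choice of right logic actions flagged in the introduction—will be the delicate point, since it determines whether thickness/syndeticity match up with density of orbits as required.
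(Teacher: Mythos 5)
Your proposal rests on a false equivalence: you assert that ``$M(G)$ is a singleton iff the greatest ambit $S(G)$ is itself minimal,'' and your entire reverse direction (RP $\Rightarrow$ extreme amenability) is organized around ``forcing density of every orbit and hence minimality of $S(G)$.'' The correct statement is that $M(G)$ is a singleton iff $S(G)$ contains a \emph{fixed point}, i.e.\ a one-point subflow; minimality of $S(G)$ is a different and generally false condition. Indeed, for any nontrivial extremely amenable $G$ (e.g.\ $\aut{\mathbb{Q},<}$), $S(G)$ contains a fixed point whose orbit is a single point and hence not dense, so $S(G)$ is very far from minimal even though $M(G)$ is a singleton; conversely a nontrivial compact group has $S(G)=G$ minimal but $M(G)=G$ not a singleton. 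So the reverse direction as planned proves the wrong statement and cannot be repaired by bookkeeping. What the paper does instead (Section 7) is: realize $S(G)$ explicitly as $\varprojlim \beta H_n$, prove that $\alpha$ is a fixed point iff each $\alpha(n)$ is a \emph{thick ultrafilter} (Theorem 7.3), prove that thick ultrafilters on $H_n$ exist iff $\mathbf{A}_n$ is a Ramsey object --- the point being that the non-thick sets form an ideal exactly in that case (Proposition 7.1) --- and prove a lifting lemma (Proposition 7.2) guaranteeing that the inverse system of thick ultrafilters is surjective enough for $\varprojlim R_n$ to be nonempty. None of these three steps appears in your outline, and all are needed.

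Your forward direction is essentially sound and close to the classical KPT argument: a non-Ramsey $\mathbf{A}$ yields a full syndetic $2$-coloring $\gamma$ of $\emb{\mathbf{A},\mathbf{K}}$ by Proposition 4.8, and the orbit closure $\overline{G\cdot\gamma}$ is a flow whose fixed points would be $G$-invariant colorings, which are constant by transitivity of $G$ on $\emb{\mathbf{A},\mathbf{K}}$. But to rule out fixed points you still need the (nontrivial, and in the paper only asserted later, as fact (3) before Lemma 8.4) claim that every element of $\overline{G\cdot\gamma}$ is again a \emph{full} syndetic coloring, hence non-constant; as written you wave at this with ``a fixed point would force one color class to be co-thick.'' Note also that in the paper this direction falls out of Theorem 7.3 for free: if $\mathbf{A}_n$ is not Ramsey then $R_n=\emptyset$, so no fixed point of $S(G)$ exists. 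Finally, since the theorem is stated for Fra\"iss\'e--HP classes, you should invoke the equivalence of being Ramsey in $\mathcal{K}$ versus in $\age{\mathbf{K}}=\mathcal{K}\!\!\downarrow$ (Proposition 4.1), which your write-up does not mention.
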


\begin{rem}
Once again, we are using Ramsey Property for embeddings; see Corollary 4.5.
\end{rem}

Let $L$ be a language and $L^* = L\cup \mathcal{S}$, where $\mathcal{S} = \{S_i: i\in\mathbb{N}\}$ and the $S_i$ are new relational symbols of arity $n(i)$. If $\mathbf{A}$ is an $L^*$-structure, write $\mathbf{A}|_L$ for the structure obtained by throwing away the interpretations of the $S_i$. If $\mathcal{K}^*$ is a class of $L^*$-structures, set $\mathcal{K}^*|_L = \{\mathbf{A}^*|_L: \mathbf{A}^*\in \mathcal{K}^*\}$. If $\mathcal{K} = \mathcal{K}^*|_L$ and $\mathcal{K}^*$ is closed under isomorphism, we say that $\mathcal{K}^*$ is an \emph{expansion} of $\mathcal{K}$. If $\mathbf{A}^*\in\mathcal{K}^*$ and $\mathbf{A}^*|_L = \mathbf{A}$, then we say that $\mathbf{A}^*$ is an expansion of $\mathbf{A}$. If $f \in \emb{\mathbf{A}, \mathbf{B}}$ and $\mathbf{B}^*$ is an expansion of $\mathbf{B}$, we let $\mathbf{A}(f, \mathbf{B}^*)$ be the unique expansion of $\mathbf{A}$ so that $f\in \emb{\mathbf{A}(f, \mathbf{B}^*), \mathbf{B}^*}$. The expansion $\mathcal{K}^*$ is \emph{precompact} if for each $\mathbf{A}\in \mathcal{K}$, the set $\{\mathbf{A}^*\in\mathcal{K}^*: \mathbf{A}^*|_L = \mathbf{A}\}$ is finite. 

If $\mathcal{K}^*$ is an expansion of the class $\mathcal{K}$, it will be useful to think of the pair $(\mathcal{K}^*,\mathcal{K})$ as a category as follows. If $X\subseteq \mathcal{K}$ is a set (as opposed to a proper class), say that $X$ is \emph{adequate} for $\mathcal{K}$ if $X$ contains at least one representative of each isomorphism type in $\mathcal{K}$. For $X$ adequate, let $\mathrm{Cat}_X(\mathcal{K}^*,\mathcal{K})$ be the category $C$ with $\ob{C} = \{\mathbf{A}^*: \mathbf{A}^* \text{ is an expansion of some } \mathbf{A}\in X\}$ and with $\arr{C}$ the set of embeddings between structures in $\ob{C}$. If $\mathcal{K}^*$ and $\mathcal{K}^{**}$ are two expansions of the class $\mathcal{K}$ in languages $L^*$ and $L^{**}$, we say that $\mathcal{K}^*$ and $\mathcal{K}^{**}$ are \emph{isomorphic expansions} if for any adequate $X$, there is a fully faithful functor $\Phi_X: \mathrm{Cat}_X(\mathcal{K}^*,\mathcal{K})\rightarrow \mathrm{Cat}_X(\mathcal{K}^{**}, \mathcal{K})$ with fully faithful inverse satisfying
\begin{itemize}
\item
$\Phi_X(\mathbf{A}^*)|_L = \mathbf{A}^*|_L$ for any $\mathbf{A}^*\in\ob{\mathrm{Cat}_X(\mathcal{K}^*,\mathcal{K})}$,
\item
$\Phi_X(f) = f$ for any $f\in \arr{\mathrm{Cat}(\mathcal{K}^*,\mathcal{K})}$.
\end{itemize}
We will call such a $\Phi_X$ an \emph{isomorphism of expansions}. Notice that $L^*$ need not equal $L^{**}$ for $\mathcal{K}^*$ and $\mathcal{K}^{**}$ to be isomorphic.

\begin{prop}
If $\mathcal{K}^*$ and $\mathcal{K}^{**}$ are isomorphic, then for any adequate $X$, there is an isomorphism of expansions $\Phi_X: \mathrm{Cat}_X(\mathcal{K}^*,\mathcal{K})\rightarrow \mathrm{Cat}_X(\mathcal{K}^{**},\mathcal{K})$. 
\end{prop}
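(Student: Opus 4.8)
The plan is to prove the statement by transport: the hypothesis that $\mathcal{K}^*$ and $\mathcal{K}^{**}$ are isomorphic furnishes an isomorphism of expansions $\Phi_Y$ on some adequate set $Y$, and given an arbitrary adequate $X$ I would manufacture $\Phi_X$ out of $\Phi_Y$, exploiting the fact that any two adequate sets contain isomorphic representatives of each isomorphism type in $\mathcal{K}$. The whole point is that $\Phi_Y$ merely \emph{reinterprets} the extra structure while fixing $L$-reducts and fixing arrows, so it can be conjugated across isomorphisms of the underlying $L$-structures.

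First I would fix, for each $\mathbf{A} \in X$, a structure $\sigma(\mathbf{A}) \in Y$ together with an $L$-isomorphism $\eta_\mathbf{A}: \mathbf{A}\rightarrow \sigma(\mathbf{A})$, using adequacy of $Y$. Each $L^*$-expansion $\mathbf{A}^*$ of $\mathbf{A}$ then transports along $\eta_\mathbf{A}$ to an $L^*$-expansion $\mathbf{A}^*_\eta$ of $\sigma(\mathbf{A})$ for which $\eta_\mathbf{A}$ is an $L^*$-isomorphism; since $\mathcal{K}^*$ is closed under isomorphism, $\mathbf{A}^*_\eta$ is an object of $\mathrm{Cat}_Y(\mathcal{K}^*,\mathcal{K})$. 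I would then define $\Phi_X(\mathbf{A}^*)$ to be the $L^{**}$-expansion of $\mathbf{A}$ obtained by pulling $\Phi_Y(\mathbf{A}^*_\eta)$ back along $\eta_\mathbf{A}^{-1}$. Because $\Phi_Y$ fixes $L$-reducts, $\Phi_Y(\mathbf{A}^*_\eta)$ lives on $\sigma(\mathbf{A})$, so its pullback lives on $\mathbf{A}$; hence $\Phi_X(\mathbf{A}^*)|_L = \mathbf{A}^*|_L$ and the first bullet condition holds by construction.

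Next I would handle arrows, setting $\Phi_X(f) = f$ and verifying this is a legitimate arrow of the target category. Given an $L^*$-embedding $f : \mathbf{A}^*\rightarrow \mathbf{B}^*$ (in particular an $L$-embedding $\mathbf{A}\rightarrow \mathbf{B}$), the composite $g = \eta_\mathbf{B}\circ f\circ \eta_\mathbf{A}^{-1}$ is an $L^*$-embedding $\mathbf{A}^*_\eta \rightarrow \mathbf{B}^*_\eta$ between objects of $\mathrm{Cat}_Y(\mathcal{K}^*,\mathcal{K})$. Since $\Phi_Y$ is the identity on arrows, $g = \Phi_Y(g)$ is an $L^{**}$-embedding $\Phi_Y(\mathbf{A}^*_\eta)\rightarrow \Phi_Y(\mathbf{B}^*_\eta)$, and because $\eta_\mathbf{A},\eta_\mathbf{B}$ are $L^{**}$-isomorphisms onto the respective pullbacks, $f = \eta_\mathbf{B}^{-1}\circ g\circ \eta_\mathbf{A}$ is an $L^{**}$-embedding $\Phi_X(\mathbf{A}^*)\rightarrow \Phi_X(\mathbf{B}^*)$. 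This simultaneously gives functoriality and the second bullet condition. Full faithfulness transfers because conjugation by the fixed $\eta$'s together with the hom-set bijection supplied by $\Phi_Y$ biject $\mathrm{Hom}$-sets in the two categories, matching $f$ with $f$ throughout; the inverse $\Psi_X$ is built identically from the inverse $\Psi_Y$, so $\Psi_X\Phi_X$ and $\Phi_X\Psi_X$ are identities.

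The hard part will be the arrow step: I must be sure that the single naming $\Phi_X(f)=f$ yields a genuine $L^{**}$-embedding for \emph{every} $f$ at once, not merely a consistent assignment on objects. This is exactly where I use that $\Phi_Y$ is the identity on arrows rather than an arbitrary equivalence, and where I must keep the transport of expansions on the source and on the target of each embedding coordinated via $\eta_\mathbf{A}$ and $\eta_\mathbf{B}$. Well-definedness is not an issue: once the choices $\sigma(\mathbf{A}),\eta_\mathbf{A}$ are fixed once and for all for each $\mathbf{A}\in X$, the object and arrow assignments are determined, and alternative choices simply produce a different but equally valid isomorphism of expansions.
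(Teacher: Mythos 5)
Your proposal is correct and is essentially the paper's own argument: the paper likewise fixes, for each structure in the target adequate set, an isomorphic representative in the source adequate set together with an isomorphism, transports the expansion across, applies the given isomorphism of expansions, and pulls back (using the notation $\mathbf{A}(f,\mathbf{B}^*)$ for the pullback), leaving arrows fixed. The only difference is that you spell out the verification that $\Phi_X(f)=f$ really is an $L^{**}$-embedding, which the paper dismisses as ``straightforward to check.''
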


\begin{proof}
Let $X$ and $Y$ be adequate, and suppose that there is an isomorphism of expansions $\Phi_X$. Since isomorphisms of expansions are trivial on embeddings, it is enough to define $\Phi_Y$ on objects. For $\mathbf{A}\in Y$, choose isomorphic  $\mathbf{A}_X$ and an isomorphism $f_\mathbf{A}: \mathbf{A}_X\rightarrow \mathbf{A}$. Let $\mathbf{A}^*$ be a  $\mathcal{K}^*$-expansion of $\mathbf{A}$. Then $\Phi_X(\mathbf{A}_X(f_\mathbf{A},\mathbf{A}^*))$ is a $\mathcal{K}^{**}$-expansion of $\mathbf{A}_X$. Set $\Phi_Y(\mathbf{A}^*) = \mathbf{A}(f^{-1}_{\mathbf{A}}, \Phi_X(\mathbf{A}_X(f_\mathbf{A}, \mathbf{A}^*)))$. It is straightforward to check that this works.
\end{proof}

If $\mathcal{K}^*$ is an expansion of the Fra\"iss\'e--HP class $\mathcal{K}$, we say that the pair $(\mathcal{K}^*, \mathcal{K})$ is \emph{reasonable} if for any $\mathbf{A},\mathbf{B}\in\mathcal{K}$, embedding $f: \mathbf{A}\rightarrow \mathbf{B}$, and expansion $\mathbf{A}^*$ of $\mathbf{A}$, then there is an expansion $\mathbf{B}^*$ of $\mathbf{B}$ with $f: \mathbf{A}^*\rightarrow \mathbf{B}^*$ an embedding. When $\mathcal{K}^*$ is also a Fra\"iss\'e--HP class, we have the following equivalent definition.

\begin{prop}
Let $\mathcal{K}^*$ be a Fra\"iss\'e--HP expansion class of the Fra\"iss\'e--HP class $\mathcal{K}$ with \fr limits $\mathbf{K}^*,\mathbf{K}$ respectively. Then the pair $(\mathcal{K}^*,\mathcal{K})$ is reasonable iff $\mathbf{K}^*|_L \cong \mathbf{K}$.
\end{prop}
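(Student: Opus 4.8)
The plan is to prove both implications through the intrinsic characterization of a Fra\"iss\'e--HP limit as the unique countable locally finite structure whose age is $\mathcal{K}\!\!\downarrow$ and which is $\mathcal{K}$-homogeneous, i.e.\ which satisfies the extension property for $\mathcal{K}$. Fix a $\mathcal{K}^*$-exhaustion $\mathbf{K}^* = \bigcup_n \mathbf{C}_n^*$ with $\mathbf{C}_n^*\in\mathcal{K}^*$; reducing, $\mathbf{K}^*|_L = \bigcup_n(\mathbf{C}_n^*|_L)$ is a $\mathcal{K}$-exhaustion, so $\mathbf{K}^*|_L$ is locally finite. Since every finite substructure of $\mathbf{K}^*|_L$ is the $L$-reduct of a substructure of $\mathbf{K}^*$, hence lies in $\age{\mathbf{K}^*} = \mathcal{K}^*\!\!\downarrow$, and since reducts commute with passing to substructures, one reads off $\age{\mathbf{K}^*|_L} = (\mathcal{K}^*|_L)\!\!\downarrow = \mathcal{K}\!\!\downarrow$ immediately. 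Thus the whole content lies in the interaction between reasonableness and $\mathcal{K}$-homogeneity of the reduct.

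For the forward direction (reasonable $\Rightarrow \mathbf{K}^*|_L\cong\mathbf{K}$) I would verify the extension property for $\mathbf{K}^*|_L$ and then invoke uniqueness. Fix $\mathbf{A}\subseteq\mathbf{B}\in\mathcal{K}$ and an $L$-embedding $g:\mathbf{A}\to\mathbf{K}^*|_L$. First catch the image: choose $n$ with $g(\mathbf{A})\subseteq\mathbf{C}_n^*|_L$ and write $\mathbf{C}^* = \mathbf{C}_n^*\in\mathcal{K}^*$, $\mathbf{C} = \mathbf{C}^*|_L\in\mathcal{K}$, so that $g$ corestricts to an embedding $\mathbf{A}\to\mathbf{C}$. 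Amalgamating $\mathbf{B}$ and $\mathbf{C}$ over $\mathbf{A}$ inside $\mathcal{K}$ gives $\mathbf{D}\in\mathcal{K}$ with $r:\mathbf{B}\to\mathbf{D}$, $s:\mathbf{C}\to\mathbf{D}$ and $r\circ\mathrm{incl}_\mathbf{A} = s\circ g$. The key move is to apply reasonableness to the embedding $s:\mathbf{C}\to\mathbf{D}$ together with the expansion $\mathbf{C}^*\in\mathcal{K}^*$: this yields $\mathbf{D}^*\in\mathcal{K}^*$ expanding $\mathbf{D}$ with $s:\mathbf{C}^*\to\mathbf{D}^*$ an $L^*$-embedding. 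Embed $e:\mathbf{D}^*\to\mathbf{K}^*$; then $e\circ s$ and the inclusion $\mathbf{C}^*\hookrightarrow\mathbf{K}^*$ are two embeddings of the $\mathcal{K}^*$-object $\mathbf{C}^*$ into $\mathbf{K}^*$, so $\mathcal{K}^*$-homogeneity gives $\alpha\in\aut{\mathbf{K}^*}$ with $\alpha|_{\mathbf{C}^*} = e\circ s$. Setting $\beta = \alpha^{-1}\circ e$ we get $\beta\circ s = \mathrm{id}$ on $\mathbf{C}^*$, whence $h := (\beta\circ r)|_L:\mathbf{B}\to\mathbf{K}^*|_L$ is an $L$-embedding with $h(a) = \beta(s(g(a))) = g(a)$ for $a\in\mathbf{A}$. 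So $h$ extends $g$, $\mathbf{K}^*|_L$ has the extension property, and $\mathbf{K}^*|_L\cong\mathbf{K}$.

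For the reverse direction (reducts agree $\Rightarrow$ reasonable) I would run the mirror construction. Given $\mathbf{A}\subseteq\mathbf{B}\in\mathcal{K}$ and an expansion $\mathbf{A}^*\in\mathcal{K}^*$ of $\mathbf{A}$, embed $\mathbf{A}^*\subseteq\mathbf{K}^*$, so $\mathbf{A}\subseteq\mathbf{K} = \mathbf{K}^*|_L$. Using the extension property of $\mathbf{K}$ (available since $\mathbf{K}^*|_L\cong\mathbf{K}$), extend the inclusion to $j:\mathbf{B}\to\mathbf{K}$ fixing $\mathbf{A}$ pointwise, and let $\mathbf{B}^*$ be the pullback along $j$ of the $L^*$-structure that $\mathbf{K}^*$ induces on $j(\mathbf{B})$. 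Then $\mathbf{B}^*|_L = \mathbf{B}$, and since $\mathbf{A}^*\subseteq\mathbf{K}^*$ sits on $\mathbf{A}\subseteq j(\mathbf{B})$, the inclusion $f:\mathbf{A}^*\to\mathbf{B}^*$ is an $L^*$-embedding. So $\mathbf{B}^*$ lies in $\age{\mathbf{K}^*} = \mathcal{K}^*\!\!\downarrow$ and has every desired property except that I still owe membership in $\mathcal{K}^*$ itself.

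The main obstacle is exactly this last point. Because $\mathcal{K}^*$ is only Fra\"iss\'e \emph{minus} HP, a finite substructure of $\mathbf{K}^*$ whose $L$-reduct happens to lie in $\mathcal{K}$ need \emph{not} belong to $\mathcal{K}^*$, so $\mathbf{B}^*\in\mathcal{K}^*$ cannot simply be read off the construction (indeed one can exhibit substructures of $\mathbf{K}^*$ with reduct in $\mathcal{K}$ lying in $\mathcal{K}^*\!\!\downarrow\setminus\mathcal{K}^*$). What I expect must be exploited is that $\mathbf{B}^*$ is not arbitrary: it contains the \emph{genuine} $\mathcal{K}^*$-object $\mathbf{A}^*$ as an $L^*$-substructure, and inside the homogeneous $\mathbf{K}^*$ the defining constraints of $\mathcal{K}^*$ should propagate upward along $\mathbf{A}^*\subseteq\mathbf{B}^*$, so that the presence of a genuine member below forces $\mathbf{B}^*\in\mathcal{K}^*$. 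Turning this propagation — driven by $\mathcal{K}^*$-homogeneity of $\mathbf{K}^*$ and by the fact that reasonableness quantifies only over $\mathbf{A}^*\in\mathcal{K}^*$ rather than over all of $\mathcal{K}^*\!\!\downarrow$ — into a rigorous argument is the crux of the reverse implication and the step I would scrutinize most.
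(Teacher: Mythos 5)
Your direction ``reasonable $\Rightarrow \mathbf{K}^*|_L\cong\mathbf{K}$'' is correct and genuinely different from the paper's. The paper constructs an isomorphism $\mathbf{K}\rightarrow\mathbf{K}^*|_L$ directly by a back-and-forth between a $\mathcal{K}$-exhaustion of $\mathbf{K}$ and the reduct of a $\mathcal{K}^*$-exhaustion of $\mathbf{K}^*$, applying reasonableness together with the extension property of $\mathbf{K}^*$ at alternate steps. You instead prove a one-step extension lemma for $\mathbf{K}^*|_L$ (amalgamation in $\mathcal{K}$, one application of reasonableness, then $\mathcal{K}^*$-homogeneity to retract onto $\mathbf{C}^*$) and quote uniqueness of the Fra\"iss\'e--HP limit; this localizes the use of reasonableness and runs the back-and-forth only once, inside the uniqueness theorem. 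The one point to make explicit is that the base $\mathbf{A}$ of your amalgamation must lie in $\mathcal{K}$, not merely in $\mathcal{K}\!\!\downarrow$, since AP for a Fra\"iss\'e--HP class is only stated over bases in $\mathcal{K}$; this is harmless because the uniqueness back-and-forth only ever extends embeddings defined on members of the exhaustions.

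The other direction is where your proposal is incomplete, and the repair you propose to investigate does not work. You correctly observe that the pulled-back expansion $\mathbf{B}^*$ lies only in $\age{\mathbf{K}^*}=\mathcal{K}^*\!\!\downarrow$, but your hope that ``the presence of a genuine member below forces $\mathbf{B}^*\in\mathcal{K}^*$'' is false. Take $\mathcal{K}$ to be the class of finite sets ($L$ empty) and $\mathcal{K}^*$ the class of finite sets with a unary predicate $P$ of even cardinality: this has JEP and AP (free amalgamation gives $|P^\mathbf{D}|=|P^\mathbf{B}|+|P^\mathbf{C}|-|P^\mathbf{A}|$, even), is not hereditary, and satisfies $\mathcal{K}^*|_L=\mathcal{K}$ and $\mathbf{K}^*|_L\cong\mathbf{K}$. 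Let $\mathbf{A}^*$ be two points both in $P$ (so $\mathbf{A}^*\in\mathcal{K}^*$) and let $\mathbf{B}\supseteq\mathbf{A}$ have three points; the extension property of $\mathbf{K}$ may well send the third point into $P^{\mathbf{K}^*}$, producing $\mathbf{B}^*$ with $|P|=3$, which contains the genuine member $\mathbf{A}^*$, has reduct in $\mathcal{K}$, and is not in $\mathcal{K}^*$. So no upward propagation is available, and the construction as you (and, for what it is worth, the paper --- its first paragraph is the identical construction and simply asserts it suffices) have set it up can output a witness outside $\mathcal{K}^*$. The pair in this example is still reasonable (put the third point outside $P$), so the proposition is not contradicted; but finishing this direction requires either an extra argument producing a \emph{different} expansion of $\mathbf{B}$ in $\mathcal{K}^*$ over $\mathbf{A}^*$, or the additional standing hypothesis --- satisfied in all of the paper's applications, e.g.\ the class $\mathcal{K}(Y)$ of section 8 and expansions presented by listing the admissible expansions of each $\mathbf{A}\in\mathcal{K}$ --- that any member of $\mathcal{K}^*\!\!\downarrow$ whose reduct lies in $\mathcal{K}$ already lies in $\mathcal{K}^*$, under which your construction closes immediately.
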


\begin{proof}
Assume that $\mathbf{K}^*|_L\cong \mathbf{K}$. Fix $\mathbf{A}^*\in\fin{\mathbf{K}^*}\cap \mathcal{K}^*$ and $\mathbf{B}\in \mathcal{K}$ with $\mathbf{A}^*|_L\subseteq \mathbf{B}$. Use the extension property for $\mathbf{K}$ to find an embedding $f: \mathbf{B}\rightarrow\mathbf{K}^*|_L$ extending the inclusion of $\mathbf{A}^*|_L$. Now define the expansion $\mathbf{B}^*$ of $\mathbf{B}$ by declaring that $S_i^{\mathbf{B}^*}(x_1,...,x_{n_i})$ holds iff $S_i^{\mathbf{K}^*}(f(x_1),...,f(x_{n_i}))$ holds. This is enough to show that $(\mathcal{K}^*,\mathcal{K})$ is reasonable.

Conversely, suppose $(\mathcal{K}^*,\mathcal{K})$ is reasonable. Let $\mathbf{K}^* = \bigcup_{n} \mathbf{A}_n^*$ be a $\mathcal{K}^*$-exhaustion, and let $\mathbf{K} = \bigcup_n \mathbf{B}_n$ be a $\mathcal{K}$-exhaustion. Set $\mathbf{A}_n = \mathbf{A}^*_n|_L$; let $f_1: \mathbf{A}_1\rightarrow \mathbf{B}_{n_1}$ for some large enough $n_1$. Using the reasonable property, choose an expansion $\mathbf{B}_{n_1}^*$ of $\mathbf{B}_{n_1}$ with $f_1: \mathbf{A}_1^*\rightarrow \mathbf{B}_{n_1}^*$ an embedding. Then use the extension property for $\mathbf{K}^*$ to find an embedding $f_2: \mathbf{B}_{n_1}\rightarrow \mathbf{A}_{n_2}$ extending $f_1^{-1}$ for some large enough $n_2$. If $f_k$ is defined and $k$ is even, use the extension property for $\mathbf{K}$ to find $f_{k+1}: \mathbf{A}_{n_k}\rightarrow \mathbf{B}_{n_{k+1}}$ extending $f_k^{-1}$. If $k$ is odd, use the reasonable property and the extension property for $\mathbf{K}^*$ to define $f_{k+1}$ extending $f_k^{-1}$. We proceed in this manner, building an isomorphism $\bigcup_{n} f_{2n}: \mathbf{K}\rightarrow \mathbf{K}^*|_L$.
\end{proof}
\vspace{2 mm}

Now suppose $(\mathcal{K}^*,\mathcal{K})$ is reasonable and precompact. Set
$$X_{\mathcal{K}^*} := \{\langle \mathbf{K},\vec{S}\rangle: \langle \mathbf{A}, \vec{S}|_A\rangle \in \mathcal{K}^* \text{ whenever } \mathbf{A}\in \fin{\mathbf{K}}\cap\mathcal{K}\}$$
We topologize this space by declaring the basic open neighborhoods to be of the form $N(\mathbf{A}^*) := \{\mathbf{K}'\in X_\mathcal{K}^*: \mathbf{A}^*\subseteq \mathbf{K}'\}$, where $\mathbf{A}^*$ is an expansion of some $\mathbf{A}\in\fin{\mathbf{K}}\cap \mathcal{K}$. We can view $X_{\mathcal{K}^*}$ as a closed subspace of 
$$\prod_{\mathbf{A}\in \fin{\mathbf{K}}\cap \mathcal{K}} \{\mathbf{A}^*: \mathbf{A}^* \text{ is an expansion of } \mathbf{A}\}$$.
Notice that since $(\mathcal{K}^*,\mathcal{K})$ is precompact, $X_{\mathcal{K}^*}$ is compact. If $\bigcup_n \mathbf{A}_n = \mathbf{K}$ is an exhaustion, a compatible metric is given by 
$$d(\langle \mathbf{K}, \vec{S}\rangle, \langle \mathbf{K}, \vec{T}\rangle = 1/k(\vec{S}, \vec{T})$$
where $k(\vec{S}, \vec{T})$ is the largest $k$ for which $\langle \mathbf{A}_k, \vec{S}|_{A_k}\rangle\cong \langle \mathbf{A}_k, \vec{T}|_{A_k}\rangle$.

We can now form the (right) logic action of $G = \aut{\mathbf{K}}$ on $X_{\mathcal{K}^*}$ by setting $\mathbf{K}'\cdot g$ to be the structure where $S_i^{(\mathbf{K}'g)}(x_1,...,x_{n_i})$ holds iff $S_i^{\mathbf{K}'}(g(x_1),...,g(x_{n_i}))$ holds. It is easy to check that this action is jointly continuous, turning $X_{\mathcal{K}^*}$ into a $G$-flow. For readers used to left logic actions, acting on the right by $g$ is the same as acting on the left by $g^{-1}$. 

\begin{prop}
If $\mathcal{K}$ is a Fra\"iss\'e--HP class and $\mathcal{K}^*$ and $\mathcal{K}^{**}$ are isomorphic expansions of $\mathcal{K}$ with each reasonable and precompact, then $X_{\mathcal{K}^*}\cong X_{\mathcal{K}^{**}}$. 
\end{prop}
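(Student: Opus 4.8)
The plan is to build an explicit $G$-flow isomorphism $\Psi\colon X_{\mathcal{K}^*}\to X_{\mathcal{K}^{**}}$ out of the categorical data. First I would fix the adequate set $X=\fin{\mathbf{K}}\cap\mathcal{K}$; this is genuinely adequate for $\mathcal{K}$ because $\mathbf{K}=\flim{\mathcal{K}}$ has a $\mathcal{K}$-exhaustion, so every isomorphism type in $\mathcal{K}$ occurs as a finite substructure of $\mathbf{K}$ lying in $\mathcal{K}$. The hypothesis (together with the preceding proposition producing $\Phi_X$ on any adequate set) yields an isomorphism of expansions $\Phi_X\colon\mathrm{Cat}_X(\mathcal{K}^*,\mathcal{K})\to\mathrm{Cat}_X(\mathcal{K}^{**},\mathcal{K})$ which fixes $L$-reducts and is the identity on embeddings, with an inverse functor $\Phi_X^{-1}$ enjoying the same two properties.

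Given $\langle\mathbf{K},\vec{S}\rangle\in X_{\mathcal{K}^*}$, each finite restriction $\langle\mathbf{A},\vec{S}|_A\rangle$ with $\mathbf{A}\in\fin{\mathbf{K}}\cap\mathcal{K}$ is an object of the category, and I would set $\Psi(\langle\mathbf{K},\vec{S}\rangle)=\langle\mathbf{K},\vec{T}\rangle$, where $\langle\mathbf{A},\vec{T}|_A\rangle:=\Phi_X(\langle\mathbf{A},\vec{S}|_A\rangle)$. The coherence check is that these finite pieces glue: whenever $\mathbf{A}\subseteq\mathbf{B}$ in $\fin{\mathbf{K}}\cap\mathcal{K}$, the inclusion $\iota$ is an embedding $\langle\mathbf{A},\vec{S}|_A\rangle\to\langle\mathbf{B},\vec{S}|_B\rangle$, so applying $\Phi_X$ (which fixes embeddings) exhibits $\iota$ as an embedding $\Phi_X(\langle\mathbf{A},\vec{S}|_A\rangle)\to\Phi_X(\langle\mathbf{B},\vec{S}|_B\rangle)$ with underlying map the inclusion $A\subseteq B$; this forces $\Phi_X(\langle\mathbf{B},\vec{S}|_B\rangle)|_A=\Phi_X(\langle\mathbf{A},\vec{S}|_A\rangle)$. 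Since $\mathbf{K}$ has a $\mathcal{K}$-exhaustion, every tuple lies in such an $\mathbf{A}$, so $\vec{T}$ is well defined on all of $\mathbf{K}$. Because $\Phi_X$ fixes $L$-reducts, $\langle\mathbf{K},\vec{T}\rangle$ is an expansion of $\mathbf{K}$, and as each $\langle\mathbf{A},\vec{T}|_A\rangle\in\mathcal{K}^{**}$ it lies in $X_{\mathcal{K}^{**}}$. Running the same construction with $\Phi_X^{-1}$ produces a two-sided inverse, so $\Psi$ is a bijection.

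For the homeomorphism claim I would note that for fixed $\mathbf{B}\in\fin{\mathbf{K}}\cap\mathcal{K}$, $\Phi_X$ restricts to a bijection between $\mathcal{K}^*$-expansions of $\mathbf{B}$ and $\mathcal{K}^{**}$-expansions of $\mathbf{B}$ (injective and surjective via $\Phi_X^{-1}$, using that both fix $L$-reducts). Hence $\Psi^{-1}(N(\mathbf{B}^{**}))=N(\mathbf{B}^*)$, where $\mathbf{B}^*$ is the unique $\mathcal{K}^*$-expansion with $\Phi_X(\mathbf{B}^*)=\mathbf{B}^{**}$, so preimages of basic open sets are basic open, and symmetrically for $\Psi$; alternatively, $\Psi$ is a continuous bijection of compact Hausdorff spaces, hence a homeomorphism.

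The step I expect to be the main obstacle is $G$-equivariance, $\Psi(\langle\mathbf{K},\vec{S}\rangle\cdot g)=\Psi(\langle\mathbf{K},\vec{S}\rangle)\cdot g$, which I would test on each finite $\mathbf{A}$. Writing $\langle\mathbf{K},\vec{S}\rangle\cdot g=\langle\mathbf{K},\vec{S}_g\rangle$, the restriction $g|_A$ is an isomorphism of $L^*$-structures from $\langle\mathbf{A},\vec{S}_g|_A\rangle$ onto $\langle g(A),\vec{S}|_{g(A)}\rangle$, with both $\mathbf{A}$ and $g(A)$ in $\fin{\mathbf{K}}\cap\mathcal{K}$ since $g\in\aut{\mathbf{K}}$. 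Thus $g|_A$ is an arrow of the category; because $\Phi_X$ fixes arrows, $g|_A$ is also an isomorphism $\Phi_X(\langle\mathbf{A},\vec{S}_g|_A\rangle)\to\Phi_X(\langle g(A),\vec{S}|_{g(A)}\rangle)$ of $L^{**}$-structures, i.e.\ $\langle\mathbf{A},\vec{T}'|_A\rangle\to\langle g(A),\vec{T}|_{g(A)}\rangle$, where $\langle\mathbf{K},\vec{T}'\rangle:=\Psi(\langle\mathbf{K},\vec{S}_g\rangle)$. Unwinding this isomorphism gives $S_i^{\vec{T}'}(\bar{x})\Leftrightarrow S_i^{\vec{T}}(g\bar{x})$ for every tuple $\bar{x}$ from $A$, which is exactly $\vec{T}'|_A=\vec{T}_g|_A$; letting $\mathbf{A}$ exhaust $\mathbf{K}$ yields $\Psi(\langle\mathbf{K},\vec{S}_g\rangle)=\langle\mathbf{K},\vec{T}\rangle\cdot g$. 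The only real care needed is bookkeeping the right logic action correctly and recognizing $g|_A$ as a categorical arrow so that triviality of $\Phi_X$ on embeddings applies.
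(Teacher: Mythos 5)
Your proposal is correct and follows essentially the same route as the paper: fix the adequate set $X = \fin{\mathbf{K}}\cap\mathcal{K}$, apply the isomorphism of expansions $\Phi_X$ levelwise along an exhaustion of $\mathbf{K}$, and use that $\Phi_X$ fixes $L$-reducts and embeddings to get coherence, bijectivity, continuity, and $G$-equivariance. The paper leaves these verifications as ``straightforward to check''; you have simply written them out.
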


\begin{proof}
Let $\mathbf{K} = \flim{\mathcal{K}}$, and fix a $\mathcal{K}$-exhaustion $\bigcup_n \mathbf{A}_n$. Set $X = \fin{\mathbf{K}}\cap \mathcal{K}$, and let $\Phi_X: \mathrm{Cat}_X(\mathcal{K}^*,\mathcal{K})\rightarrow \mathrm{Cat}_X(\mathcal{K}^{**},\mathcal{K})$ be an isomorphism of expansions. Define a map $\phi: X_{\mathcal{K}^*}\rightarrow X_{\mathcal{K}^{**}}$ via $\phi(\langle \mathbf{K}, \vec{S}\rangle) = \bigcup_n \Phi_X(\langle \mathbf{A}_n, \vec{S}|_{A_n}\rangle)$. Notice that since $\Phi_X$ respects embeddings, the right hand side is a member of $X_{\mathcal{K}^{**}}$. It is straightforward to check that this is a continuous bijection which respects $G$-action.
\end{proof}
\vspace{2 mm}

First let us consider when $X_{\mathcal{K}^*}$ is a minimal $G$-flow. We say that the pair $(\mathcal{K}^*,\mathcal{K})$ has the \emph{Expansion Property} (ExpP) when for any $\mathbf{A}^*\in\mathcal{K}^*$, there is $\mathbf{B}\in\mathcal{K}$ such that for any expansion $\mathbf{B}^*$ of $\mathbf{B}$, there is an embedding $f: \mathbf{A}^*\rightarrow \mathbf{B}^*$.

\begin{prop}
Let $\mathcal{K}^*$ be a reasonable, precompact Fra\"iss\'e--HP expansion class of the Fra\"iss\'e--HP class $\mathcal{K}$ with \fr limits $\mathbf{K}^*, \mathbf{K}$ respectively. Let $G = \aut{\mathbf{K}}$. Then the $G$-flow $X_{\mathcal{K}^*}$ is minimal iff the pair $(\mathcal{K}^*,\mathcal{K})$ has the ExpP.
\end{prop}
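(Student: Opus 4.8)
The plan is to prove both directions by connecting the density of orbits in $X_{\mathcal{K}^*}$ to the combinatorial Expansion Property. The key translation is that a point $\langle \mathbf{K}, \vec{S}\rangle \in X_{\mathcal{K}^*}$ has dense orbit precisely when every basic open set $N(\mathbf{B}^*)$ is met by some $\langle \mathbf{K}, \vec{S}\rangle \cdot g$; unpacking the right logic action, this says that for every expansion $\mathbf{B}^*$ realized as a substructure of some point in $X_{\mathcal{K}^*}$, there is $g \in G$ so that $\mathbf{B}^*$ embeds into $\langle \mathbf{K}, \vec{S}_g\rangle$ via the inclusion of its underlying domain. Since $\mathbf{K}^* = \flim{\mathcal{K}^*}$ is itself (up to relabeling) a point of $X_{\mathcal{K}^*}$, and by Proposition 5.4 (reasonableness) its reduct is $\mathbf{K}$, I expect to use $\langle \mathbf{K}, \vec{S}^*\rangle := \mathbf{K}^*$ as a canonical point whose orbit closure is the whole space; minimality then amounts to asking whether \emph{every} point has dense orbit, equivalently whether every point's orbit closure contains this canonical point, equivalently whether every point realizes every finite expanded structure $\mathbf{A}^* \in \mathcal{K}^*$ somewhere.

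For the direction (ExpP $\Rightarrow$ minimal), I would fix an arbitrary $\langle \mathbf{K}, \vec{T}\rangle \in X_{\mathcal{K}^*}$ and a basic open set $N(\mathbf{A}^*)$, and show the orbit meets $N(\mathbf{A}^*)$. First I apply ExpP to $\mathbf{A}^*$ to obtain $\mathbf{B} \in \mathcal{K}$ such that every expansion of $\mathbf{B}$ admits an embedding of $\mathbf{A}^*$. Using the extension property for $\mathbf{K}$, I find an embedding of $\mathbf{B}$ into $\mathbf{K}$; the point $\langle \mathbf{K}, \vec{T}\rangle$ induces an expansion $\mathbf{B}^*$ on the image of this $\mathbf{B}$, and ExpP supplies an embedding $f\colon \mathbf{A}^* \to \mathbf{B}^*$. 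This embedding is a partial isomorphism of $\mathbf{K}$, which by ultrahomogeneity extends to some $g \in G = \aut{\mathbf{K}}$; translating by (the appropriate inverse of) $g$ moves the copy of $\mathbf{A}^*$ onto the initial segment witnessing membership in $N(\mathbf{A}^*)$. Care with the left-versus-right convention of the logic action is the routine bookkeeping here. Since the basic open set and the starting point were arbitrary, every orbit is dense, so $X_{\mathcal{K}^*}$ is minimal.

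For the converse (minimal $\Rightarrow$ ExpP), I argue contrapositively: suppose ExpP fails, witnessed by some $\mathbf{A}^* \in \mathcal{K}^*$ such that for every $\mathbf{B} \in \mathcal{K}$ there is an expansion $\mathbf{B}^*$ of $\mathbf{B}$ into which $\mathbf{A}^*$ does not embed. I would use these bad expansions, together with a compactness or back-and-forth construction over a $\mathcal{K}$-exhaustion $\bigcup_n \mathbf{A}_n$ of $\mathbf{K}$, to assemble a single point $\langle \mathbf{K}, \vec{T}\rangle \in X_{\mathcal{K}^*}$ whose every finite substructure avoids $\mathbf{A}^*$ as a substructure. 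Concretely, one chooses compatible bad expansions of the $\mathbf{A}_n$ whose union is a point of $X_{\mathcal{K}^*}$ lying entirely outside $N(\mathbf{A}^*)$ and all its $G$-translates; since $\mathbf{A}^*$ embeds into $\mathbf{K}^*$, this point's orbit misses the open set $N(\mathbf{A}^*)$ witnessing membership for $\mathbf{K}^*$, so its orbit is not dense and $X_{\mathcal{K}^*}$ is not minimal.

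The main obstacle I anticipate is the assembly step in the converse: ensuring that the family of ``bad'' expansions of the finite pieces $\mathbf{A}_n$ can be chosen \emph{coherently} so that their union is actually a legitimate point of $X_{\mathcal{K}^*}$ (i.e.\ every finite substructure lands in $\mathcal{K}^*$) while simultaneously and uniformly avoiding every embedded copy of $\mathbf{A}^*$. This is where I would lean on precompactness (to keep the space of expansions finite at each stage, enabling a K\"onig's-lemma-style limit) and on the reasonable/Fra\"iss\'e--HP structure of $\mathcal{K}^*$ to guarantee the partial expansions remain amalgamable and extendable. The ExpP $\Rightarrow$ minimal direction, by contrast, is essentially a direct application of ultrahomogeneity and the extension property and should go through cleanly.
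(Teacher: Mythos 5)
Your proposal is correct and follows essentially the same route as the paper: the forward direction uses ExpP to show every point of $X_{\mathcal{K}^*}$ realizes every $\mathbf{A}^*\in\mathcal{K}^*$ and then ultrahomogeneity of $\mathbf{K}$ to translate into any basic open set, and the converse uses a K\"onig's-lemma assembly (enabled by precompactness) of bad expansions along an exhaustion to produce a point whose orbit misses $N(\mathbf{A}^*)$. The coherence issue you flag in the converse is real but is handled at exactly the same level of detail in the paper's own proof.
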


\begin{proof}
Suppose the pair has ExpP. Let $\mathbf{A}^*\in\mathcal{K}^*$, and find $\mathbf{B}\in\mathcal{K}$ witnessing the ExpP for $\mathbf{A}^*$. Pick any $\mathbf{K}'\in X_{\mathcal{K}^*}$, and find $\mathbf{B}'\subseteq \mathbf{K}'$ with $\mathbf{B}'|_L = \mathbf{B}$. Then there is an embedding $f: \mathbf{A}^*\rightarrow \mathbf{B}'\subseteq \mathbf{K}'$, hence $\mathcal{K}^*\subseteq \age{\mathbf{K}'}$. Now fix a basic open neighborhood $N(\mathbf{C}^*)$ of $X_{\mathcal{K}^*}$, where $\mathbf{C}^*$ expands $\mathbf{C}\in\fin{\mathbf{K}}\cap \mathcal{K}$.  Let $f: \mathbf{C}^*\rightarrow \mathbf{D}'$ be an embedding. Use ultrahomogeneity in $\mathbf{K}$ to find $g\in G$ extending $f$. Then $\mathbf{K}'\cdot g\in N(\mathbf{C}^*)$; hence $X_{\mathcal{K}^*}$ is minimal.

Conversely, suppose the pair does not have ExpP. Find $\mathbf{A}^*\in\fin{\mathbf{K}^*}\cap \mathcal{K}^*$ for which there is no $\mathbf{B}\in\mathcal{K}$ witnessing ExpP. Now use K\"onig's Lemma to find a $\mathbf{K}'\in X_{\mathcal{K}^*}$ with $\mathbf{A}^*\not\in\age{\mathbf{K}'}$.  It follows that $\mathbf{K}'\cdot G\cap N(\mathbf{A}^*) = \emptyset$, so $X_{\mathcal{K}^*}$ is not minimal.
\end{proof}

\begin{cor}
With the assumptions of Proposition 5.5, suppose $(\mathcal{K}^*, \mathcal{K})$ has ExpP. Let $\mathbf{A}\in \mathcal{K}$, and let $\mathbf{A}^*$ be an expansion of $\mathbf{A}$. Then $\emb{\mathbf{A}^*,\mathbf{K}^*}$ is a syndetic subset of $\emb{\mathbf{A},\mathbf{K}}$.
\end{cor}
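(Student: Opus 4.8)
The plan is to unwind the two definitions and feed the witness produced by ExpP into the thickness hypothesis. First I would record the natural identification underlying the statement: since $(\mathcal{K}^*,\mathcal{K})$ is reasonable, Proposition 5.3 gives $\mathbf{K}^*|_L \cong \mathbf{K}$, so every $f \in \emb{\mathbf{A}^*,\mathbf{K}^*}$ becomes, after taking $L$-reducts, an element of $\emb{\mathbf{A},\mathbf{K}}$. This realizes $\emb{\mathbf{A}^*,\mathbf{K}^*}$ as a subset of $\emb{\mathbf{A},\mathbf{K}}$, and the claim to prove is precisely that this subset meets every thick $X \subseteq \emb{\mathbf{A},\mathbf{K}}$.

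So I would fix an arbitrary thick set $X$ and apply the ExpP to $\mathbf{A}^*$ to obtain $\mathbf{B} \in \mathcal{K}$ with the property that every expansion of $\mathbf{B}$ admits an embedding of $\mathbf{A}^*$; note that $\mathbf{A} \leq \mathbf{B}$, since taking the $L$-reduct of such an embedding yields $\mathbf{A} = \mathbf{A}^*|_L \to \mathbf{B}$. Then I would invoke thickness of $X$ for this $\mathbf{B}$: there is $g \in \emb{\mathbf{B},\mathbf{K}}$ with $g \circ \emb{\mathbf{A},\mathbf{B}} \subseteq X$.

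The key step is to convert the abstract $g$ into the right expansion of $\mathbf{B}$. Viewing $g$ as an embedding of $\mathbf{B}$ into $\mathbf{K}^*|_L$, I would pull the $L^*$-structure of $\mathbf{K}^*$ back along $g$, declaring $S_i^{\mathbf{B}^*}(\bar x) \iff S_i^{\mathbf{K}^*}(g(\bar x))$. This defines an expansion $\mathbf{B}^*$ of $\mathbf{B}$ for which $g : \mathbf{B}^* \to \mathbf{K}^*$ is an embedding; as $\mathbf{B}^*$ is then isomorphic to a substructure of $\mathbf{K}^*$ whose $L$-reduct lies in $\mathcal{K}$, it is a legitimate member of $\mathcal{K}^*$. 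Applying the defining property of $\mathbf{B}$ to this particular $\mathbf{B}^*$ yields an embedding $f \in \emb{\mathbf{A}^*,\mathbf{B}^*}$.

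Finally I would compose: $g \circ f \in \emb{\mathbf{A}^*,\mathbf{K}^*}$, while its $L$-reduct equals $g \circ (f|_L)$ with $f|_L \in \emb{\mathbf{A},\mathbf{B}}$, so that $g \circ f \in g \circ \emb{\mathbf{A},\mathbf{B}} \subseteq X$. Hence $g \circ f \in \emb{\mathbf{A}^*,\mathbf{K}^*} \cap X$, the intersection is nonempty, and $\emb{\mathbf{A}^*,\mathbf{K}^*}$ is syndetic. The main obstacle is exactly the pullback in the third step: thickness hands us only an abstract embedding $g$, whereas ExpP speaks solely about expansions of $\mathbf{B}$, so one must manufacture the correct expansion $\mathbf{B}^*$ out of $g$ and check that it is genuinely in $\mathcal{K}^*$ before the second application of ExpP is available.
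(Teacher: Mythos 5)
Your proof is correct and is essentially the paper's own argument: apply ExpP to get $\mathbf{B}$, use thickness of $X$ to get $g\in\emb{\mathbf{B},\mathbf{K}}$, pull back the $L^*$-structure along $g$ to form the expansion $\mathbf{B}^* = \mathbf{B}(g,\mathbf{K}^*)$, find $f\in\emb{\mathbf{A}^*,\mathbf{B}^*}$, and conclude $g\circ f\in\emb{\mathbf{A}^*,\mathbf{K}^*}\cap X$. The only difference is that you spell out the pullback construction and verify $\mathbf{B}^*\in\mathcal{K}^*$ explicitly, whereas the paper invokes the notation $\mathbf{B}(g,\mathbf{K}^*)$ defined earlier in section 5.
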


\begin{proof}
Let $\mathbf{B}\in\mathcal{K}$ witness the ExpP for $\mathbf{A}^*$. If $X\subseteq \emb{\mathbf{A}, \mathbf{K}}$ is thick, find $g\in \emb{\mathbf{B},\mathbf{K}}$ with $g\circ\emb{\mathbf{A},\mathbf{B}}\subseteq X$. Let $\mathbf{B}^* = \mathbf{B}(g, \mathbf{K}^*)$. Since $\mathbf{B}$ witnesses ExpP for $\mathbf{A}^*$, find $f\in \emb{\mathbf{A}^*,\mathbf{B}^*}$. Now $g\circ f\in \emb{\mathbf{A}^*,\mathbf{K}^*}\cap X$.
\end{proof}
\vspace{2 mm}

The following extends Theorem 5.1 and is one of the major theorems in [KPT]. This theorem in its full generality is proven in [NVT]:

\begin{theorem}
Let $\mathcal{K}^*$ be a reasonable, precompact Fra\"iss\'e--HP expansion class of the Fra\"iss\'e--HP class $\mathcal{K}$ with \fr limits $\mathbf{K}^*,\mathbf{K}$, respectively. Let $G = \aut{\mathbf{K}}$. Then $X_{\mathcal{K}^*}\cong M(G)$ iff the pair $(\mathcal{K}^*,\mathcal{K})$ has the ExpP and $\mathcal{K}^*$ has the RP.
\end{theorem}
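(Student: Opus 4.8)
The plan is to prove both directions by combining the minimality criterion (Proposition~5.5) with the Ramsey Property, using the universal minimal flow as the unique minimal subflow of the greatest ambit. I would first reduce the statement to a claim about minimality plus universality: by Proposition~5.5, the pair $(\mathcal{K}^*,\mathcal{K})$ has the ExpP precisely when $X_{\mathcal{K}^*}$ is a minimal $G$-flow. Since $M(G)$ is the unique minimal flow up to isomorphism (Theorem~2.5), and any minimal flow admitting a $G$-map from a universal flow must equal it, the real content is that $X_{\mathcal{K}^*}$ is \emph{universal} (equivalently, maps onto every minimal flow) exactly when $\mathcal{K}^*$ has the RP. So ExpP handles minimality, and RP must handle universality; the theorem then follows by noting that a minimal flow is isomorphic to $M(G)$ iff it is universal.

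For the forward direction, assume $X_{\mathcal{K}^*}\cong M(G)$. Minimality of $M(G)$ forces the ExpP via Proposition~5.5. For the RP, I would argue contrapositively: if some $\mathbf{A}^*\in\mathcal{K}^*$ fails to be a Ramsey object, I want to produce a $G$-flow that $X_{\mathcal{K}^*}$ cannot map onto, contradicting universality. The natural strategy is to use the characterization from Proposition~4.1: failure of the Ramsey Property for $\mathbf{A}$ means there is a large $2$-coloring $\gamma$ of $\emb{\mathbf{A},\mathbf{K}}$ with neither $\gamma_0$ nor $\gamma_1$ thick. Since each $\gamma_i$ fails to be thick, its complement meets the structure in a syndetic way, and I would cook up an auxiliary expansion or a coloring-flow encoding this coloring. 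Here the precompact-expansion machinery should let me recognize $\emb{\mathbf{A}^*,\mathbf{K}^*}$ as a syndetic subset (Corollary~5.6), and the mismatch between the given expansion's color classes and the Ramsey-theoretic partition should obstruct the existence of a $G$-map from $X_{\mathcal{K}^*}$ onto the flow built from $\gamma$.

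For the reverse direction, assume ExpP and RP. ExpP gives minimality of $X_{\mathcal{K}^*}$ by Proposition~5.5, so it suffices to show $X_{\mathcal{K}^*}$ is universal, whence it is isomorphic to $M(G)$. I would do this by mapping the greatest ambit $(S(G),1)$ onto $X_{\mathcal{K}^*}$; since every minimal subflow of $S(G)$ is a copy of $M(G)$ and $X_{\mathcal{K}^*}$ is itself minimal, producing a $G$-map $S(G)\to X_{\mathcal{K}^*}$ identifies $X_{\mathcal{K}^*}$ as (isomorphic to) a minimal subflow of $S(G)$, hence to $M(G)$. The key is to show that for each finite $\mathbf{A}\in\fin{\mathbf{K}}\cap\mathcal{K}$ and each expansion $\mathbf{A}^*$, the set $\emb{\mathbf{A}^*,\mathbf{K}^*}$ behaves like a syndetic/thick family compatibly with the greatest-ambit structure; concretely, the RP should guarantee that any coloring arising from a point of $S(G)$ admits a monochromatic copy in the right color class, allowing a coherent choice of expansion-structure on $\mathbb{N}$ that lands in $X_{\mathcal{K}^*}$. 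Using the explicit greatest-ambit description (to be given in Section~6), points of $S(G)$ correspond to coherent systems of colorings of the $\emb{\mathbf{A},\mathbf{K}}$, and the RP says that along any such system one color class is thick; feeding the syndetic class $\emb{\mathbf{A}^*,\mathbf{K}^*}$ (Corollary~5.6) through this produces a well-defined limiting expansion, giving the desired $G$-map.

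The hard part will be the universality argument, specifically constructing the $G$-map $S(G)\to X_{\mathcal{K}^*}$ and showing it is well-defined and continuous. This requires the explicit model of the greatest ambit from Section~6 to pin down exactly what a ``point'' of $S(G)$ is in terms of ultrafilters or coherent colorings of the embedding sets, and then verifying that the Ramsey Property selects, uniformly across all finite $\mathbf{A}$, a compatible family of expansions. The syndeticity of $\emb{\mathbf{A}^*,\mathbf{K}^*}$ from Corollary~5.6 and the thick/syndetic duality (Proposition~4.9) should be the bridge: thickness of a monochromatic color class must intersect the syndetic target set, forcing the limiting structure into $X_{\mathcal{K}^*}$. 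I expect the coherence and continuity verifications to be the most delicate bookkeeping, and the precise definition of the greatest ambit to be indispensable for making the forward obstruction rigorous as well.
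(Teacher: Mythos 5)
Your reduction to ``minimality plus universality,'' with Proposition~5.5 handling the ExpP half, is sound and consistent with the paper's framework. The genuine problem is the mechanism you propose for the reverse direction. A $G$-map $S(G)\rightarrow X_{\mathcal{K}^*}$ exists for \emph{every} $G$-ambit, hence for every minimal flow, by the defining universal property of the greatest ambit; constructing one carries no information, and restricting it to a minimal subflow $M\subseteq S(G)$ only reproduces the always-available surjection $M(G)\rightarrow X_{\mathcal{K}^*}$. What must actually be proved is that this surjection is \emph{injective}, equivalently that $X_{\mathcal{K}^*}$ embeds into $S(G)=\varprojlim \beta H_n$ as a (necessarily minimal) subflow. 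The paper achieves this by exhibiting a concrete point $\alpha$ whose coordinates are ultrafilters concentrating on $H_n^*=\emb{\mathbf{A}_n^1,\mathbf{K}^*}$ and thick relative to $\mathcal{K}^*$ (such ultrafilters exist precisely because $\mathcal{K}^*$ has the RP), showing that $\overline{\alpha\cdot G}\subseteq \varprojlim Y_n$ with $|Y_n|\leq k_n$, the number of expansions of $\mathbf{A}_n$, that each $Y_n$ is thick, and then invoking the fact that the Ramsey degree of $\mathbf{A}_n$ is \emph{exactly} $k_n$ (Proposition~5.8, whose lower bound uses the ExpP via the syndeticity of $\emb{\mathbf{A}^*,\mathbf{K}^*}$, Corollary~5.6) so that Corollary~8.8 gives $\overline{\alpha\cdot G}\cong M(G)$; the isomorphism-of-expansions machinery then identifies $\overline{\alpha\cdot G}$ with $X_{\mathcal{K}^*}$. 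That exact Ramsey-degree count is the step which rules out $X_{\mathcal{K}^*}$ being a proper factor of $M(G)$, and nothing in your sketch plays that role.

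The forward direction for the RP is also only a gesture: the bad coloring produced by Proposition~4.1 lives on $\emb{\mathbf{A}^*,\mathbf{K}^*}$ with thickness computed relative to $\mathcal{K}^*$, not on $\emb{\mathbf{A},\mathbf{K}}$, one needs Lemma~5.9 to pass between the two notions of thickness, and you never actually construct the flow that $X_{\mathcal{K}^*}$ would fail to map onto. (For comparison, the paper's own written argument at the end of Section~8 establishes only the implication from ExpP and RP to $X_{\mathcal{K}^*}\cong M(G)$, deferring the full equivalence to [NVT].)
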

\begin{rem}
Pairs $(\mathcal{K}^*,\mathcal{K})$ of Fra\"iss\'e--HP classes which are reasonable, precompact, and satisfy the ExpP and RP are called \emph{excellent}.
\end{rem}

When $(\mathcal{K}^*, \mathcal{K})$ is an excellent pair, the Ramsey Property for $\mathcal{K}^*$ tells us something about the combinatorics of $\mathcal{K}$

\begin{prop}
Let $(\mathcal{K}^*,\mathcal{K})$ be an excellent pair. Then every $\mathbf{A}\in \mathcal{K}$ has finite Ramsey degree. In particular, the Ramsey degree of $\mathbf{A}$ is equal to the number of expansions of $\mathbf{A}\in \mathcal{K}^*$. 
\end{prop}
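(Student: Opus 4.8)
The plan is to establish the two inequalities separately: that the Ramsey degree of $\mathbf{A}$ is at least the number $t := |\{\mathbf{A}^*\in\mathcal{K}^*:\mathbf{A}^*|_L = \mathbf{A}\}|$ of its expansions in $\mathcal{K}^*$, and at most $t$; precompactness guarantees $t$ is finite, which yields the ``finite Ramsey degree'' part for free. Throughout I would fix the \fr limit $\mathbf{K}^*$ with $\mathbf{K}^*|_L\cong \mathbf{K}$, available since the pair is reasonable (Proposition 5.2), and exploit the fact that $\mathbf{K}^*\in X_{\mathcal{K}^*}$: for any $f\in\emb{\mathbf{A},\mathbf{K}}$ the induced expansion $\mathbf{A}(f,\mathbf{K}^*)$ is the expansion of the substructure $f(\mathbf{A})\in \fin{\mathbf{K}}\cap\mathcal{K}$, hence lies in $\mathcal{K}^*$ and is one of the $t$ expansions $\mathbf{A}^*_1,\ldots,\mathbf{A}^*_t$.

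For the lower bound I would invoke the syndetic characterization of Ramsey degree (Proposition 4.7) with $\mathbf{D} = \mathbf{K}$ and $\mathcal{C} = \mathcal{K}$. Define a $t$-coloring $\gamma$ of $\emb{\mathbf{A},\mathbf{K}}$ by $\gamma(f) = \mathbf{A}(f,\mathbf{K}^*)$. Under the forgetful identification, its color classes are exactly the sets $\emb{\mathbf{A}^*_i,\mathbf{K}^*}\subseteq \emb{\mathbf{A},\mathbf{K}}$, and each $\mathbf{A}^*_i\in\mathcal{K}^*$ embeds into $\mathbf{K}^* = \flim{\mathcal{K}^*}$. Corollary 5.6 (which is where the ExpP enters) states precisely that each $\emb{\mathbf{A}^*_i,\mathbf{K}^*}$ is syndetic. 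Thus $\gamma$ is a syndetic $t$-coloring, so Proposition 4.7 gives Ramsey degree $\geq t$.

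For the upper bound I would argue directly from the combinatorial definition, using the RP of $\mathcal{K}^*$. Given $\mathbf{B}\in\mathcal{K}$ with $\mathbf{A}\leq\mathbf{B}$ and $r\in\mathbb{N}$, realize $\mathbf{B}$ as a substructure of $\mathbf{K}$ and let $\mathbf{B}^*$ be the expansion of $\mathbf{B}$ induced by $\mathbf{K}^*$; because $\mathbf{K}^*\in X_{\mathcal{K}^*}$, every induced expansion $\mathbf{A}(f,\mathbf{B}^*)$ for $f\in\emb{\mathbf{A},\mathbf{B}}$ again lies in $\mathcal{K}^*$, so $\emb{\mathbf{A},\mathbf{B}}$ partitions into at most $t$ pieces $\emb{\mathbf{A}^*_i,\mathbf{B}^*}$ indexed by expansion type. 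I would then run an iteration of the RP in $\mathcal{K}^*$, one stage per expansion type: build a chain $\mathbf{D}_t = \mathbf{B}^*, \mathbf{D}_{t-1},\ldots,\mathbf{D}_0 =: \mathbf{C}^*$, where at stage $i$ I apply the RP to $\mathbf{A}^*_i\leq\mathbf{D}_i$ to obtain $\mathbf{D}_{i-1}\hookrightarrow(\mathbf{D}_i)^{\mathbf{A}^*_i}_r$ (setting $\mathbf{D}_{i-1}=\mathbf{D}_i$ when $\mathbf{A}^*_i$ does not embed into $\mathbf{B}^*$), and put $\mathbf{C} = \mathbf{C}^*|_L$. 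Given any $r$-coloring $c$ of $\emb{\mathbf{A},\mathbf{C}}$, I would peel off one expansion type at a time: at stage $i$ use the Ramsey witness to find an embedding making the $\mathbf{A}^*_i$-part of the running composite monochromatic (coloring each $e\in\emb{\mathbf{A}^*_i,\cdot}$ by $c(e|_L)$), while checking that the monochromaticity secured at earlier stages survives restriction along the new embedding. The final composite $g\in\emb{\mathbf{B}^*,\mathbf{C}^*}$ then has $g\circ\emb{\mathbf{A}^*_i,\mathbf{B}^*}$ monochromatic for every $i$, so $c$ restricted to $(g|_L)\circ\emb{\mathbf{A},\mathbf{B}}$ uses at most $t$ colors; that is, $\mathbf{C}\hookrightarrow(\mathbf{B})^{\mathbf{A}}_{r,t}$. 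Hence the degree is $\leq t$, and with the lower bound it equals $t$.

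I expect the main obstacle to be the bookkeeping in the iterated Ramsey argument: verifying that the monochromaticity of each earlier expansion type is preserved under composition with the subsequent embeddings (this uses only that $\mathbf{A}^*_i$-embeddings into a later $\mathbf{D}_j$ map into $\mathbf{A}^*_i$-embeddings into the earlier one), and ensuring via the choice of $\mathbf{B}^*$ coming from $\mathbf{K}^*$ that every induced sub-expansion remains inside $\mathcal{K}^*$ even though $\mathcal{K}^*$ is only a \fr--HP class and need not be hereditary.
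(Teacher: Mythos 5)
Your proposal is correct, and while the lower bound is exactly the paper's argument, the upper bound takes a genuinely different route. For the lower bound you and the paper both color $\emb{\mathbf{A},\mathbf{K}}$ by expansion type, cite Corollary 5.6 (which is where the ExpP enters) for syndeticity of each class $\emb{\mathbf{A}^*_i,\mathbf{K}^*}$, and conclude by the syndetic characterization of Ramsey degree --- note that this is Proposition 4.8 in the paper, not 4.7 as you cite, though you clearly mean the right statement. For the upper bound the paper works infinitarily inside $\mathbf{K}$: it first proves Lemma 5.9, which transfers thickness back and forth between $\emb{\mathbf{A}^*,\mathbf{K}^*}$ and $\emb{\mathbf{A},\mathbf{K}}$ (the backward direction again using the ExpP), and then iteratively shrinks a chain of thick subsets $T_1\supseteq\cdots\supseteq T_k$ of $\emb{\mathbf{A},\mathbf{K}}$ so that each $T_i\cap\emb{\mathbf{A}^*_i,\mathbf{K}^*}$ is monochromatic, concluding via the thick-color criterion of Proposition 4.6. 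You instead run the classical finitary argument: amalgamate Ramsey witnesses in $\mathcal{K}^*$, one per expansion type, into a chain $\mathbf{B}^*=\mathbf{D}_t\leq\cdots\leq\mathbf{D}_0=\mathbf{C}^*$, and peel off one type at a time, the bookkeeping being exactly the observation you make that composition with later witnesses carries $\emb{\mathbf{A}^*_i,\mathbf{D}_j}$ into $\emb{\mathbf{A}^*_i,\mathbf{D}_{j'}}$ for $j'<j$ and so preserves earlier monochromaticity. Both arguments are sound; yours has the virtues of producing an explicit finite witness $\mathbf{C}$ and of showing that the upper bound uses only the RP of $\mathcal{K}^*$ together with reasonableness and precompactness (the ExpP is needed only for the lower bound), whereas the paper's version is the one that meshes with the thick/syndetic machinery it reuses in section 8. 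Your care about the HP issue --- that induced expansions of substructures of $\mathbf{K}$ remain in $\mathcal{K}^*$ because $\mathbf{K}^*\in X_{\mathcal{K}^*}$ and $\mathcal{K}$ is isomorphism-closed --- is exactly the right point to check and is handled correctly.
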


Let's prove this by using some of the ideas developed in section 4.

\begin{lemma}
With $(\mathcal{K}^*, \mathcal{K})$ as in Proposition 5.8, let $\mathbf{A}\in \mathcal{K}$ and $\mathbf{A}^*$ be an expansion of $\mathbf{A}$. Then $X\subseteq \emb{\mathbf{A}^*, \mathbf{K}^*}$ is thick iff $X = T\cap \emb{\mathbf{A}^*,\mathbf{K}^*}$ with $T\subseteq \emb{\mathbf{A},\mathbf{K}}$ thick.
\end{lemma}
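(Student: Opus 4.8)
The plan is to prove both implications directly from the definition of thickness, after first installing the identification $\mathbf{K} = \mathbf{K}^*|_L$ furnished by reasonableness (Proposition 5.3). Under this identification every $L^*$-embedding restricts to an $L$-embedding, so $\emb{\mathbf{A}^*,\mathbf{K}^*}\subseteq\emb{\mathbf{A},\mathbf{K}}$, and likewise $\emb{\mathbf{A}^*,\mathbf{B}^*}\subseteq\emb{\mathbf{A},\mathbf{B}}$ for any expansion $\mathbf{B}^*$ of $\mathbf{B}$. The one bookkeeping fact I would isolate at the outset is the compatibility of induced expansions: for $g\in\emb{\mathbf{B},\mathbf{K}}$ and $h\in\emb{\mathbf{A},\mathbf{B}}$ one has $\mathbf{A}(g\circ h,\mathbf{K}^*)=\mathbf{A}(h,\mathbf{B}(g,\mathbf{K}^*))$; in particular $g\circ h\in\emb{\mathbf{A}^*,\mathbf{K}^*}$ precisely when $h$ is an $L^*$-embedding of $\mathbf{A}^*$ into the expansion $\mathbf{B}(g,\mathbf{K}^*)$ that $g$ induces on $\mathbf{B}$.

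For the forward direction, assume $X$ is thick and set $T=X\cup(\emb{\mathbf{A},\mathbf{K}}\setminus\emb{\mathbf{A}^*,\mathbf{K}^*})$, so that $T\cap\emb{\mathbf{A}^*,\mathbf{K}^*}=X$ automatically. To see $T$ is thick, fix $\mathbf{B}\geq\mathbf{A}$ in $\mathcal{K}$; reasonableness produces an expansion $\mathbf{B}^*$ of $\mathbf{B}$ with $\mathbf{A}^*\leq\mathbf{B}^*$, and thickness of $X$ yields $g^*\in\emb{\mathbf{B}^*,\mathbf{K}^*}$ with $g^*\circ\emb{\mathbf{A}^*,\mathbf{B}^*}\subseteq X$. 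Writing $g$ for its reduct (so that $\mathbf{B}(g,\mathbf{K}^*)=\mathbf{B}^*$), I would verify $g\circ\emb{\mathbf{A},\mathbf{B}}\subseteq T$ via the dichotomy above: an $h\in\emb{\mathbf{A},\mathbf{B}}$ either is an $L^*$-embedding of $\mathbf{A}^*$ into $\mathbf{B}^*$, whence $g\circ h\in X$, or it is not, whence $g\circ h$ falls outside $\emb{\mathbf{A}^*,\mathbf{K}^*}$ and so lies in $T$ for free.

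The reverse direction is the crux. Assume $T$ is thick and put $X=T\cap\emb{\mathbf{A}^*,\mathbf{K}^*}$. The difficulty is that, to witness thickness of $X$ against a given $\mathbf{B}^*\geq\mathbf{A}^*$, I must produce an embedding of the \emph{specific} expansion $\mathbf{B}^*$, whereas thickness of $T$ only hands me some $g\in\emb{\mathbf{B},\mathbf{K}}$ with $g\circ\emb{\mathbf{A},\mathbf{B}}\subseteq T$, inducing a possibly wrong expansion $\mathbf{B}(g,\mathbf{K}^*)$. This is exactly where I would invoke the Expansion Property: apply ExpP to $\mathbf{B}^*$ to obtain $\mathbf{C}\in\mathcal{K}$ (necessarily $\mathbf{C}\geq\mathbf{A}$) every expansion of which embeds $\mathbf{B}^*$. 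Now use thickness of $T$ on $\mathbf{C}$ to get $g\in\emb{\mathbf{C},\mathbf{K}}$ with $g\circ\emb{\mathbf{A},\mathbf{C}}\subseteq T$, let $\mathbf{C}'=\mathbf{C}(g,\mathbf{K}^*)$ be the induced expansion, and pick an $L^*$-embedding $\iota\colon\mathbf{B}^*\to\mathbf{C}'$ granted by ExpP. Then $f^*:=g\circ\iota$ is an $L^*$-embedding of $\mathbf{B}^*$ into $\mathbf{K}^*$, and for any $h\in\emb{\mathbf{A}^*,\mathbf{B}^*}$ the reduct of $\iota\circ h$ lies in $\emb{\mathbf{A},\mathbf{C}}$, so $f^*\circ h=g\circ(\iota\circ h)\in T$; hence $f^*\circ\emb{\mathbf{A}^*,\mathbf{B}^*}\subseteq T\cap\emb{\mathbf{A}^*,\mathbf{K}^*}=X$, as required.

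The main obstacle is precisely this expansion-matching issue in the reverse direction: thickness is a purely $L$-level statement, so on its own it cannot control which $\mathcal{K}^*$-expansion a given embedding induces. The Expansion Property is the device that forces a prescribed expanded structure to appear inside \emph{every} induced expansion, allowing me to route the target $\mathbf{B}^*$ through a thick witness at the level of $\mathbf{C}$. The two points I would take care to confirm are that ExpP applies to the expanded object $\mathbf{B}^*$ (it is stated for members of $\mathcal{K}^*$) and that the reduct $\mathbf{C}$ genuinely dominates $\mathbf{A}$, both of which are immediate from $\mathbf{A}\leq\mathbf{B}=\mathbf{B}^*|_L$.
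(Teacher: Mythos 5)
Your proof is correct and follows essentially the same route as the paper: the reverse direction (routing the target expansion $\mathbf{B}^*$ through an ExpP witness $\mathbf{C}$ and the induced expansion $\mathbf{C}(g,\mathbf{K}^*)$) is the paper's argument verbatim, and the forward direction differs only cosmetically, in that you take $T = X\cup\bigl(\emb{\mathbf{A},\mathbf{K}}\setminus\emb{\mathbf{A}^*,\mathbf{K}^*}\bigr)$ where the paper takes the union of $X$ with the ranges $g_{\mathbf{B}}\circ\emb{\mathbf{A},\mathbf{B}}$ of its thickness witnesses. Both verifications rest on the same two facts you isolate: reasonableness supplies an expansion of $\mathbf{B}$ embedding $\mathbf{A}^*$, and $\mathbf{A}(g\circ h,\mathbf{K}^*)=\mathbf{A}(h,\mathbf{B}(g,\mathbf{K}^*))$ controls which composites land in $\emb{\mathbf{A}^*,\mathbf{K}^*}$.
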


\begin{rem}
``Thick'' above is referring to two different notions of thickness. In general, when we say $X\subseteq \emb{\mathbf{A},\mathbf{D}}$ is thick/syndetic, this means with respect to the class $\mathcal{D} = \age{\mathbf{D}}$.
\end{rem}

\begin{proof}
$(\Rightarrow)$ If $X\subseteq \emb{\mathbf{A}^*,\mathbf{K}^*}$ is thick, fix $\mathbf{B}\in \mathcal{K}$ with $\mathbf{A}\leq \mathbf{B}$. Pick any expansion $\mathbf{B}^*$ of $\mathbf{B}$, then as $X\subseteq \emb{\mathbf{A}^*,\mathbf{K}^*}$ is thick, find $g_\mathbf{B}\in \emb{\mathbf{B}^*,\mathbf{K}^*}$ with $g_\mathbf{B}\circ \emb{\mathbf{A}^*,\mathbf{B}^*}\subseteq X$. Now set $T' = \bigcup_{\mathbf{A}\leq\mathbf{B}} g_\mathbf{B}\circ \emb{\mathbf{A},\mathbf{B}}$. Then $T'\subseteq \emb{\mathbf{A},\mathbf{K}}$ is thick, as is $T = T'\cup X$. Then $X = T\cap \emb{\mathbf{A}^*,\mathbf{K}^*}$ as desired.

$(\Leftarrow)$ If $X = T\cap \emb{\mathbf{A}^*, \mathbf{K}^*}$ with $T\subseteq \emb{\mathbf{A},\mathbf{K}}$ thick, then fix $\mathbf{B}^*\in\mathcal{K}^*$ with $\mathbf{A}^*\leq \mathbf{B}^*$. Then find $\mathbf{C}\in\mathcal{K}$ witnessing the ExpP for $\mathbf{B}^*$. As $T\subseteq \emb{\mathbf{A},\mathbf{K}}$ is thick, find $g\in\emb{\mathbf{C},\mathbf{K}}$ with $g\circ \emb{\mathbf{A},\mathbf{C}}\subseteq T$. Let $\mathbf{C}^* = \mathbf{C}(g, \mathbf{K}^*)$ be the unique expansion of $\mathbf{C}$ so that $g\in\emb{\mathbf{C}^*,\mathbf{K}^*}$. As $\mathbf{C}$ witnesses ExpP for $\mathbf{B}^*$, pick $f\in \emb{\mathbf{B}^*,\mathbf{C}^*}$. Now we have $g\circ f\in \emb{\mathbf{B}^*,\mathbf{K}^*}$ and $g\circ f\circ \emb{\mathbf{A}^*,\mathbf{B}^*}\subseteq T\cap \emb{\mathbf{A}^*,\mathbf{K}^*}$.
\end{proof}

\begin{proof}[Proof of Proposition 5.8]
Fix $\mathbf{A}\in \mathcal{K}$, and let $\mathbf{A}_1,...,\mathbf{A}_k$ list the expansions of $\mathbf{A}$. We can now write
$$\emb{\mathbf{A},\mathbf{K}} = \bigsqcup_{i\leq k} \emb{\mathbf{A}_i, \mathbf{K}^*}.$$
Fix a $k+1$-coloring $\gamma$ of $\emb{\mathbf{A},\mathbf{K}}$. Find a thick $T_1\subseteq \emb{\mathbf{A},\mathbf{K}}$ so that $T_1\cap \emb{\mathbf{A}_1,\mathbf{K}^*}$ is monochromatic. If thick $T_i\subseteq \emb{\mathbf{A},\mathbf{K}}$ has been determined, find thick $T_{i+1}\subseteq \emb{\mathbf{A},\mathbf{K}}$, $T_{i+1}\subseteq T_i$, so that $T_{i+1}\cap \emb{\mathbf{A}_{i+1}, \mathbf{K}^*}$ is monochromatic. Then $T_k\subseteq \emb{\mathbf{A},\mathbf{K}}$ is thick and $|\gamma(T_k)|\leq k$. This shows that $\mathbf{A}$ has Ramsey degree $\leq k$.

For the other bound, note that by Corollary 5.6, $\emb{\mathbf{A}_i, \mathbf{K}^*}$ is syndetic. Let $\gamma$ be the coloring  of $\emb{\mathbf{A},\mathbf{K}}$ with $\gamma(f) = i$ iff $f\in \emb{\mathbf{A}_i, \mathbf{K}^*}$. Then $\gamma$ is a syndetic $k$-coloring, so by Proposition 4.8, $\mathbf{A}$ has Ramsey degree $\geq k$.
\end{proof}

In the setting of Theorem 5.7, we can consider the orbit of $\mathbf{K}^*$ in $X_{\mathcal{K}^*} = M(G)$. Notice that $\mathbf{K}' = \mathbf{K}^*\cdot g$ iff $\mathbf{K}'\cong \mathbf{K}^*$ iff $\mathbf{K}'$ has age $\mathcal{K}^*$ and satisfies the extension property. We can write out which $\mathbf{K}'$ satisfy these assumptions as a countable intersection of open conditions in $X_{\mathcal{K}^*}$. Since $X_{\mathcal{K}^*}$ is minimal, each $\mathbf{K}'$ has age $\mathcal{K}^*$.  $\mathbf{K}'\in X_{\mathcal{K}^*}$ satisfies the extension property iff it is in each of the open neighborhoods $N(\mathbf{A}^*\subseteq \mathbf{B}^*)$, where $\mathbf{B}^*\in\mathcal{K}^*$ and $\mathbf{A}^*$ is an expansion of $\mathbf{A}\in\fin{\mathbf{K}}\cap \mathcal{K}$ (recall the notion of isomorphic inclusion pair defined before Proposition 3.3. Note that since $\mathcal{K}^*$ has RP, $\mathbf{A}^*$ is rigid):
$$N(\mathbf{A}^*\subseteq \mathbf{B}^*) := \left(\bigcup_{\substack{\mathbf{A}'|_L = \mathbf{A}\\ \mathbf{A}'\neq \mathbf{A}^*}} N(\mathbf{A}')\right) \cup \left( \bigcup_{(\mathbf{A}^*\subseteq \mathbf{B}')\cong (\mathbf{A}^*\subseteq \mathbf{B}^*)} N(\mathbf{B}')\right).$$
The orbit $\mathbf{K}^*\cdot G$ is also dense since $X_{\mathcal{K}^*}$ is minimal; hence $\mathbf{K}^*\cdot G$ is a generic orbit in $X_{\mathcal{K}^*}$. Note that any $G$-flow can have at most one generic orbit as the intersection of two generic subsets of any Baire space is nonempty. The following proposition is proved in [AKL] (Prop.\ 14.1), we follow that proof for the most part.

\begin{prop}
Let $G$ be a Polish topological group and suppose $M(G)$ has a generic orbit. Then if $Y$ is a minimal $G$-flow, then $Y$ has a generic orbit. 
\end{prop}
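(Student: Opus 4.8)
The plan is to transport the generic orbit of $M(G)$ across the canonical factor map onto $Y$ and then invoke a topological zero--one law. Since $M(G)$ is universal, I would fix a $G$-map $\phi\colon M(G)\to Y$; its image is a subflow of the minimal flow $Y$, so $\phi$ is onto. Let $x_0\in M(G)$ have comeager (generic) orbit and set $y_0=\phi(x_0)$, so that $\phi(x_0 G)=y_0 G$. The goal is to show that $y_0 G$ is comeager in $Y$.

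The key preliminary is a \emph{category lemma}: for every nonempty open $U\subseteq M(G)$, the image $\phi(U)$ is non-meager in $Y$. Here I would use that $M(G)$ is minimal, so every orbit is dense and hence $\{Ug^{-1}:g\in G\}$ covers $M(G)$; by compactness finitely many translates $Ug_1^{-1},\dots,Ug_n^{-1}$ suffice, and applying the $G$-map $\phi$ gives $Y=\bigcup_i \phi(U)g_i^{-1}$. As $Y$ is compact Hausdorff, hence Baire, some $\phi(U)g_i^{-1}$ is non-meager, and since right translation is a homeomorphism, $\phi(U)$ is non-meager. An immediate consequence is that $\phi$ pulls meager sets back to meager sets: if $N\subseteq Y$ is nowhere dense and $\phi^{-1}(\overline{N})$ contained a nonempty open $U$, then $\phi(U)\subseteq \overline{N}$ would be meager, contradicting the lemma; thus $\phi^{-1}(\overline{N})$ is closed with empty interior, hence nowhere dense, and so $\phi^{-1}$ of any meager set is meager.

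Next I would deduce that $y_0 G$ is non-meager and then upgrade this to comeager. Because $x_0 G\subseteq \phi^{-1}(y_0 G)$ and $x_0 G$ is comeager, $\phi^{-1}(y_0 G)$ is comeager, hence non-meager; were $y_0 G$ meager, the previous paragraph would force $\phi^{-1}(y_0 G)$ to be meager, a contradiction. Now $y_0 G$ is $G$-invariant, and in a minimal flow any $G$-invariant set $A$ with the Baire property satisfies a zero--one law: writing $A=V\triangle M$ with $V$ open and $M$ meager, minimality makes $\bigcup_g Vg^{-1}$ a nonempty open invariant set, hence all of $Y$; covering $Y$ by finitely many $Vg_i^{-1}$ and using the invariance of $A$ yields $Y\setminus A\subseteq \bigcup_i Mg_i^{-1}$, which is meager, so $A$ is comeager. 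Applied to the non-meager invariant set $A=y_0 G$, this shows $y_0 G$ is comeager, i.e.\ generic, completing the argument.

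The one delicate point, and the main obstacle, is verifying the Baire property of the invariant set $y_0 G$ that the zero--one law requires. When $Y$ is metrizable this is automatic: the orbit $y_0 G$ is the continuous image of the Polish group $G$ under $g\mapsto y_0 g$, hence analytic, and analytic sets have the Baire property. This is exactly the setting relevant here, since in the application $M(G)$ is metrizable and $Y$, being a continuous image of $M(G)$, is metrizable by Proposition 2.10; and it is the route followed in [AKL].
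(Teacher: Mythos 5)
Your overall strategy --- push the generic orbit through a factor map $\phi\colon M(G)\to Y$, show that $\phi$ pulls meager sets back to meager sets, conclude that $y_0\cdot G$ is non-meager, and finish with a topological zero--one law for invariant sets with the Baire property --- is sound and is essentially the strategy of the paper's proof (both following [AKL]). Your category lemma, its consequence that preimages of meager sets are meager, and your proof of the zero--one law are all correct. The paper's version of the final step is presented slightly differently (it chooses dense open sets $V_n$ avoiding the supposedly meager orbit and shows their preimages are dense open, using a countable dense subgroup of $G$ where you use compactness of $Y$), but the two arguments are interchangeable.

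The gap is exactly the point you flag yourself: the Baire property of the invariant set $y_0\cdot G$. The proposition is stated for an arbitrary Polish group $G$ and an arbitrary minimal $G$-flow $Y$, with no metrizability hypothesis on $M(G)$ or on $Y$. Your justification (``$y_0\cdot G$ is analytic, and analytic subsets of a metrizable space have the Baire property'') only covers the case where $Y$ is metrizable, and your fallback that $Y$ is metrizable because it is a continuous image of $M(G)$ presupposes that $M(G)$ is metrizable, which is not among the hypotheses. The paper closes this gap without any metrizability assumption: fixing a continuous surjection $f\colon\mathcal{N}\rightarrow G$ from Baire space, it sets $G_s=\{f(\alpha):\alpha\text{ extends }s\}$ and $P_s=\overline{y_0\cdot G_s}$, observes that $\{P_s\}$ is a regular Souslin scheme of closed sets whose Souslin operation yields exactly $y_0\cdot G$, and invokes Nikod\'ym's theorem ([K], 29.14) that the Souslin operation preserves the Baire property in an arbitrary topological space. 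Replacing your appeal to analyticity by this Souslin-scheme argument makes your proof work in full generality; as written, it proves only the metrizable case of the stated proposition.
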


\begin{proof}
Let $\pi: M(G)\rightarrow Y$ be a $G$-map, and suppose $x_0\in M(G)$ has a generic orbit. We will show that $y_0 := \pi(x_0)$ has a generic orbit in $Y$. First, we show that $y_0\cdot G$ has the Baire Property (BP). Fix a continuous surjection $f: \mathcal{N}\rightarrow G$, where $\mathcal{N}$ is the Baire space. For $s\in {}^{< \omega}\mathbb{N}$ write $G_s = \{f(\alpha): \alpha = s\char94 \beta\}$. Set $P_s = \overline{y_0\cdot G_s}$. Now $\{P_s: s\in {}^{< \omega}\mathbb{N}\}$ is a regular Souslin scheme. Set 
$$P = \bigcup_{\alpha\in \mathcal{N}} \bigcap_{n\in \mathbb{N}} P_{\alpha |_n}.$$
Since each $P_s$ is closed, hence has the BP, the Nikod\'{y}m Theorem (see [K], 29.14) tells us that $P$ also has the BP, and $P = y_0\cdot G$.

Suppose for sake of contradiction that $y_0\cdot G$ is not comeager, then as $y_0\cdot G$ is $G$-invariant and $Y$ is minimal, $y_0\cdot G$ must be meager. Fix dense open $V_n\subseteq Y$ with $(y_0\cdot G)\cap (\bigcap_n V_n)$ empty. Taking preimages, we must have $(x_0\cdot G)\cap \left(\bigcap_n \pi^{-1}(V_n)\right) = \emptyset$. Set $U_n = \pi^{-1}(V_n)$. We will derive a contradiction once we show that $U_n$ is dense open. Pick $W\subseteq X$ nonempty and open, and let $G_0\subseteq G$ be a countable dense subgroup. Since $M(G)$ is minimal, $W\cdot G_0 = X$. Then $\pi(W)\cdot G_0 = Y$, so $\pi(W)$ is not meager. Hence $\pi(W)\cap V_n\neq \emptyset$, and thus $W\cap U_n\neq \emptyset$.
\end{proof}

If $G$ is Polish and $M(G)$ has a generic orbit, $G$ is said to have the \emph{generic point property}. Angel, Kechris, and Lyons posed the following conjecture ([AKL] Question 15.2):

\begin{conj}[Generic Pont Problem]
Let $G$ be a Polish group with metrizable universal minimal flow. Then $G$ has the generic point property.
\end{conj}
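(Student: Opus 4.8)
The plan is to reduce the statement to a single category-theoretic assertion about $M(G)$ and then attack that assertion using the semigroup structure of the greatest ambit developed in Section 2. Throughout, write $X = M(G)$; since $X$ is compact metrizable it is Polish, and since $G$ is Polish the orbit map $g \mapsto xg$ is continuous from $G$ into $X$, so every orbit $xG$ is analytic and in particular has the Baire property. Because $X$ is minimal, every orbit is dense, so the action is topologically transitive: for any nonempty open $U,V \subseteq X$ there is $g \in G$ with $Ug \cap V \neq \emptyset$. The topological $0$--$1$ law for topologically transitive actions by homeomorphisms on a Baire space then applies to each $G$-invariant set with the Baire property; hence every orbit $xG$ is either meager or comeager. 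Two comeager sets in a Baire space meet, so at most one orbit can be comeager, and such an orbit is automatically a generic (dense $G_\delta$) orbit. Thus $G$ has the generic point property \emph{iff} some orbit of $X$ is non-meager, and this is the only thing I would need to establish.

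Before attacking non-meagerness I would record why universality is indispensable: an irrational rotation of the circle is a metrizable minimal $\mathbb{Z}$-flow all of whose orbits are countable, hence meager, so metrizability of a minimal flow \emph{alone} never yields a comeager orbit. Any successful argument must use that $X = M(G)$ is the \emph{universal} minimal flow, equivalently (Section 2) that it arises as a minimal right ideal $M \subseteq S(G)$ of the greatest ambit.

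Concretely, I would fix an idempotent $u \in M$ (Proposition 2.4), so that $u$ is a left identity for $M$ and, since left multiplication is continuous, the map $r = \lambda_u|_{S(G)} : S(G) \to M$ is a continuous retraction which is also a $G$-map, because $r(pg) = u(pg) = (up)g = r(p)g$. The distinguished point $1 \in S(G)$ has dense orbit and $r(1) = u$, so $u$ is the natural candidate for a non-meager point: I would try to show that $uG = r(1\cdot G)$ is non-meager in $M$. The strategy is to transfer the denseness of $1\cdot G$ in $S(G)$ across the quotient $r$, using a countable base on the metrizable target $M$ to organize a Baire-category (back-and-forth) argument that controls the fibers of $r$, so that denseness upstairs descends to non-meagerness of $uG$ downstairs.

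The main obstacle is exactly this last transfer. A continuous $G$-map from the greatest ambit onto a metrizable minimal flow can a priori collapse a dense orbit to a meager set, so the descent cannot be purely formal: it must exploit special structural features available for $G$ Polish --- namely the projectivity of the greatest ambit in the category of minimal flows (equivalently, passage to the maximal highly proximal, extremally disconnected, extension of $M$) together with the rigidity that metrizability of the quotient forces on the retraction $r$. Pinning down a workable form of ``$r$ is irreducible enough that category descends'' is the genuine content of the problem. The combinatorial substitutes available when $G \leq S_\infty$ (finite Ramsey degrees and precompact expansions, which exhibit the generic orbit explicitly as the orbit of $\mathbf{K}^*$ in $X_{\mathcal{K}^*}$) are unavailable for a general Polish group, which is why I expect this step --- rather than the reduction above --- to require the bulk of the work and to be where the difficulty genuinely resides.
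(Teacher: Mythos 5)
Your proposal does not prove the statement, and you say so yourself: everything after the reduction is a plan whose decisive step is left open. That is worth stating precisely, because the statement is a \emph{conjecture} (Question 15.2 of [AKL]), and the paper does not prove it in general either --- it settles only the case of closed subgroups of $S_\infty$ (Corollary 8.15). Your reduction is sound: orbits of a Polish group acting continuously on the compact metrizable $M(G)$ are analytic, hence have the Baire property; minimality gives topological transitivity, so the topological $0$--$1$ law makes every orbit meager or comeager, and at most one orbit can be comeager. (The paper's own Proposition 5.10 uses the same Baire-property ingredient, via a Souslin scheme and the Nikod\'ym theorem, but for the different purpose of pushing a generic orbit from $M(G)$ down to other minimal flows --- not for producing one.) The retraction $r = \lambda_u : S(G)\rightarrow M$ from an idempotent $u\in M$ is likewise correct Section-2 material. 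But the assertion that $uG = r(1\cdot G)$ is non-meager in $M$ \emph{is} the conjecture, and your proposal supplies no mechanism through which metrizability of $M$ enters. That it must enter non-formally is shown already by $G = \mathbb{Z}$: there $S(G) = \beta\mathbb{Z}$, and $M(\mathbb{Z})$ is an infinite minimal flow in which every orbit is countable and no point is isolated, so every orbit is meager and $\lambda_u$ collapses the dense orbit $\mathbb{Z}$ onto a meager set. Phrases like ``projectivity of the greatest ambit,'' ``maximal highly proximal extension,'' and ``$r$ is irreducible enough that category descends'' name the desired phenomenon; they are not arguments for it.

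It is instructive to contrast this with how the paper makes metrizability bite in the case it does handle. For $G\leq S_\infty$ the greatest ambit is computed explicitly as $\varprojlim \beta H_n$ (Theorem 6.3), and since each $\beta H_n$ is extremally disconnected and so embeds no infinite compact metric space (Theorem 2.9, with Proposition 2.10), a metrizable $M(G)$ is forced to be $\varprojlim Y_n$ with each $Y_n\subseteq \beta H_n$ \emph{finite} (Corollary 6.4) --- this is exactly where metrizability enters, and it has no abstract analogue in your scheme. Finiteness of the thick sets $Y_n$ of ultrafilters is then translated into finite Ramsey degrees (Proposition 8.2, Theorem 8.7), the $Y_n$ are organized into a reasonable precompact expansion class $\mathcal{K}(Y)$ with $(\mathcal{K}(Y),\mathcal{K})$ excellent (Propositions 8.10--8.13), and the comeager orbit is exhibited concretely as $\mathbf{K}^*\cdot G$ inside $X_{\mathcal{K}^*}\cong M(G)$ via the explicit $G_\delta$ computation preceding Proposition 5.10. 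So where you hope for an abstract category-descent principle, the paper manufactures the generic point combinatorially; you are right that these combinatorial substitutes are unavailable for general Polish $G$, and indeed Section 9 poses precisely the missing ingredients (Problems 9.1 and 9.2, the latter being the metrizability-detecting analogue of Theorem 2.9). For the record, the full conjecture was later proved by Ben Yaacov, Melleray, and Tsankov by a Baire-category analysis of $S(G)$ broadly in the spirit of your plan --- but nothing in this paper, and nothing in your proposal, closes the gap you have correctly located.
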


Our new proof of Theorems 5.1 and 5.7 will have the added benefit of solving the Generic Point Problem for $G$ a closed subgroup of $S_\infty$.

\section{The Greatest Ambit}

In the remaining sections, we fix once and for all a relational \fr class $\mathcal{K}$ with \fr limit $\mathbf{K}$, which we suppose has universe $\mathbb{N}$. Set $G = \aut{\mathbf{K}}$. For each $n\in \mathbb{N}$ we also let $\mathbf{A}_n\in \fin{\mathbf{K}}$ with $A_n = \{1,2,...,n\}$. As a shorthand, write $H_n$ for $\emb{\mathbf{A}_n, \mathbf{K}}$; let $i_m^n$ denote the inclusion $\mathbf{A}_m\hookrightarrow \mathbf{A}_n$ for $m\leq n$ and $i_n$ denote the inclusion embedding $\mathbf{A}_n\subseteq \mathbf{K}$. 
\vspace{1.5 mm}

Suppose $f\in \emb{\mathbf{A}_m,\mathbf{A}_n}$. As $\mathbf{K}$ is a \fr structure, the map $\hat{f}: H_n\rightarrow H_m$ given by $\hat{f}(g) = g\circ f$ is surjective. Let $\beta H_n$ denote the $\beta$-compactification of the discrete space $H_n$. Then $\hat{f}$ has a unique continuous extension $\tilde{f}: \beta H_n\rightarrow \beta H_m$ which is also surjective. If $q\in \beta H_n$ and $S\subseteq H_m$, then $S\in \tilde{f}(q)$ iff $\hat{f}^{-1}(S)\in q$, i.e.\ $\tilde{f}(q)$ is just the pushforward of $q$ by $\hat{f}$. We will primarily be interested in the case when $f = i_m^n$.
\vspace{1.5 mm} 

Form the space $\varprojlim \beta H_n := \{\alpha\in \prod_n \beta H_n: \tilde{i}_m^n(\alpha(n)) = \alpha(m)\}$. Topologically, we view $\varprojlim \beta H_n$ as a subspace of $\prod_n \beta H_n$. Let $1\in \varprojlim \beta H_n$ denote the element where on each level $n$, the ultrafilter is principal on the embedding $1_G|_{\mathbf{A}_n}$. Our goal is to give $\varprojlim \beta H_n$ a $G$-flow structure; then $(\varprojlim \beta H_n, 1)$ will be the greatest $G$-ambit. To do this, we first take the peculiar step of stripping away the pointwise convergence topology on $G$, replacing it with the discrete topology. Form $\beta G$, which here will always refer to the compactification of $G$ as a discrete space. Endow $\beta G$ with the left-topological semigroup structure extending $G$. For the most part, we will only need the right $G$-action that arises from this structure; if $p\in \beta G$, $g\in G$, and $S\subseteq G$, then $S\in pg$ iff $Sg^{-1}\in p$
\vspace{1.5 mm}

Let $\tilde{\pi}_n: \beta G\rightarrow \beta H_n$ be the unique continuous extension of the map $\pi_n(g) = g|_{\mathbf{A}_n}$, and let $\tilde{\pi}: \beta G\rightarrow \varprojlim \beta H_n$ be given by $(\tilde{\pi}(p))(n) = \tilde{\pi}_n(p)$. Implicit in this definition is that $\tilde{\pi}_m  = \tilde{i}_m^n\circ \tilde{\pi}_n$, which follows since $\pi_m = \hat{i}_m^n\circ \pi_n$. Notice that $\tilde{\pi}$ is continuous, $1 = \tilde{\pi}(1_G)$, and $\{\tilde{\pi}(g): g\in G\}$ is dense, hence $\tilde{\pi}$ is surjective. We can use the semigroup structure on $\beta G$ to give $\varprojlim \beta H_n$ a $G$-action.

\begin{prop}
If $p,q\in \beta G$ are such that $\tilde{\pi}(p) = \tilde{\pi}(q)$, then $\tilde{\pi}(pg) = \tilde{\pi}(qg)$ for any $g\in G$.
\end{prop}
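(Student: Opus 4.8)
The plan is to work one coordinate at a time. Since $\tilde\pi(r)(n) = \tilde\pi_n(r)$ for every $r \in \beta G$, it suffices to fix $n$ and show $\tilde\pi_n(pg) = \tilde\pi_n(qg)$; the hypothesis $\tilde\pi(p) = \tilde\pi(q)$ supplies $\tilde\pi_k(p) = \tilde\pi_k(q)$ at every level $k$, and I will only need this at one auxiliary level. The key observation is that, once we pass to a large enough level, right multiplication by $g$ seen through the restriction maps is governed by a single embedding of finite structures.

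First I would produce that factorization. Fix $n$. Since $g \in G \subseteq S_\infty$ is a permutation of $\mathbb{N}$, the set $g(\{1,\dots,n\})$ is finite, so I choose $m \geq n$ with $g(\{1,\dots,n\}) \subseteq \{1,\dots,m\}$, i.e. $g(A_n) \subseteq A_m$. Because $g$ is an automorphism of $\mathbf{K}$ and $\mathbf{A}_n, \mathbf{A}_m$ carry the induced substructures, the restriction $f_g := g|_{\mathbf{A}_n}$ is an embedding $\mathbf{A}_n \to \mathbf{A}_m$, so $\hat{f}_g : H_m \to H_n$, $e \mapsto e \circ f_g$, makes sense. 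For any $h \in G$ and any $x \leq n$ we have $(hg)(x) = h(g(x)) = (h|_{A_m})(f_g(x))$, whence
\[
\pi_n(hg) = \pi_m(h) \circ f_g = \hat{f}_g(\pi_m(h)).
\]
Writing $\rho_g$ for right multiplication by $g$ on $G$, this is the identity $\pi_n \circ \rho_g = \hat{f}_g \circ \pi_m$ of maps $G \to H_n$.

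Next I pass to Stone--\v{C}ech compactifications. The map $p \mapsto pg$ is continuous on $\beta G$ and extends $\rho_g$, so $p \mapsto \tilde\pi_n(pg)$ and $p \mapsto \tilde{f}_g(\tilde\pi_m(p))$ are two continuous maps $\beta G \to \beta H_n$ agreeing on the dense subset $G$ by the displayed identity; the uniqueness clause of Theorem 2.7 forces them to coincide, giving $\tilde\pi_n(pg) = \tilde{f}_g(\tilde\pi_m(p))$ for all $p$. (Equivalently, one checks this directly from the pushforward definition: $S \in \tilde\pi_n(pg)$ iff $\pi_n^{-1}(S)g^{-1} \in p$, and $\pi_n^{-1}(S)g^{-1} = \pi_m^{-1}(\hat{f}_g^{-1}(S))$.) Finally the hypothesis yields $\tilde\pi_m(p) = \tilde\pi_m(q)$, so
\[
\tilde\pi_n(pg) = \tilde{f}_g(\tilde\pi_m(p)) = \tilde{f}_g(\tilde\pi_m(q)) = \tilde\pi_n(qg),
\]
and as $n$ was arbitrary, $\tilde\pi(pg) = \tilde\pi(qg)$.

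The only real content is the factorization $\pi_n \circ \rho_g = \hat{f}_g \circ \pi_m$, and the subtlety worth flagging is why one must ascend to the higher level $m$: the $G$-action does not preserve level $n$, since $g$ need not fix $\{1,\dots,n\}$ setwise, so $\pi_n(hg)$ genuinely depends on $h$ beyond its restriction to $\mathbf{A}_n$. Choosing $m$ with $g(A_n) \subseteq A_m$ is exactly what turns $\pi_n(hg)$ into a function of $\pi_m(h)$, and recognizing that function as precomposition by the finite-structure embedding $f_g$ is the heart of the matter; everything afterwards is just the uniqueness of continuous extensions to $\beta G$.
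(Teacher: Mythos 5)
Your proof is correct and is essentially the paper's argument: the paper's chain of equivalences $\pi_m^{-1}(S)\in pg \Leftrightarrow \{h: h|_{\mathbf{A}_n}\in T\}\in p$ (after choosing a level containing $g(\mathbf{A}_m)$) is exactly your factorization $\pi_n\circ\rho_g = \hat{f}_g\circ\pi_m$ unwound at the level of ultrafilter membership, which you even record in your parenthetical. The only cosmetic difference is that you package the passage to $\beta G$ as ``two continuous maps agreeing on a dense set,'' while the paper verifies the same identity set-by-set.
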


\begin{proof}
Fix $S\subseteq H_m$ and $g\in G$. Choose $n$ large enough so that $\mathbf{A}_m\cup g(\mathbf{A}_m)\subseteq \mathbf{A}_n$. Set $T = \{f\in H_n: f \circ g|_{\mathbf{A}_m}\in S\}$. Then we have:

\begin{align*}
\pi_m^{-1}(S)\in pg &\Leftrightarrow \{h\in G: hg\in \pi_m^{-1}(S)\}\in p\\
&\Leftrightarrow \{h\in G: hg|_{\mathbf{A}_m}\in S\}\in p\\
&\Leftrightarrow \{h\in G: h|_{\mathbf{A}_n}\in T\}\in p\\
&\Leftrightarrow \{h\in G: h|_{\mathbf{A}_n}\in T\}\in q\\
&\Leftrightarrow \pi_m^{-1}(S)\in qg \qedhere
\end{align*}
\end{proof}

We now can define $\tilde{\pi}(p)\cdot g := \tilde{\pi}(pg)$. That this is an action follows from associativity of $\beta G$. More explicitly, if $\alpha\in \varprojlim \beta H_n$, $g\in G$, and $S\subseteq H_m$, we have for large $n$ that
\begin{align*}
S\in \alpha g(m) \Leftrightarrow \{f\in H_n: f \circ g|_{\mathbf{A}_m}\in S\}\in \alpha (n).
\end{align*} 
Our use of right actions instead of left actions is justified by the following:

\begin{prop}
The right action of $G$ on $\varprojlim \beta H_n$ is jointly continuous when $G$ is given the pointwise convergence topology.
\end{prop}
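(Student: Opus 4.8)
The plan is to exploit the fact that $\varprojlim \beta H_n$ carries the subspace topology inherited from $\prod_n \beta H_n$, so that verifying joint continuity of $(\alpha, g) \mapsto \alpha g$ reduces to checking it coordinate by coordinate. Concretely, it suffices to show that for each fixed $m$ the map $(\alpha, g) \mapsto (\alpha g)(m) \in \beta H_m$ is continuous. Since the sets $\overline{S} = \{p \in \beta H_m : S \in p\}$ for $S \subseteq H_m$ form a clopen basis for $\beta H_m$, this in turn amounts to showing that the set $\{(\alpha, g) \in \varprojlim \beta H_n \times G : S \in (\alpha g)(m)\}$ is open in the product topology for every $S \subseteq H_m$, where $G$ carries the pointwise convergence topology.

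So I fix $S \subseteq H_m$ and a point $(\alpha_0, g_0)$ with $S \in (\alpha_0 g_0)(m)$, and I produce a basic open neighborhood of it contained in the set above. Using the explicit description of the action displayed just before the proposition, I choose $n$ large enough that $\mathbf{A}_m \cup g_0(\mathbf{A}_m) \subseteq \mathbf{A}_n$ and set $T := \{f \in H_n : f \circ g_0|_{\mathbf{A}_m} \in S\} \subseteq H_n$; then $S \in (\alpha_0 g_0)(m)$ is equivalent to $T \in \alpha_0(n)$. The crucial observation---and what makes the pointwise topology exactly the right one---is that both the threshold $n$ and the set $T$ depend on $g_0$ only through the finite restriction $g_0|_{\mathbf{A}_m}$, i.e.\ only through the values $g_0(1), \dots, g_0(m)$.

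Now let $U := \{g \in G : g|_{\mathbf{A}_m} = g_0|_{\mathbf{A}_m}\}$, which is a basic open neighborhood of $g_0$ in the pointwise convergence topology, and let $V := \{\alpha \in \varprojlim \beta H_n : T \in \alpha(n)\}$, which is clopen since it is the preimage of $\overline{T} \subseteq \beta H_n$ under the continuous coordinate projection $\alpha \mapsto \alpha(n)$. For any $g \in U$ we have $g(\mathbf{A}_m) = g_0(\mathbf{A}_m) \subseteq \mathbf{A}_n$, so the same $n$ is admissible and the same set $T$ arises; hence for every $(\alpha, g) \in V \times U$ the equivalence gives $S \in (\alpha g)(m)$ iff $T \in \alpha(n)$, which holds by the choice of $V$. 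Thus $V \times U$ is the desired neighborhood, establishing continuity of $(\alpha,g) \mapsto (\alpha g)(m)$ and hence joint continuity of the action.

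The only real content is the locality observation in the second paragraph, so I expect that to be the heart of the proof rather than a genuine obstacle: the action was defined through the semigroup $\beta G$ built from $G$ \emph{as a discrete group}, yet the $m$-th coordinate of $\alpha g$ is determined by $g$ only on $\mathbf{A}_m$. This is precisely the finiteness that the pointwise convergence topology isolates (and, incidentally, why the right action rather than the left one is used), and everything else is routine unwinding of the basic clopen structure of the $\beta$-compactifications.
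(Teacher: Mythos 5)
Your proof is correct and takes essentially the same route as the paper's: both fix $S\subseteq H_m$ and a point with $S\in(\alpha g)(m)$, pass to $n$ large enough that $g(\mathbf{A}_m)\subseteq \mathbf{A}_n$, form $T=\{f\in H_n: f\circ g|_{\mathbf{A}_m}\in S\}$, and conclude that $\{\alpha: T\in\alpha(n)\}\times\{h: h|_{\mathbf{A}_m}=g|_{\mathbf{A}_m}\}$ lands inside $\{\alpha: S\in\alpha(m)\}$. The locality observation you single out---that $(\alpha g)(m)$ depends on $g$ only through $g|_{\mathbf{A}_m}$---is exactly the content of the paper's one-line verification.
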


\begin{proof}
First note that a basis for the topology on $\varprojlim \beta H_n$ is given by sets of the form $\tilde{S} := \{\alpha: S\in\alpha (m)\}$, where $S\subseteq H_m$ and $m\in \mathbb{N}$. So suppose $S\subseteq H_m$ and $\alpha g\in \tilde{S}$. Fix $n$ large enough so that $g(\mathbf{A}_m)\subseteq \mathbf{A}_n$, and let $T = \{f\in H_n: f\circ g|_{\mathbf{A}_m}\in S\}$. Letting $U_m = \{h\in G: h|_{\mathbf{A}_m} = g|_{\mathbf{A}_m}\}$, then $\tilde{T}\cdot U_m\subseteq \tilde{S}$.
\end{proof}

\begin{theorem}
$(\varprojlim \beta H_n, 1)$ is the greatest $G$-ambit when $G$ is given the pointwise convergence topology.
\end{theorem}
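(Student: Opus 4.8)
The plan is to verify the universal property of the greatest ambit directly: given an arbitrary $G$-ambit $(X,x_0)$, I will produce the (necessarily unique) map of ambits $f\colon (\varprojlim \beta H_n,1)\to (X,x_0)$. That $(\varprojlim \beta H_n,1)$ is itself a $G$-ambit is already in hand: it is compact, the action is jointly continuous by Proposition 6.2, and the orbit $1\cdot G=\{\tilde\pi(g):g\in G\}$ is dense since $1\cdot g=\tilde\pi(1_G)\cdot g=\tilde\pi(g)$ and $\{\tilde\pi(g):g\in G\}$ is dense. So the real content is the universal property, and uniqueness of $f$ is automatic because $1$ has dense orbit.

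The second step is to build $f$ by passing through $\beta G$. The orbit map $\rho\colon G\to X$, $\rho(g)=x_0\cdot g$, is continuous for the pointwise-convergence topology on $G$, as this is just continuity of the action. Viewing $G$ as discrete, Theorem 2.7 extends $\rho$ to a continuous $\tilde\rho\colon \beta G\to X$. A density argument (both sides continuous, agreeing on the dense set $G$, with $X$ Hausdorff) then shows $\tilde\rho(pg)=\tilde\rho(p)\cdot g$ for $p\in \beta G$ and $g\in G$, using that right multiplication $R_g$ on $\beta G$ and the homeomorphism $\cdot\, g$ on $X$ are continuous. I then want to factor $\tilde\rho$ through $\tilde\pi$, which reduces to the single implication $\tilde\pi(p)=\tilde\pi(q)\Rightarrow \tilde\rho(p)=\tilde\rho(q)$; note that $\tilde\pi(p)=\tilde\pi(q)$ is equivalent to $\tilde\pi_n(p)=\tilde\pi_n(q)$ for every $n$, since $\varprojlim\beta H_n$ carries the subspace topology of the product.

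The hard part will be exactly this factorization, and it is where the pointwise-convergence topology (rather than the discrete one) must be used. The key observation is that for each $\phi\in C(X)$ and each $\epsilon>0$, the function $\psi:=\phi\circ\rho$ is uniformly $\epsilon$-approximable by a function factoring through some $\pi_n$. Indeed, joint continuity of the action together with compactness of $X$ (a tube-lemma argument at the identity) produces an $n$ with $|\phi(z\cdot h)-\phi(z)|<\epsilon$ for all $z\in X$ and all $h\in G_n$. Since $g|_{\mathbf{A}_n}=g'|_{\mathbf{A}_n}$ holds exactly when $g'=gh$ with $h\in G_n$, choosing $\psi_n\colon H_n\to\mathbb{R}$ by $\psi_n(t)=\psi(g_t)$ for any $g_t\in\pi_n^{-1}(t)$ gives $\|\psi-\psi_n\circ\pi_n\|_\infty\le\epsilon$. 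Passing to Stone--\v{C}ech extensions, which is an isometry of $C_b(G)$ into $C(\beta G)$, the extension of $\psi_n\circ\pi_n$ is $\bar\psi_n\circ\tilde\pi_n$ while the extension of $\psi$ is $\phi\circ\tilde\rho$; hence $\tilde\pi_n(p)=\tilde\pi_n(q)$ forces $|\phi(\tilde\rho(p))-\phi(\tilde\rho(q))|\le 2\epsilon$. As $\phi$ and $\epsilon$ are arbitrary and $C(X)$ separates points of the compact Hausdorff space $X$, we conclude $\tilde\rho(p)=\tilde\rho(q)$.

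Finally, with the factorization established, define $f$ by $f(\tilde\pi(p))=\tilde\rho(p)$. Since $\tilde\pi$ is a continuous surjection from the compact space $\beta G$ onto the Hausdorff space $\varprojlim\beta H_n$, it is closed, hence a quotient map, so $f$ is continuous. Equivariance $f(\alpha\cdot g)=f(\alpha)\cdot g$ follows from $\tilde\rho(pg)=\tilde\rho(p)\cdot g$ and the definition $\tilde\pi(p)\cdot g=\tilde\pi(pg)$, while $f(1)=f(\tilde\pi(1_G))=\tilde\rho(1_G)=x_0\cdot 1_G=x_0$. Thus $f$ is a map of $G$-ambits onto $(X,x_0)$, and since $(X,x_0)$ was arbitrary this establishes that $(\varprojlim\beta H_n,1)$ is the greatest $G$-ambit.
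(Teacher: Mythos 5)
Your proof is correct, and its skeleton coincides with the paper's: extend the orbit map $g\mapsto x_0\cdot g$ to $\beta G$ (with $G$ discrete), show that the extension factors through $\tilde{\pi}$, and then read off continuity of the induced map from the fact that $\tilde{\pi}$ is a closed surjection, with equivariance coming from $\tilde{\rho}(pg)=\tilde{\rho}(p)\cdot g$. Where you genuinely diverge is in the one step that carries all the content, namely the implication $\tilde{\pi}(p)=\tilde{\pi}(q)\Rightarrow\tilde{\rho}(p)=\tilde{\rho}(q)$. The paper proves the contrapositive by pure point-set topology: it separates $x_0\cdot p$ from $x_0\cdot q$ by open sets, uses joint continuity and a finite subcover to produce a single $N$ such that $x_0\cdot g\in U_p$ forces $x_0\cdot(gG_N)\subseteq V_p$, and then exhibits disjoint sets $S_p,S_q\subseteq H_N$ with $\pi_N^{-1}(S_p)\in p$ and $\pi_N^{-1}(S_q)\in q$, so that $\tilde{\pi}_N(p)\neq\tilde{\pi}_N(q)$. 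You instead dualize: the same joint-continuity-plus-compactness input (your tube-lemma step, which is sound since $\pi_n$ is surjective by ultrahomogeneity and its fibers are exactly the cosets $gG_n$) shows that every $\phi\circ\rho$ with $\phi\in C(X)$ is a uniform limit of functions factoring through some $\pi_n$, and since $C(X)$ separates points of the compact Hausdorff space $X$ the factorization follows. The two arguments encode the same uniform-continuity fact, but yours makes explicit the classical picture of the greatest ambit as the compactification dual to the algebra of right uniformly continuous bounded functions on $G$ --- precisely the Samuel-compactification viewpoint the paper only gestures at in the remark following Corollary 6.4. What the paper's version buys in exchange is that it stays in the currency of subsets of $H_N$ and ultrafilters, and the construction of the sets $T_g$ and $S_p$ there is reused almost verbatim in the proofs of Theorem 7.3 and Theorem 8.7.
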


\begin{proof}
Let $(X, x_0)$ be a $G$-ambit, and let $\rho: \beta G\rightarrow X$ be the continuous extension of the map $g\rightarrow x_0\cdot g$ (remember that $\beta G$ is the compactification constructed from the discrete topology on $G$). Write $\rho(p) = x_0\cdot p$; notice that if $U\ni x_0\cdot p$ is an open neighborhood, then $\{g\in G: x_0\cdot g\in U\}\in p$. We will show that if $\tilde{\pi}(p) = \tilde{\pi}(q)$, then $x_0\cdot p = x_0\cdot q$. So suppose $x_0\cdot p \neq x_0\cdot q$. As compact Hausdorff spaces are normal, pick $U_p, V_p, U_q, V_q$ open neighborhoods of $x_0\cdot p, x_0\cdot q$ with $\overline{U_p}\subseteq V_p$, $\overline{U_q}\subseteq V_q$, and $V_p\cap V_q = \emptyset$. For each $y\in X\setminus V_p$, let $W_y^p$ be a neighborhood avoiding $U_p$. For each $n$, let $G_n\subseteq G$ denote the pointwise stabilizer of $\mathbf{A}_n$; use joint continuity to find $Y_y^p$ a neighborhood of $y$ and $n_y^p\in \mathbb{N}$ with $Y_y^p\cdot G_{n_y^p}\subseteq W_y^p$. As the $Y_y^p$ cover $X\setminus V_p$, find a finite subcover $\mathcal{C}_p$. Repeat these steps for $q$, and let $N$ be the largest of any $n_y^p, n_y^q$ mentioned in $\mathcal{C}_p, \mathcal{C}_q$. 
\vspace{1.5 mm}

Now if $x_0\cdot g\in U_p$, we must have $x_0\cdot (gG_N)\subseteq V_p$; if this were not the case, then for $h\in gG_N$ with $x_0\cdot h\in X\setminus V_p$, we have $x_0\cdot h\in Y_y^p$ for some $Y_y^p\in\mathcal{C}_p$. Hence $x_0\cdot (gG_N) = (x_0 h)G_N\subseteq W_y^p$, a contradiction since $W_y^p\cap U_p = \emptyset$. Similarly for $q$. Let $S_p := \{f\in H_N: \exists g\in G(g|_{\mathbf{A}_N} = f\text{ and } x_0\cdot g\in U_p)\}$. Do likewise for $q$. Then $\{g\in G: x_0\cdot g\in U_p\}\subseteq \pi_N^{-1}(S_p)\in p$, $\{g\in G: x_0\cdot g\in U_q\}\subseteq \pi_N^{-1}(S_q)\in q$, and $\pi_N^{-1}(S_p)\cap \pi_N^{-1}(S_q) = \emptyset$. Hence $\tilde{\pi}(p)\neq \tilde{\pi}(q)$.
\vspace{1.5 mm}

Thus there is a well-defined map $\phi: \varprojlim \beta H_n\rightarrow X$ with $\rho = \phi\circ \tilde{\pi}$. To show that $\phi$ is a $G$-map, we need to show that $\phi$ is continuous and respects $G$-action. Since $\tilde{\pi}$ is a continuous and closed map, we see that $\phi$ is continuous. For fixed $g\in G$, observe that $p\rightarrow x_0\cdot (pg)$ and $p\rightarrow (x_0\cdot p)\cdot g$ are two continuous extensions of $h\rightarrow x_0\cdot hg$, so are equal. Hence $\rho(pg) = \rho(p)\cdot g$ for any $p\in \beta G$, $g\in G$. Now let $\alpha\in \varprojlim \beta H_n$, $g\in G$, and pick $p\in \beta G$ with $\tilde{\pi}(p) = \alpha$. Then $\phi(\alpha\cdot g) = \phi(\tilde{\pi}(p)\cdot g) = \phi(\tilde{\pi}(pg)) = \rho(pg) = \rho(p)\cdot g = \phi(\alpha)\cdot g$. Hence $\phi$ is a $G$-map.
\end{proof}

As a first application, we easily obtain the following corollary, originally due to Pestov [P].

\begin{cor}
If $G$ is a closed subgroup of $S_\infty$ with infinite metrizable universal minimal flow $M(G)$, then as a topological space, $M(G)\cong 2^{\mathbb{N}}$.
\end{cor}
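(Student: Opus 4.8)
The plan is to verify the classical topological characterization of the Cantor set: a space is homeomorphic to $2^{\mathbb{N}}$ precisely when it is nonempty, compact, metrizable, totally disconnected, and perfect (has no isolated points). By Theorem 2.5 we may realize $M(G)$ as a minimal subflow of the greatest ambit, which by Theorem 6.3 is $\varprojlim \beta H_n$. As a $G$-flow, $M(G)$ is compact and nonempty, and it is metrizable by hypothesis, so only total disconnectedness and perfectness require work.

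For total disconnectedness I would exploit the explicit model $M(G)\subseteq \varprojlim \beta H_n$. Since each $H_n$ is discrete, the basic open sets $\overline{A}$ of $\beta H_n$ are clopen, so each $\beta H_n$ is zero-dimensional; zero-dimensionality passes to products and to subspaces, whence $\varprojlim \beta H_n$, and therefore $M(G)$, is zero-dimensional. Concretely, the basis $\tilde{S} = \{\alpha : S\in\alpha(m)\}$ of $\varprojlim \beta H_n$ exhibited in the proof of Proposition 6.2 consists of clopen sets, and these restrict to a clopen basis of $M(G)$; as $M(G)$ is compact metrizable, it is totally disconnected.

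The key remaining step — the only place where minimality and infinitude of $M(G)$ enter — is perfectness. I would argue by contradiction. Let $D\subseteq M(G)$ be the set of isolated points; it is open, and since $G$ acts on $M(G)$ by homeomorphisms (each $x\mapsto x\cdot g$ has continuous inverse $x\mapsto x\cdot g^{-1}$, carrying isolated points to isolated points) it is $G$-invariant. If $D$ were nonempty, then $M(G)\setminus D$ would be a closed, $G$-invariant set; were it also nonempty it would be a proper subflow, contradicting minimality, so necessarily $M(G)=D$ is discrete. But a compact discrete space is finite, contradicting the assumption that $M(G)$ is infinite. Hence $D=\emptyset$ and $M(G)$ is perfect.

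With all five conditions in place, the standard characterization of the Cantor set gives $M(G)\cong 2^{\mathbb{N}}$. The argument is short once the description of the greatest ambit as $\varprojlim \beta H_n$ is available; the only point demanding genuine care is the perfectness step, where the dichotomy ``$M(G)\setminus D$ is empty or is a proper subflow'' is exactly what converts minimality together with infinitude into the required contradiction.
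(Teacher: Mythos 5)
Your proof is correct, and your perfectness step (the isolated points form an open $G$-invariant set, so minimality plus infinitude forces it to be empty) is exactly the observation the paper makes, albeit in a single terse sentence. Where you genuinely diverge is in how total disconnectedness is obtained. You use only that each $\beta H_n$ has a basis of clopen sets $\overline{A}$, so that $\prod_n \beta H_n$, and hence the closed subspace $M(G)$, is zero-dimensional; combined with Brouwer's characterization of the Cantor set this finishes the job, and it is arguably the most elementary route to the corollary as stated. The paper instead invokes Theorem 2.9 (a regular extremally disconnected space embeds no infinite compact metric space) together with Proposition 2.10 (continuous images of compact metrizable spaces in Hausdorff spaces are metrizable) to conclude that the projection of $M(G)$ to each coordinate $\beta H_n$ lands in a \emph{finite} set $Y_n$, so that $M(G)$ sits inside $\varprojlim Y_n$ with each $Y_n$ finite. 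That is strictly more information than zero-dimensionality, and the difference is not merely stylistic: the representation $M(G)=\varprojlim Y_n$ with $Y_n$ finite is precisely the launching point of Section 8, where the elements of $Y_n$ are analyzed as thick sets of ultrafilters and reassembled into the expansion class $\mathcal{K}(Y)$. So your argument proves the corollary, but anyone following your route would still need to extract the finiteness of the coordinate projections (via Theorem 2.9 and Proposition 2.10) before the later arguments of the paper can proceed.
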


\begin{proof}
Let $Y\subseteq \varprojlim \beta H_n$ be infinite metrizable. Notice that if $Y$ is a subflow, then $Y$ has no isolated points. By Theorem 2.9, $\beta H_n$ embeds no infinite compact metric space. Consider the projection of $\varprojlim \beta H_n$ onto the $n$-th coordinate; by Proposition 2.10, it must be that for each $n$, there is a finite $Y_n\subseteq \beta H_n$ with $\alpha (n)\in Y_n$ for any $\alpha\in Y$. It follows that $Y = \varprojlim Y_n$.
\end{proof}

\begin{rem}
Lionel Nguyen Van Th\'e has pointed out to me that the construction in this section is essentially a more explicit version of the original construction of the greatest ambit $S(G)$ (of any topological group $G$) given by Pierre Samuel [Sa]. His construction proceeds as follows: let $\mathcal{V}$ be a basis of open neighborhoods of the identity. For $p\in \beta G$ (as a discrete group), let $p^*$ be the filter generated by sets of the form $\{SV: S\in p, V\in \mathcal{V}\}$. Now set $p\sim q$ iff $p^* = q^*$; we then obtain $S(G)\cong \beta G/\sim$. Dana Barto\v{s}ov\'a uses this approach to extend some of the results from [KPT] to uncountable structures (see [B]). The representation of the greatest ambit presented here was also discovered by Pestov (Corollary 3.3 in [P]).
\end{rem}

\section{Extreme Amenability}

We now turn to the proof of Theorem 5.1; though logically we could skip to the proof of Theorem 5.7, this will provide an introduction to many of the ideas used there. Since $M(G)$ is isomorphic to any minimal subflow of $\varprojlim \beta H_n$, it is enough to characterize when $\varprojlim \beta H_n$ has a fixed point. 

We say that an ultrafilter $p\in \beta H_n$ is \emph{thick} if each $S\in p$ is thick. Denote the set of thick ultrafilters on $H_n$ by $R_n$.
\begin{prop}
$R_n\neq \emptyset$ iff $\mathbf{A}_n$ is a Ramsey object in $\mathcal{K}$. 
\end{prop}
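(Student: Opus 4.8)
The plan is to reduce ``Ramsey object'' to the coloring characterization of Proposition 4.1 — namely that $\mathbf{A}_n$ is a Ramsey object in $\mathcal{K}$ iff every finite coloring of $H_n = \emb{\mathbf{A}_n,\mathbf{K}}$ has a thick color class (the equivalence $(1)\Leftrightarrow(3)$ there, taking $\mathbf{D}=\mathbf{K}$, $\mathcal{D}=\mathcal{C}=\mathcal{K}$, and any fixed $k\geq 2$) — and then to translate this coloring statement into the existence of an ultrafilter all of whose members are thick.

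The forward direction ($R_n\neq\emptyset \Rightarrow$ Ramsey object) is immediate. If $p\in R_n$ and $\gamma\colon H_n\to[k]$ is any finite coloring, then the color classes $\gamma_1,\dots,\gamma_k$ partition $H_n\in p$, so exactly one $\gamma_i$ lies in $p$; since every member of $p$ is thick, this $\gamma_i$ is thick. By Proposition 4.1, $\mathbf{A}_n$ is a Ramsey object.

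For the reverse direction I would construct a thick ultrafilter directly. The key observation is that, because thickness is upward closed, the non-thick subsets of $H_n$ are downward closed. I would first establish the pivotal lemma that no finite family of non-thick sets covers $H_n$: given non-thick $N_1,\dots,N_k$ with $\bigcup_i N_i = H_n$, coloring each $f\in H_n$ by the least $i$ with $f\in N_i$ yields a $k$-coloring whose classes $\gamma_i\subseteq N_i$ are all non-thick by downward closure, contradicting the Ramsey property through Proposition 4.1; the degenerate case $k=1$ is excluded because $H_n$ itself is thick (for any $\mathbf{B}\geq\mathbf{A}_n$ in $\mathcal{K}$, every $h\in\emb{\mathbf{B},\mathbf{K}}$ has $h\circ\emb{\mathbf{A}_n,\mathbf{B}}\subseteq H_n$). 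Consequently the non-thick sets generate a proper ideal $\mathcal{I}_0$ on $H_n$; extending $\mathcal{I}_0$ to a maximal ideal $\mathcal{I}$ and passing to its dual ultrafilter $p$, as developed in Section 2.2, produces the desired object. Indeed, if some $S\in p$ were non-thick then $S\in\mathcal{I}_0\subseteq\mathcal{I}$ while $S^c\in\mathcal{I}$, forcing $H_n\in\mathcal{I}$ and contradicting properness; hence every member of $p$ is thick and $p\in R_n$.

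The main obstacle — really the only place requiring care — is the reverse direction's non-covering lemma, that the Ramsey property prevents $H_n$ from being covered by finitely many non-thick sets. Everything else (downward closure of the non-thick sets, the refinement by least index, and the ideal-to-ultrafilter extension) is routine once this is in hand. It is worth noting the clean reformulation that an ultrafilter is thick precisely when it contains every syndetic set, since $S$ is syndetic iff $S^c$ is not thick; from this viewpoint the reverse direction is exactly the assertion that the syndetic sets have the finite intersection property, which is once more the non-covering lemma above.
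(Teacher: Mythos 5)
Your proof is correct and takes essentially the same route as the paper: both directions reduce to Proposition 4.1 and hinge on the observation that $\mathbf{A}_n$ being a Ramsey object is equivalent to the non-thick subsets of $H_n$ forming (or, in your slightly weaker but equally sufficient formulation, generating) a proper ideal, from which a thick ultrafilter is extracted. The only cosmetic difference is that the paper verifies closure of the non-thick sets under binary unions via the large-coloring equivalence $(1)\Leftrightarrow(4)$, while you prove the non-covering lemma via $(1)\Leftrightarrow(3)$ and downward closure; these are interchangeable.
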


\begin{proof}
To see this, we need to show that the non-thick subsets of $H_n$ form an ideal iff $\mathbf{A}_n$ is a Ramsey object. If $\mathbf{A}_n$ is a Ramsey object, suppose $S\subseteq H_n$ is thick, and suppose $S = T_1\sqcup T_2$. By the equivalence of (1) and (4) in Proposition 4.1, we see that one of $T_1$ or $T_2$ is Ramsey. 

Conversely, if $\mathbf{A}_n$ is not a Ramsey object, then use the equivalence of (1) and (3) in Proposition 4.1 to find disjoint $S,T\subseteq H_n$ with $S\sqcup T = H_n$ and neither $S$ nor $T$ Ramsey.
\end{proof}

\begin{prop}
Suppose $m\leq n$, $\mathbf{A}_n$ is a Ramsey object, and $f\in \emb{\mathbf{A}_m, \mathbf{A}_n}$. Then if $p\in R_m$, there is $q\in R_n$ with $\tilde{f}(q) = p$.
\end{prop}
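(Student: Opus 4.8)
The plan is to realize $q$ as an ultrafilter on $H_n$ that simultaneously pushes forward to $p$ and is thick. First I would record the reformulation that makes the pushforward condition manageable: $\tilde f(q) = p$ holds as soon as $q$ extends the pre-image filter $\hat f^{-1}(p)$ generated by $\{\hat f^{-1}(S) : S \in p\}$. Indeed, if $q \supseteq \hat f^{-1}(p)$ then $\hat f^{-1}(S) \in q$ for every $S \in p$, so $S \in \tilde f(q)$ for every $S \in p$; since $p$ and $\tilde f(q)$ are both ultrafilters, the inclusion $p \subseteq \tilde f(q)$ forces $p = \tilde f(q)$. Thus the task reduces to extending $\hat f^{-1}(p)$ to a \emph{thick} ultrafilter on $H_n$.

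The content-bearing step is a lemma dual to Lemma 4.2: if $S \subseteq H_m$ is thick, then $\hat f^{-1}(S) = \{g \in H_n : g \circ f \in S\}$ is thick in $H_n$. To prove this directly, fix $\mathbf B \in \mathcal K$ with $\mathbf A_n \leq \mathbf B$; composing any embedding $\mathbf A_n \to \mathbf B$ with $f$ shows $\mathbf A_m \leq \mathbf B$, so thickness of $S$ yields $h \in \emb{\mathbf B, \mathbf K}$ with $h \circ \emb{\mathbf A_m, \mathbf B} \subseteq S$. For any $e \in \emb{\mathbf A_n, \mathbf B}$ we have $e \circ f \in \emb{\mathbf A_m, \mathbf B}$, whence $(h \circ e) \circ f = h \circ (e \circ f) \in S$, i.e.\ $h \circ e \in \hat f^{-1}(S)$; therefore $h \circ \emb{\mathbf A_n, \mathbf B} \subseteq \hat f^{-1}(S)$, witnessing thickness. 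Since $\hat f^{-1}(S_1) \cap \hat f^{-1}(S_2) = \hat f^{-1}(S_1 \cap S_2)$ and $p$ is closed under finite intersections, it follows that $\hat f^{-1}(p)$ is a genuine filter every member of which is thick.

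Finally I would invoke the Ramsey hypothesis at level $n$. Because $\mathbf A_n$ is a Ramsey object, the argument in Proposition 7.1 shows that the non-thick subsets of $H_n$ form a proper ideal $\mathcal I_n$, and an ultrafilter on $H_n$ lies in $R_n$ precisely when it avoids $\mathcal I_n$, i.e.\ when it extends the dual filter $\mathcal F_n$ of $\mathcal I_n$. It then suffices to check that $\hat f^{-1}(p) \cup \mathcal F_n$ generates a proper filter and to extend it to an ultrafilter $q$ by Zorn's Lemma. The step requiring the most care is exactly this properness check, and it is where the two previous paragraphs pay off: using that $\hat f^{-1}(p)$ is a filter and $\mathcal I_n$ is an ideal, a typical finite intersection reduces to verifying $G \cap (H_n \setminus I) \neq \emptyset$ for a single thick $G \in \hat f^{-1}(p)$ and a single non-thick $I \in \mathcal I_n$; were this empty we would have $G \subseteq I$, making the thick set $G$ non-thick, a contradiction. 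The resulting $q$ extends $\hat f^{-1}(p)$ and avoids $\mathcal I_n$, so $q \in R_n$ and $\tilde f(q) = p$, as required. I expect the preimage-of-thick lemma to be the genuine obstacle conceptually (one must dualize Lemma 4.2 correctly), while the remainder is ultrafilter bookkeeping that hinges on the ideal structure supplied by the Ramsey assumption.
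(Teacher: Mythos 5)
Your proposal is correct and follows essentially the same route as the paper: form the pre-image filter $\hat f^{-1}(p)$, show each of its members is thick (you argue this directly, the paper by contraposition, but it is the same computation $\hat f(h\circ e)=h\circ(e\circ f)$), and then extend to an ultrafilter avoiding the ideal of non-thick sets supplied by the Ramsey hypothesis via Proposition 7.1. The extra detail you give on why the extension exists is exactly what the paper's phrase ``extend $\hat f^{-1}(p)$ to any $q$ avoiding the non-thick ideal'' is compressing.
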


\begin{proof}

Form the preimage filter $\hat{f}^{-1}(p)$. If $T\in \hat{f}^{-1}(p)$, then $T \supseteq \hat{f}^{-1}(S)$ for some $S\in p$. Suppose $T$ is not thick; find a large enough $N$ so that for each $g\in H_N$, we have $g\circ \emb{\mathbf{A}_n, \mathbf{A}_N}\not\subseteq T$. It follows that for any $g\in H_N$, $g\circ \emb{\mathbf{A}_n, \mathbf{A}_N}\circ f\not\subseteq S$. As $\emb{\mathbf{A}_n, \mathbf{A}_N}\circ f\subseteq \emb{\mathbf{A}_m,\mathbf{A}_N}$, we see that $S$ is not thick, a contradiction. Now extend $\hat{f}^{-1}(p)$ to any $q\in \beta H_n$ avoiding the non-thick ideal.
\end{proof}

It follows from Propositions 7.1 and 7.2 that $\varprojlim R_n\neq \emptyset$ iff $\mathcal{K}$ has the Ramsey Property. It is natural to ask whether this is a subflow of $\varprojlim \beta H_n$; in fact, we can do even better. The following theorem implies Theorem 5.1.

\begin{theorem}
$\alpha\in \varprojlim \beta H_n$ is a fixed point iff $\alpha\in \varprojlim R_n$.
\end{theorem}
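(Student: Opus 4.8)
The plan is to first translate the fixed-point condition into a statement purely about the coordinate ultrafilters $\alpha(n)$, and then handle the two implications by completely different arguments. Using the formula for the right action displayed just before Proposition 6.2, for $g\in G$ and $S\subseteq H_m$ we have $S\in \alpha g(m)$ iff $\{f\in H_n: f\circ g|_{\mathbf{A}_m}\in S\}\in \alpha(n)$ for large $n$; in other words $\alpha g(m)=\tilde{e}(\alpha(n))$ where $e=g|_{\mathbf{A}_m}\in\emb{\mathbf{A}_m,\mathbf{A}_n}$. By ultrahomogeneity of $\mathbf{K}$, every $e\in\emb{\mathbf{A}_m,\mathbf{A}_n}$ arises this way (extend the partial isomorphism $e$ to some $g\in G$), and conversely every $g|_{\mathbf{A}_m}$ lies in some $\emb{\mathbf{A}_m,\mathbf{A}_n}$. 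Hence $\alpha$ is a fixed point iff $\tilde{e}(\alpha(n))=\alpha(m)$ for all $m\le n$ and all $e\in\emb{\mathbf{A}_m,\mathbf{A}_n}$. Since coherence in the inverse limit already gives $\tilde{i}_m^n(\alpha(n))=\alpha(m)$, I would record the reduction in the form: $\alpha$ is fixed iff $\tilde{e}(\alpha(n))=\tilde{i}_m^n(\alpha(n))$ for every $m\le n$ and every $e\in\emb{\mathbf{A}_m,\mathbf{A}_n}$.

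For the implication \emph{fixed $\Rightarrow$ $\alpha\in\varprojlim R_n$}, I would argue by contradiction using a finite-intersection trick. Suppose $\alpha$ is fixed but some $S\in\alpha(n)$ is not thick. Then (after passing to a cofinal $\mathbf{A}_N$, $N\ge n$, which one checks also witnesses non-thickness) there is no $g\in H_N$ with $g\circ\emb{\mathbf{A}_n,\mathbf{A}_N}\subseteq S$, i.e.\ $\{g\in H_N: g\circ\emb{\mathbf{A}_n,\mathbf{A}_N}\subseteq S\}=\emptyset$. On the other hand, fixedness at levels $n\le N$ gives $S\in\alpha(n)=\tilde{e}(\alpha(N))$ for every $e\in\emb{\mathbf{A}_n,\mathbf{A}_N}$, so $\{g\in H_N: g\circ e\in S\}\in\alpha(N)$ for each such $e$. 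As $\emb{\mathbf{A}_n,\mathbf{A}_N}$ is finite, the intersection of these sets lies in $\alpha(N)$; but that intersection is exactly the empty set above, contradicting $\emptyset\notin\alpha(N)$.

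For the reverse implication \emph{$\alpha\in\varprojlim R_n \Rightarrow$ fixed}, the whole content is the following lemma, which I expect to be the main obstacle: for $p\in R_n$, $m\le n$, and any $e,e'\in\emb{\mathbf{A}_m,\mathbf{A}_n}$, one has $\tilde{e}(p)=\tilde{e'}(p)$. Here the Ramsey property enters (note $R_n\neq\emptyset$ forces $\mathbf{A}_n$, hence $\mathbf{A}_m\le\mathbf{A}_n$, to be Ramsey objects by Propositions 7.1 and 4.3). If the two pushforwards differed, there would be $S\subseteq H_m$ with $\hat{e}^{-1}(S)\in p$ and $\hat{e'}^{-1}(H_m\setminus S)\in p$; since $p$ is thick, the intersection $W=\{f\in H_n: f\circ e\in S,\ f\circ e'\notin S\}$ is thick. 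Now take $\mathbf{A}_N$ with $\mathbf{A}_N\hookrightarrow(\mathbf{A}_n)^{\mathbf{A}_m}_2$ (using that $\mathbf{A}_m$ is a Ramsey object), use thickness of $W$ to pick $g\in H_N$ with $g\circ\emb{\mathbf{A}_n,\mathbf{A}_N}\subseteq W$, and color $\phi\in\emb{\mathbf{A}_m,\mathbf{A}_N}$ according to whether $g\circ\phi\in S$. A monochromatic copy $x\in\emb{\mathbf{A}_n,\mathbf{A}_N}$ of $\mathbf{A}_n$ then forces $x\circ e$ and $x\circ e'$ to receive the same color; but $g\circ x\in W$ puts $g\circ x\circ e\in S$ and $g\circ x\circ e'\notin S$, so they receive opposite colors, a contradiction. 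Granting this lemma, coherence gives $\tilde{e}(\alpha(n))=\tilde{i}_m^n(\alpha(n))=\alpha(m)$ for all $e$, which by the reduction of the first paragraph says precisely that $\alpha$ is fixed.
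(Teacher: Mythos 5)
Your proof is correct, and its overall architecture --- reducing the fixed-point condition to the statement that $\tilde{e}(\alpha(n))=\alpha(m)$ for every $e\in\emb{\mathbf{A}_m,\mathbf{A}_n}$, then handling the two implications separately, with a finite-intersection argument for ``fixed $\Rightarrow$ thick'' --- matches the paper's. The genuine difference is in the direction ``thick $\Rightarrow$ fixed.'' The paper does not invoke the Ramsey property there at all: given $S\in\alpha(m)$, $g\in G$, and (for contradiction) $S\notin\alpha g(m)$, it places $T_1\cap T_2=\{f\in H_n: f\circ i_m^n\in S \text{ and } f\circ g|_{\mathbf{A}_m}\notin S\}$ (your set $W$) into $\alpha(n)$, uses thickness to find $h\in H_N$ with $h\circ\emb{\mathbf{A}_n,\mathbf{A}_N}\subseteq T_1\cap T_2$, and then exploits the identity $h\circ g|_{\mathbf{A}_n}\circ i_m^n = h\circ i_n^N\circ g|_{\mathbf{A}_m}$: the single element $x$ of $\emb{\mathbf{A}_m,\mathbf{K}}$ appearing on both sides must lie in $S$ (reading the left side via $h\circ g|_{\mathbf{A}_n}\in T_1$) and outside $S$ (reading the right side via $h\circ i_n^N\in T_2$). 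In effect the two competing embeddings $i_m^n$ and $g|_{\mathbf{A}_m}$ are amalgamated over $\mathbf{A}_N$ by the pair $g|_{\mathbf{A}_n}, i_n^N\in\emb{\mathbf{A}_n,\mathbf{A}_N}$, so no coloring is needed. Your route instead proves the pushforward-independence lemma by a $2$-coloring of $\emb{\mathbf{A}_m,\mathbf{A}_N}$ and an application of $\mathbf{A}_N\hookrightarrow(\mathbf{A}_n)^{\mathbf{A}_m}_2$, which is legitimate since $R_n\neq\emptyset$ forces $\mathbf{A}_m$ to be a Ramsey object by Propositions 7.1 and 4.3. Both arguments are valid; the paper's is more elementary and shows that thickness alone drives this direction, while yours isolates a clean standalone lemma ($\tilde{e}(p)=\tilde{e'}(p)$ for all $e,e'$ whenever $p$ is a thick ultrafilter) at the cost of an extra appeal to the Ramsey arrow.
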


\begin{proof}
Suppose $\alpha\in \varprojlim R_n$, and let $S\in \alpha (m)$. Fix $g\in G$; we want to show that $S\in \alpha g(m)$. Let $n\geq m$ be large enough so that $\mathbf{A}_m\cup g(\mathbf{A}_m)\subseteq \mathbf{A}_n$, and set $T_1 = \{f\in H_n: f|_{\mathbf{A}_m}\in S\}$, $T_2 = \{f\in H_n: f\circ g|_{\mathbf{A}_m}\not\in S\}$. If $S\not\in \alpha g(m)$, then we have $T_1\cap T_2\in \alpha (n)$. Now pick $N$ large enough so that $\mathbf{A}_n\cup g(\mathbf{A}_n)\subseteq \mathbf{A}_N$. As $\alpha (n)$ is thick, fix $h\in H_N$ with $h\circ \emb{\mathbf{A}_n, \mathbf{A}_N}\subseteq T_1\cap T_2$. But now set $x = h\circ g|_{\mathbf{A}_n}\circ i^n_m = h\circ i^N_n\circ g|_{\mathbf{A}_m}$. Since $g|_{\mathbf{A}_n}\in \emb{\mathbf{A}_n, \mathbf{A}_N}$, we have $h\circ g|_{\mathbf{A}_n}\in T_1\cap T_2$, hence $x\in S$. Similarly $h\circ i^N_n \in T_1\cap T_2$, implying $x\not\in S$. This is a contradiction.

Conversely, if $\alpha(m)$ is not thick, suppose $S\in \alpha(m)$ is not thick, and find $n\geq m$ such that $f\circ \emb{\mathbf{A}_m,\mathbf{A}_n}\not\subseteq S$ for each $f\in H_n$. Then we have 
$$\bigcap_{r\in \emb{\mathbf{A}_m,\mathbf{A}_n}} \{f\in H_n: f \circ r\in S\} = \emptyset.$$
Hence for some $g\in G$, we must have $S\not\in \alpha g(m)$, and $\alpha$ cannot be a fixed point.
\end{proof}

\begin{rem}
M\"uller and Pongr\'acz in [MP] use different methods to show the following: let $\mathbf{K}$ be a \fr structure, $\mathcal{K} = \age{\mathbf{K}}$, and $G = \aut{\mathbf{K}}$. Suppose each $\mathbf{A}\in \mathcal{K}$ has Ramsey degree $\leq d$ for some fixed $d\in \mathbb{N}$. Then $|M(G)|\leq d$.
\end{rem}

\section{Metrizability of $M(G)$}

We now consider the case where $M(G)$ is metrizable. Corollary 6.4 tells us that if $M(G)$ is metrizable, then $M(G) = \varprojlim Y_n$, where $Y_n$ is a finite subset of $\beta H_n$. To characterize the ultrafilters that can appear in such a $Y_n$, we need to introduce some new terminology.

If $F_1,...,F_k$ are filters on $H_n$, we say that $\{F_1,...,F_k\}$ is \emph{thick} if every $S\in (F_1\cap\cdots\cap F_k)$ is thick.  It will often be the case that each $F_i$ is a filter on some $X_i\subseteq H_n$; when there is no confusion, we will identify $F_i$ with its pushforward to a filter on $H_n$. Note that if $\{F_1,...,F_k\}$ is thick and $F'$ is another filter on $H_n$, then $\{F_1,...,F_k, F'\}$ is also thick. We will frequently consider the following thick set of filters:

\begin{prop}
Let $\mathbf{A}_n$ have Ramsey degree $k$, and let $\gamma$ be a full syndetic $k$-coloring of $H_n$. Let $F_i\subseteq \mathcal{P}(\gamma_i)$ consist of those $X\subseteq \gamma_i$ which are syndetic. Then $\{F_1,...,F_k\}$ is a thick set of filters.
\end{prop}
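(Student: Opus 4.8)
The plan is to treat the two assertions of the proposition separately: that each $F_i$ really is a filter on $\gamma_i$, and that the family $\{F_1,\dots,F_k\}$ is thick. Writing $H_n = \emb{\mathbf{A}_n,\mathbf{K}}$, the basic tool throughout is the elementary \emph{duality} between the two notions, which I would establish first: a set $X\subseteq H_n$ is syndetic iff $H_n\setminus X$ is not thick, and $X$ is thick iff $H_n\setminus X$ is not syndetic. Both are immediate from the definitions together with the fact that thickness is upward closed (if $X$ is syndetic and $H_n\setminus X$ were thick the two would have to meet; conversely, if $H_n\setminus X$ is not thick, then any thick $T$ with $T\cap X=\emptyset$ would lie in $H_n\setminus X$ and force it to be thick). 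Combining duality with the \emph{maximality} of the degree gives a second basic observation: if $Y\subseteq\gamma_i$ is syndetic then $\gamma_i\setminus Y$ is \emph{not} syndetic, since otherwise $\{\gamma_j:j\ne i\}\cup\{Y,\gamma_i\setminus Y\}$ would be a syndetic $(k+1)$-coloring, contradicting via Proposition 4.8 that $\mathbf{A}_n$ has Ramsey degree exactly $k$.

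For the thickness of the family, let $S\in F_1\cap\cdots\cap F_k$, so that each $M_i:=S\cap\gamma_i$ is syndetic, and put $N=H_n\setminus S$. I would consider the $(k+1)$-coloring of $H_n$ with classes $M_1,\dots,M_k,N$. Since $\mathbf{A}_n$ has Ramsey degree $\le k$, Proposition 4.6(3) produces a set of at most $k$ of these colors whose union $U$ is thick. If $U$ omitted some $M_{i_0}$ then $U\subseteq H_n\setminus M_{i_0}$; but $M_{i_0}$ is syndetic, so by duality $H_n\setminus M_{i_0}$ is not thick, a contradiction. Hence $U$ contains all $k$ of the classes $M_1,\dots,M_k$, and since $|U|\le k$ it contains nothing else, so $U=M_1\cup\cdots\cup M_k=S$. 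Thus $S$ is thick, which is exactly what it means for $\{F_1,\dots,F_k\}$ to be a thick family. This step uses only that each $S\cap\gamma_i$ is syndetic, not that the $F_i$ are filters.

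It remains to verify that each $F_i$ is a filter on $\gamma_i$. Nonemptiness ($\gamma_i\in F_i$ since $\gamma$ is syndetic), properness ($\emptyset\notin F_i$, as $\emptyset$ meets no thick set while $H_n$ is thick), and upward closure inside $\gamma_i$ are all immediate from the definition of syndeticity. The one substantive point is closure under intersection: if $Y_1,Y_2\subseteq\gamma_i$ are syndetic, then $Y_1\cap Y_2$ is syndetic. By duality this is equivalent to showing that the fibre family $\mathcal{I}_i=\{W\subseteq\gamma_i:(H_n\setminus\gamma_i)\cup W\text{ is not thick}\}$ is closed under finite unions; it is automatically an ideal in every other respect (downward closed, contains $\emptyset$, omits $\gamma_i$).

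I expect this intersection-closure to be the main obstacle. Unlike the thickness statement, it cannot be read off from the bound $\le k$ alone: degree $\le k$ only ever yields the \emph{existence} of a thick sub-union, and applying it to the refinement of $\gamma_i$ by membership in $Y_1,Y_2$ gives only one-sided information about the \emph{complementary} piece of $\gamma_i$, whereas closure under unions asserts that a specific union is \emph{not} thick. The correct route is to show that the exactness of the Ramsey degree forces each color class of the maximal syndetic coloring to carry a Ramsey-object structure relative to its fibre, so that $\mathcal{I}_i$ is an ideal and $F_i$ is its dual filter. Concretely, I would isolate a relativized form of Proposition 7.1: for $W_1,W_2\subseteq\gamma_i$, if $(H_n\setminus\gamma_i)\cup W_1$ and $(H_n\setminus\gamma_i)\cup W_2$ are both non-thick then so is $(H_n\setminus\gamma_i)\cup(W_1\cup W_2)$. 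Proving this is a genuine Ramsey argument — run in the fibre over $\gamma_i$ using the structural Ramsey property of $\mathbf{A}_n$, rather than by counting colors — and it is precisely where the hypothesis that the degree \emph{equals} $k$ (and not merely that it is $\le k$) is consumed.
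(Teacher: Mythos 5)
Your treatment of the thick/syndetic duality and your proof that the family $\{F_1,\dots,F_k\}$ is thick are both correct, and the latter is essentially the paper's argument (the paper uses the full $2k$-coloring with classes $S_i$ and $\gamma_i\setminus S_i$ where you use the $(k+1)$-coloring $M_1,\dots,M_k,H_n\setminus S$; both pin down the thick sub-union by observing that a thick set must meet every syndetic color class). The problem is the other half of the proposition. You correctly isolate closure of $F_i$ under intersection as the substantive point, but you do not prove it: you reduce it to a ``relativized form of Proposition 7.1'' and then assert that this lemma follows from ``a genuine Ramsey argument run in the fibre over $\gamma_i$ using the structural Ramsey property of $\mathbf{A}_n$,'' without supplying that argument. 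Note that $\mathbf{A}_n$ is not assumed to be a Ramsey object (its degree is $k$, possibly greater than $1$), so it is not even clear which Ramsey property you intend to invoke; showing that the fibre over a single color class behaves like a degree-one object is, morally, the content of the later parts of Section 8 and is not available here. The key step is therefore missing.

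Moreover, your diagnosis that intersection-closure cannot be obtained ``by counting colors'' from the bound $\le k$ is mistaken --- this is exactly how the paper does it. Given syndetic $S,T\subseteq\gamma_1$, form the full $(k+3)$-coloring $\delta$ with $\delta_1=S\cap T$, $\delta_j=\gamma_j$ for $2\le j\le k$, $\delta_{k+1}=S\setminus T$, $\delta_{k+2}=T\setminus S$, and $\delta_{k+3}=\gamma_1\setminus(S\cup T)$. If $S\cap T$ were not syndetic, its complement $\delta_2\sqcup\cdots\sqcup\delta_{k+3}$ would be thick, so by Proposition 4.6 some union $U$ of at most $k$ of the colors $\delta_2,\dots,\delta_{k+3}$ is thick. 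A thick set meets every syndetic set, so $U$ must contain each of the $k-1$ pairwise disjoint syndetic classes $\gamma_2,\dots,\gamma_k$, leaving room for at most one of $S\setminus T$, $T\setminus S$, $\gamma_1\setminus(S\cup T)$; in every case $U$ is contained in $H_n\setminus S$ or in $H_n\setminus T$, contradicting the syndeticity of $S$ or $T$. The input beyond ``degree $\le k$'' is precisely the syndeticity of $\gamma_2,\dots,\gamma_k$ and of $S$ and $T$ themselves: the exactness of the degree enters through the hypothesis that $\gamma$ is a syndetic $k$-coloring, not through any fibrewise Ramsey lemma. This is the same counting device you already used for the thickness of the family; you stopped one application short.
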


\begin{proof}
First we show each $F_i$ is a filter; we prove this for $F_1$. Certainly $F_1$ is upward closed. Suppose $S,T\subseteq \gamma_1$ are syndetic. Form the $(k+3)$-coloring $\delta$ by letting $\delta_1 = (S\cap T)$, $\delta_{k+1} = S\setminus T$, $\delta_{k+2} = T\setminus S$, $\delta_{k+3} = \gamma_1\setminus (S\cup T)$, and $\delta_i = \gamma_i$ for $2\leq i\leq k$. If $(S\cap T)$ is not syndetic, then $H_n\setminus (S\cap T) = \delta_2\sqcup\cdots\sqcup \delta_{k+3}$ is thick. So some subset of $k$ colors among $\delta_2,...,\delta_{k+3}$ must form a thick subset. Since each $\delta_j$ for $2\leq j\leq k$ is syndetic, for one of $X = (S\setminus T), (T\setminus S), (\gamma_i\setminus (S\cup T))$ we have $\delta_2\sqcup\cdots\sqcup \delta_k\sqcup X$ thick. But this contradicts the fact that $S$ and $T$ are syndetic. Hence $S\cap T$ is syndetic, and $F_1$ is a filter. To see that $\{F_1,...,F_k\}$ is thick, pick $S_i\in F_i$ for $1\leq i\leq k$. Then  consider a full $2k$-coloring of $H_n$ with colors $S_i, (\gamma_i\setminus S_i)$ for $i\leq k$. Some $k$ equivalence classes form a thick subset; as each $S_i$ is syndetic, $S_i$ must be one of the equivalence classes.
\end{proof}

\begin{rem}
We will call $\{F_1,...,F_k\}$ as in Proposition 8.1 the \emph{syndetic filters for $\gamma$}.
\end{rem}

If $X\subseteq H_n$ is thick, we say that $S\subseteq H_n$ is \emph{syndetic relative to $X$} if $X\setminus S$ is not thick. Notice that if $Y\subseteq H_n$ is thick and $S$ is syndetic relative to $X$ for some $X\supseteq Y$, then $S$ is also syndetic relative to $Y$. We say that $S$ is \emph{piecewise syndetic} if $S$ is syndetic relative to some thick $X$. Equivalently, $S$ is piecewise syndetic if $S$ is the intersection of a syndetic set and a thick set. 

Now is a good time to compare our use of the words ``thick'', ``syndetic'', and ``piecewise syndetic'' with their traditional meanings (see [HS]). For $G$ a discrete group, $T\subseteq G$ is \emph{thick} if the collection $\{Tg^{-1}: g\in G\}$ has the finite intersection property. $S\subseteq G$ is \emph{syndetic} if there are $g_1,...,g_k\in G$ with $\bigcup_{i\leq k} Sg^{-1}_i = G$, and $P\subseteq G$ is \emph{piecewise syndetic} if there are $g_1,...,g_k$ with $\bigcup_{i\leq k} Pg^{-1}_i$ thick. It is not hard to show that $S$ is syndetic iff $G\setminus S$ is not thick, and $P$ is piecewise syndetic iff $P = S\cap T$ for $S$ syndetic and $T$ thick. 

Let us show that $T\subseteq H_n$ is thick in our sense iff $\pi_n^{-1}(T)$ is thick in the traditional sense. For the forward direction, fix $g_1,...,g_k\in G$. Find $N$ large enough so that $g_i(\mathbf{A}_n)\subseteq \mathbf{A}_N$ for each $i\leq k$. As $T$ is thick (in our sense), pick $x\in H_N$ with $x\circ \emb{\mathbf{A}_n,\mathbf{A}_N}\subseteq T$. Now let $g\in G$ be any element with $g|_{\mathbf{A}_N} = x$. Then $g\in \bigcap_{i\leq k} \pi_n^{-1}(T)g^{-1}_i$.

For the converse, if $T$ is not thick (in our sense), find $N$ large enough so that for any $x\in H_N$, $x\circ \emb{\mathbf{A}_n,\mathbf{A}_N}\not\subseteq T$. Find $g_1,...,g_k$ so that $\{g_i|_{\mathbf{A}_n}: i\leq k\} = \emb{\mathbf{A}_n,\mathbf{A}_N}$. Then $\bigcap_{i\leq k} \pi_n^{-1}(T)g^{-1}_i = \emptyset$. 

Now we see that $S\subseteq H_n$ is syndetic iff $\pi_n^{-1}(S)$ is syndetic, and similarly for piecewise syndetic. Let us return now to the story at hand.

\begin{prop}
The following are equivalent:
\begin{enumerate}
\item
$\mathbf{A}_n$ has Ramsey degree $t\leq k$,
\item
There is a thick set $\{\alpha_n^1,...,\alpha_n^k\}$ of $k$ ultrafilters on $H_n$. 
\end{enumerate}
\end{prop}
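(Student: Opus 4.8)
The plan is to first record a reformulation that makes both directions transparent: a finite set $\{\alpha^1,\dots,\alpha^k\}$ of ultrafilters on $H_n$ is thick if and only if \emph{every syndetic subset of $H_n$ belongs to at least one $\alpha^i$}. This rests on the elementary observation that, since thick sets are upward closed, $S\subseteq H_n$ is syndetic exactly when $H_n\setminus S$ is not thick. Indeed, if some $S\in\bigcap_i\alpha^i$ were non-thick then $H_n\setminus S$ would be syndetic, hence in some $\alpha^i$, giving $\emptyset\in\alpha^i$; conversely, a syndetic $P$ lying in no $\alpha^i$ satisfies $H_n\setminus P\in\bigcap_i\alpha^i$ while being non-thick. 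With this equivalence both implications become statements about covering the syndetic sets.

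For $(2)\Rightarrow(1)$ I would argue by contradiction. If $\mathbf{A}_n$ had Ramsey degree $>k$, then by Proposition 4.8 there is a syndetic $(k+1)$-coloring $\{P_1,\dots,P_{k+1}\}$ of $H_n$, a partition into $k+1$ pairwise disjoint syndetic classes. Each $P_j$ is syndetic, so by the reformulation each lies in some $\alpha^{i(j)}$; as there are $k+1$ pairwise disjoint sets but only $k$ ultrafilters, two of them lie in a common $\alpha^i$, forcing $\emptyset\in\alpha^i$, a contradiction. Hence the degree is $\le k$.

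The substance is $(1)\Rightarrow(2)$. Assuming $\mathbf{A}_n$ has degree $t\le k$, by the reformulation it suffices to produce $t$ ultrafilters whose union contains every syndetic set, since one may then append $k-t$ arbitrary ultrafilters (this preserves thickness, enlarging the collection only shrinking the common intersection, as in the remark before Proposition 8.1). Finding these $t$ ultrafilters amounts to covering the family $\mathrm{Syn}$ of all syndetic subsets of $H_n$ by $t$ subfamilies each having the finite intersection property (FIP), since any FIP family extends to an ultrafilter containing it. Viewing the finite subfamilies of $\mathrm{Syn}$ with empty intersection as the (finite) edges of a hypergraph on $\mathrm{Syn}$, such a cover is exactly a $t$-coloring with no monochromatic edge, so by the de Bruijn--Erd\H{o}s compactness principle it is enough to color each finite subfamily $\{P_1,\dots,P_m\}\subseteq\mathrm{Syn}$. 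For this finite case I would color $H_n$ by the atoms of the Boolean algebra generated by $P_1,\dots,P_m$, writing $C_\sigma=\bigcap_{j\in\sigma}P_j\cap\bigcap_{j\notin\sigma}(H_n\setminus P_j)$ for $\sigma\subseteq[m]$. Since $\mathbf{A}_n$ has degree $t$, Proposition 4.6 produces nonempty atoms $C_{\sigma_1},\dots,C_{\sigma_{t'}}$ with $t'\le t$ whose union is thick. Each $P_j$, being syndetic, meets this thick union, and $P_j\cap C_{\sigma_l}\ne\emptyset$ forces $j\in\sigma_l$, so $\bigcup_l\sigma_l=[m]$. Assigning each $j$ to some class $l$ with $j\in\sigma_l$ partitions $\{P_1,\dots,P_m\}$ into $\le t$ classes, and the $l$-th class has intersection containing $\bigcap_{j\in\sigma_l}P_j\supseteq C_{\sigma_l}\ne\emptyset$; thus every class has the FIP, as required.

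The main obstacle is precisely this last direction. The tempting route of taking a maximal syndetic $t$-coloring, forming the syndetic filters of Proposition 8.1, and extending them to ultrafilters does \emph{not} work: non-thick sets form an ideal only when the degree is $1$ (compare the proof of Proposition 7.1), so thickness is generally destroyed under arbitrary ultrafilter extension, and there is no reason a syndetic $P$ should meet any fixed color class syndetically. The Boolean-atom computation replaces that naive extension, using the degree hypothesis through Proposition 4.6 to bound how many atoms are needed and using syndeticity to guarantee that every index, and hence every $P_j$, is captured; the de Bruijn--Erd\H{o}s step then upgrades the finite colorings to the single global cover that yields the $t$ ultrafilters.
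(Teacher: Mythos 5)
Your proof is correct, but the hard direction $(1)\Rightarrow(2)$ takes a genuinely different route from the paper's. The paper fixes a syndetic $t$-coloring $\gamma$, takes the syndetic filters $F_1,\dots,F_t$ of Proposition 8.1, and extends them to a thick family of ultrafilters one at a time via Lemma 8.3, whose engine is the auxiliary filter of sets syndetic \emph{relative to} $\gamma_1\sqcup S_2\sqcup\cdots\sqcup S_k$ --- precisely the careful version of the ``tempting route'' you correctly observe fails if the extensions are taken arbitrarily. You instead reformulate thickness of a finite family of ultrafilters as the statement that every syndetic set lies in some member, reduce the construction to covering the syndetic sets by $t$ families with the finite intersection property, and obtain that cover by de Bruijn--Erd\H{o}s compactness from the finite case, which you settle by applying Proposition 4.6 to the Boolean atoms generated by finitely many syndetic sets; each step checks out, including the pigeonhole argument for $(2)\Rightarrow(1)$ and the padding by $k-t$ arbitrary ultrafilters. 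The trade-off is worth noting: your argument is self-contained and arguably more transparent for Proposition 8.2 itself, but it only produces \emph{some} thick family of ultrafilters, whereas the paper's Lemma 8.3 is deliberately stronger --- it extends \emph{prescribed} filters $G_i\supseteq F_i$ to a thick family of ultrafilters $U_i\supseteq G_i$, and that extra control is exactly what Proposition 8.5 later needs to arrange $\tilde{i}^n_m(\alpha_n^j)=\alpha_m^{a_j}$. So your route proves this proposition but would not substitute for the lemma in its later use.
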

\begin{proof}\renewcommand{\qedsymbol}{} 
$(2\Rightarrow 1)$ Fix a $(k+1)$-coloring $\gamma$ of $H_n$. For each $1\leq i\leq k$, there is some $\gamma_{j_i}\in \alpha_n^i$. Then $\bigsqcup_{1\leq i\leq k} \gamma_{j_i}$ must be thick.

\vspace{2 mm}
$(1\Rightarrow 2)$ Fix $\gamma$ a syndetic $t$-coloring of $H_n$. Let $\{F_1,...,F_t\}$ be the syndetic filters for $\gamma$. We will be done once we prove the following lemma; we distinguish this lemma because it is somewhat stronger than what we need and we will use it later.
\end{proof}
\begin{lemma}
Suppose $\mathbf{A}_n$ has Ramsey degree $k$, $\gamma$ is a syndetic $k$-coloring, and $\{F_1,...,F_k\}$ are the syndetic filters for $\gamma$. Let $G_i$ be a filter on $\gamma_i$ extending $F_i$ such that $\{G_1,...,G_k\}$ is thick. Then each $G_i$ can be extended to an ultrafilter $U_i$ on $\gamma_i$ such that $\{U_1,...,U_k\}$ is thick.
\end{lemma}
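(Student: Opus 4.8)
The plan is to extend the filters to ultrafilters by a Zorn's lemma argument that keeps the family thick throughout. I would consider the poset $\mathcal{P}$ whose elements are tuples $(H_1,\dots,H_k)$ with each $H_i$ a filter on $\gamma_i$ extending $G_i$ and with $\{H_1,\dots,H_k\}$ thick, ordered by coordinatewise inclusion. The union of a chain is again such a tuple: each $H_i^\infty=\bigcup_\alpha H_i^\alpha$ is a filter extending $G_i$, and since the family has only $k$ members and the chain is directed, any finitely many sets $C_i\in H_i^\infty$ already lie in a single tuple of the chain, whose thickness forces $\bigcup_i C_i$ to be thick; as every element of the intersection filter has this form, $\{H_1^\infty,\dots,H_k^\infty\}$ is thick. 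Zorn's lemma then yields a maximal $(U_1,\dots,U_k)$, and it suffices to prove each $U_i$ is an ultrafilter.

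Suppose not, say $U_1$ is not an ultrafilter, witnessed by some $A\subseteq\gamma_1$ with $A,\gamma_1\setminus A\notin U_1$. By maximality neither $A$ nor $\gamma_1\setminus A$ can be added to $U_1$ while keeping the family thick. Unwinding this and intersecting the two witnessing selections, I would obtain $B\in U_1$ and $C_i\in U_i$ for $i\ge 2$, writing $R=\bigcup_{i\ge2}C_i$, such that both $(B\cap A)\cup R$ and $(B\setminus A)\cup R$ are non-thick, whereas $W:=B\cup R=(B\cap A)\cup(B\setminus A)\cup R$ is thick (since $B\in U_1$, each $C_i\in U_i$, and the family is thick).

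To derive a contradiction I would feed the refined coloring of $H_n$ — splitting $\gamma_1$ into $B\cap A$, $B\setminus A$, $\gamma_1\setminus B$, and each $\gamma_i$ ($i\ge2$) into $C_i$ and $\gamma_i\setminus C_i$ — into the degree bound (Proposition 4.6): some subcollection of at most $k$ of these classes has thick union. Because $\gamma$ is a syndetic coloring, each $\gamma_j$ is syndetic, so a thick subcollection cannot omit every piece of some $\gamma_j$ (otherwise its union would lie in the thick set $H_n\setminus\gamma_j$, making $\gamma_j$ non-syndetic); with $k$ colors and at most $k$ pieces selected, exactly one piece of each $\gamma_j$ is used. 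Using that $F_i\subseteq U_i$, so that the $C_i$ may be taken syndetic and hence each $\gamma_i\setminus C_i$ non-syndetic, the admissible selections collapse: choosing $B\cap A$ or $B\setminus A$ on the first coordinate and the $C_i$ elsewhere returns exactly $(B\cap A)\cup R$ or $(B\setminus A)\cup R$, contradicting thickness of the selected subcollection.

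The step I expect to be the main obstacle is precisely closing this contradiction, because thickness is \emph{not} partition-regular: a thick set may split into two non-thick pieces, so the bare splitting of $W$ is not yet absurd. The content must come from the hypothesis that the Ramsey degree is \emph{exactly} $k$ — equivalently, that there is no syndetic $(k+1)$-coloring — which is what should exclude the residual configuration, namely the one in which $B$ is itself non-syndetic and the thick subcollection uses $\gamma_1\setminus B$ rather than $B\cap A$ or $B\setminus A$. Arranging the bookkeeping of the refined coloring so that it yields exactly this contradiction, rather than a merely consistent assignment of colors, is the delicate heart of the argument, and is where I would concentrate the effort.
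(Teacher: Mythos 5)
Your overall framework (Zorn on tuples of thick filter families, then arguing that a maximal tuple consists of ultrafilters) is sound, and your derivation of the critical configuration from maximality is correct: if $A,\gamma_1\setminus A\notin U_1$, intersecting witnesses gives $B\in U_1$, $C_i\in U_i$ with $B\sqcup C_2\sqcup\cdots\sqcup C_k$ thick while $(B\cap A)\cup R$ and $(B\setminus A)\cup R$ are both non-thick. But the contradiction is not closed, and the route you sketch for closing it fails at a specific point: you cannot ``take the $C_i$ syndetic.'' The filters $U_i$ extend $F_i$, but that does not make their elements syndetic --- a syndetic set can be partitioned into two non-syndetic pieces, so any ultrafilter (or large filter) extending $F_i$ necessarily contains non-syndetic sets. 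The same objection applies to $B$, as you note yourself. Consequently your $(2k+1)$-class full coloring does not collapse to the two forbidden selections, and the ``residual configuration'' you flag is not excluded. Since you explicitly defer the delicate step, the proof as written has a genuine gap exactly where the Ramsey hypothesis must be used.

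The gap is fillable, and the repair is simpler than the machinery you set up: do not color all of $H_n$; instead apply the \emph{large}-coloring form of the Ramsey degree (Proposition 4.6, $(1)\Rightarrow(4)$, with $r=k+1$) to the partial $(k+1)$-coloring whose domain is the thick set $B\sqcup C_2\sqcup\cdots\sqcup C_k$ and whose classes are $B\cap A$, $B\setminus A$, $C_2,\dots,C_k$. Some $\leq k$ classes must have thick union, hence some class is omitted. Omitting $C_j$ leaves a set contained in $H_n\setminus\gamma_j$, which is non-thick because the \emph{full color class} $\gamma_j$ is syndetic (this is where syndeticity enters --- for the $\gamma_j$, not for the $C_j$); omitting $B\cap A$ or $B\setminus A$ leaves one of your two non-thick sets. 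Every option fails, giving the contradiction. This computation is essentially the second paragraph of the paper's proof transplanted into your Zorn framework. The paper itself organizes things differently: it works one coordinate at a time, forms the filter $P_1$ of subsets of $\gamma_1$ that are syndetic relative to $\gamma_1\sqcup S_2\sqcup\cdots\sqcup S_k$ for some $S_i\in G_i$ (proving it is a filter compatible with $G_1$ via the same large-coloring argument), and then observes that \emph{any} ultrafilter extending $G_1\vee P_1$ preserves thickness, since a failure $T\cup S_2\cup\cdots\cup S_k$ non-thick would put $\gamma_1\setminus T$ into $P_1\subseteq U_1$. Your approach, once repaired, buys a symmetric all-coordinates-at-once statement; the paper's buys an explicit description of which sets must belong to $U_1$.
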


\begin{proof}
We will show that $G_1$ can be extended to an ultrafilter $U_1$ such that $\{U_1, G_2,...,G_k\}$ is thick; by relabeling and repeating, this is enough. Let $P_1$ consist of those subsets $T\subseteq \gamma_1$ for which there are $S_i\in G_i$, $2\leq i\leq k$, such that $T$ is syndetic relative to $\gamma_1\sqcup S_2\sqcup\cdots \sqcup S_k$. I claim $P_1$ is a filter. Certainly $P_1$ is upward closed, so suppose $T_1, T_2\in P$. By taking intersections, we may suppose that there are $S_i\in \gamma_i$, $i\geq 2$, such that both $T_1$ and $T_2$ are syndetic relative to $\gamma_1\sqcup S_2\sqcup \cdots\sqcup S_k$. Now the proof that $T_1\cap T_2$ is syndetic relative to $\gamma_1\sqcup S_2\sqcup \cdots \sqcup S_k$ mimics the proof of Proposition 8.1. 
\vspace{1.5 mm}

Now let $S\in G_1$, and suppose $T\in P_1$ as witnessed by $S_i\in G_i$, $2\leq i\leq k$. Then I claim $(S\cap T)\cup S_2\cup \cdots\cup S_k$ is thick. Consider the $(k+1)$-coloring $\delta$ with $\dom{\delta} = S\sqcup S_2\sqcup\cdots \sqcup S_k$ and with $\delta_1 = (S\cap T)$, $\delta_{k+1} = (S\setminus T)$, and $\delta_i = S_i$ for $2\leq i\leq k$. $\delta$ is large, and we cannot have $(S\setminus T)\cup S_1\cup\cdots\cup S_k$ thick since $T$ is syndetic relative to $\gamma_1\sqcup S_2\sqcup\cdots \sqcup S_k$. So as each $\gamma_i$ is syndetic, we must have $(S\cap T)\cup S_2\cup\cdots\cup S_k$ thick. In particular, since $\gamma_1$ is syndetic, $S\cap T$ is non-empty.

We can now extend the filter generated by $P_1$ and $G_1$ to an ultrafilter $U_1$. Since this ultrafilter extends $P_1$, $\{U_1,G_2,...,G_k\}$ is thick. 
\end{proof}

We need to develop a few ideas related to colorings before proceeding. If $\gamma$ is a $k$-coloring and $\delta$ is an $\ell$-coloring both with domain $X$, the \emph{product coloring} $\gamma*\delta$ is the $k\ell$-coloring with domain $X$ with $\gamma*\delta(x) = \gamma(x)(\ell-1) + \delta(x)$. We say that $\delta$ \emph{refines} $\gamma$ if $\delta(x) = \delta(y)$ implies $\gamma(x) = \gamma(y)$. If $\gamma$ is a coloring of $H_m$ and $f\in \emb{\mathbf{A}_m,\mathbf{A}_n}$, then $f(\gamma)$ is the coloring of $H_n$ with $\mathrm{dom}(f(\gamma)) = \hat{f}^{-1}(\mathrm{dom}(\gamma))$ and $f(\gamma)(x) = \gamma(x\circ f)$.
\vspace{1.5 mm}

$G$ acts on $H_n$ via $g\cdot x = g\circ x$. This induces a continuous (left) logic action on the compact, metrizable space of partial $k$-colorings with at most $k$ colors. Explicitly, $g\gamma(g\cdot x)$ is defined iff $\gamma(x)$ is, and $g\gamma(g\cdot x) = \gamma(x)$. This will be the only time we use left actions in this paper. Below we collect some simple facts about colorings.

\begin{enumerate}
\item
If $\gamma$ is a syndetic coloring of $H_m$ and $f\in \emb{\mathbf{A}_m,\mathbf{A}_n}$, then $f(\gamma)$ is a syndetic coloring of $H_n$.
\item
If $\mathbf{A}_n$ has Ramsey degree $k$, then for every large $\ell$-coloring $\gamma$ of $H_n$ with $k \leq \ell$, there is (up to relabeling colors) a full $k$-coloring $\gamma'\in \overline{G\cdot \gamma}$.
\item
If $\gamma$ is a full syndetic $k$-coloring of $H_n$, then every $\gamma'\in \overline{G\cdot \gamma}$ is a full syndetic $k$-coloring.
\item
Let $\gamma, \delta$ be full colorings of $H_n$ such that $\delta$ refines $\gamma$. If $g_N\cdot \delta\rightarrow \delta'$, then $g_N\cdot \gamma$ also converges to some $\gamma'$, and $\delta'$ refines $\gamma'$.
\end{enumerate}

\begin{lemma}
Suppose $m\leq n$, and $\mathbf{A}_m$ and $\mathbf{A}_n$ have Ramsey degrees $k$ and $\ell$, respectively, with $k\leq \ell$. Then there are syndetic colorings $\gamma, \delta$ of $H_m, H_n$ in $k, \ell$ colors, respectively, such that $\delta$ refines $i^n_m(\gamma)$.
\end{lemma}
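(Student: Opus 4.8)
The plan is to obtain $\gamma$ and $\delta$ as limits of translates under the left logic action on colorings, using a product coloring to link the two levels and the facts (1)--(4) about colorings collected just above. First I would fix, using Proposition 4.8 together with the Ramsey-degree hypotheses, a full syndetic $k$-coloring $\gamma_0$ of $H_m$ and a full syndetic $\ell$-coloring $\epsilon$ of $H_n$. Set $\Gamma = i_m^n(\gamma_0)$; by fact (1) this is a full syndetic $k$-coloring of $H_n$. Now form the product coloring $\theta = \Gamma * \epsilon$, a full coloring of $H_n$ in $L$ colors with $\ell \leq L \leq k\ell$, which by construction refines both $\Gamma$ and $\epsilon$. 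Since $H_n$ is thick, $\theta$ is large, so as $\mathbf{A}_n$ has Ramsey degree $\ell \leq L$, fact (2) produces a net $(g_N)$ in $G$ with $g_N\cdot\theta \to \delta'$, where after relabeling $\delta'$ is a full $\ell$-coloring. I will take $\delta := \delta'$.

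The two refinements then propagate to the limit via fact (4). Applying fact (4) to the relation ``$\theta$ refines $\epsilon$'' shows that $g_N\cdot\epsilon$ converges to some full coloring $\epsilon'$ and that $\delta$ refines $\epsilon'$; applying it to ``$\theta$ refines $\Gamma$'' shows that $g_N\cdot\Gamma$ converges to some $\Gamma'$ and that $\delta$ refines $\Gamma'$. The hard part is to see that $\delta$ is syndetic rather than merely full, and this is exactly what the product with $\epsilon$ buys us: by fact (3), $\epsilon'\in\overline{G\cdot\epsilon}$ is a full syndetic $\ell$-coloring, so all $\ell$ of its color classes are nonempty. Since $\delta$ refines $\epsilon'$ and is itself an $\ell$-coloring, it has at least $\ell$ nonempty classes (one inside each class of $\epsilon'$) and at most $\ell$; hence exactly $\ell$, the two partitions coincide, and $\delta = \epsilon'$ up to relabeling. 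In particular $\delta$ is syndetic.

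It remains to identify $\Gamma'$ as $i_m^n(\gamma)$ for a syndetic $\gamma$. A direct computation with the action conventions gives $g\cdot i_m^n(\gamma_0) = i_m^n(g\cdot\gamma_0)$, so $g_N\cdot\Gamma = i_m^n(g_N\cdot\gamma_0)$. Passing to a subnet so that $g_N\cdot\gamma_0 \to \gamma$ (by compactness of the space of $k$-colorings of $H_m$) and using continuity of the map $\gamma\mapsto i_m^n(\gamma)$, we obtain $\Gamma' = i_m^n(\gamma)$; moreover $\gamma\in\overline{G\cdot\gamma_0}$ is a full syndetic $k$-coloring by fact (3). Thus $\gamma$ and $\delta$ are syndetic colorings of $H_m$ and $H_n$ in $k$ and $\ell$ colors respectively, and $\delta$ refines $\Gamma' = i_m^n(\gamma)$, as required.

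The only genuine obstacle is the syndeticity of $\delta$; the product-coloring device reduces it to the elementary counting observation that a refinement of an $\ell$-coloring by an $\ell$-coloring with all classes nonempty is forced to agree with it. Once that device is in place, facts (2)--(4) carry out the limiting argument and the identification of $\Gamma'$ with $i_m^n(\gamma)$ is routine.
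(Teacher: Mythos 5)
Your proposal is correct and follows essentially the same route as the paper: form the product coloring $i_m^n(\gamma_0)*\epsilon$ on $H_n$, use the Ramsey degree of $\mathbf{A}_n$ to pass to a limit that is a full $\ell$-coloring, and recover $\gamma$ from the corresponding limit of the translates of $\gamma_0$. Your extra details --- the counting argument showing the limit coincides with $\epsilon'$ up to relabeling (hence is syndetic) and the verification that the action commutes with $i_m^n$ --- are exactly the steps the paper leaves implicit.
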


\begin{proof}
Choose any full syndetic $k,\ell$ colorings $\gamma', \delta'$ of $H_m, H_n$, respectively. Form the product coloring $P:= i^n_m(\gamma')*\delta'$ on $H_n$. $\overline{G\cdot P}$ must contain a full $\ell$-coloring $\delta$ (up to relabeling colors) which must also be syndetic. If $g_N\cdot P\rightarrow \delta$, then $g_N\cdot \gamma'$ converges to some coloring $\gamma$. $\gamma$ and $\delta$ are as desired.
\end{proof}

 If $F$ is a filter on $H_m$ and $f\in\emb{\mathbf{A}_m,\mathbf{A}_n}$, introduce the shorthand notation $f(F)$ for $\hat{f}^{-1}(F)$. Notice that in the proof of Proposition 7.2, we showed that if $X\subseteq H_m$ is thick, then $\hat{f}^{-1}(X)\subseteq H_n$ is also thick; it follows that if $F_1,...,F_k$ are filters on $H_m$ and $\{F_1,...,F_k\}$ is thick, then $\{f(F_1),...,f(F_k)\}$ is also thick. The following proposition is similar in spirit to Proposition 7.2.

\begin{prop}
Suppose $m\leq n$ and $\mathbf{A}_m$, $\mathbf{A}_n$ have Ramsey degrees $k\leq \ell$, respectively. Then if $\{\alpha_m^i: 1\leq i\leq k\}\subseteq \beta H_m$ is thick, there is a thick set $\{\alpha_n^j: 1\leq j\leq \ell\}\subseteq H_n$ such that $\{\tilde{i}^n_m(\alpha_n^j): 1\leq j\leq \ell\} = \{\alpha_m^i: 1\leq i\leq k\}$.
\end{prop}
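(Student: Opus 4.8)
The plan is to combine the colorings produced by Lemma~8.4 with the filter-extension technology of Lemma~8.2, in the spirit of the preimage-filter argument of Proposition~7.2. Write $f = i_m^n$ for the inclusion $\mathbf{A}_m\hookrightarrow\mathbf{A}_n$. First I would invoke Lemma~8.4 to fix a syndetic $k$-coloring $\gamma$ of $H_m$ and a syndetic $\ell$-coloring $\delta$ of $H_n$ with $\delta$ refining $i_m^n(\gamma)$, and let $\sigma:[\ell]\to[k]$ be the resulting color map, so that $\delta_j\subseteq \hat{f}^{-1}(\gamma_{\sigma(j)})$. Since each color class of the syndetic coloring $f(\gamma)$ is nonempty and $\delta$ is full, $\sigma$ is surjective.

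Next I would align the given ultrafilters with $\gamma$. Each $\alpha_m^i$ contains exactly one color $\gamma_{\tau(i)}$, and I claim $\tau$ is a bijection, so after relabeling $\gamma_i\in\alpha_m^i$: if some $\gamma_{i_0}$ were missed then $H_m\setminus\gamma_{i_0}\in\bigcap_i\alpha_m^i$ would be thick, contradicting that $\gamma_{i_0}$ is syndetic (a set is syndetic iff its complement is not thick). The same computation yields the sharper \emph{absorption fact} $\alpha_m^i\supseteq F_i^\gamma$, where $\{F_1^\gamma,\dots,F_k^\gamma\}$ are the syndetic filters for $\gamma$: if a syndetic $S\subseteq\gamma_i$ were absent from $\alpha_m^i$, then $\gamma_i\setminus S\in\alpha_m^i$, and unioning with $\gamma_l\in\alpha_m^l$ for $l\neq i$ would force $H_m\setminus S$ thick, again contradicting syndeticity of $S$. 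This fact is what keeps the construction honest in the degenerate case $m=n$, where it collapses $\alpha_n^j$ to $\alpha_m^j$.

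With this in hand I would build the target filters. Let $\{F_1^\delta,\dots,F_\ell^\delta\}$ be the syndetic filters for $\delta$ (Proposition~8.1), and for each $j$ let $G_j$ be the filter on $\delta_j$ generated by $F_j^\delta$ together with the preimage filter $f(\alpha_m^{\sigma(j)})=\hat{f}^{-1}(\alpha_m^{\sigma(j)})$. Each $G_j$ extends $F_j^\delta$ by construction. Granting that $\{G_1,\dots,G_\ell\}$ is thick, Lemma~8.2 extends each $G_j$ to an ultrafilter $\alpha_n^j$ on $\delta_j$ with $\{\alpha_n^1,\dots,\alpha_n^\ell\}$ thick. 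The remaining conclusions are then formal: since $\delta_j\in\alpha_n^j$ and the $\delta_j$ are disjoint, the $\alpha_n^j$ are $\ell$ distinct ultrafilters; since $f(\alpha_m^{\sigma(j)})\subseteq\alpha_n^j$, for every $A\in\alpha_m^{\sigma(j)}$ we have $\hat{f}^{-1}(A)\in\alpha_n^j$, so the pushforward satisfies $\tilde{i}_m^n(\alpha_n^j)\supseteq\alpha_m^{\sigma(j)}$ and hence equals it; and since $\sigma$ is onto, $\{\tilde{i}_m^n(\alpha_n^j):j\in[\ell]\}=\{\alpha_m^i:i\in[k]\}$, as required.

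The hard part will be the thickness of $\{G_1,\dots,G_\ell\}$. Unwinding the definition, one must show that for all $A_i\in\alpha_m^i$ (which we may take inside $\gamma_i$) and $P_j\in F_j^\delta$ the set $\bigsqcup_j\bigl(P_j\cap\hat{f}^{-1}(A_{\sigma(j)})\bigr)$ is thick; grouping by $\sigma$ and using that the cross terms vanish, this rewrites as $\hat{f}^{-1}\bigl(\bigsqcup_i A_i\bigr)\cap\bigl(\bigsqcup_j P_j\bigr)$, an \emph{intersection of two thick sets}, which need not be thick in general. The refinement of $i_m^n(\gamma)$ by $\delta$ forces the $\mathbf{A}_m$-restriction of any $y\in\delta_j$ into the correct color $\gamma_{\sigma(j)}$ for free, and the absorption fact $\alpha_m^i\supseteq F_i^\gamma$ prevents the syndetic data of $\delta$ from clashing with the pulled-back preimage data. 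I expect to turn these two structural features into a genuine proof by a simultaneous selection argument modeled on Proposition~8.1 and Lemma~8.2: fix a target $\mathbf{B}\geq\mathbf{A}_n$, use thickness of $\bigsqcup_j P_j$ inside a sufficiently large $\mathbf{A}_N$ to pin down the $\delta$-pattern, and then use thickness of $\{\alpha_m^i\}$ (via its pullback) together with the extension property to correct the $\mathbf{A}_m$-restrictions into the sets $A_i$. Arranging this correction so that it refines, rather than destroys, the already-chosen $\delta$-pattern is the delicate point on which the whole argument turns.
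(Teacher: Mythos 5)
Your setup coincides with the paper's proof of this proposition: the same colorings $\gamma,\delta$ from Lemma 8.4, the same alignment $\gamma_i\in\alpha_m^i$ via syndeticity, the same filters $G_j$ generated by the syndetic filter $F_j^\delta$ together with the restriction of $\hat{f}^{-1}(\alpha_m^{\sigma(j)})$ to $\delta_j$, and the same appeal to the extension lemma (which is Lemma 8.3 in the paper, not 8.2) to finish. But the one step you defer --- thickness of $\{G_1,\dots,G_\ell\}$ --- is the entire content of the proposition, and your proposal stops at a plan for it (``I expect to turn these two structural features into a genuine proof\dots''). That is a genuine gap. Moreover, the ``simultaneous selection argument'' you sketch, which would pin down the $\delta$-pattern inside a large $\mathbf{A}_N$ and then correct the $\mathbf{A}_m$-restrictions by the extension property, is not how the difficulty is resolved in the paper, and the interference you yourself flag at the end is a real obstruction to making it work.

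The paper splits the step in two, and neither half is a selection argument. First, the pullback data alone is handled directly: taking $A_i\in\alpha_m^i$ with $A_i\subseteq\gamma_i$ and setting $T_j=\delta_j\cap\hat{f}^{-1}(A_{\sigma(j)})$, the refinement gives $\bigsqcup_j T_j=\hat{f}^{-1}\bigl(\bigsqcup_i A_i\bigr)$ exactly (your ``cross terms vanish'' computation, but with no syndetic sets present yet), and this is thick because $\bigsqcup_i A_i$ lies in $\bigcap_i\alpha_m^i$ and preimages of thick sets under $\hat{f}$ are thick --- no intersection of two thick sets arises at this stage. Second, to bring in the syndetic sets $T_j'\in F_j^\delta$, one colors the thick set $\bigcup_j T_j$ with the $2\ell$ colors $(T_j'\cap T_j)$ and $(T_j\setminus T_j')$; since $\mathbf{A}_n$ has Ramsey degree exactly $\ell$, some $\ell$ of these color classes union to a thick set $X$, and since $X$ is thick it must meet each syndetic $T_j'$, whose only trace on the domain is the class $(T_j'\cap T_j)$; hence $X$ contains all $\ell$ classes $(T_j'\cap T_j)$ and therefore equals $\bigcup_j(T_j'\cap T_j)$, which is thus thick. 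The idea you are missing is that the second ``thick set'' in your intersection is not arbitrary: it is a disjoint union of exactly $\ell$ syndetic pieces, and the Ramsey degree bound $\ell$ leaves the thick color selection no room to omit any of them. With that step supplied, the rest of your writeup (surjectivity of $\sigma$, the absorption fact $F_i^\gamma\subseteq\alpha_m^i$, and the pushforward identification $\tilde{i}_m^n(\alpha_n^j)=\alpha_m^{\sigma(j)}$) is correct and matches the paper.
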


\begin{proof}
Fix full syndetic $k, \ell$ colorings $\gamma, \delta$ of $H_m, H_n$, respectively, with $\delta$ refining $i_m^n(\gamma)$ as guaranteed by Lemma 8.4. For $1\leq j\leq \ell$, let $a_j$ be the unique number $1\leq a_j\leq k$ with $\delta_j\subseteq i_m^n(\gamma)_{a_j}$. Notice that since $\{\alpha_m^1,...,\alpha_m^k\}$ is thick and since each $\gamma_i$ is syndetic, without loss of generality we may suppose $\gamma_i\in\alpha_m^i$. We can then assume that $\alpha_m^i$ is an ultrafilter on $\gamma_i$.

Let $P_j$ be the filter on $\delta_j$ with $T\in P_j$ iff $T\supseteq \delta_j\cap S$ for some $S\in i^n_m(\alpha_m^{a_j})$; as $\delta_j$ is syndetic and $\{i_m^n(\alpha_m^1),...,i_m^n(\alpha_m^k)\}$ is thick, $\delta_j\cap S \neq \emptyset$ for any $S\in i_m^n(\alpha_m^{a_j})$, so $P_j$ is indeed a filter. Let us show that $\{P_1,...,P_\ell\}$ is thick. Pick $T_j\in P_j$ as witnessed by $S_j\in i_m^n(\alpha_m^{a_j})$ for each $1\leq j\leq \ell$. By taking intersections, we may assume $S_j$ depends only on $a_j$. It follows that $S_j\subseteq T_1\cup\cdots\cup T_\ell$. As $j\rightarrow a_j$ is onto and $\{i_m^n(\alpha_m^1),...,i_m^n(\alpha_m^k)\}$ is thick, we are done.
\vspace{1.5 mm}

Let $\{F_1,...,F_\ell\}$ be the syndetic filters for $\delta$. If $T_i'\in F_i$ and $T_i\in P_i$ for each $1\leq i\leq \ell$, then I claim that $(T_1'\cap T_1)\cup\cdots\cup (T'_\ell\cap T_\ell)$ is thick. Consider the $2\ell$-coloring of $T_1\cup\cdots\cup T_\ell$ with colors $(T_j'\cap T_j)$ and $(T_j\setminus T_j')$ for $1\leq j\leq \ell$. Some $\ell$ colors must form a thick subset, and each $T_j'$ is syndetic.

Therefore let $G_j$ be the filter generated by $F_j\cup P_j$; use Lemma 8.3 to obtain a thick set of ultrafilters $\{\alpha_n^j: 1\leq j\leq \ell\}$. Since $\alpha_n^j$ extends $P_j$, we have that $\tilde{i}_m^n(\alpha_n^j) = \alpha_m^{a_j}$, completing the proof. 
\end{proof}

\begin{cor}
If each $\mathbf{A}\in\mathcal{K}$ has finite Ramsey degree, there is $\varprojlim Y_n \subseteq \varprojlim \beta H_n$ with $Y_n\subseteq \beta H_n$ a finite thick set. 
\end{cor}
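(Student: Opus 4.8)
The plan is to build a coherent tower of finite thick sets of ultrafilters by induction, feeding the single-level existence result (Proposition 8.2) into the coherence/lifting result (Proposition 8.5). First I would set $k_n$ to be the Ramsey degree of $\mathbf{A}_n$; by hypothesis each $k_n$ is finite. The key preliminary observation is that the sequence $(k_n)$ is non-decreasing: for $m \leq n$ the inclusion $i_m^n$ witnesses $\mathbf{A}_m \leq \mathbf{A}_n$, so Proposition 4.7 gives $k_m \leq k_n$. This monotonicity is exactly what is needed to invoke Proposition 8.5 at every step of the induction, since that proposition requires the Ramsey degree at the lower level to be at most the Ramsey degree at the higher level.

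With this in hand, the construction is a straightforward induction. For the base case I would apply Proposition 8.2 to $\mathbf{A}_1$: since $\mathbf{A}_1$ has Ramsey degree $k_1$, there is a thick set $Y_1 = \{\alpha_1^1,\dots,\alpha_1^{k_1}\} \subseteq \beta H_1$ of $k_1$ ultrafilters. For the inductive step, given a thick set $Y_n$ of $k_n$ ultrafilters on $H_n$, I would apply Proposition 8.5 to the pair $\mathbf{A}_n \leq \mathbf{A}_{n+1}$ (with Ramsey degrees $k_n \leq k_{n+1}$) and the thick set $Y_n$. This produces a thick set $Y_{n+1} = \{\alpha_{n+1}^j : 1 \leq j \leq k_{n+1}\}$ of $k_{n+1}$ ultrafilters on $H_{n+1}$ satisfying $\{\tilde{i}_n^{n+1}(\alpha_{n+1}^j) : 1 \leq j \leq k_{n+1}\} = Y_n$, so that $\tilde{i}_n^{n+1}$ restricts to a surjection $Y_{n+1} \to Y_n$.

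Finally I would assemble the inverse limit. Because $\tilde{i}_m^n = \tilde{i}_m^{m+1} \circ \cdots \circ \tilde{i}_{n-1}^n$ for $m \leq n$, each bonding map restricts to a surjection $Y_n \to Y_m$, so $(Y_n)$ is an inverse system of finite non-empty sets with surjective bonding maps. Non-emptiness of $\varprojlim Y_n$ then follows by choosing an arbitrary $\alpha(1) \in Y_1$ and lifting successively using surjectivity of each $\tilde{i}_n^{n+1}|_{Y_{n+1}}$; by construction $\varprojlim Y_n$ sits inside $\varprojlim \beta H_n$, and each $Y_n$ is finite and thick, as required.

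I do not expect a genuine obstacle here: essentially all the difficulty has already been absorbed into Proposition 8.5 (the lifting of a thick set of ultrafilters to a coherent thick set one level up), and the corollary is bookkeeping on top of it. The only two points needing care are making sure the monotonicity $k_n \leq k_{n+1}$ holds so that Proposition 8.5 is applicable at each stage, and noting that the resulting inverse system has surjective bonding maps, which secures both coherence across all levels and non-emptiness of $\varprojlim Y_n$.
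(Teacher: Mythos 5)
Your proof is correct and is precisely the argument the paper intends (the corollary is stated without proof as an immediate consequence of Propositions 8.2 and 8.5): the base case from Proposition 8.2, the inductive lifting from Proposition 8.5 using the monotonicity $k_m \le k_n$ supplied by Proposition 4.7, and non-emptiness of the inverse limit from surjectivity of the restricted bonding maps. No gaps.
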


\begin{theorem}
Let $\mathbf{K}$ be a \fr structure, with $\mathcal{K} = \age{\mathbf{K}}$ and $G = \aut{\mathbf{K}}$. Then the following are equivalent:
\begin{enumerate}
\item
$M(G)$ is metrizable,
\item
Each $\mathbf{A}\in \mathcal{K}$ has finite Ramsey degree.
\end{enumerate}
\end{theorem}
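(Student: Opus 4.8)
The plan is to run everything inside the greatest ambit $\varprojlim \beta H_n$ (Theorem 6.3), using that $M(G)$ is isomorphic to every minimal subflow of $\varprojlim \beta H_n$. The two implications are handled separately, and in both directions the bridge between topology and combinatorics is Proposition 8.2 (that $\mathbf{A}_n$ has finite Ramsey degree exactly when $H_n$ carries a \emph{finite} thick set of ultrafilters) together with Corollary 6.4 (that a metrizable $M(G)$ is exactly an inverse limit $\varprojlim Y_n$ of \emph{finite} sets $Y_n \subseteq \beta H_n$).

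For $(1)\Rightarrow(2)$: assume $M(G)$ is metrizable and realize it as a minimal subflow $M \subseteq \varprojlim \beta H_n$. By the proof of Corollary 6.4, $Y_n := \pi_n(M)$ is finite and $M = \varprojlim Y_n$. The one real step is to show each $Y_n$ is a \emph{thick} set of ultrafilters. Fix $S$ lying in every $\beta \in Y_n$; since $M$ is minimal and $G$-invariant, $S \in \alpha g(n)$ for all $g \in G$ and any fixed $\alpha \in M$. If $S$ were not thick, pick $N$ witnessing this: every $f \in H_N$ has some $r \in \emb{\mathbf{A}_n,\mathbf{A}_N}$ with $f \circ r \notin S$. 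As $\emb{\mathbf{A}_n,\mathbf{A}_N}$ is finite, $H_N = \bigcup_r \{f : f\circ r \notin S\}$, so some $\{f : f\circ r \notin S\} \in \alpha(N)$; extending that $r$ to some $g \in G$ by ultrahomogeneity gives $S \notin \alpha g(n)$, a contradiction. Hence each $Y_n$ is a finite thick set, so by Proposition 8.2 each $\mathbf{A}_n$ has finite Ramsey degree. Since every $\mathbf{A} \in \mathcal{K}$ embeds into some $\mathbf{A}_n$, Proposition 4.7 propagates finiteness of the Ramsey degree to all of $\mathcal{K}$.

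For $(2)\Rightarrow(1)$: assume every $\mathbf{A} \in \mathcal{K}$ has finite Ramsey degree. Corollary 8.6 hands us $\varprojlim Y_n \subseteq \varprojlim \beta H_n$ with each $Y_n$ a finite thick set; as a closed subspace of the countable product $\prod_n Y_n$ of finite discrete spaces, $\varprojlim Y_n$ is compact metrizable. It then suffices to locate a metrizable minimal subflow, since all minimal subflows of the greatest ambit are isomorphic to $M(G)$, and any minimal subflow contained in $\varprojlim Y_n$ is automatically metrizable as a closed subspace of a metrizable space.

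I expect the main obstacle to be exactly this last point: producing a minimal subflow inside (or recovered from) $\varprojlim Y_n$. The coherent system from Corollary 8.6 only guarantees $\tilde{i}^n_m(Y_n) = Y_m$ under the \emph{inclusion} maps, whereas $G$-invariance of $\varprojlim Y_n$ would require $Y_n$ to be closed under the pushforwards $\widetilde{g|_{\mathbf{A}_m}}$ along arbitrary embeddings $g|_{\mathbf{A}_m} : \mathbf{A}_m \to \mathbf{A}_n$, which is not automatic. Two routes seem available. One is to strengthen the construction so that the thick system is genuinely $G$-invariant, making $\varprojlim Y_n$ itself a subflow containing a minimal one. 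The other, more intrinsic, is to bound the projections of an \emph{arbitrary} minimal subflow $M$ directly: by the argument of the first implication $\pi_n(M)$ is always a thick set, and one would show it is a \emph{minimal} thick set — via Proposition 4.8 and a redundancy argument that any thick set of ultrafilters of size exceeding the Ramsey degree of $\mathbf{A}_n$ admits a proper thick subset — so that $|\pi_n(M)|$ equals the finite Ramsey degree of $\mathbf{A}_n$ and $M$ is metrizable. Pinning down that $\pi_n(M)$ is minimal-thick (equivalently, that a redundant projection would yield a proper subflow of $M$) is the crux.
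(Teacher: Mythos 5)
Your $(1)\Rightarrow(2)$ direction is complete and is essentially the paper's argument: realize the metrizable $M(G)$ as $\varprojlim Y_n$ with each $Y_n$ finite via Corollary 6.4, use minimality together with the two-factorizations computation from Theorem 7.3 to conclude each $Y_n$ is thick, and then invoke Propositions 8.2 and 4.7.

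The gap is exactly where you locate it, in $(2)\Rightarrow(1)$, and neither of your proposed repairs is the one that works. Route 2 (bounding $\pi_n(M)$ for an \emph{arbitrary} minimal subflow $M$) stumbles at the first step: there is no a priori reason that $\pi_n(M)$ is finite --- finiteness of the projections is essentially the metrizability you are trying to establish --- and the redundancy lemma you would need (that any thick family of ultrafilters of size exceeding the Ramsey degree of $\mathbf{A}_n$ admits a proper thick subfamily) is neither proved nor obviously true, since several ultrafilters in a thick family may concentrate on the same color class of a syndetic coloring. Route 1 (upgrading Corollary 8.6 so that $\varprojlim Y_n$ is itself $G$-invariant) would require each $Y_n$ to be closed under pushforward along \emph{every} embedding $\mathbf{A}_m\rightarrow\mathbf{A}_n$ simultaneously, which is far more than Proposition 8.5 delivers for the inclusions alone.

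The paper's resolution is different and cheaper: it never makes $Y=\varprojlim Y_n$ invariant, but instead locates a single point $\alpha\in Y$ with $\overline{\alpha\cdot G}\subseteq Y$. Writing $F_m=\bigcap_{\beta\in Y_m}\beta$, for finite $W\subseteq G$ and $S\in F_m$ one sets $Y_{W,S}=\{\alpha\in Y:\ S\in\alpha g(m)\text{ for all }g\in W\}$. These sets are closed and directed under intersection, and the combinatorial heart of the proof is that each is nonempty: with $T_g=\{f\in H_n: f\circ g|_{\mathbf{A}_m}\in S\}$, one shows via the same two-factorizations trick that $T_{1_G}\setminus\bigcap_{g\in W}T_g$ is not thick, so thickness of $Y_n$ forces $\bigcap_{g\in W\cup\{1_G\}}T_g$ into some $\beta\in Y_n$, and any $\alpha\in Y$ above that $\beta$ lies in $Y_{W,S}$. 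Compactness then yields $\alpha$ in all the $Y_{W,S}$ at once, so for every $g$ and $m$ the ultrafilter $\alpha g(m)$ extends $F_m$; since an ultrafilter extending the intersection of finitely many ultrafilters must equal one of them, $\alpha g(m)\in Y_m$, whence $\overline{\alpha\cdot G}\subseteq Y$ is a metrizable subflow and any minimal subflow of it is a metrizable copy of $M(G)$. You should adopt this single-orbit compactness argument in place of your two routes.
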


\begin{proof}
Suppose $Y = \varprojlim Y_n\subseteq \varprojlim \beta H_n$ with $Y_n\subseteq H_n$ finite. We will show that there is $Z\subseteq Y$ with $Z$ a subflow of $\varprojlim \beta H_n$ iff for each $n$, $Y_n$ is thick. Set $Y_n = \{\alpha_n^i: 1\leq i\leq k_n\}$. Let $F_n$ be the filter $(\alpha_n^1\cap\cdots\cap \alpha_n^{k_n})$.

Suppose $S\in F_m$ is not thick. Pick $\alpha\in Y$. The proof that there is $g\in G$ with $S\not\in \alpha g(m)$ now proceeds exactly the same as the second paragraph of the proof of Theorem 7.3.

Now suppose for each $n$ that $Y_n$ is thick. For $W\subseteq G$ finite, $m\in\mathbb{N}$, and $S\in F_m$, let $Y_{W, S}$ consist of those $\alpha\in Y$ such that $S\in \alpha g(m)$ for each $g\in W$. Notice that $Y_{W,S}\subseteq Y$ is closed, hence compact.
\vspace{2 mm} 

\begin{claim} 
First, let us show that $Y_{W, S}$ is nonempty. Fix $n$ large enough so that $g(\mathbf{A}_m)\subseteq \mathbf{A}_n$ for each $g\in W\cup \{1_G\}$. For $g\in G$, set $T_g = \{f\in H_n: f\circ g|_{\mathbf{A}_m}\in S\}$. We will show that the set $X := T_{1_G}\setminus \left(\bigcap_{g\in W} T_g\right)$ is not thick by mimicking the first paragraph of the proof of Theorem 7.3. If it were, pick $N$ large enough so that $g(\mathbf{A}_n)\subseteq \mathbf{A}_N$ for each $g\in W\cup \{1_G\}$ and find $h\in H_N$ so that $h\circ \emb{\mathbf{A}_n,\mathbf{A}_N}\subseteq X$. But now for each $g\in W$, set $x_g = h\circ g|_{\mathbf{A}_n}\circ i_m^n = h\circ i^N_n\circ g|_{\mathbf{A}_m}$. Since $g|_{\mathbf{A}_n}$ and $i^N_n$ are both in $\emb{\mathbf{A}_n,\mathbf{A}_N}$, we have $h\circ g|_{\mathbf{A}_n}$ and $h\circ i^N_n$ in $X$. Since $h\circ g|_{\mathbf{A}_n}\in X\subseteq T_{1_G}$, we have $x_g\in S$. But this implies that $h\circ i^N_n\in\bigcap_{g\in W} T_g$, a contradiction.

Since $Y_n$ is thick and since $T_{1_G} = (\hat{i}_m^n)^{-1}(S)\in Y_n$, this means that $(\bigcap_{g\in W\cup \{1_G\}} T_g) \in \alpha_n^j$ for some $\alpha_n^j\in Y_n$. Now any $\alpha\in Y$ with $\alpha (n) = \alpha_n^j$ is a member of $Y_{W,S}$. This proves the claim.
\end{claim}
\vspace{2 mm}

Now observe that if $W_1,W_2$ are finite subsets of $G$, $S_1\in F_m$, and $S_2\in F_n$ ($m\leq n$), then letting $S_3 = (\hat{i}_m^n)^{-1}(S_1)\cap S_2\in F_n$, we have $Y_{W_1,S_1}\cap Y_{W_2, S_2} \supseteq Y_{W_1\cup W_2, S_3}$. In particular, since each $Y_{W,S}$ is compact, there is $\alpha\in Y$ a member of all of them. Hence $\overline{\alpha\cdot G}\subseteq Y$ is a metrizable subflow of $\varprojlim \beta H_n$. 
\end{proof}

\begin{cor}
If $Y = \varprojlim Y_n\subseteq \varprojlim \beta H_n$, each $Y_n$ is thick, and $\mathbf{A}_n$ has Ramsey degree $|Y_n|<\infty$, then $Y\cong M(G)$.
\end{cor}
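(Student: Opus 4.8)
The plan is to find a genuine minimal subflow $M$ inside $Y$ and then to use the sharpness of the Ramsey-degree hypothesis to force $M$ to exhaust $Y$. Since each $Y_n$ is thick, I would first invoke the part of the proof of Theorem 8.7 that produces, from the thickness of every $Y_n$, a point $\alpha \in Y$ whose orbit closure $Z := \overline{\alpha \cdot G}$ is a subflow of $\varprojlim \beta H_n$ contained in $Y$. Here it is essential to pass to the honest subflow $Z$, since $Y$ itself need not be $G$-invariant. Choosing a minimal subflow $M \subseteq Z$, it is then a minimal subflow of the greatest ambit $\varprojlim \beta H_n$, so $M \cong M(G)$, and the problem reduces to showing $M = Y$.

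To compare the two flows level by level, I would set $M_n := \pi_n(M)$. Since $M$ is compact and metrizable, Proposition 2.10 together with Theorem 2.9 forces each $M_n$ to be finite, and the argument of Corollary 6.4 then gives $M = \varprojlim M_n$. From $M \subseteq Y = \varprojlim Y_n$ one reads off $M_n \subseteq Y_n$, hence $|M_n| \le |Y_n|$; and because $M = \varprojlim M_n$ is a subflow, the part of the proof of Theorem 8.7 showing that subflows have thick projections gives that each $M_n$ is thick.

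The crux is a two-sided squeeze on cardinalities. As $M_n$ is a thick set of exactly $|M_n|$ ultrafilters on $H_n$, the implication $(2 \Rightarrow 1)$ of Proposition 8.2 bounds the Ramsey degree of $\mathbf{A}_n$ above by $|M_n|$; combined with the hypothesis that this degree equals $|Y_n|$, this yields
$$|Y_n| \;\le\; |M_n| \;\le\; |Y_n|,$$
so $|M_n| = |Y_n|$ and therefore $M_n = Y_n$ for every $n$ (both are finite and $M_n \subseteq Y_n$). Consequently $M = \varprojlim M_n = \varprojlim Y_n = Y$, and since $M \cong M(G)$ we conclude $Y \cong M(G)$.

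The identifications $M = \varprojlim M_n$ and $M_n \subseteq Y_n$ are routine inverse-limit bookkeeping. The step I expect to carry the real weight --- and the reason the exact-degree hypothesis is needed in place of mere finiteness --- is the matching of the lower bound on the Ramsey degree furnished by thickness (via Proposition 8.2) against the prescribed value $|Y_n|$; this equality is precisely what pins $M_n = Y_n$ and upgrades the a priori merely contained minimal flow $M$ to all of $Y$.
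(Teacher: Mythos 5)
Your argument is correct and is precisely the proof the paper leaves implicit: Theorem~8.7's construction supplies a subflow inside $Y$, a minimal subflow $M=\varprojlim M_n$ of the greatest ambit is then isomorphic to $M(G)$, its levels $M_n\subseteq Y_n$ are thick by the easy direction of Theorem~8.7, and Proposition~8.2 $(2\Rightarrow 1)$ squeezes $|Y_n|\le |M_n|\le |Y_n|$ so that $M=Y$. The only cosmetic remark is that the appeal to Theorem~2.9 and Proposition~2.10 is unnecessary, since $M_n\subseteq Y_n$ is finite by hypothesis.
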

\vspace{2 mm}

Suppose $Y = \varprojlim Y_n\subseteq \varprojlim \beta H_n$ is the universal minimal flow and is metrizable. By Corollary 8.8, we may assume that $|Y_n| := k_n$ is the Ramsey degree of $\mathbf{A}_n$. It will be useful now to abuse notation and think of $\mathcal{K}$ as being the Fra\"iss\'e--HP class $\{\mathbf{B}: \mathbf{B}\cong \mathbf{A}_n \text{ for some $n$}\}$. Our goal is to interpret $\bigcup_{n\in \mathbb{N}}Y_n$ as $\mathrm{Cat}_X(\mathcal{K}(Y), \mathcal{K})$ for some adequate $X$ and expansion class $\mathcal{K}(Y)$ so that $(\mathcal{K}(Y),\mathcal{K})$ is excellent.

If $f\in \emb{\mathbf{A}_m, \mathbf{A}_n}$, $\alpha_m^i\in Y_m$, $\alpha_n^j\in Y_n$, then we say that $f\in \emb{\alpha_m^i, \alpha_n^j}$ if for all $g\in G$ with $g|_{\mathbf{A}_m} = f$ and all $\alpha\in Y$ with $\alpha (n) = \alpha_n^j$, we have $\alpha g (m) = \alpha_m^i$. If $f\in H_m$, we say that $f\in \emb{\alpha_m^i, \alpha}$ if $f\in \emb{\alpha_m^i, \alpha (n)}$ for large enough $n$.

\begin{prop}
If for some $g\in G$ with $g|_{\mathbf{A}_m} = f$ and some $\alpha\in Y$ with $\alpha (n) = \alpha_n^j$ we have $\alpha g(m) = \alpha_m^i$, then $f\in \emb{\alpha_m^i, \alpha_n^j}$. If $f_1\in \emb{\alpha_m^{i_1},\alpha_n^{i_2}}$ and $f_2\in \emb{\alpha_n^{i_2}, \alpha_N^{i_3}}$, then $f_2\circ f_1\in \emb{\alpha_m^{i_1}, \alpha_N^{i_3}}$. 
\end{prop}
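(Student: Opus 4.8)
The plan is to reduce both assertions to a single explicit description of the action: I claim that whenever $g\in G$ satisfies $g|_{\mathbf{A}_m}=f$ for some $f\in\emb{\mathbf{A}_m,\mathbf{A}_n}$, then for every $\alpha\in\varprojlim\beta H_n$ one has $\alpha g(m)=\tilde{f}(\alpha(n))$. This falls directly out of the explicit formula for the right action displayed in Section 6 (between Propositions 6.1 and 6.2): for $S\subseteq H_m$ we have $S\in\alpha g(m)$ iff $\{f'\in H_n:f'\circ f\in S\}\in\alpha(n)$, and the right-hand condition is exactly $\hat{f}^{-1}(S)\in\alpha(n)$, i.e.\ $S\in\tilde{f}(\alpha(n))$. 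The one thing to verify is that the level $n$ is ``large enough'' for that formula to apply; but since $f\in\emb{\mathbf{A}_m,\mathbf{A}_n}$ we have $g(\mathbf{A}_m)=\ran{f}\subseteq A_n$ and $\mathbf{A}_m\subseteq\mathbf{A}_n$, so it does apply at this very $n$.

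Granting this, the first assertion is immediate: the ultrafilter $\alpha g(m)=\tilde{f}(\alpha(n))$ depends only on $f$ and on the single coordinate $\alpha(n)$, not on the rest of $g$ or of $\alpha$. Hence if $\alpha g(m)=\alpha_m^i$ holds for one pair $(g,\alpha)$ with $g|_{\mathbf{A}_m}=f$ and $\alpha(n)=\alpha_n^j$, then $\tilde{f}(\alpha_n^j)=\alpha_m^i$, and this forces $\alpha g(m)=\alpha_m^i$ for \emph{every} such pair, which is precisely $f\in\emb{\alpha_m^i,\alpha_n^j}$. More usefully, the same computation yields the reformulation $f\in\emb{\alpha_m^i,\alpha_n^j}\iff\tilde{f}(\alpha_n^j)=\alpha_m^i$; extracting the equality $\tilde f(\alpha_n^j)=\alpha_m^i$ from membership requires one witnessing pair, which always exists (extend $f$ to an automorphism by ultrahomogeneity of $\mathbf{K}$, and use that the projection $Y\to Y_n$ is onto, which holds because the bonding maps $\tilde{i}_m^n\colon Y_n\to Y_m$ are surjective by the construction in Proposition 8.6 and Corollary 8.7).

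For the second assertion I would then work entirely with pushforwards. Since $\widehat{f_2\circ f_1}(f')=f'\circ f_2\circ f_1=\hat{f_1}(\hat{f_2}(f'))$, we get $\widehat{f_2\circ f_1}=\hat{f_1}\circ\hat{f_2}$, and as pushforward is functorial ($\widetilde{h\circ k}=\tilde{h}\circ\tilde{k}$, because $(h\circ k)^{-1}=k^{-1}\circ h^{-1}$) this gives $\widetilde{f_2\circ f_1}=\tilde{f_1}\circ\tilde{f_2}$. Applying the reformulation to the two hypotheses, $\tilde{f_1}(\alpha_n^{i_2})=\alpha_m^{i_1}$ and $\tilde{f_2}(\alpha_N^{i_3})=\alpha_n^{i_2}$, so
$$\widetilde{f_2\circ f_1}(\alpha_N^{i_3})=\tilde{f_1}\bigl(\tilde{f_2}(\alpha_N^{i_3})\bigr)=\tilde{f_1}(\alpha_n^{i_2})=\alpha_m^{i_1},$$
and the reformulation again gives $f_2\circ f_1\in\emb{\alpha_m^{i_1},\alpha_N^{i_3}}$. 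The only genuine pitfall I anticipate is the order reversal in $\widehat{f_2\circ f_1}=\hat{f_1}\circ\hat{f_2}$ (the hat operation is precomposition, hence contravariant), which must be tracked carefully to get the composition identity, and the resulting order of pushforwards, correct; once the formula $\alpha g(m)=\tilde{f}(\alpha(n))$ is in hand, everything else is bookkeeping.
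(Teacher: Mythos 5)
Your proposal is correct and follows essentially the same route as the paper: the paper's proof also rests on the observation that $S\in\alpha g(m)$ iff $T=\hat{f}^{-1}(S)\in\alpha(n)$, i.e.\ that $\alpha g(m)$ is exactly the pushforward $\tilde{f}(\alpha(n))$ and hence depends only on $f=g|_{\mathbf{A}_m}$ and $\alpha(n)$, and it then dismisses the composition statement as an easy consequence, which your functoriality computation $\widetilde{f_2\circ f_1}=\tilde{f_1}\circ\tilde{f_2}$ makes explicit. The only nit is the citation: the lifting result and its corollary guaranteeing surjective bonding maps are Proposition 8.5 and Corollary 8.6, not 8.6 and 8.7 (and in any case $Y_n$ may be taken to be the image of the projection, so surjectivity is automatic).
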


\begin{proof}
The second statement easily follows from the first. Now suppose $g$, $\alpha$ are as above; fix $S\subseteq H_m$. Let $T = \{f\in H_n: f\circ g|_{\mathbf{A}_m}\in S\}$. Then $S\in \alpha g(m)$ iff $T\in \alpha (n)$. In particular, $\alpha g(m)$ depends only on $g|_{\mathbf{A}_m}$ and $\alpha (n)$.
\end{proof}

Define $C$ to be the category with object set $\bigcup_{n\in \mathbb{N}} Y_n$ and arrows defined as above. To realize $C$ as $\mathrm{Cat}_{\{\mathbf{A}_n: n\in \mathbb{N}\}}(\mathcal{K}(Y), \mathcal{K})$ for some expansion $\mathcal{K}(Y)$, fix an enumeration $a_n^1,...,a_n^{N_n}$ of the elements of each $\mathbf{A}_n$. For each $\alpha_m^i\in Y_m$, introduce a new $N_m$-ary relation symbol $R_m^i$. Then the object $\alpha_n^j$ can now be realized as the structure $\langle \mathbf{A}_n, (R_m^i)_{m\in \mathbb{N}, 1\leq i\leq k_m}\rangle$, where $R_m^i(b_1,...,b_{N_m})$ holds whenever the map $a_m^i\rightarrow b_i$ is in $\emb{\alpha_m^i,\alpha_n^j}$. If $\alpha\in Y$, we can interpret $\alpha$ as the countably infinite locally finite structure $\bigcup_n \alpha(n)$. Notice that $Y\cong X_{\mathcal{K}(Y)}$.

\begin{prop}
$\mathcal{K}(Y)$ is a reasonable precompact expansion of $\mathcal{K}$ which has the JEP and the ExpP.
\end{prop}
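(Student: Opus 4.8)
The plan is to verify the four properties in turn, using throughout that $Y$ is the \emph{minimal} universal minimal flow (as established before Proposition 8.9 via Corollary 8.8) and that $Y\cong X_{\mathcal{K}(Y)}$.

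Precompactness is immediate: by construction the only $\mathcal{K}(Y)$-expansions of $\mathbf{A}_n$ on its universe are the finitely many structures $\alpha_n^1,\dots,\alpha_n^{k_n}$ attached to the points of $Y_n$, so $\{\mathbf{A}^*\in\mathcal{K}(Y):\mathbf{A}^*|_L=\mathbf{A}_n\}$ is finite; likewise $\mathcal{K}(Y)|_L=\mathcal{K}$ by construction, so $\mathcal{K}(Y)$ really is an expansion of $\mathcal{K}$. For reasonableness, fix $f\in\emb{\mathbf{A}_m,\mathbf{A}_n}$ and an expansion $\alpha_m^i$ of $\mathbf{A}_m$. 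I would extend $i_n\circ f$ to some $g\in G$ by ultrahomogeneity of $\mathbf{K}$, use surjectivity of the projection $Y\to Y_m$ (which holds because the bonding maps $\tilde i_m^n\colon Y_n\to Y_m$ are onto, Proposition 8.6) to pick $\alpha\in Y$ with $\alpha(m)=\alpha_m^i$, and set $\beta=\alpha g^{-1}\in Y$. Since $\beta g(m)=\alpha(m)=\alpha_m^i$ and $g|_{\mathbf{A}_m}=f$, Proposition 8.9 gives $f\in\emb{\alpha_m^i,\beta(n)}$, so $\beta(n)$ is the required expansion of $\mathbf{A}_n$.

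The engine for the last two properties is the following claim: \emph{every expansion $\alpha_m^i\in Y_m$ embeds into every $\alpha\in Y$.} To prove this I would invoke minimality of $Y$: the orbit $\alpha\cdot G$ is dense, and the projection $p_m\colon Y\to Y_m$ is continuous onto the finite discrete set $Y_m$, so $p_m^{-1}(\alpha_m^i)$ is a nonempty open set met by the orbit; hence $\alpha g(m)=\alpha_m^i$ for some $g\in G$. Choosing $n$ with $g(\mathbf{A}_m)\subseteq\mathbf{A}_n$ and setting $f=g|_{\mathbf{A}_m}$, Proposition 8.9 yields $f\in\emb{\alpha_m^i,\alpha(n)}$, i.e. $\alpha_m^i$ embeds into $\alpha$. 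The JEP then follows quickly: given $\alpha_m^i$ and $\alpha_n^j$, fix any $\alpha\in Y$; by the claim both embed into $\alpha$ at some finite levels, and since $\alpha=\bigcup_N\alpha(N)$ is locally finite, composing with the inclusion arrows $\alpha(n')\to\alpha(N)$ places both images inside a single $\alpha(N)=\alpha_N^l$, which is the desired joint expansion.

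For the ExpP I would argue by contradiction. If it failed for some $\alpha_m^i$, then for every $n$ the ``bad'' set $B_n:=\{\alpha_n^j\in Y_n:\emb{\alpha_m^i,\alpha_n^j}=\emptyset\}$ is nonempty. The key observation is that $\tilde i_n^N(B_N)\subseteq B_n$ for $n\le N$: if $\alpha_m^i$ embedded into $\tilde i_n^N(\alpha_N^l)$, it would embed into $\alpha_N^l$ by composing with the inclusion arrow (which is an arrow of $C$ by Proposition 8.9 applied with $g=1_G$), contradicting $\alpha_N^l\in B_N$. Hence $C_N:=\{\alpha\in Y:\alpha(N)\in B_N\}$ is a decreasing chain of nonempty closed (hence compact) subsets of $Y$, so by compactness some $\alpha$ lies in all of them; this $\alpha$ satisfies $\alpha(n)\in B_n$ for every $n$, so $\alpha_m^i$ embeds into $\alpha$ at no level, contradicting the claim. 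I expect the main obstacle to be the bookkeeping in the ExpP step --- correctly passing between embeddings into the finite levels $\alpha(n)$ and into the infinite structure $\alpha$, and verifying that the inclusions $i_n^N$ are genuine arrows of $C$ so that the monotonicity $\tilde i_n^N(B_N)\subseteq B_n$ holds --- rather than any conceptual difficulty.
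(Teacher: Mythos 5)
Your proof is correct and follows essentially the same route as the paper: reasonableness via extending $f$ to $g\in G$ and taking $\alpha g^{-1}$, and JEP and ExpP via minimality of $Y$ together with Proposition 8.9. Your repackaging of the ExpP step as a decreasing chain of nonempty compact sets $C_N$ (rather than the paper's convergent subsequence) and your isolation of the claim that every $\alpha_m^i$ embeds into every $\alpha\in Y$ are only cosmetic differences.
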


\begin{proof}
If $\alpha_m^i,\alpha_n^j\in \mathcal{K}(Y)$, pick $\alpha\in Y$ with $\alpha (m) = \alpha_m^i$. As $Y$ is minimal, find $g\in G$ with $\alpha g(n) = \alpha_n^j$. Let $N$ be large enough so that $g(\mathbf{A}_n)\subseteq \mathbf{A}_N$. Then $\alpha (N)$ witnesses JEP for $\alpha_m^i$, $\alpha_n^j$ as witnessed by the maps $i^N_m$ and $g|_{\mathbf{A}_n}$. 

Now let $f\in \emb{\mathbf{A}_m,\mathbf{A}_n}$ and $\alpha_m^i\in Y_m$. Pick $g\in G$ with $g|_{\mathbf{A}_m} = f$, and pick $\alpha\in Y$ with $\alpha (m) = \alpha_m^i$. Then $\alpha g^{-1}(n)$ is an expansion of $\mathbf{A}_n$ with $f\in \emb{\alpha_m^i,\alpha_n^j}$, showing that $\mathcal{K}(Y)$ is a reasonable expansion of $\mathcal{K}$. 

Suppose $\mathcal{K}(Y)$ did not have the ExpP as witnessed by $\alpha_m^i$. For each $N\in\mathbb{N}$ pick $\alpha_N\in Y$ with $\emb{\alpha_m^i, \alpha_N(N)} = \emptyset$. By passing to a convergent subsequence, suppose $\alpha_N\rightarrow \alpha$. Then $\overline{\alpha\cdot G}\cap \{\zeta\in Y: \zeta(m) = \alpha_m^i\} = \emptyset$, a contradiction to minimality of $Y$.
\end{proof}

Notice that the ExpP implies the following useful corollary:

\begin{cor}
For any $\alpha\in Y$ and any $\alpha_m^i\in \mathcal{K}(Y)$, $\emb{\alpha_m^i, \alpha}$ is a syndetic subset of $H_m$.
\end{cor}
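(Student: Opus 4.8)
The plan is to transcribe, essentially verbatim, the proof of Corollary 5.6 into the present setting, exploiting the identification $Y\cong X_{\mathcal{K}(Y)}$ established just before the statement, with an arbitrary $\alpha\in Y$ playing the role that $\mathbf{K}^*$ plays there. Recall that to show $\emb{\alpha_m^i,\alpha}$ is syndetic in $H_m$ it suffices, by definition, to fix an arbitrary thick $X\subseteq H_m$ and produce a single $f\in\emb{\alpha_m^i,\alpha}\cap X$. All the ingredients are already available: Proposition 8.11 tells us that $(\mathcal{K}(Y),\mathcal{K})$ has the ExpP, and Proposition 8.10 lets us compose the relevant expanded embeddings and detect them from a single witness.

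Concretely, I would proceed as follows. First, using the ExpP from Proposition 8.11, choose $\mathbf{A}_n$ witnessing the ExpP for $\alpha_m^i$; that is, every expansion of $\mathbf{A}_n$ in $\mathcal{K}(Y)$ admits an embedding of $\alpha_m^i$. Next, since $X$ is thick and $\mathbf{A}_m\leq\mathbf{A}_n$ via $i_m^n$, choose $g'\in H_n$ with $g'\circ\emb{\mathbf{A}_m,\mathbf{A}_n}\subseteq X$, and extend $g'$ to some $g\in G$ by ultrahomogeneity. The key step is to read off the correct expansion of $\mathbf{A}_n$: set $\alpha_n^j:=(\alpha g)(n)\in Y_n$, which is precisely the expansion $\mathbf{A}_n(g',\alpha)$ making $g'\in\emb{\alpha_n^j,\alpha}$ (this is exactly what Proposition 8.10 certifies from the single witness $(g,\alpha)$). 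Now apply the ExpP to this particular $\alpha_n^j$ to obtain $e\in\emb{\alpha_m^i,\alpha_n^j}$, and compose via Proposition 8.10 to get $f:=g'\circ e\in\emb{\alpha_m^i,\alpha}$. Since $e\in\emb{\mathbf{A}_m,\mathbf{A}_n}$, we have $f\in g'\circ\emb{\mathbf{A}_m,\mathbf{A}_n}\subseteq X$, so $f\in\emb{\alpha_m^i,\alpha}\cap X$ as desired.

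The main obstacle, and the reason this is not quite a literal quotation of Corollary 5.6, is that the statement must hold for every $\alpha\in Y$, including non-generic points that need not satisfy the extension property or be isomorphic to the Fra\"iss\'e limit $\flim{\mathcal{K}(Y)}$. Thus I cannot invoke any homogeneity of $\alpha$. The point is that the full strength of the ExpP --- an embedding of $\alpha_m^i$ into \emph{every} expansion of $\mathbf{A}_n$ --- is exactly what circumvents this: it applies to the single expansion $\alpha_n^j=\mathbf{A}_n(g',\alpha)$ that $\alpha$ forces upon $\mathbf{A}_n$ through $g'$, no matter how badly behaved $\alpha$ is. The remaining points are bookkeeping: the well-definedness of $\alpha_n^j$ as an element of $Y_n$, the translation between the composition-of-arrows notation $\emb{\cdot,\cdot}$ and genuine structure embeddings into the infinite object $\alpha$, and the verification $g'\circ e\in X$ all follow routinely from Proposition 8.10 and the construction of $\mathcal{K}(Y)$.
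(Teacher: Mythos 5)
Your argument is correct and is exactly the one the paper intends: the corollary is stated there without proof as an immediate consequence of the ExpP, and the details are precisely the transcription of the proof of Corollary 5.6 that you give, with $\alpha_n^j=\alpha g(n)$ playing the role of $\mathbf{B}(g,\mathbf{K}^*)$ and your observation that ExpP applies to whatever expansion $\alpha$ forces on $\mathbf{A}_n$ handling the non-generic $\alpha$. (Only note that your citations are off by one: the ExpP is Proposition 8.10 and the composition/well-definedness statement is Proposition 8.9.)
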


\begin{prop}
$\mathcal{K}(Y)$ has the Ramsey Property.
\end{prop}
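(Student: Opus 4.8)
The plan is to recast the Ramsey Property for $\mathcal{K}(Y)$ as a statement about the partition of each $H_m$ induced by the blocks $\emb{\alpha_m^i, \alpha^*}$, and then to force a contradiction from Proposition 4.8 as soon as any block could be split. Fix any $\alpha^* \in Y$; since $Y = X_{\mathcal{K}(Y)}$, its $L$-reduct is $\mathbf{K}$ and its age is $\mathcal{K}(Y)\!\!\downarrow$. For each $m$ define $\Phi_m \colon H_m \to Y_m$ by $\Phi_m(f) = \alpha^* g(m)$ for any $g \in G$ with $g|_{\mathbf{A}_m} = f$ (every such $f$ extends to an automorphism by ultrahomogeneity). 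By Proposition 8.9 this is well-defined, and, again by Proposition 8.9, $\emb{\alpha_m^i, \alpha^*} = \Phi_m^{-1}(\alpha_m^i)$. Hence the sets $E_i := \emb{\alpha_m^i, \alpha^*}$ for $1 \le i \le k_m$ partition $H_m$, and each is syndetic, in particular nonempty, by Corollary 8.12.

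First I would reduce being a Ramsey object to a splitting statement about the $E_i$. Applying Proposition 4.1 to the class $\mathcal{K}(Y)$ with $\mathbf{D} = \alpha^*$ (the proposition needs no homogeneity, only that $\mathcal{K}(Y)$ is cofinal in $\age{\alpha^*}$), $\alpha_m^i$ is a Ramsey object in $\mathcal{K}(Y)$ iff every full $2$-coloring of $E_i$ has a color that is thick with respect to $\mathcal{K}(Y)$. The delicate point here — the step I expect to be the main obstacle — is that the individual ultrafilters $\alpha_m^i$ are \emph{not} thick ultrafilters once $k_m > 1$ (a single block is syndetic but not thick in $\mathcal{K}$), so $\mathcal{K}(Y)$-thickness cannot be read off the $\alpha_m^i$ directly and must be converted through its trace description. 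By Lemma 5.9 (whose proof uses only the ExpP, available by Proposition 8.11), a subset $S \subseteq E_i$ is thick in $\mathcal{K}(Y)$ iff $S = T \cap E_i$ for some $T \subseteq H_m$ thick in $\mathcal{K}$. Since the largest admissible $T$ is $S \cup (H_m \setminus E_i) = H_m \setminus (E_i \setminus S)$, and a superset of a thick set is thick, $S$ is thick in $\mathcal{K}(Y)$ iff $E_i \setminus S$ is non-syndetic in $\mathcal{K}$.

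Feeding this back, for a $2$-coloring $E_i = \gamma_1 \sqcup \gamma_2$ the color $\gamma_1$ is thick in $\mathcal{K}(Y)$ exactly when $\gamma_2$ is non-syndetic in $\mathcal{K}$, and symmetrically. Therefore $\alpha_m^i$ is a Ramsey object in $\mathcal{K}(Y)$ iff $E_i$ admits no partition into two $\mathcal{K}$-syndetic pieces. Once this equivalence is established the rest is immediate.

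Finally I would derive the contradiction. Suppose some block splits as $E_i = \gamma_1 \sqcup \gamma_2$ with both $\gamma_1$ and $\gamma_2$ syndetic. Replacing $E_i$ by $\gamma_1, \gamma_2$ in the partition $\{E_1, \dots, E_{k_m}\}$ produces a partition of $H_m = \emb{\mathbf{A}_m, \mathbf{K}}$ into $k_m + 1$ syndetic sets, that is, a syndetic $(k_m+1)$-coloring of $H_m$. By Proposition 4.8 this forces the Ramsey degree of $\mathbf{A}_m$ to be at least $k_m + 1$, contradicting the fact (Corollary 8.8) that $|Y_m| = k_m$ is exactly the Ramsey degree of $\mathbf{A}_m$. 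Hence no block splits into two syndetic sets, so each $\alpha_m^i$ is a Ramsey object, and $\mathcal{K}(Y)$ has the Ramsey Property.
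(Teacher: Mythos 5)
Your proof is correct and follows essentially the same route as the paper: reduce via Proposition 4.1 to finding a thick color in any full $2$-coloring of $\emb{\alpha_m^i,\alpha^*}$, convert $\mathcal{K}(Y)$-thickness into $\mathcal{K}$-thickness of the union with the remaining (syndetic) blocks, and conclude from the fact that the Ramsey degree of $\mathbf{A}_m$ is exactly $k_m$. The only cosmetic differences are that you argue contrapositively through Proposition 4.8 (a syndetic $(k_m+1)$-coloring would force degree $\geq k_m+1$) where the paper argues directly through the $(k_m+1)$-coloring and Proposition 4.6, and that you spell out, via Lemma 5.9, the thickness-transfer step the paper leaves implicit.
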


\begin{proof}
Pick any $\alpha\in Y$, and fix $\alpha_n^1\in \mathcal{K}(Y)$. By Proposition 4.1, it is enough to show that for any full 2-coloring $\gamma$ of the set $\emb{\alpha_n^1, \alpha}$, there is a color $\gamma_j$ which is a thick subset of $\emb{\alpha_n^1, \alpha}$. This is equivalent to showing that $\gamma_j\cup \left(\bigcup_{2\leq i \leq k_n} \emb{\alpha_n^i, \alpha}\right)$ is a thick subset of $H_n$. But consider the $(k_n+1)$-coloring $\delta$ of $H_n$ given by letting $\delta_1 = \gamma_1$, $\delta_{k_n+1} = \gamma_2$, and $\delta_i = \emb{\alpha_i^n, \alpha}$ for $2\leq i \leq k_n$; since $\mathbf{A}_n$ has Ramsey degree $k_n$ and by Corollary 8.11, we are done.
\end{proof}

To show that $\mathcal{K}(Y)$ has the AP, we note the following theorem of Ne\v{s}et\v{r}il and R\"odl [NR].
\begin{prop}
Let $\mathcal{C}$ be a class of finite structures with the JEP and the RP. Then $\mathcal{C}$ has the AP.
\end{prop}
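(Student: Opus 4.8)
The plan is to derive amalgamation directly from the Ramsey property of the apex $\mathbf{A}$, using JEP only to manufacture a common target and then using a carefully chosen $2$-coloring to force the two legs of the amalgam to share their image of $\mathbf{A}$. Throughout I would use the embedding formulation of RP from Section 4 together with the hook-arrow notation $\mathbf{D}\hookrightarrow(\mathbf{E})^\mathbf{A}_2$.

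First I would fix the concrete set-up. Given $\mathbf{A},\mathbf{B},\mathbf{C}\in\mathcal{C}$ and embeddings $f\in\emb{\mathbf{A},\mathbf{B}}$, $g\in\emb{\mathbf{A},\mathbf{C}}$, apply JEP to $\mathbf{B}$ and $\mathbf{C}$ to obtain $\mathbf{E}\in\mathcal{C}$ with embeddings $u\in\emb{\mathbf{B},\mathbf{E}}$ and $v\in\emb{\mathbf{C},\mathbf{E}}$. Now $u\circ f$ and $v\circ g$ both lie in $\emb{\mathbf{A},\mathbf{E}}$ but need not agree, so $\mathbf{E}$ itself is generally not an amalgam. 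Since $\mathbf{A}\leq\mathbf{E}$ and $\mathbf{A}$ is a Ramsey object, I would then choose $\mathbf{D}\in\mathcal{C}$ with $\mathbf{D}\hookrightarrow(\mathbf{E})^\mathbf{A}_2$.

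The crux, and the step I expect to be the main obstacle, is the design of the coloring. I would color $e\in\emb{\mathbf{A},\mathbf{D}}$ by $\chi(e)=0$ if $e$ extends to a copy of $\mathbf{B}$ along $f$, i.e.\ if $e=r\circ f$ for some $r\in\emb{\mathbf{B},\mathbf{D}}$, and $\chi(e)=1$ otherwise. Let $h\in\emb{\mathbf{E},\mathbf{D}}$ be such that $h\circ\emb{\mathbf{A},\mathbf{E}}$ is $\chi$-monochromatic. The key observation is that the monochromatic color must be $0$: the element $h\circ u\circ f$ belongs to $h\circ\emb{\mathbf{A},\mathbf{E}}$ and has color $0$, witnessed by $h\circ u\in\emb{\mathbf{B},\mathbf{D}}$, since $(h\circ u)\circ f=h\circ u\circ f$.

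Finally, because $v\circ g\in\emb{\mathbf{A},\mathbf{E}}$, the embedding $h\circ v\circ g$ also lies in the monochromatic set and hence has color $0$; so there is $r\in\emb{\mathbf{B},\mathbf{D}}$ with $r\circ f=h\circ v\circ g$. Setting $s:=h\circ v\in\emb{\mathbf{C},\mathbf{D}}$ gives $s\circ g=h\circ v\circ g=r\circ f$, so $\mathbf{D}$ together with $r$ and $s$ is the desired amalgam. The verifications that $\chi$ is well-defined and that all the composites are genuine embeddings are routine; the only real content is the coloring, whose whole purpose is to guarantee that a monochromatic copy of $\mathbf{E}$ carries a copy of $\mathbf{B}$ and a copy of $\mathbf{C}$ meeting in a common image of $\mathbf{A}$.
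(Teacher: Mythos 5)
Your proof is correct and follows essentially the same route as the paper's: use JEP to produce a common target $\mathbf{E}$, apply the Ramsey property for $\mathbf{A}$ over $\mathbf{E}$, and color copies of $\mathbf{A}$ by whether they extend along $f$ to a copy of $\mathbf{B}$, so that a monochromatic copy of $\mathbf{E}$ yields the amalgam. The only (inessential) difference is that you get away with a $2$-coloring keyed to $\mathbf{B}$ alone and take $s = h\circ v$ directly, whereas the paper uses the symmetric $4$-coloring by membership in $S\cap T$, $S\setminus T$, $T\setminus S$, and neither; your version is a slight streamlining of the same argument.
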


\begin{proof}
Fix $\mathbf{A},\mathbf{B},\mathbf{C}\in\mathcal{C}$ and embeddings $f:\mathbf{A}\rightarrow \mathbf{B}$ and $g: \mathbf{A}\rightarrow \mathbf{C}$. Let $\mathbf{E}$ witness JEP for $\mathbf{B}$ and $\mathbf{C}$, and find $\mathbf{D}\in\mathbf{C}$ with $\mathbf{D}\rightarrow (\mathbf{E})^{\mathbf{A}}_4$.

Let $S:= \{ x\in \emb{\mathbf{A},\mathbf{D}}: \exists y\in\emb{\mathbf{B},\mathbf{D}}(x = y\circ f)\}$. Let $T$ be defined similarly for $\mathbf{C}$ and $g$. Let $\gamma$ be the $4$-coloring of $\emb{\mathbf{A},\mathbf{D}}$ with $\gamma_1 = S\cap T$, $\gamma_2 = S\setminus T$, $\gamma_3 = T\setminus S$, and $\gamma_4 = \emb{\mathbf{A},\mathbf{D}}\setminus (S\cup T)$. Pick $h\in\emb{\mathbf{E},\mathbf{D}}$ with $h\circ \emb{\mathbf{A},\mathbf{E}}$ monochromatic. Since $\mathbf{E}$ embeds both $\mathbf{B}$ and $\mathbf{C}$, the color must be $\gamma_1$. Now pick any $x\in \emb{\mathbf{A},\mathbf{E}}$, and pick $r, s$ witnessing the fact that $h\circ x\in S\cap T$. Then $h\circ x = r\circ f = s\circ g$.
\end{proof} 

We now have the following:

\begin{theorem}
Let $\mathbf{K}$ be a \fr structure with $\mathcal{K} = \age{\mathbf{K}}$ and $G = \aut{\mathbf{K}}$. Then the following are equivalent:
\begin{enumerate}
\item
$G$ has metrizable universal minimal flow,
\item 
Each $\mathbf{A}\in \mathcal{K}$ has finite Ramsey degree,
\item
There is a \fr precompact expansion class $\mathcal{K}^*$ with $(\mathcal{K}^*,\mathcal{K})$ excellent, i.e.\ $\mathcal{K}^*$ is reasonable and has both the ExpP and the RP.
\end{enumerate}
\end{theorem}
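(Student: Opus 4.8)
The plan is to obtain $(1)\Leftrightarrow(2)$ essentially for free and then close the cycle by proving the substantial implication $(2)\Rightarrow(3)$ together with the nearly immediate implication $(3)\Rightarrow(1)$. Indeed, the equivalence $(1)\Leftrightarrow(2)$ is exactly the content of Theorem 8.7, so no new work is needed there; everything reduces to relating the expansion class in $(3)$ to the combinatorial and dynamical data already in hand.

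For $(3)\Rightarrow(1)$, suppose $\mathcal{K}^*$ is a precompact expansion class with $(\mathcal{K}^*,\mathcal{K})$ excellent, so that $\mathcal{K}^*$ is reasonable and has both the ExpP and the RP. Then Theorem 5.7 gives $X_{\mathcal{K}^*}\cong M(G)$, and since $(\mathcal{K}^*,\mathcal{K})$ is precompact, $X_{\mathcal{K}^*}$ is compact metrizable with the metric described in Section 5; hence $M(G)$ is metrizable, which is $(1)$. (Alternatively one could deduce $(2)$ directly from $(3)$ via Proposition 5.8.)

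The heart of the matter is $(2)\Rightarrow(3)$, where I would simply assemble the construction carried out in Section 8. Assuming each $\mathbf{A}\in\mathcal{K}$ has finite Ramsey degree, Theorem 8.7 yields $(1)$, so $M(G)$ is metrizable; by Corollary 6.4 (and its proof) we may write $M(G)=\varprojlim Y_n$ with each $Y_n\subseteq\beta H_n$ finite, and by Corollary 8.8 we may arrange that each $Y_n$ is thick with $|Y_n|=k_n$, the Ramsey degree of $\mathbf{A}_n$. Setting $Y=M(G)$, I form the expansion class $\mathcal{K}(Y)$ exactly as in the paragraph preceding Proposition 8.9, realizing each $\alpha_n^j\in Y_n$ as a structure $\langle\mathbf{A}_n,(R_m^i)\rangle$; by construction $Y\cong X_{\mathcal{K}(Y)}$.

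It then remains to check that $(\mathcal{K}(Y),\mathcal{K})$ is excellent, and this is precisely what the three structural propositions of Section 8 supply: Proposition 8.10 shows $\mathcal{K}(Y)$ is reasonable, precompact, and has the JEP and the ExpP; Proposition 8.12 gives the RP; and Proposition 8.13, the Ne\v{s}et\v{r}il--R\"odl argument, upgrades JEP and RP to the AP. Together with the evident facts that $\mathcal{K}(Y)$ is closed under isomorphism, has only countably many isomorphism types, and contains structures of arbitrarily large finite cardinality, this makes $\mathcal{K}(Y)$ a Fra\"iss\'e--HP class with $(\mathcal{K}(Y),\mathcal{K})$ excellent, establishing $(3)$. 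The real obstacle lives entirely inside $(2)\Rightarrow(3)$: producing an expansion whose associated flow is $M(G)$ and, above all, proving the Ramsey Property for $\mathcal{K}(Y)$. That verification rests on the delicate filter-combinatorics of Section 8 (the syndetic filters of Proposition 8.1, the ultrafilter-extension Lemma 8.3, and the refinement and lifting results Lemma 8.4 and Proposition 8.5, feeding into Corollary 8.11 and Proposition 8.12); once those are granted, the present theorem is pure bookkeeping.
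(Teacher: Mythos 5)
Your proposal is correct and follows the paper's own proof almost verbatim: $(1)\Leftrightarrow(2)$ is Theorem 8.7, and $(2)\Rightarrow(3)$ is exactly the $\mathcal{K}(Y)$ construction together with Propositions 8.10, 8.12, and 8.13. The only cosmetic difference is that you close the cycle via $(3)\Rightarrow(1)$ using Theorem 5.7, whereas the paper uses the lighter $(3)\Rightarrow(2)$ via Proposition 5.8 --- an alternative you note yourself, and your route is also legitimate since the proof of the relevant direction of Theorem 5.7 does not depend on the present theorem.
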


\begin{proof}
$(1\Leftrightarrow 2)$ was Theorem 8.7, and we have just shown $(2\Rightarrow 3)$. $(3\Rightarrow 2)$ is Proposition 5.8. 
\end{proof}

In light of the discussion before Proposition 5.10, we obtain:

\begin{cor}
If $G$ is a closed subgroup of $S_\infty$ with metrizable universal minimal flow, then $G$ has the generic point property.
\end{cor}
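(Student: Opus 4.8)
The plan is to reduce to the \fr setting and then read the conclusion off the structure theorem already in hand. First I would apply Theorem 3.4 to write $G = \aut{\mathbf{K}}$ for some relational \fr structure $\mathbf{K}$ on $\mathbb{N}$, and set $\mathcal{K} = \age{\mathbf{K}}$. Since closed subgroups of $S_\infty$ are Polish, the generic point property is meaningful for $G$, and by definition it suffices to produce a single generic orbit inside $M(G)$.

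With $M(G)$ assumed metrizable, I would invoke the implication $(1)\Rightarrow(3)$ of Theorem 8.14 to obtain a \fr precompact expansion class $\mathcal{K}^*$ for which the pair $(\mathcal{K}^*,\mathcal{K})$ is excellent. Theorem 5.7 then gives the concrete identification $M(G)\cong X_{\mathcal{K}^*}$ under the right logic action of $G$. This is the decisive step: it replaces the abstract minimal flow by an explicit space of expansions whose orbits can be analyzed combinatorially, and it is exactly where the metrizability hypothesis is consumed.

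Finally I would appeal to the discussion preceding Proposition 5.10, which shows directly that $\mathbf{K}^*\cdot G$ is a generic orbit of $X_{\mathcal{K}^*}$. It is dense by minimality of $X_{\mathcal{K}^*}$, and it is $G_\delta$ because $\mathbf{K}^*\cdot G$ consists precisely of those $\mathbf{K}'\in X_{\mathcal{K}^*}$ isomorphic to $\mathbf{K}^*$, equivalently those with age $\mathcal{K}^*$ satisfying the extension property, and the latter is the countable intersection of the open sets $N(\mathbf{A}^*\subseteq \mathbf{B}^*)$. Hence $M(G)$ has a generic orbit, which is precisely the generic point property.

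The genuine content of this corollary is carried by the earlier theorems, so within the argument itself there is no serious obstacle; the one point to verify carefully is that the reduction via Theorem 3.4 is legitimate in the relational framework fixed in section 6 and that genericity yields a bona fide \emph{single} generic orbit. The latter is automatic, since in a Baire space the intersection of two comeager sets is nonempty, so any $G$-flow has at most one generic orbit.
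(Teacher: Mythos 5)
Your proposal is correct and follows essentially the same route as the paper: Theorem 8.14 $(1)\Rightarrow(3)$ supplies an excellent pair $(\mathcal{K}^*,\mathcal{K})$, Theorem 5.7 identifies $M(G)$ with $X_{\mathcal{K}^*}$, and the discussion preceding Proposition 5.10 exhibits $\mathbf{K}^*\cdot G$ as a dense $G_\delta$ orbit. The paper's own proof is exactly this appeal to the earlier material, so nothing further is needed.
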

\vspace{2 mm}

We conclude with the proof of Theorem 5.7. Suppose $\mathcal{K}^*$ is a \fr precompact expansion class as in Theorem 8.14 (3). We will show that $X_{\mathcal{K}^*}\cong M(G)$. Let $\mathbf{K}^*\in X_{\mathcal{K}^*}$ be the \fr limit. For each $n$, let $\mathbf{A}_n^1,...,\mathbf{A}_n^{k_n}$ be the expansions of $\mathbf{A}_n$ in $\mathcal{K}^*$, with $\mathbf{K}^*|_{\mathbf{A}_n} = \mathbf{A}_n^1$. Write $H_n^*$ for $\emb{\mathbf{A}_n^1, \mathbf{K}^*}$. Notice that $\varprojlim \beta H_n^*\subseteq \varprojlim \beta H_n$ is nonempty. Pick $\alpha\in \varprojlim \beta H_n^*$ so that each $\alpha(n)$ is thick for $H_n^*$ (we can do this as $\mathcal{K}^*$ has RP).

If $f, x\in H_m$, let us say that $f$ and $x$ have the \emph{same expansion} if for large enough $n$, there is $r\in H_n^*$ with $r\circ f = x$. It is straightforward to check that this is an equivalence relation. 
\vspace{2 mm}

\begin{claim} For $g,h\in G$, $g|_{\mathbf{A}_m}$ and $h|_{\mathbf{A}_m}$ have the same expansion iff $\alpha g(m) = \alpha h(m)$; we once again mimic the proof of Theorem 7.3. Suppose $g, h\in G$ are such that $g|_{\mathbf{A}_m}$ and $h|_{\mathbf{A}_m}$ have the same expansion. Pick $n$ large enough so that $g(\mathbf{A}_m)\cup h(\mathbf{A}_m)\subseteq \mathbf{A}_n$. Let $S\in \alpha g(m)$; set $T_g = \{f\in H_n: f\circ g|_{\mathbf{A}_m}\in S\}$ and $T_h = \{f\in H_n: f\circ h|_{\mathbf{A}_m}\in S\}$. For sake of contradiction, suppose $S\not\in \alpha h(m)$, so that $T_g\setminus T_h\in \alpha (n)$. Since $g|_{\mathbf{A}_m}$ and $h|_{\mathbf{A}_m}$ have the same expansion, find $r\in H_n^*$ with $r\circ g|_{\mathbf{A}_m} = h|_{\mathbf{A}_m}$. Pick $N$ large enough so that $\mathbf{A}_n\cup r(\mathbf{A}_n)\subseteq \mathbf{A}_N$. Since $\alpha(n)$ is thick for $H_n^*$, find $s\in H_N^*$ so that $s\circ \emb{\mathbf{A}_n^1, \mathbf{A}_N^1}\subseteq T_g\setminus T_h$. But now set $x = s\circ r\circ g|_{\mathbf{A}_m} = s\circ i^N_n\circ h|_{\mathbf{A}_m}$. This tells us that $x\in S$ and $x\not\in S$, a contradiction.

For the converse, suppose $\alpha g(m) = \alpha h(m)$. Pick any $S\in \alpha g(m)$ with $S\subseteq H_m^*$. Then with $T_g, T_h$ as above, we have $T_g\cap T_h\cap H_n^*\in \alpha(n)$. Pick $f\in T_g\cap T_h\cap H_n^*$. Then $f\circ g|_{\mathbf{A}_m}$ and $f\circ h|_{\mathbf{A}_m}$ are in $H_m^*$; since $\mathbf{K}^*$ is ultrahomogeneous, $f\circ g|_{\mathbf{A}_m}$ and $f\circ h|_{\mathbf{A}_m}$ have the same expansion. It follows that $g|_{\mathbf{A}_m}$ and $h|_{\mathbf{A}_m}$ also have the same expansion. This proves the claim.
\end{claim}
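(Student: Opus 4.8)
The plan is to prove both implications by translating every statement about the ultrafilter $\alpha g(m)$ into a statement about $\alpha(n)$ through the defining equivalence $S \in \alpha g(m) \Leftrightarrow \{f \in H_n : f \circ g|_{\mathbf{A}_m} \in S\} \in \alpha(n)$ (valid for large $n$), and then exploiting that each $\alpha(n)$ is a \emph{thick} ultrafilter concentrated on $H_n^*$. This is exactly the mechanism of Theorem 7.3, now carried out relative to the expanded embeddings $H_n^*$ rather than all of $H_n$.

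For the forward implication, I would assume $g|_{\mathbf{A}_m}$ and $h|_{\mathbf{A}_m}$ have the same expansion, fix a witness $r \in H_n^*$ with $r \circ g|_{\mathbf{A}_m} = h|_{\mathbf{A}_m}$, and show $\alpha g(m) \subseteq \alpha h(m)$ (equality of ultrafilters then follows by symmetry). Supposing some $S \in \alpha g(m)$ fails to lie in $\alpha h(m)$, the sets $T_g = \{f \in H_n : f \circ g|_{\mathbf{A}_m} \in S\}$ and $T_h = \{f \in H_n : f \circ h|_{\mathbf{A}_m} \in S\}$ satisfy $T_g \setminus T_h \in \alpha(n)$. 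Using thickness of $\alpha(n)$ for $H_n^*$, I would pick $N$ large and $s \in H_N^*$ with $s \circ \emb{\mathbf{A}_n^1, \mathbf{A}_N^1} \subseteq T_g \setminus T_h$, with $N$ chosen so that both $r$ and the inclusion $i_n^N$ factor through $\emb{\mathbf{A}_n^1, \mathbf{A}_N^1}$. The single embedding $x = s \circ r \circ g|_{\mathbf{A}_m} = s \circ i_n^N \circ h|_{\mathbf{A}_m}$ is then forced into $S$ by $s\circ r \in T_g$ and out of $S$ by $s \circ i_n^N \notin T_h$, the desired contradiction.

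For the backward implication, I would first observe that $\alpha g(m)$ concentrates on a single expansion class. Writing $j$ for the expansion type of $g|_{\mathbf{A}_m}$, every $f \in H_n^*$ makes $f \circ g|_{\mathbf{A}_m}$ have expansion type $j$, so the set $\{f \in H_n : f \circ g|_{\mathbf{A}_m} \in \emb{\mathbf{A}_m^j, \mathbf{K}^*}\}$ contains $H_n^* \in \alpha(n)$; hence $\emb{\mathbf{A}_m^j, \mathbf{K}^*} \in \alpha g(m)$. Since distinct expansion classes are disjoint, if $\alpha g(m) = \alpha h(m)$ the two ultrafilters cannot simultaneously contain members of different classes, forcing $g|_{\mathbf{A}_m}$ and $h|_{\mathbf{A}_m}$ to share an expansion type. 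Finally I would note that sharing an expansion type is equivalent to having the same expansion in the sense defined: ultrahomogeneity of $\mathbf{K}^*$ provides an automorphism of $\mathbf{K}^*$ carrying one embedding to the other, and its restriction to $\mathbf{A}_n$ is the required $r \in H_n^*$.

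The main obstacle I anticipate is the bookkeeping in the forward direction — specifically, arranging the indices so that the \emph{same} element $x$ simultaneously witnesses $x \in S$ (through $g$) and $x \notin S$ (through $h$). This works only because $\alpha(n)$ is thick for $H_n^*$, not merely for $H_n$; that finer thickness is precisely what the Ramsey Property of $\mathcal{K}^*$ buys us, and checking that the witness $s$ can be taken inside $H_N^*$ while absorbing both $r$ and $i_n^N$ is the crux of the argument.
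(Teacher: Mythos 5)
Your proposal is correct and, in the forward direction, is exactly the paper's argument: the same sets $T_g, T_h$, the same witness $r\in H_n^*$, and the same contradiction extracted from $x = s\circ r\circ g|_{\mathbf{A}_m} = s\circ i^N_n\circ h|_{\mathbf{A}_m}$ with $s\in H_N^*$ chosen so that both $r$ and $i^N_n$ lie in $\emb{\mathbf{A}_n^1,\mathbf{A}_N^1}$. In the converse you diverge slightly, and your version is in fact the more robust one: the paper picks $S\in\alpha g(m)$ with $S\subseteq H_m^*$, which tacitly requires $H_m^*\in\alpha g(m)$ --- something that holds only when $g|_{\mathbf{A}_m}$ has the same expansion type as the inclusion $i_m$ --- whereas you observe that $\alpha g(m)$ concentrates on the expansion class $\emb{\mathbf{A}_m^{j},\mathbf{K}^*}$ of $g|_{\mathbf{A}_m}$ itself and conclude from the disjointness of distinct classes, then use ultrahomogeneity of $\mathbf{K}^*$ to upgrade ``same expansion type'' to ``same expansion'' exactly as the paper does. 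Both arguments rest on the same two facts (composition with an element of $H_n^*$ preserves expansion type, and $\alpha(n)$ is a thick ultrafilter on $H_n^*$), so this is essentially the paper's proof with the converse stated in the form that works verbatim for arbitrary $g$.
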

\vspace{2 mm}

It now follows that $\overline{\alpha \cdot G} \subseteq \varprojlim Y_n := Y$ with $|Y_n|\leq k_n$. By the proof of Theorem 8.7, we see that each $Y_n$ is thick. Since the Ramsey degree of $\mathbf{A}_n$ is exactly $k_n$ by Proposition 5.8, we must have $|Y_n| = k_n$; hence by Corollary 8.8, we have $\overline{\alpha\cdot G} = Y \cong M(G)$. Form the expansion class $\mathcal{K}(Y)$. Note that each structure in $\mathcal{K}(Y)$ is isomorphic to $\alpha g(m)$ for some $g\in G$ and $m\in \mathbb{N}$.  Pick $f\in H_m$ and $g\in G$ with $g|_{\mathbf{A}_m} = f$. Then for $h_1, h_2\in G$, we have
\begin{align*}
f\in \emb{\alpha h_1(m), \alpha h_2(n)} &\Leftrightarrow \alpha h_2g(m) = \alpha h_1(m) \\
&\Leftrightarrow h_2\circ f \text{ and } h_1|_{\mathbf{A}_m} \text{ have the same expansion} \\
&\Leftrightarrow f\in \emb{\mathbf{A}_m(h_1|_{\mathbf{A}_m}, \mathbf{K}^*), \mathbf{A}_n(h_2|_{\mathbf{A}_n}, \mathbf{K}^*)}
\end{align*}
Letting $X = \{\mathbf{A}_n: n\in \mathbb{N}\}$, this shows that $\Phi_X: \mathrm{Cat}_X(\mathcal{K}(Y), \mathcal{K})\rightarrow \mathrm{Cat}_X(\mathcal{K}^*, \mathcal{K})$ given by $\Phi_X(\alpha g(m)) = \mathbf{A}_m(g|_{\mathbf{A}_m}, \mathbf{K}^*)$ is an isomorphism of expansions. Hence $X_{\mathcal{K}^*}\cong M(G)$. 

\section{Conclusion}

While the new proof of KPT correspondence given here solves the Generic Point Problem for closed subgroups of $S_\infty$, it was originally stated for any Polish group $G$. We briefly discuss one possible generalization of the methods presented here. 

A (relational) \emph{metric structure} is of the form $\langle X, d, \{R_i: i\in I\}\rangle$, where $X$ is a Polish metric space, $d$ is the metric (we assume that $X$ has diameter less than 1), and the ``relations'' $R_i: X^{n_i}\rightarrow \mathbb{R}$ are $n_i$-ary functions which are $k$-Lipschitz for some $k$. An automorphism of the structure is then an isometry of $(X,d)$ which in addition preserves all of the relations $R_i$. The \emph{quantifier-free type} of a finite tuple $(x_1,...,x_k)$ is just the (labelled) induced substructure on $\{x_1,...,x_k\}$. In particular, $(x_1,...,x_k)$ and $(y_1,...,y_k)$ have the same quantifier-free type iff $x_i\rightarrow y_i$ is an isomorphism of the induced substructures. 

A metric structure $\mathbf{X}$ is said to be \emph{near-ultrahomogeneous} if for any $(x_1,...,x_k), (y_1,...,y_k)$ with the same quantifier-free type and any $\epsilon >0$, there is an automorphism $\pi$ of $\mathbf{X}$ with $\mathrm{max}_i(d(\pi(x_i), y_i)) < \epsilon$. Near-ultrahomogeneous metric structures are called \emph{metric \fr structures}. One of the main theorems of metric \fr theory is that for any Polish group G, there is a metric \fr structure $\mathbf{X}$ with $\aut{\mathbf{X}}\cong G$; here $\aut{\mathbf{X}}$ is given the pointwise convergence topology. One can also consider the \emph{metric \fr class} $\mathcal{X}$ of finite structures which embed into $\mathbf{X}$. By no means is this intended to be a complete introduction to the theory; the interested reader should see [MT] and [Sch].

There is evidence that metric \fr theory can be used to investigate the dynamical properties of Polish groups. Melleray and Tsankov in [MT] have shown that $\aut{\mathbf{X}}$ is extremely amenable iff the class $\mathcal{X}$ satisfies an appropriate analogue of the Ramsey Property. Perhaps it is possible to use methods similar to those in section 6 to characterize the greatest ambit. We might proceed as follows: enumerate $D = \{x_1,x_2,...\}$ a dense subset of $X$, and let $\mathbf{X}_n$ be the induced substructure on $\{x_1,...,x_n\}$. Let $i_m^n$ be the inclusion of $\mathbf{X}_m$ into $\mathbf{X}_n$ for $m\leq n$ as before. We can, as before, set $H_n$ to be the set of embeddings of $\mathbf{X}_n$ into $\mathbf{X}$. Here a major difference arises; now $H_n$ has the structure of a Polish metric space rather than a discrete structure. However, any metric space (in particular, any Tychonoff space) admits a $\beta$-compactification (if the space is not locally compact, the original space may not be an open subspace of the compactification). Since the maps $\hat{i}_m^n: H_n\rightarrow H_m$ have dense image, the unique extension 
$\tilde{i}_m^n$ is surjective, and we can form $\varprojlim \beta H_n$ as before. Defining a jointly continuous $G$-action and showing that this is the greatest ambit seem to require a bit more care. 

Some of the results in this paper are known to generalize to general Polish groups. Melleray, Nguyen Van Th\'e, and Tsankov [MNT] have shown that if $G$ is a Polish group, $M(G)$ metrizable, and $G$ has the generic point property, then $M(G)$ is of the form $\widehat{G/G_0}$, where $G_0$ is extremely amenable and coprecompact in $G$, and the completion is taken with respect to the left uniformity on $G/G_0$ (using the left uniformity yields a right $G$-action). In particular, for $G$ a closed subgroup of $S_\infty$, saying that $M(G)$ is of the form $\widehat{G/G_0}$ for $G_0$ extremely amenable and coprecompact is exactly to say that 8.14 (3) holds.

We conclude with two problems. In my opinion, the second is much more difficult and fundamental.

\begin{prob}
Use metric \fr theory and methods similar to section 6 to characterize the greatest $G$-ambit for any Polish group $G$.
\end{prob}

Presuming that Problem 9.1 can be solved, we then would like to use the greatest ambit to characterize when $M(G)$ is metrizable. However, we need an analogue of Theorem 2.9 for $\beta$-compactifications of arbitrary Polish spaces.

\begin{prob}
Let $X$ be a Polish space, and form $\beta X$. Characterize the compact metric spaces which embed into $\beta X$.
\end{prob}

Andy Zucker

Carnegie Mellon University

Pittsburgh, PA 15213

zucker.andy@gmail.com

\begin{thebibliography}{99}

\bibitem[A]{A} J.\ Auslander, \emph{Minimal Flows and Their Extensions}, North Holland, 1988.

\bibitem[B]{B} D.\ Barto\v{s}ov\'a, Universal minimal flows of groups of automorphisms of uncountable structures. \emph{Canadian Math.\ Bull.}, http://dx.doi.org:10.4153/CMB-2012-023-4.

\bibitem[AKL]{AKL} O. Angel, A.S. Kechris, and R. Lyons, Random Orderings and Unique Ergodicity of Automorphism Groups, \emph{J. Eur. Math. Soc.}, to appear.

\bibitem[EEN]{EEN} David B.\ Ellis, Robert Ellis, and Mahesh Nerurkar, The Topological Dynamics of Semigroup Actions, \emph{Transactions of the American Mathematical Society}, \textbf{353} (2001), 1279--1320.

\bibitem[HS]{HS} Neil Hindman, Dona Strauss, \emph{Algebra in the Stone-\v{C}ech Compactification}, 2nd Edition, De Gruyter (2012).

\bibitem[Ho]{H} Wilfrid Hodges, \emph{Model Theory}, Cambridge Univ. Press, 1993.

\bibitem[K]{K} A.S.\ Kechris, \emph{Classical Descriptive Set Theory}, Springer, 1995.

\bibitem[KPT]{KPT} A.S.\ Kechris, V.G.\ Pestov, and S. Todor\v{c}evi\'c, Fra\"{i}ss\'{e} limits, Ramsey theory, and topological dynamics of automorphism groups, \emph{Geometric and Functional Analysis}, \textbf{15} (2005), 106--189.

\bibitem[MNT]{MNT} J.\ Melleray, L.\ Nguyen Van Th\'e, and T.\ Tsankov, Polish Groups with Metrizable Universal Minimal Flows, preprint (2014). 

\bibitem[MT]{MT} J.\ Melleray and T.\ Tsankov, Extremely Amenable groups via Continuous Logic, (2011).

\bibitem[MP]{MP} M.\ M\"uller and A.\ Pongr\'acz, Topological Dynamics of Unordered Ramsey Structures, preprint, (2014).

\bibitem[NR]{Ne} J.\ Ne\v{s}et\v{r}il and V.\ R\"odl, Partitions of finite relational and set systems, J.\ Comb.\ Theory, \textbf{22 (3)}, (1977), 289--312.

\bibitem[NVT]{NVT} L.\ Nguyen Van Th\'e, More on the Kechris-Pestov-Todor\v{c}evi\'c Correspondence: Precompact Expansions, \emph{Fund.\ Math.}, \textbf{222} (2013), 19-47. 

\bibitem[P]{P} V.\ Pestov, On free actions, minimal flows, and a problem by Ellis, \emph{Trans.\ Amer.\ Math.\ Soc.}, \textbf{350 (10)}, (1998), 4149--4165.

\bibitem[Sa]{S} Pierre Samuel, Ultrafilters and compactifications of uniform spaces. \emph{Trans.\ Amer.\ Math.\ Soc.}, \textbf{64} (1948), 100--132.

\bibitem[Sch]{Sch} K.\ Schoretsanitis, \emph{\fr Theory for Metric Structures}, Ph.D.\ Thesis, (2007).

\bibitem[W]{W} S.\ Willard, \emph{General Topology}, Addison-Wesley, 1970.

\end{thebibliography}
\end{document}